\newtheorem{theorem}{Theorem}
\newtheorem{lemma}{Lemma}
\newtheorem{corollary}{Corollary}
\newtheorem{proposition}{Proposition}
\newtheorem{algorithm}{Algorithm}
\newtheorem{estimator}{Estimator}
\newtheorem{subestimator}{Estimator}[estimator]
\theoremstyle{definition}
\theoremstyle{remark}
\newtheorem{assumption}{Assumption}
\newtheorem{remark}{Remark}
\def\inprob{\stackrel{p}{\rightarrow}}
\def\indist{\rightsquigarrow}
\def\inas{\stackrel{a.s.}{\longrightarrow}}
\newcommand\ind{\protect\mathpalette{\protect\independenT}{\perp}}
\def\independenT#1#2{\mathrel{\rlap{$#1#2$}\mkern4mu{#1#2}}}
\DeclareSymbolFont{bbold}{U}{bbold}{m}{n}
\DeclareSymbolFontAlphabet{\mathbbold}{bbold}
\newcommand{\one}{\mathbbold{1}}
\def\bbE{\mathbb{E}}
\def\bbP{\mathbb{P}}
\def\bbR{\mathbb{R}}
\def\bbV{\mathbb{V}}
\def\cov{\hspace{0.05em}\text{cov}\hspace{0.05em}}
\title{Double cross-fit doubly robust estimators: \\ Beyond series regression}
\author{
  Alec McClean\textsuperscript{1},
  Sivaraman Balakrishnan\textsuperscript{2},
  Edward H. Kennedy\textsuperscript{2},
  and Larry Wasserman\textsuperscript{2} \\\\
  \textsuperscript{1}Division of Biostatistics, NYU Grossman School of Medicine \\
  \textsuperscript{2}Department of Statistics \& Data Science, Carnegie Mellon University \\\\
  \texttt{hadera01@nyu.edu, \{siva, edward, larry\}@stat.cmu.edu}
}
\date{}
\begin{document}
\maketitle
	
\begin{abstract}
    Doubly robust estimators with cross-fitting have gained popularity in causal inference due to their favorable structure-agnostic error guarantees. However, when additional structure, such as H\"{o}lder smoothness, is available then more accurate ``double cross-fit doubly robust'' (DCDR) estimators can be constructed by splitting the training data and undersmoothing nuisance function estimators on independent samples.  We study a DCDR estimator of the Expected Conditional Covariance, a functional of interest in causal inference and conditional independence testing. We first provide a structure-agnostic error analysis for the DCDR estimator with no assumptions on the nuisance functions or their estimators.  Then, assuming the nuisance functions are H\"{o}lder smooth, but without assuming knowledge of the true smoothness level or the covariate density, we establish that DCDR estimators with several linear smoothers are $\sqrt{n}$-consistent and asymptotically normal under minimal conditions and achieve fast convergence rates in the non-$\sqrt{n}$ regime. When the covariate density and smoothnesses are known, we propose a minimax rate-optimal DCDR estimator based on undersmoothed kernel regression.  Moreover, we show an undersmoothed DCDR estimator satisfies a slower-than-$\sqrt{n}$ central limit theorem, and that inference is possible even in the non-$\sqrt{n}$ regime. Finally, we support our theoretical results with simulations, providing intuition for double cross-fitting and undersmoothing, demonstrating where our estimator achieves $\sqrt{n}$-consistency while the usual ``single cross-fit'' estimator fails, and illustrating asymptotic normality for the undersmoothed DCDR estimator.
\end{abstract}
    
\section{Introduction} \label{sec:intro}

In statistical estimation, the goal often is to construct low-dimensional functionals of an unknown data-generating distribution. Causal effects, such as the average treatment effect, the local average treatment effect, and the average treatment effect on the treated, are prime examples of low-dimensional functionals. Typically, estimators for these functionals are built as summary statistics of nuisance function estimates, such as the propensity score or outcome regression function. In recent decades, doubly robust estimators based on influence functions and semiparametric efficiency theory have gained prominence due to their favorable statistical properties, including robustness to model misspecification and improved efficiency \citep{vanderlaan2003unified, tsiatis2006semiparametric, kennedy2024semiparametric}. Crucially, these estimators can be \emph{cross-fit}, whereby the nuisance functions are estimated on a separate sample from that used to evaluate the functional estimator. Cross-fitting avoids restrictive Donsker-type conditions and enables flexible machine learning methods for nuisance estimation \citep{chernozhukov2018double, zheng2010asymptotic, robins2008higher}.  A widely used approach minimizes the mean squared error (MSE) of the nuisance estimators on a training set and then applies cross-fitting to construct the functional estimator. We refer to this method as the \emph{single cross-fit doubly robust-MSE (SCDR-MSE) estimator} (see, e.g., \citet{kennedy2024semiparametric} for a review).

\medskip

The SCDR-MSE estimator is attractive in practice: when the nuisance estimators’ MSE converges at an $n^{-1/4}$ rate (along with mild regularity conditions), it attains $\sqrt{n}$-consistency and asymptotic normality. This result is particularly appealing because it ensures that generic machine learning algorithms---trained solely to minimize MSE---can yield valid statistical inference under minimal assumptions. Recent theoretical work has further demonstrated that the SCDR-MSE estimator is minimax optimal in a particular structure-agnostic setting, meaning that no estimator can outperform it without additional knowledge of the nuisance functions' structure \citep{balakrishnan2023fundamental, jin2024structure}.

\medskip

\color{black}
However, a key limitation of the SCDR-MSE estimator is that it remains agnostic to any additional structure in the nuisance functions. While this generality ensures robustness, it can lead to suboptimal performance when smoother or lower-complexity nuisance functions permit faster convergence rates. A growing body of work has explored refinements to address this issue under smoothness assumptions. Higher-order estimators---originally proposed by \citet{robins2008higher}---utilize additional influence function corrections to reduce bias and achieve optimal convergence rates under smoothness assumptions \citep{robins2009semiparametric, liu2023new, vandervaart2014higher, robins2017minimax, liu2021adaptive, bonvini2024doubly}.  Meanwhile, \citet{mcgrath2022undersmoothing} demonstrated that SCDR and plug-in estimators incorporating undersmoothed orthogonal wavelet estimators could also match these efficiency gains, aligning with broader findings on cross-fitting and undersmoothing in semiparametric estimation \citep{gine2008simple, paninski2008undersmoothed, newey1998undersmoothing, vanderlaan2022efficient}. \color{black} Despite these theoretical advances, practical implementation remains a challenge.

\medskip

An alternative approach, first proposed by \citet{newey2018cross}, is the \emph{double cross-fit doubly robust (DCDR) estimator}. This estimator retains the doubly robust framework but introduces an additional layer of cross-fitting, where the nuisance estimators are trained on \emph{separate, independent} samples. Double cross-fitting is a simple yet effective modification that, as recent work suggests, can lead to rate-optimal estimation in both the $\sqrt{n}$- and non-$\sqrt{n}$-regimes, particularly when combined with undersmoothing \citep{mcgrath2022undersmoothing, fisher2023threeway, kennedy2023towards}.  However, several important questions remain about its theoretical guarantees and practical applicability:
\begin{enumerate}
    \itemsep0.05in
    \item Most analyses of the DCDR estimator rely on smoothness assumptions, leaving open the question of whether it retains its favorable properties in a structure-agnostic setting.
    \item Existing results with smoothness assumptions primarily focus on series regression nuisance estimators, raising the question of whether similar guarantees extend to other common estimators like k-Nearest-Neighbors or local polynomial regression. 
    \item While recent work has shown that the DCDR estimator can attain minimax-optimal convergence rates in the non-$\sqrt{n}$ regime, it remains unclear whether valid inference procedures exist. 
    \item Finally, empirical validation is lacking: theoretical guarantees suggest that DCDR should perform well, but little is known about how it compares to the standard SCDR-MSE estimator. 
\end{enumerate}
This paper addresses these gaps in the literature.

\color{black}
\subsection{Structure of the paper and our contributions}

We estimate the Expected Conditional Covariance (ECC), a causal effect \citep{li2011higher, diaz2023nonagency} which is also relevant to conditional independence testing \citep{shah2020hardness}, using a DCDR estimator. After providing further background in Section~\ref{sec:setup}, the structure of the paper and our main contributions are as follows:

\begin{enumerate}
    \itemsep0.05in
    \item \textbf{Structure-agnostic analysis (Section~\ref{sec:modelfree}).} 
    We derive a new asymptotically linear expansion for the DCDR estimator with minimal assumptions on the nuisance functions or their estimators. 

    \item \textbf{H\"older smoothness and local averaging estimators (Section~\ref{sec:unknown}).} 
    Under H\"older smoothness assumptions for the nuisance estimates, we construct a DCDR estimator using Nearest Neighbors and local polynomial regression estimators, complementing earlier results with series regression \citep{newey2018cross, mcgrath2022undersmoothing}. 

    \item \textbf{Known density and non-$\sqrt{n}$ inference (Section~\ref{sec:known}).} 
    Supposing the covariate density and smoothness levels are known, we develop a new DCDR estimator with kernel regression nuisance estimators that is rate-optimal for non-$\sqrt{n}$ convergence, complementing previous results with orthogonalized wavelet estimators \citep{mcgrath2022undersmoothing}. Then, we establish a slower-than-$\sqrt{n}$ central limit theorem, the first of its kind for a cross-fit doubly robust estimator, building on prior results with higher-order estimators \citep{robins2016asymptotic}.

    \item \textbf{Empirical validation (Section~\ref{sec:simulations}).} 
    Through simulations we illustrate our theoretical results. For example, Figure~\ref{fig:smoothness-intro} reinforces our convergence analysis from Section~\ref{sec:unknown}. It shows QQ plots over 100 simulations. With H\"{o}lder smooth nuisance functions having smoothness less than half the dimension, a DCDR estimator with undersmoothed local polynomial regressions (orange circles) approximates a normal distribution very closely while the SCDR-MSE estimator (blue triangles) does not.  Other results in simulations provide intuition for our structure-agnostic results from Section~\ref{sec:modelfree} and verify our slower-than-$\sqrt{n}$ CLT from Section~\ref{sec:known}.

    \item \textbf{Discussion and future directions (Section~\ref{sec:discussion}).} 
    We conclude by discussing practical implications of our results, extensions of our theoretical analysis to a wider class of estimators, and other avenues for future work.
\end{enumerate}

\vspace{-0.2in}
\begin{figure}[H]
    \centering
    \includegraphics[height=2.2in]{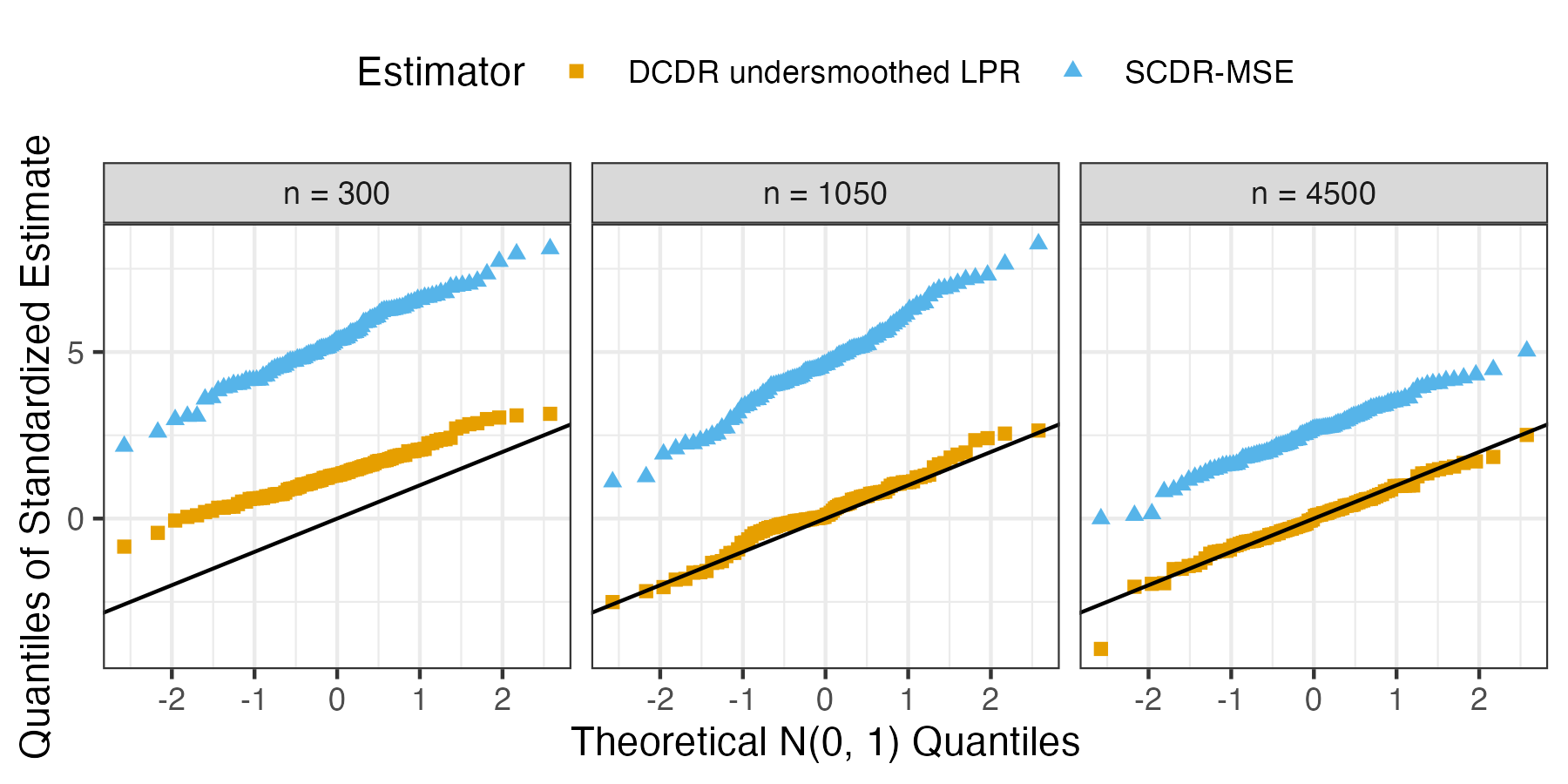}
    \caption{QQ-plots of $100$ standardized DCDR estimates with undersmoothed local polynomial regressions and $100$ standardized SCDR-MSE estimates over sample size (columns) with H\"{o}lder$(0.35)$ smooth nuisance functions and dimension $1$.}
    \label{fig:smoothness-intro}
\end{figure}


\color{black}
\subsection{Notation} \label{sec:notation}

We denote expectation by $\bbE$, variance by $\bbV$, covariance by $\cov$, and sample averages by $\bbP_n(f) = \frac{1}{n} \sum_{i=1}^{n} f(Z_i)$. For $x \in \bbR^d$, $\lVert x \rVert^2$ is the squared Euclidean norm, while $\lVert f \rVert_{\bbP}^2 = \int_{\mathcal{Z}} f(z)^2 d\bbP(z)$ and $\lVert f \rVert_\infty = \sup_{z \in \mathcal{Z}} |f(z)|$ denote the squared $L_2(\bbP)$ and supremum norms. If $\widehat f$ is an estimated function, then $\bbE \lVert \widehat f \rVert_\bbP^2$ is the expectation of $\lVert \widehat f \rVert_\bbP^2$ over the training data used to construct $\widehat f$. We use $a \lesssim b$ to mean $a \leq Cb$ for some constant $C$, and $a \asymp b$ to mean $b \lesssim a$ and $a \lesssim b$. We use $a \wedge b$ and $a \vee b$ for minimum and maximum. Convergence is denoted by $\indist$ (distribution), $\inprob$ (probability), and $\inas$ (almost sure).  Standard probabilistic order notation includes $o_\bbP(\cdot)$, $O_\bbP(\cdot)$, $o(1)$, and $O(1)$.  

\medskip

A function $f: \bbR^d \to \bbR$ is Hölder$(s)$ smooth if it is $\lfloor s \rfloor$-times continuously differentiable (where $\lfloor s \rfloor$ is the largest integer strictly smaller than $s$) with bounded partial derivatives and satisfies
$$
| D^m f(x) - D^m f(x') | \lesssim \lVert x - x' \rVert^{s - \lfloor s \rfloor}
$$
for all $x, x'$ and $m$ with $\sum_{j=1}^{d} m_j = \lfloor s \rfloor$, where $D^m$ is the multivariate partial derivative operator.

\medskip

We denote generic nuisance functions by $\eta$, datasets of $n$ observations by $D$ with subscripts (e.g., $D_\eta$ for training data for estimating $\eta$), and covariates by $X^n$ with similar subscripts.

\section{Setup and background} \label{sec:setup}

In this section, we describe the data generating process and the ECC, review known lower bounds for estimating the ECC over H\"{o}lder smoothness classes, revisit the existing literature on plug-in, doubly robust, and higher-order estimators, and explicitly define the double cross-fit doubly robust estimator for the ECC.

\medskip

We assume we observe a dataset comprising $3n$ independent and identically distributed data points $\{ Z_i \}_{i=1}^{3n}$ drawn from a distribution $\mathcal{P}$.  Here, $Z_i$ is a tuple $\{ X_i, A_i, Y_i \}$ where $X \in \bbR^d$ are covariates and $A \in \bbR$ and $Y \in \bbR$. We denote $\pi(X) = \bbE(A \mid X)$ and $\mu(X) = \bbE(Y \mid X)$ and collectively refer to them as nuisance functions.  In causal inference, often $A$ denotes binary treatment status, while $Y$ is the outcome of interest.  In that case, $\pi$ is referred to as the propensity score. Typically, $\bbE(Y \mid A=a, X)$ is referred to as the outcome regression, but we will refer to $\mu$ as the outcome regression function. 

\medskip
\color{black}
We focus on estimating the ECC:
$$
\psi_{ecc} = \bbE \{ \cov (A, Y \mid X) \} = \bbE(AY) - \bbE \{ \pi(X) \mu(X) \}.
$$
The ECC appears in the causal inference literature in the numerator of the variance weighted average treatment effect \citep{li2011higher}, as a measure of causal influence \citep{diaz2023nonagency}, and in derivative effects under stochastic interventions \citep{zhou2022marginal}.  Additionally, the ECC has appeared in the conditional independence testing literature \citep{shah2020hardness}.  Prior work on optimal DCDR estimators has also focused on the ECC \citep{mcgrath2022undersmoothing, newey2018cross, fisher2023threeway}.

\begin{remark} \label{rem:cycling}
    For our theoretical analysis, we assume there are $3n$ observations in total so we have $n$ observations for each independent fold.  When estimating the ECC with the DCDR estimator, we split the data into three folds: two for training and one for estimation.  Since our focus is on asymptotic rates, we ignore the constant factor lost from splitting the data.  But, with iid data, one can cycle the folds, repeat the estimation, and take the average to retain full sample efficiency. Indeed, our simulation results in Section~\ref{sec:simulations} illustrate such an approach.
\end{remark}

\subsection{Assumptions and lower bounds on estimation rates}

We start with the two assumptions we impose throughout.

\begin{assumption} \label{asmp:dgp} \textbf{(Bounded first and second moments for $A$ and $Y$)} $\mu(X)$ and $\pi(X)$ satisfy $|\mu(X)| < \infty, |\pi(X)| < \infty$, and the conditional second moments of $A$ and $Y$ are bounded above and below; i.e, $0 < \bbV (A \mid X = x), \bbV(Y \mid X = x) < \infty$ for all $x \in \mathcal{X}$.
\end{assumption}

\begin{assumption} \label{asmp:bdd_density} \textbf{(Bounded covariate density)} The covariates $X$ are continuous and have support $\mathcal{X}$, a compact subset of $\bbR^d$, and the covariate density $f(x)$ satisfies $0 < c \leq f(x) \leq C < \infty$ for all $x \in \mathcal{X}$.
\end{assumption}

 While we focus on continuous $X$ with density relative to the Lebesgue measure, our approach can be straightforwardly extended to discrete covariates. For discrete covariates, one can construct a separate DCDR estimator for each covariate value and then aggregate across these values, weighting by their estimated probabilities. These probabilities can be estimated at a $\sqrt{n}$-rate using a simple count estimator. 

\medskip

\color{black}
We require no further assumptions until Section~\ref{sec:unknown}. In Sections~\ref{sec:unknown} and~\ref{sec:known}, we analyze the DCDR estimator when the data generating process satisfies $\pi \in \text{H\"{o}lder}(\alpha)$ and $\mu \in \text{H\"{o}lder}(\beta)$.  Under H\"{o}lder smoothness, and when the covariate density is sufficiently smooth, \citet{robins2008higher} and \citet{robins2009semiparametric} proved that the minimax rate satisfies
\begin{equation} \label{eq:minimax_rate}
    \inf_{\widehat \psi} \sup_{\mathcal{P}_{\alpha, \beta}} \bbE | \widehat \psi - \psi_{ecc} | \gtrsim \begin{cases} n^{-1/2} &\text{ if } \frac{\alpha + \beta}{2} > d / 4, \\
        n^{-\frac{2 \alpha + 2\beta}{2 \alpha + 2 \beta + d}} &\text{otherwise.} \end{cases}
\end{equation}
The minimax rate exhibits an ``elbow'' phenomenon: $\sqrt{n}$-convergence is possible when the average smoothness of the nuisance functions is larger than one quarter the dimension; otherwise, the lower bound on the minimax rate is slower than $\sqrt{n}$ and depends on the average smoothness of the nuisance functions and the dimension of the covariates.  Importantly, these rates depend on the covariate density being smooth enough that it does not affect the estimation rate; when the covariate density is non-smooth, minimax rates for the ECC are not yet known.

\subsection{Plug-in, doubly robust, and higher-order estimators}

In this section, we describe plug-in, doubly robust, and higher-order estimators in further detail.  A plug-in estimator for the ECC can be constructed based on the representation
$$
\bbE \{ \cov (A, Y \mid X) \} = \bbE(AY) - \bbE \{ \pi(X) \mu(X) \}
$$
or
$$
\bbE \{ \cov (A, Y \mid X) \} = \bbE \big[ A \{ Y - \mu(X) \} \big].
$$
In either case, an estimator can be constructed according to the ``plugin principle'', by plugging in estimates for the relevant nuisance functions and taking the empirical average.  These estimators are often intuitive and easy to construct and when the nuisance functions are H\"{o}lder smooth and the estimators are appropriately undersmoothed they can be rate-optimal \citep{mcgrath2022undersmoothing}. However, without additional structure and careful undersmoothing, they can inherit biases from their nuisance function estimators. This has inspired an extensive literature on doubly robust estimators, which are also referred to as ``first-order'', ``double machine learning'', or ``one-step'' estimators.

\medskip

Doubly robust estimators are based on semiparametric efficiency theory and the efficient influence function (EIF), which acts like a functional derivative in the first-order von Mises expansion of the functional \citep{van1996weak, tsiatis2006semiparametric}.  For the ECC, the un-centered EIF is 
\begin{equation} \label{eq:ecc_eif}
    \varphi(Z) = \{ A - \pi(X) \} \{ Y - \mu(X) \}.
\end{equation}
The doubly robust estimator is constructed by estimating the nuisance functions, plugging their values into the formula for the un-centered EIF, and taking the empirical average:
$$
\widehat \psi_{dr} = \bbP_n \left[ \{ A - \widehat \pi(X) \} \{ Y - \widehat \mu(X) \} \right].
$$
Other doubly robust estimators such as the targeted maximum likelihood estimator are also common in the literature \citep{vanderlaan2011targeted}. They provide similar asymptotic guarantees as the doubly robust estimator, and are often referred to as ``doubly robust'' when their bias can be bounded by the product of the root mean squared errors of the nuisance function estimators under only mild regularity conditions. Doubly robust estimators are typically combined with two extra steps: (1) the nuisance estimators are constructed on a separate sample from that used to evaluated $\widehat \psi_{dr}$, and (2) the MSE of the nuisance estimates is minimized. We refer to this approach as the single cross-fit doubly robust-MSE (SCDR-MSE) estimator. It has strong error guarantees. Indeed, it is the optimal estimator when only MSE rates can be guaranteed for the nuisance estimators \citep{balakrishnan2023fundamental, jin2024structure}.
 However, when additional structure like H\"{o}lder smoothness is available, then better SCDR estimators can be constructed by undersmoothing the nuisance estimators (rather than minimizing MSE); see \citet{mcgrath2022undersmoothing} for a comprehensive analysis. \color{black}

\medskip

Higher-order estimators are based on a higher-order von Mises expansion of the functional of interest \citep{robins2008higher, li2011higher}.  Just as doubly robust estimators correct the bias of plug-in estimators, higher-order estimators correct the bias of doubly robust estimators.  For the ECC, the second-order estimator is
$$
\widehat \psi_{hoif} = \widehat \psi_{dr} - \frac{1}{n(n-1)} \sum_{i \neq j} \left\{ A_i - \widehat \pi(X_i) \right\} b(X_i)^T \widehat \Sigma^{-1} b(X_j)  \left\{ Y_j - \widehat \mu(X_j) \right\}
$$
where $b(X)$ is a basis with dimension growing with sample size and $\widehat \Sigma = \bbP_n \{ b(X) b(X)^T \}$ is the Gram matrix.  Higher-order estimators capitalize on the additional structure available when the nuisance functions are smooth, enabling them to achieve the minimax rate in some settings \citep{robins2008higher, robins2009semiparametric}. Recent research has developed adaptive and more numerically stable extensions of higher-order estimators \citep{liu2021adaptive, liu2023new}.

\subsection{Double cross-fit doubly robust estimator}

We focus on a double cross-fit doubly robust (DCDR) estimator, which is a simple adaptation of the SCDR estimator, whereby the nuisance estimators are trained on separate independent samples.

\begin{algorithm} \label{alg:dcdr} \textbf{\emph{(DCDR Estimator for the ECC)}}
    Let $(D_\mu, D_\pi, D_\varphi)$ denote three independent samples of $n$ observations of $Z_i = (X_i, A_i, Y_i)$. Then:
    \begin{enumerate}
        \item Train an estimator $\widehat \mu$ for $\mu$ on $D_\mu$ and train an estimator $\widehat \pi$ for $\pi$ on $D_\pi$.
        \item On $D_\varphi$, estimate the un-centered efficient influence function values $\widehat \varphi (Z) = \{ A - \widehat \pi(X) \} \{ Y - \widehat \mu(X) \}$ using the estimators from step 1, and construct the DCDR estimator $\widehat \psi_n$ as the empirical average of $\widehat \varphi(Z)$ over the estimation data  $D_\varphi$: 
        $$
        \widehat \psi_n = \bbP_{n} \{ \widehat \varphi(Z) \} \equiv \frac1n \sum_{Z_i \in D_\varphi} \widehat \varphi(Z_i).
        $$
    \end{enumerate}
\end{algorithm}

This estimator has received some attention in prior work: \citet{newey2018cross} first proposed it and combined it with regression splines nuisance estimators, showing that the resulting DCDR estimator can be $\sqrt{n}$-consistent under minimal smoothness conditions.  \citet{fisher2023threeway} and \citet{kennedy2023towards} extended the approach to estimate heterogeneous effect estimation, while \citep{mcgrath2022undersmoothing} developed a comprehensive analysis of the DCDR estimator with orthogonalized wavelet nuisance estimators under smoothness assumptions. Nonetheless, as we outlined in the introduction, there are several questions remaining about the properties of the DCDR estimator. The rest of this paper analyzes the DCDR estimator in detail.


\color{black}
\section{Structure-agnostic analysis} \label{sec:modelfree}

In this section, we derive a structure-agnostic asymptotically linear expansion for the DCDR estimator which holds with generic nuisance functions and estimators.  To the best of our knowledge, this is the first such structure-agnostic analysis. Then, we provide a nuisance-function-agnostic decomposition of the remainder term from the asymptotically linear expansion.  Finally, we discuss, informally, how these results reveal that undersmoothing the nuisance function estimators can lead to faster convergence rates for the DCDR estimator. 

\medskip

Our first result is a structure-agnostic asymptotically linear expansion of the DCDR estimator.  It does not require any assumptions about the nuisance functions or their estimators beyond Assumptions~\ref{asmp:dgp} and~\ref{asmp:bdd_density}.  

\begin{restatable}{lemma}{lemexpansion}\label{lem:expansion} \textbf{\emph{(Structure-agnostic linear expansion)}}
    Under Assumptions~\ref{asmp:dgp} and~\ref{asmp:bdd_density}, if $\psi_{ecc}$ is estimated with the DCDR estimator $\widehat \psi_n$ from Algorithm~\ref{alg:dcdr}, then
    \begin{align*}
        \widehat \psi_n - \psi_{ecc} &= (\bbP_{n} - \bbP) \{ \varphi(Z) \} + R_{1,n} + R_{2,n} \\
        \text{ where } R_{1,n} &\leq \lVert b_\pi \rVert_\bbP \lVert b_\mu \rVert_\bbP \text{ and } R_{2,n} = O_{\bbP} \left( \sqrt{\frac{\bbE \lVert \widehat \varphi - \varphi \rVert_\bbP^2 + \rho (\Sigma_n)}{n} } \right), 
    \end{align*}
    $b_\eta \equiv b_\eta(X) = \bbE \{ \widehat \eta(X) - \eta(X) \mid X \}$ is the pointwise bias of the estimator $\widehat \eta$, $\rho(\Sigma_n)$ denotes the spectral radius of $\Sigma_n$, and
    $$
    \Sigma_n = \bbE \left( \cov \left[ \Big\{  \widehat b_\varphi (X_1) , ...,  \widehat b_\varphi (X_n) \Big\}^T \mid X_\varphi^n \right] \right) 
    $$
    where $\widehat b_{\varphi} (X_i) = \bbE \{ \widehat \varphi(Z_i) - \varphi(Z_i) \mid X_i, D_\pi, D_\mu \}$ is the conditional bias of $\widehat \varphi$ and $X_\varphi^n$ denotes the covariates in the estimation sample.
\end{restatable}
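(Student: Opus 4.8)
The plan is to start from the defining identity $\widehat\psi_n = \bbP_n\{\widehat\varphi(Z)\}$ and add and subtract the empirical average of the \emph{oracle} influence function over $D_\varphi$. Since $\bbE\{\varphi(Z)\} = \psi_{ecc}$, this immediately isolates the leading linear term, giving $\widehat\psi_n - \psi_{ecc} = (\bbP_n - \bbP)\{\varphi(Z)\} + \bbP_n\{\widehat\varphi(Z) - \varphi(Z)\}$ and reducing everything to controlling the remainder $\bbP_n\{\widehat\varphi - \varphi\}$. The first computation I would carry out is the conditional bias of $\widehat\varphi$ given the covariate and the two training folds: expanding $(A - \widehat\pi)(Y - \widehat\mu) - (A - \pi)(Y - \mu)$ and using $\bbE(A \mid X) = \pi$ and $\bbE(Y \mid X) = \mu$, the two cross terms vanish in expectation and one is left with the product structure $\widehat b_\varphi(X) = \{\widehat\pi(X) - \pi(X)\}\{\widehat\mu(X) - \mu(X)\}$. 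This identity is the step that makes double cross-fitting pay off, and Assumption~\ref{asmp:dgp} guarantees all the moments below are finite.

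Next I would split the remainder as $\bbP_n\{\widehat\varphi - \varphi\} = \bbE\{\widehat b_\varphi(X)\} + A_n + B_n$, where $A_n = \bbP_n\{(\widehat\varphi - \varphi) - \widehat b_\varphi(X)\}$ is the conditional-noise piece and $B_n = \bbP_n\{\widehat b_\varphi(X)\} - \bbE\{\widehat b_\varphi(X)\}$ is the fluctuation of the averaged conditional bias. The deterministic term $\bbE\{\widehat b_\varphi(X)\}$ becomes $R_{1,n}$: because $D_\pi$ and $D_\mu$ are \emph{independent} samples, conditioning on $X$ factorizes the product as $\bbE\{\widehat\pi - \pi \mid X\}\,\bbE\{\widehat\mu - \mu \mid X\} = b_\pi(X)\,b_\mu(X)$, so $R_{1,n} = \bbE\{b_\pi(X)\,b_\mu(X)\}$, and Cauchy--Schwarz yields $R_{1,n} \le \lVert b_\pi \rVert_\bbP \lVert b_\mu \rVert_\bbP$.

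The remaining work is to show $R_{2,n} = A_n + B_n$ has the stated order; since both pieces are mean zero, I would bound their second moments and invoke Markov's inequality. For $A_n$, conditioning on the training folds makes the summands independent and mean zero by the very definition of $\widehat b_\varphi$, so $\bbE(A_n^2) = n^{-1}\,\bbE\{\bbV(\widehat\varphi - \varphi \mid X, D_\pi, D_\mu)\} \le n^{-1}\,\bbE\lVert \widehat\varphi - \varphi \rVert_\bbP^2$. The delicate term is $B_n$: the summands $\widehat b_\varphi(X_i)$ are \emph{not} independent across $i$ because they share the training data, so I would apply the law of total variance conditioning on the estimation covariates $X_\varphi^n$. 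The within-$X_\varphi^n$ part is exactly the quadratic form $n^{-2}\,\mathbf{1}^\top \Sigma_n \mathbf{1}$, which I would bound by $\rho(\Sigma_n)/n$ using $\mathbf{1}^\top \Sigma_n \mathbf{1} \le \rho(\Sigma_n)\lVert \mathbf{1} \rVert^2 = n\,\rho(\Sigma_n)$, valid since $\Sigma_n$ is positive semidefinite and $\mathbf{1}$ is the all-ones vector. The across-$X_\varphi^n$ part equals $n^{-1}\,\bbV\{b_\pi(X)\,b_\mu(X)\}$, which I would absorb into the other term: Jensen gives $b_\pi(X)^2 \le \bbE\{(\widehat\pi - \pi)^2 \mid X\}$ and likewise for $b_\mu$, and independence of the folds turns the product of these conditional expectations into $\bbE\{(\widehat\pi - \pi)^2 (\widehat\mu - \mu)^2 \mid X\}$, so that $\bbV\{b_\pi b_\mu\} \le \bbE\{\widehat b_\varphi(X)^2\} \le \bbE\lVert \widehat\varphi - \varphi \rVert_\bbP^2$. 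Combining, $\bbE(B_n^2) \le \{\rho(\Sigma_n) + \bbE\lVert \widehat\varphi - \varphi \rVert_\bbP^2\}/n$, and adding the $A_n$ bound gives $\bbE(R_{2,n}^2) \lesssim \{\bbE\lVert \widehat\varphi - \varphi \rVert_\bbP^2 + \rho(\Sigma_n)\}/n$; Markov's inequality then delivers the claimed $O_\bbP$ rate.

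I expect the treatment of $B_n$ to be the main obstacle, both because the shared-training-data dependence across the estimation sample forces the $n \times n$ matrix $\Sigma_n$ to appear (rather than a scalar variance), so that the spectral-radius bound on the quadratic form is what keeps this piece at rate $1/n$, and because one must carefully disentangle the training-data randomness, captured by $\rho(\Sigma_n)$, from the covariate randomness of the averaged product bias, which is precisely what the Jensen-plus-fold-independence step controls.
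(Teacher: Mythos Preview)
Your proposal is correct and follows essentially the same approach as the paper: the paper defines $R_{2,n} = (\bbP_n - \bbE)\{\widehat\varphi - \varphi\}$ and bounds its variance by applying the law of total variance twice (first on $(X_\varphi^n, D_\pi, D_\mu)$, then on $X_\varphi^n$), yielding exactly the three pieces corresponding to your $A_n$, the within-$X_\varphi^n$ part of $B_n$ (the $\Sigma_n$ term), and the across-$X_\varphi^n$ part of $B_n$. Your explicit orthogonal split $R_{2,n} = A_n + B_n$ via $\widehat b_\varphi$ is just a repackaging of the same decomposition, and your Rayleigh-quotient bound $\mathbf 1^\top \Sigma_n \mathbf 1 \le \rho(\Sigma_n)\lVert \mathbf 1\rVert^2$ is in fact a cleaner justification of the same inequality the paper reaches via eigendecomposition.
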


All proofs are delayed to the appendix.  Here, we provide some intuition for the result.  Crucially, the proof of Lemma~\ref{lem:expansion} analyzes the randomness of the DCDR estimator over \emph{both the estimation and training data}.  By contrast, the analysis of the SCDR estimator is usually conducted \emph{conditionally on the training data}. The unconditional analysis of the DCDR estimator allows us to leverage the independence of the training samples, thereby bounding the bias of the DCDR estimator by the product of integrated biases of the nuisance function estimators.  Without accounting for the randomness over the training data, this is not possible. Therefore, conditional on the training data, the DCDR estimator would only have the same guarantees as the SCDR estimator. However, the unconditional analysis also requires accounting for the covariance over the training data between summands of the DCDR estimator because, without conditioning on the training data, the nuisance function estimators are random, and $\widehat \varphi(Z_i) \cancel{\ind} \widehat \varphi(Z_j)$ and $\cov_{i \neq j} \{ \widehat \varphi(Z_i), \widehat \varphi(Z_j) \} \neq 0$.  These non-zero covariances are accounted for by the new spectral radius term in the second remainder term, $\rho(\Sigma_n)$, which we analyze in further detail in Proposition~\ref{prop:spectral}.

\medskip

Lemma~\ref{lem:expansion} is useful because of its generality, and we use it throughout the rest of the paper.  Beyond Assumptions~\ref{asmp:dgp} and~\ref{asmp:bdd_density}, Lemma~\ref{lem:expansion} requires no assumptions for the nuisance functions or their estimators.  This is in contrast to previous results, which focus on specific linear smoothers for the nuisance function estimators \citep{newey2018cross, mcgrath2022undersmoothing, kennedy2023towards, fisher2023threeway}.  In Section~\ref{sec:unknown}, we use Lemma~\ref{lem:expansion} to analyze the DCDR estimator with linear smoothers.  Before that, we analyze the spectral radius term in Lemma~\ref{lem:expansion} without assuming any structure on the nuisance functions or their estimators, but leveraging the specific structure of the ECC.

\begin{remark}
    \citet{mcgrath2022undersmoothing} improved upon the bias term in Lemma~\ref{lem:expansion} using special properties of wavelet estimators, and the bias of their estimator scales like the minimum of two bias products. We demonstrate that a similar phenomenon occurs for local polynomial regression in Section~\ref{sec:known}. 
\end{remark}

\begin{restatable}{proposition}{propspectral} \label{prop:spectral}
    \emph{\textbf{(Spectral radius bound)}} Under Assumptions~\ref{asmp:dgp} and~\ref{asmp:bdd_density},  if $\psi_{ecc}$ is estimated with the DCDR estimator $\widehat \psi_n$ from Algorithm~\ref{alg:dcdr}, then 
    \begin{align*}
        \frac{\rho(\Sigma_n)}{n} \leq \frac{\bbE \lVert \widehat \varphi - \varphi \rVert_\bbP^2}{n} &+ \left( \lVert b_\pi^2 \rVert_\infty + \lVert s_\pi^2 \rVert_\infty \right) \bbE \Big[ \big| \cov \{ \widehat \mu(X_i), \widehat \mu(X_j) \mid X_i, X_j \} \big| \Big] \\
        &+ \left( \lVert b_\mu^2 \rVert_\infty + \lVert s_\mu^2 \rVert_\infty \right) \bbE \Big[ \big| \cov \{ \widehat \pi(X_i), \widehat \pi(X_j) \mid X_i, X_j \} \big| \Big] 
    \end{align*}
    where $\lVert b_\eta^2 \lVert_\infty = \sup_{x \in \mathcal{X}} \bbE \{ \widehat \eta(X) - \eta(X) \mid X = x \}^2$ and $\lVert s_\eta^2 \rVert_\infty = \sup_{x \in \mathcal{X} } \bbV \{ \widehat \eta(X) \mid X = x\}$ are uniform squared bias and variance bounds.
\end{restatable}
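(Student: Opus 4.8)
The plan is to identify the conditional bias $\widehat b_\varphi$ in closed form, compute the entries of $\Sigma_n$ using the independence of the two training folds, and then bound the spectral radius of the resulting structured matrix. First I would expand $\widehat\varphi(Z) - \varphi(Z)$ by writing $A - \widehat\pi = (A-\pi) + (\pi - \widehat\pi)$ and $Y - \widehat\mu = (Y-\mu)+(\mu-\widehat\mu)$ and take the conditional expectation given $(X, D_\pi, D_\mu)$. Since $\bbE(A - \pi \mid X) = \bbE(Y-\mu \mid X)=0$ and $\widehat\pi, \widehat\mu$ are fixed given the training data, the two cross terms vanish and the $\cov(A,Y\mid X)$ term is shared with $\varphi$, leaving the clean factorization
$$\widehat b_\varphi(X) = \{\pi(X) - \widehat\pi(X)\}\{\mu(X) - \widehat\mu(X)\}.$$

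Next, because $X_1, \dots, X_n$ are iid and $\widehat b_\varphi(X_i)$ depends on the estimation sample only through $X_i$, the matrix $\Sigma_n$ is an equicorrelation (compound-symmetry) matrix: every diagonal entry equals $v = \bbE\,\bbV\{\widehat b_\varphi(X)\mid X\}$ and every off-diagonal entry equals $c = \bbE\,\cov\{\widehat b_\varphi(X_i), \widehat b_\varphi(X_j) \mid X_i, X_j\}$. The key computation is the off-diagonal entry: I would exploit the independence of $D_\pi$ and $D_\mu$ to factor the relevant mixed moment into a $\widehat\pi$-factor times a $\widehat\mu$-factor, each of which splits into a training-data covariance plus a bias product. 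Expanding the product, the pure bias-product pieces cancel exactly against $\bbE\{\widehat b_\varphi(X_i)\}\,\bbE\{\widehat b_\varphi(X_j)\}$, leaving only terms in which at least one genuine training-data covariance, $\cov\{\widehat\pi(X_i),\widehat\pi(X_j)\mid X_i,X_j\}$ or $\cov\{\widehat\mu(X_i),\widehat\mu(X_j)\mid X_i,X_j\}$, appears. This cancellation is precisely what double cross-fitting buys and is the crux of the argument; an analogous but simpler calculation gives the diagonal $v$ as a sum of variance/bias products.

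Finally, for the spectral radius I would use that, for a symmetric matrix, $\rho(\Sigma_n) \le \lVert \Sigma_n \rVert_\infty$, the maximum absolute row sum, which for the equicorrelation structure equals $v + (n-1)|c|$; dividing by $n$ gives $\rho(\Sigma_n)/n \le v/n + |c|$. I would bound $v \le \bbE \lVert \widehat\varphi - \varphi \rVert_\bbP^2$ via a bias–variance decomposition, since the conditional-bias variance is dominated by the total conditional MSE integrated over $X$. I would bound $|c|$ by the triangle inequality together with Cauchy–Schwarz on each covariance factor ($|\cov\{\widehat\pi(X_i),\widehat\pi(X_j)\mid X_i,X_j\}| \le \lVert s_\pi^2 \rVert_\infty$, and symmetrically for $\mu$) and the uniform bias bounds $|b_\eta(X_i)\, b_\eta(X_j)| \le \lVert b_\eta^2\rVert_\infty$; matching terms yields the stated inequality (indeed slightly more, since only one of $\lVert s_\pi^2\rVert_\infty, \lVert s_\mu^2\rVert_\infty$ is needed for the covariance-times-covariance term). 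The main obstacle is the off-diagonal computation: one must carefully track the fourth-order mixed moments, justify the factorization across the two independent folds, and verify the exact cancellation of the bias products, so that $|c|$ ends up controlled purely by the (typically small) training-data covariances rather than by the biases themselves.
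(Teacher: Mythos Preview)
Your proposal is correct and follows essentially the same route as the paper: identify $\widehat b_\varphi(X)=(\pi-\widehat\pi)(\mu-\widehat\mu)$, use the iid/equicorrelation structure of $\Sigma_n$ to reduce $\rho(\Sigma_n)/n$ to (diagonal)$/n$ plus $|$off-diagonal$|$, expand the off-diagonal covariance via the independence of $D_\pi$ and $D_\mu$ into three summands (bias$\times$cov, cov$\times$bias, cov$\times$cov) with the pure bias product cancelling, and bound each summand by H\"older/Cauchy--Schwarz. Your justification of the spectral bound via $\rho(\Sigma_n)\le\lVert\Sigma_n\rVert_\infty$ on the equicorrelation matrix is in fact cleaner than the paper's reference to the ``Frobenius norm,'' and your observation that only one of $\lVert s_\pi^2\rVert_\infty,\lVert s_\mu^2\rVert_\infty$ is needed for the cov$\times$cov term is exactly the sharper bound the paper records in its equation~\eqref{eq:spectral_minimum}.
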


Here, we describe Proposition~\ref{prop:spectral} in further detail.  The first term on the right hand side comes from the diagonal of $\Sigma_n$, and is equal to the variance terms already observed in Lemma~\ref{lem:expansion}.  The second and third terms come from the off-diagonal terms in $\Sigma_n$.  The expected absolute covariance, $\bbE \Big[ \big| \cov \{ \widehat \eta(X_i), \widehat \eta(X_j) \mid X_i, X_j \} \big| \Big]$, measures the covariance over the training data of an estimator's predictions at two \emph{independent} test points.  For many estimators, we anticipate that $\bbE \Big[ \big| \cov \{ \widehat \eta(X_i), \widehat \eta(X_j) \mid X_i, X_j \} \big| \Big] \lesssim n^{-1}$. In Section~\ref{sec:unknown}, we demonstrate this to be the case for k-Nearest Neighbors and local polynomial regression. In Appendix~\ref{app:series}, we demonstrate this result for regression splines and orthogonalized wavelet estimators.  It is not immediately clear whether this result can be established for general classes of machine leaning estimators. Nonetheless, in Appendix~\ref{app:random-forest} we make a first step in this direction and establish it holds, up to a polylog factor, for a centered random forest estimator \citep{biau2012analysis}. Centered random forests differ from Breiman’s original random forest proposal \citep{breiman2001random} and from random forests typically used in practice. The key distinction is that tree partitions are constructed independently of the data, greatly simplifying the theoretical analysis. Similar simplifications are common in the theoretical literature (see, e.g., \citet{biau2016random} for an overview), but extending such results to random forests commonly implemented in practice remains substantially more challenging and represents an exciting direction for future research.

\medskip

\color{black}
Like Lemma~\ref{lem:expansion}, Proposition~\ref{prop:spectral} is useful because of its generality: it applies to any nuisance functions and nuisance function estimators.  Although Proposition~\ref{prop:spectral} relies specifically on the functional being the ECC, we anticipate that similar results apply for other functionals.  

\medskip

Further investigation of Proposition~\ref{prop:spectral} reveals when undersmoothing the nuisance function estimators will lead to the fastest convergence rate.  The EIF of the ECC, like many functionals, is Lipschitz in terms of its nuisance functions, so $\widehat \varphi - \varphi \lesssim | \widehat \pi - \pi | + | \widehat \mu - \mu |$ and $\lVert \widehat \varphi - \varphi \rVert_\bbP \lesssim \lVert \widehat \pi - \pi \rVert_\bbP + \lVert \widehat \mu - \mu \rVert_\bbP$.  Moreover, the compactness of the support of $X$ in Assumption~\ref{asmp:bdd_density} implies that the supremum mean squared errors of the nuisance function estimators scale at the typical pointwise rate. Therefore, if the expected covariance term scales inversely with sample size such that $\bbE \Big[ \big| \cov \{ \widehat \eta(X_i), \widehat \eta(X_j) \mid X_i, X_j \} \big| \Big] = O_\bbP( n^{-1} )$, then $\frac{\rho(\Sigma_n)}{n} = O_\bbP \left(\frac{\bbE \lVert \widehat \varphi - \varphi \rVert_\bbP^2}{n} \right) = O_\bbP\left(\frac{ \lVert b_{\pi}^2 \rVert_\infty + \lVert s_{\pi}^2 \rVert_\infty +   \lVert b_{\mu}^2 \rVert_\infty + \lVert s_{\mu}^2 \rVert_\infty }{n} \right)$, and so
	
\begin{equation} \label{eq:remainder}
    R_{2,n} = O_{\bbP} \left(  \sqrt{\frac{ \lVert b_{\pi}^2 \rVert_\infty + \lVert s_{\pi}^2 \rVert_\infty +   \lVert b_{\mu}^2 \rVert_\infty + \lVert s_{\mu}^2 \rVert_\infty  }{n} } \right). 
\end{equation}

Balancing $R_{2,n}$ in \eqref{eq:remainder} with the bias $R_{1,n}$ in Lemma~\ref{lem:expansion} requires constructing nuisance function estimators such that $\lVert b_\pi \rVert_\bbP^2 \lVert b_\mu \rVert_\bbP^2 \asymp  \frac{\lVert s_{\pi}^2 \rVert_\infty + \lVert s_{\mu}^2 \rVert_\infty}{n}$.  A natural way to achieve such a balance is by undersmoothing both $\widehat \pi$ and $\widehat \mu$ so their squared bias is smaller than their variance. 

\medskip

In this section, we have demonstrated a structure-agnostic linear expansion for the DCDR estimator and presented a nuisance-function-agnostic decomposition of its remainder term. In the next section, we assume the nuisance functions are H\"{o}lder smooth and construct DCDR estimators with local averaging linear smoothers, and we use Lemma~\ref{lem:expansion} and Proposition~\ref{prop:spectral} to demonstrate the DCDR estimator's efficiency guarantees.

\begin{remark}
    An important question is whether the results in this section have practical implications. For brevity, we defer further details to Appendix~\ref{app:practical}, where we investigate when and how one might conduct undersmoothing with generic machine learning estimators. There, we further develop the intuition from \eqref{eq:remainder} and observe that if $\bbE \Big[ \big| \cov \{ \widehat \eta(X_i), \widehat \eta(X_j) \mid X_i, X_j \} \big| \Big] \lesssim n^{-1}$, other mild regularity conditions are satisfied, and the nuisance estimators have monotone bias-variance tradeoffs in terms of their tuning parameters (e.g., increasing a tuning parameter always decreases bias and increases variance) then these results imply that \emph{undersmoothing the nuisance estimators as much as possible} leads to the fastest convergence rate for the DCDR estimator. 
\end{remark}


\color{black}
\section{H\"{o}lder smoothness and local averaging estimators} \label{sec:unknown}

In this section, we assume the nuisance functions are H\"{o}lder smooth and construct DCDR estimators without requiring knowledge of the smoothness or covariate density.  When the nuisance functions are estimated with local polynomial regression, we show the DCDR estimator is $\sqrt{n}$-consistent and asymptotically normal under minimal conditions and, in the non-$\sqrt{n}$ regime, converges at the conjectured minimax rate with unknown and non-smooth covariate density \citep{robins2008higher}. Additionally, when the nuisance functions are estimated with k-Nearest-Neighbors, we demonstrate that the DCDR estimator is $\sqrt{n}$-consistent when the nuisance functions are H\"{o}lder smooth of order at most one and are sufficiently smooth compared to the dimension of the covariates. First, we formally state the H\"{o}lder smoothness assumptions for the nuisance functions.

\begin{assumption} \label{asmp:holder} \textbf{(H\"{o}lder smooth nuisance functions)} The nuisance functions $\pi$ and $\mu$ are H\"{o}lder smooth, with $\pi \in \text{H\"{o}lder}(\alpha)$ and $\mu \in \text{H\"{o}lder}(\beta)$.
\end{assumption}

We focus on local averaging estimators in this section, and next we review k-Nearest-Neighbors and local polynomial regression.  In Appendix~\ref{app:series}, we review series regression, and establish results like those in this section for regression splines and wavelet estimators.  Those results are already known \citep{newey2018cross, fisher2023threeway, mcgrath2022undersmoothing}, but we provide them for completeness and because we use different proof techniques from those considered previously. Moreover, in Appendix~\ref{app:random-forest}, we establish similar results for a centered random forest estimator \citep{biau2012analysis}.

\subsection{Local averaging estimators}

We define the estimators for $\mu$ using $D_\mu$. The estimators for $\pi$ follow analogously with $D_\pi$, replacing $Y$ by $A$.

\begin{estimator} \label{est:knn} \emph{\textbf{(k-Nearest-Neighbors)}}
    The k-Nearest-Neighbors estimator for $\mu(X) = \bbE(Y \mid X)$ is
    \begin{equation} \label{eq:knn}
        \widehat \mu(x) = \frac1k \sum_{Z_i \in D_\mu} \one \big( \lVert X_i - x\rVert \leq \lVert X_{(k)} (x) - x  \rVert \big) Y_i,
    \end{equation}
    where $X_{(k)} (x)$ is the $k^{\text{th}}$ nearest neighbor of $x$ in $X_\mu^n$. 
\end{estimator}

The k-Nearest-Neighbors estimator is simple.  However, as we see subsequently, it is unable to adapt to higher smoothness in the nuisance functions, as in nonparametric regression \citep{gyorfi2002distribution}.

\begin{estimator} \label{est:lpr} \emph{\textbf{(Local polynomial regression)}}
    The local polynomial regression estimator for $\mu(X) = \bbE(Y \mid X)$ is
    \begin{equation}\label{eq:lpr}
        \widehat \mu(x) = \sum_{Z_i \in D_\mu} \left\{ \frac{1}{nh^d} b(0)^T \widehat Q^{-1} b \left( \frac{X_i - x}{h} \right) K \left( \frac{X_i - x}{h} \right) \right\} Y_i
    \end{equation}
    where
    $$
    \widehat{Q} = \frac{1}{nh^d} \sum_{X_i \in X_\mu^n} b \left( \frac{X_i - x}{h} \right) K \left( \frac{X_i - x}{h} \right) b \left( \frac{X_i - x}{h} \right)^T ,
    $$
    $b: \bbR^d \to \bbR^{p}$ where $p = {d + \lceil d /2 \rceil \choose \lceil d / 2 \rceil}$ is a vector of orthogonal basis functions consisting of all powers of each covariate up to order $\lceil d/2 \rceil$ and all interactions up to degree $\lceil d/2 \rceil$ polynomials (see, \citet{masry1996multivariate}, \citet{belloni2015some} Section 3), $\lceil d / 2 \rceil$ denotes the smallest integer strictly larger than $d / 2$, $K: \bbR^d \to \bbR$ is a bounded kernel with support on $[-1, 1]^d$, and $h$ is a bandwidth parameter.  If the matrix $\widehat Q$ is not invertible, $\widehat \mu(x) = 0$.
\end{estimator}

Local polynomial regression has been extensively studied \citep{ruppert1994multivariate, masry1996multivariate, fan2018local, tsybakov2009introduction}. There are two notable features to this version of the estimator.  First, the basis is expanded to order $\lceil d/2 \rceil$, the smallest integer strictly larger than $d/2$, rather than the smoothness of the regression function.  Therefore, the estimator does not require knowledge of the true smoothness, but the expansion of the basis to degree $\lceil d /2 \rceil$ still ensures the bias of the DCDR estimator is $o_\bbP(n^{-1/2})$ in the $\sqrt{n}$-regime.  Second, the estimator is explicitly defined even when the local Gram matrix, $\widehat Q$, is not invertible --- $\widehat \mu(x) = 0$. This ensures the bias of the estimator is bounded when $\widehat Q$ is not invertible. 
    
\medskip

Unlike k-Nearest-Neighbors, local polynomial regression can optimally estimate functions of higher smoothness. In Appendix~\ref{app:nf_ests}, we provide bias and variance bounds for both estimators, which follow from standard results in the relevant literature \citep{biau2015lectures, tsybakov2009introduction, gyorfi2002distribution}.  However, two nuances arise in this analysis because the bias and variance bounds account for randomness over the training data. First, the pointwise variance, $\bbV \{ \widehat \eta(x) \}$, scales at the typical conditional (on the training data) mean squared error rate; e.g., for local polynomial regression, $\bbV \{ \widehat \mu(x) \} \lesssim h^{-2\beta} + \frac{1}{nh^d}$. It may be possible to improve this with more careful analysis, but because this will not affect the behavior of the DCDR estimator --- which uses undersmoothed nuisance function estimators --- we leave this to future work. Second, for local polynomial regression, the local Gram matrix $\widehat Q$ may not be invertible. Therefore, it is necessary to show that non-invertibility occurs with asymptotically negligible probability if the bandwidth $h$ decreases slowly enough, which is possible using a matrix Chernoff inequality (see, \citet{tropp2015introduction} Section 5).	

\medskip

Next, we show the covariance terms from Proposition~\ref{prop:spectral} can decrease inversely with sample size for both estimators, i.e., $\bbE \Big[ \big| \cov \big\{ \widehat \eta(X_i), \widehat \eta(X_j) \mid X_i, X_j  \big\}\big| \Big]$, and demonstrate the efficiency guarantees of the DCDR estimator.

\subsection{$\sqrt{n}$-consistency under minimal conditions}

The efficiency of the DCDR estimator depends on how quickly the expected absolute covariance $\bbE \Big[ \big| \cov \{ \widehat \eta(X_i), \widehat \eta(X_j) \mid X_i, X_j \} \big| \Big]$ decreases.  Therefore, first, we show that this term can decrease inversely with sample size for k-Nearest-Neighbors and local polynomial regression.

\begin{restatable}{lemma}{lemcovariance}\label{lem:covariance}
    \emph{\textbf{(Covariance bound)}} 
    Suppose Assumptions~\ref{asmp:dgp},~\ref{asmp:bdd_density}, and~\ref{asmp:holder} hold.  Moreover, assume that each estimator balances squared bias and variance or is undersmoothed.  Then, both k-Nearest-Neighbors and local polynomial regression satisfy 
    \begin{equation} \label{eq:knn_cov}
        \bbE \Big[ \big| \cov \{ \widehat \eta(X_i), \widehat \eta(X_j) \mid X_i, X_j \} \big| \Big] = O_\bbP \left( \frac1n \right)
    \end{equation}
    for $\eta \in \{ \pi, \mu \}$.
\end{restatable}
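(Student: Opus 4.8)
\section*{Proof proposal for Lemma~\ref{lem:covariance}}

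The plan is to exploit the \emph{locality} of both estimators: each prediction $\widehat\eta(x)$ depends on the training sample only through the points lying in a small neighborhood of $x$, so predictions at two well-separated test points are built from essentially disjoint, hence nearly independent, portions of the training data. Writing each estimator as a linear smoother $\widehat\eta(x) = \sum_{Z_l \in D_\eta} W_l(x)\, V_l$ (with $V_l = Y_l$ when $\eta = \mu$ and $V_l = A_l$ when $\eta = \pi$), the weights $W_l(x)$ vanish unless $X_l$ lies within a radius $r$ of $x$, where $r \asymp h$ for local polynomial regression (the kernel $K$ is supported on $[-1,1]^d$) and $r \asymp (k/n)^{1/d}$ for the $k$-NN neighborhood. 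I would first record, from the standard bias/variance bounds deferred to Appendix~\ref{app:nf_ests}, that the unconditional pointwise variance $\bbV\{\widehat\eta(x)\}$ is of the order of the mean squared error, and that under the ``balanced or undersmoothed'' regime the product $r^d\cdot\bbV\{\widehat\eta(x)\}$ is $O(1/n)$; this calibration is exactly what will make the final bound come out to $n^{-1}$, and it is why oversmoothing is excluded.

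Next I would apply the law of total covariance, conditioning on the training covariates $X_\eta^n$. Since the responses $V_l$ are independent across $l$ given the covariates,
\[
\cov\{\widehat\eta(x),\widehat\eta(x') \mid x, x'\}
= \underbrace{\bbE\Big[\textstyle\sum_l W_l(x)\,W_l(x')\,\bbV(V_l \mid X_l)\Big]}_{A(x,x')}
+ \underbrace{\cov\Big\{\textstyle\sum_l W_l(x)\,\eta(X_l),\ \textstyle\sum_m W_m(x')\,\eta(X_m)\Big\}}_{B(x,x')},
\]
where $A$ is the contribution of the shared training responses and $B$ is the covariance of the two random smoothed versions of $\eta$. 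The structural observation is that $W_l(x)\,W_l(x') = 0$ whenever $\|x - x'\| > 2r$, so $A(x,x')$ vanishes identically on the ``far'' event. On the ``near'' event $\{\|x-x'\| \lesssim r\}$ I would bound $|A(x,x')| \lesssim (nr^d)^{-1}$ directly from the weight magnitudes ($W_l \asymp (nr^d)^{-1}$ for local polynomials, $W_l = 1/k$ over $\lesssim k$ shared neighbors for $k$-NN), and, since the test points have a bounded density, $\bbP(\|x-x'\|\lesssim r) \lesssim r^d$. Integrating gives $\bbE_{x,x'}|A| \lesssim r^d\cdot(nr^d)^{-1} = 1/n$.

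The covariance $B$ is split the same way. On the near event I would use Cauchy--Schwarz, $|B(x,x')| \le \bbV\{\widehat\eta(x)\}^{1/2}\bbV\{\widehat\eta(x')\}^{1/2}$, which is of order the mean squared error, so that $\bbE_{x,x'}[\one(\text{near})\,|B|] \lesssim r^d\cdot\bbV\{\widehat\eta(x)\} = O(1/n)$ by the calibration above. The far event is where the real work lies: there $\sum_l W_l(x)\eta(X_l)$ and $\sum_m W_m(x')\eta(X_m)$ are functions of the training points falling in the disjoint balls $B_r(x)$ and $B_r(x')$. Under Poissonized sampling these would be exactly independent and $B(x,x')$ would vanish; for the fixed-$n$ i.i.d. sample they are only weakly dependent through the multinomial coupling of the region counts, and I would show this residual covariance is $O(1/n)$ per pair — via a Poissonization/de-Poissonization argument or a direct second-moment bound on the coupling — so that it integrates to $O(1/n)$.

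The main obstacle is precisely this far-event analysis of $B(x,x')$: quantifying that predictions at well-separated points, built from the same fixed-size i.i.d. sample, are independent up to an $O(1/n)$ covariance correction. Two estimator-specific complications feed into it. For local polynomial regression the weights involve the inverse local Gram matrix $\widehat Q^{-1}$, defined only when $\widehat Q$ is invertible; I would control the non-invertibility event via the matrix Chernoff bound (\citet{tropp2015introduction}, Section~5), as in the paper's variance analysis, and handle the signed but uniformly bounded weights. For $k$-NN the locality radius is itself the random $k$-th nearest-neighbor distance, so ``near'' and ``far'' are random events; I would replace it by its deterministic order $(k/n)^{1/d}$ using high-probability concentration of the $k$-NN distance and control ties. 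Combining the near-event Cauchy--Schwarz bounds for $A$ and $B$ with the far-event independence estimate yields $\bbE\big[|\cov\{\widehat\eta(X_i),\widehat\eta(X_j)\mid X_i,X_j\}|\big] = O_\bbP(1/n)$ for both estimators and both $\eta \in \{\pi,\mu\}$.
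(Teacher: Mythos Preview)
Your proposal follows the same localization strategy as the paper but with considerably more machinery. The paper's proof simply invokes two dedicated appendix lemmas (one for $k$-NN, one for local polynomial regression), each of which is a two-step argument: assert that $\cov\{\widehat\eta(X_i),\widehat\eta(X_j)\mid X_i,X_j\}=0$ whenever the two local neighborhoods are disjoint, then bound the surviving near-event piece by $\sup_x \bbV\{\widehat\eta(x)\}\cdot\bbP(\text{near})$ via H\"older's inequality, reading off $O(n^{-1})$ from the pointwise variance bounds under the balanced/undersmoothed regime. For $k$-NN the paper even takes the ``near'' event to be $\{\lVert X_i-X_j\rVert\le\lVert X_i-X_{(2k)}(X_i)\rVert\}$, a training-data-dependent event whose probability it computes directly as $2k/(n+1)$ by exchangeability, rather than replacing the random radius by its deterministic order via concentration as you propose. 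There is no law-of-total-covariance split into your $A$ and $B$ terms and no Poissonization anywhere.

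Your extra care on the far event is well-founded: two statistics computed from disjoint regions of a fixed-$n$ iid sample are not literally independent (the region counts are multinomially coupled, exactly as you say), so the paper's ``covariance equals zero on the far event'' step is a shortcut rather than a fully justified claim. Your coupling/Poissonization route would make this rigorous and would show the residual far-event covariance is $O(n^{-1})$, leaving the final bound unchanged. In short, both routes reach the same answer; the paper's is shorter because it asserts what you propose to prove, while yours is more careful on a point the paper glosses over.
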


Lemma~\ref{lem:covariance} demonstrates that the expected absolute covariance can decrease inversely with sample size for both k-Nearest-Neighbors and local polynomial regression. The result follows from a localization argument --- if the estimation points $X_i$ and $X_j$ are well separated, then $\widehat \eta(X_i)$ and $\widehat \eta(X_j)$ share no training data and therefore their covariance is zero; otherwise, the covariance is upper bounded by the variance. Lemma~\ref{lem:covariance} guarantees that the expected absolute covariance decreases inversely with sample size if the estimators balance squared bias and variance or are undersmoothed. It may be possible to improve this result so that it also applies to oversmoothed estimators, but because we focus only on undersmoothed nuisance function estimators subsequently, we leave that to future work. 

\medskip

The following result establishes that the DCDR estimator achieves $\sqrt{n}$-consistency and asymptotic normality under minimal conditions and fast convergence rates in the non-$\sqrt{n}$ regime.

\begin{restatable}{theorem}{thmsemiparametric} \label{thm:semiparametric} \emph{\textbf{(Convergence guarantees)}}
    Suppose Assumptions~\ref{asmp:dgp},~\ref{asmp:bdd_density}, and~\ref{asmp:holder} hold, and $\psi_{ecc}$ is estimated with the DCDR estimator $\widehat \psi_n$ from Algorithm~\ref{alg:dcdr}. If the nuisance functions $\widehat \mu$ and $\widehat \pi$ are estimated with local polynomial regression (Estimator~\ref{est:lpr}) with bandwidths satisfying $h_\mu, h_\pi \asymp \left( \frac{n}{\log n} \right)^{-1/d}$, then 
    \begin{equation}
        \begin{cases}
            \sqrt{\frac{n}{\bbV \{ \varphi(Z) \}}} (\widehat \psi_n - \psi_{ecc}) \indist N(0, 1) &\text{ if } \frac{\alpha + \beta}{2} > d/4\text{, and} \\[10pt]
            \bbE | \widehat \psi_n - \psi_{ecc} | = O_\bbP \left( \frac{n}{\log n} \right)^{-\frac{\alpha + \beta}{d}} &\text{ otherwise.}
        \end{cases}
    \end{equation}
    If the nuisance functions $\widehat \mu$ and $\widehat \pi$ are estimated with k-Nearest-Neighbors (Estimator~\ref{est:knn}) and $k_\mu, k_\pi \asymp \log n$, then
    \begin{equation}
        \begin{cases}
            \sqrt{\frac{n}{\bbV \{ \varphi(Z) \}}} (\widehat \psi_n - \psi_{ecc}) \indist N(0, 1) &\text{ if } \frac{\alpha + \beta}{2} > d/4 \text{ and } \alpha, \beta \leq 1\text{, and} \\[10pt]
            \bbE | \widehat \psi_n - \psi_{ecc} | \lesssim \left( \frac{n}{\log n} \right)^{-\frac{(\alpha \wedge 1) + (\beta \wedge 1)}{d}} &\text{ otherwise.}
        \end{cases}
    \end{equation}
\end{restatable}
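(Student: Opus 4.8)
The plan is to feed the estimator-specific bias, variance, and covariance bounds into the general machinery already assembled in Lemma~\ref{lem:expansion}, Proposition~\ref{prop:spectral}, and Lemma~\ref{lem:covariance}, and then compare the resulting remainder rates to $n^{-1/2}$ in each regime. By Lemma~\ref{lem:expansion}, $\widehat\psi_n - \psi_{ecc} = (\bbP_n - \bbP)\{\varphi(Z)\} + R_{1,n} + R_{2,n}$, so the whole argument reduces to two tasks. First, I would recognize the leading term as a sample average of i.i.d.\ mean-zero summands with finite variance $\bbV\{\varphi(Z)\}$ (finite by Assumption~\ref{asmp:dgp}); the Lindeberg--L\'evy CLT then gives $\sqrt{n/\bbV\{\varphi(Z)\}}\,(\bbP_n-\bbP)\{\varphi(Z)\} \indist N(0,1)$. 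Second, I would bound $R_{1,n}$ and $R_{2,n}$ and apply Slutsky's theorem. In the $\sqrt n$-regime both remainders will be shown to be $o_\bbP(n^{-1/2})$, delivering the normal limit; in the non-$\sqrt n$ regime the bias $R_{1,n}$ will dominate every other term and deliver the stated $L_1$ rate.

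For the variance remainder I would first combine Proposition~\ref{prop:spectral} with the covariance bound of Lemma~\ref{lem:covariance}---which applies precisely because both estimators are undersmoothed---to obtain the simplified form \eqref{eq:remainder}, namely $R_{2,n} = O_\bbP\big(\sqrt{(\|b_\pi^2\|_\infty + \|s_\pi^2\|_\infty + \|b_\mu^2\|_\infty + \|s_\mu^2\|_\infty)/n}\big)$. The crucial computation is that the chosen tuning parameters force the uniform variances to be only logarithmically small. For local polynomial regression, $h \asymp (n/\log n)^{-1/d}$ gives $n h^d \asymp \log n$, so the variance bound $\|s_\eta^2\|_\infty \lesssim h^{2s_\eta} + (nh^d)^{-1}$ is dominated by $(nh^d)^{-1} \asymp 1/\log n$; for k-Nearest-Neighbors, $k \asymp \log n$ gives $\|s_\eta^2\|_\infty \lesssim 1/k \asymp 1/\log n$. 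Since the squared biases are polynomially small, $R_{2,n} = O_\bbP(1/\sqrt{n\log n}) = o_\bbP(n^{-1/2})$ in every regime, so this term is never the bottleneck.

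For the bias remainder I would use $R_{1,n} \leq \|b_\pi\|_\bbP\|b_\mu\|_\bbP$ together with standard bias rates. Local polynomial regression of degree $\lceil d/2\rceil$ gives $\|b_\mu\|_\bbP \lesssim h^{\beta\wedge(\lceil d/2\rceil+1)}$ and similarly for $\pi$, so $R_{1,n}\lesssim (n/\log n)^{-(\alpha+\beta)/d}$ when the smoothnesses lie below the basis cap; because $\lceil d/2\rceil + 1 > d/2$, any capping only shrinks the bias and never jeopardizes the $\sqrt n$-regime. k-Nearest-Neighbors instead saturates at smoothness one, yielding $\|b_\mu\|_\bbP \lesssim (k/n)^{(\beta\wedge 1)/d}$ and $R_{1,n}\lesssim (n/\log n)^{-((\alpha\wedge1)+(\beta\wedge1))/d}$. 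In the $\sqrt n$-regime the condition $\tfrac{\alpha+\beta}{2}>d/4$ (with $\alpha,\beta\le1$ for k-NN) is exactly what forces the product bias to be $o(n^{-1/2})$, so both remainders vanish faster than $n^{-1/2}$ and Slutsky delivers the limit; in the non-$\sqrt n$ regime $R_{1,n}$ is slower than $n^{-1/2}$ and hence dominates $R_{2,n}$ and the $O_\bbP(n^{-1/2})$ leading term, giving the claimed rate.

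The hard part will be controlling the local polynomial estimator on the event that its local Gram matrix $\widehat Q$ is singular, since the bandwidth is pushed as small as possible. I would verify via a matrix Chernoff inequality (as flagged after Estimator~\ref{est:lpr}) that $nh^d \asymp \log n$ is exactly the threshold at which $\widehat Q$ is invertible with probability $1-o(n^{-1})$ uniformly in $x$, so that the $\widehat\mu(x)=0$ fallback contributes negligibly to both the bias and variance bounds. A secondary subtlety is that the uniform variances $\|s_\eta^2\|_\infty$ are taken unconditionally over the training data rather than conditionally; but since Assumption~\ref{asmp:holder} and the compact support of Assumption~\ref{asmp:bdd_density} ensure they scale at the conditional mean-squared-error rate, this introduces no new difficulty once the invertibility event is handled.
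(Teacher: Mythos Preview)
Your proposal is correct and follows essentially the same route as the paper: invoke Lemma~\ref{lem:expansion}, use Lemma~\ref{lem:covariance} and Proposition~\ref{prop:spectral} to reduce $R_{2,n}$ to \eqref{eq:remainder}, show the chosen tuning makes $R_{2,n}=o_\bbP(n^{-1/2})$ in every regime, and then let the product-of-biases term $R_{1,n}$ decide the outcome. Two small points: the paper's local polynomial bias cap is $\beta\wedge\lceil d/2\rceil$ rather than $\beta\wedge(\lceil d/2\rceil+1)$ (either works since both exceed $d/2$), and your phrase ``capping only shrinks the bias'' is backwards---capping the smoothness exponent can only \emph{enlarge} the bias---but your intended argument (the capped exponent still exceeds $d/2$, so the product remains $o(n^{-1/2})$) is exactly the case analysis the paper carries out.
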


Theorem~\ref{thm:semiparametric} shows that the DCDR estimator with undersmoothed local polynomial regression is $\sqrt{n}$-consistent and asymptotically normal under minimal conditions.  Further, it attains (up to a log factor) the convergence rate $n^{-\frac{\alpha + \beta}{d}}$ in probability in the non-$\sqrt{n}$ regime.  This is slower than the known lower bound for estimating the ECC when the covariate density is appropriately smooth, but has been conjectured to be the minimax rate when the covariate density is non-smooth \citep{robins2009semiparametric}.  A similar but weaker result holds for k-Nearest-Neighbors estimators, whereby the DCDR estimator achieves $\sqrt{n}$-consistency and asymptotic normality when the nuisance functions are H\"{o}lder smooth of order at most one but are sufficiently smooth compared to the dimension of the covariates. A simple example is if the nuisance functions are Lipschitz (i.e., $\alpha = \beta = 1$) and the dimension of the covariates is less than four ($d < 4$).  

\medskip

The DCDR estimator based on local polynomial regression in Theorem~\ref{thm:semiparametric} is not minimax optimal because the bandwidth is constrained so that the local Gram matrix is invertible with high probability, thereby limiting the convergence rate of the bias of the local polynomial regression estimators and, by extension, the bias of the DCDR estimator.  By replacing the Gram matrix with its expectation (assuming it is known), an estimator could be undersmoothed even further for a faster bias convergence rate. In the next section we propose such an estimator --- the ``covariate-density-adapted'' kernel regression.  We illustrate that the DCDR estimator with covariate-density-adapted kernel regression can be minimax optimal.  Moreover, we establish asymptotic normality in the non-$\sqrt{n}$ regime by undersmoothing the DCDR estimator so its variance dominates its squared bias, but it converges to a normal limiting distribution around the ECC at a slower-than-$\sqrt{n}$ rate.

\begin{remark}
    When the DCDR estimator achieves $\sqrt{n}$-consistency and asymptotic normality, Slutsky's theorem and Theorem~\ref{thm:semiparametric} imply that inference can be conducted for the ECC with Wald-type $1-\alpha$ confidence intervals, $\widehat \psi_n \pm \Phi^{-1}(1-\alpha/2) \sqrt{\frac{\widehat{\bbV}\{\varphi(Z)\}}{n}},$ where $\widehat{\bbV}\{\varphi(Z)\}$ is any consistent estimator for $\bbV \{ \varphi(Z) \}$ (e.g., the sample variance of $\widehat \varphi(Z)$).
\end{remark}

\begin{remark} \label{rem:scaling}
    Although the primary contribution of this analysis is theoretical, Theorem~\ref{thm:semiparametric} (along with related results for series regression discussed in Appendix~\ref{app:series} and in \citet{mcgrath2022undersmoothing,newey2018cross}) carries practical implications. Specifically, the nuisance estimators we consider---including regression splines and orthogonalized wavelet estimators---satisfy several beneficial properties, further investigated in Appendix~\ref{app:practical}. Consequently, achieving the fastest convergence rate for the DCDR estimator corresponds to undersmoothing the nuisance estimators as aggressively as possible (i.e., choosing bandwidth $h \asymp n^{-1/d}$). It is possible to do this in a principled manner. For instance, with local polynomial regression, choose a small fixed (with sample size) number of neighboring training points to construct an estimate at each test point.  In our simulations, we adopt this approach. We select an adaptive bandwidth based on the distance to the $10^{th}$ nearest neighbor in the training data, fixing $k=10$ across sample sizes. In simulations, this enabled $\sqrt{n}$-convergence and asymptotic normality when the ratio of smoothness to dimension is $0.35$, as illustrated in Figure~\ref{fig:smoothness-intro}.
\end{remark}

\section{Minimax optimality and asymptotic normality in the non-$\sqrt{n}$ regime} \label{sec:known}

In this section, we assume the covariate density is known and examine the behavior of the DCDR estimator with covariate-density-adapted kernel regression estimators for the nuisance functions.  For the results in this section, we require, in addition to previous assumptions, that the covariate density is known and sufficiently smooth.

\begin{assumption} \label{asmp:density_smooth} \textbf{(Known, lower bounded, and smooth covariate density)} The covariate density $f$ is known and $f \in \text{H\"{o}lder}(\gamma)$, where $\gamma \geq \alpha \vee \beta$. 
\end{assumption}

Under Assumption~\ref{asmp:density_smooth}, we demonstrate the DCDR estimator is minimax optimal.  First, we define the covariate-density-adapted kernel regression estimator:
\begin{estimator} \label{est:cdalpr} \emph{\textbf{(Covariate-density-adapted kernel regression)}} The covariate-density-adapted kernel regression estimator for $\mu(X) = \bbE(Y \mid X)$ is 
\begin{equation} \label{eq:cdalpr}
    \widehat \mu(x) = \sum_{Z_i \in D_\mu}  \frac{K_\mu \left( \frac{X_i - x}{h_\mu} \right)}{nh_\mu^d f(X_i)} Y_i,
\end{equation}
where $h_\mu$ is the bandwidth and $K_\mu$ is a kernel (to be chosen subsequently). The estimator for $\pi(X) = \bbE(A \mid X)$ is defined analogously on $D_\pi$.
\end{estimator}

This estimator uses the known covariate density in the denominator of \eqref{eq:cdalpr}. As a result, no constraint on the bandwidth is required, and the estimator can be undersmoothed more than the local polynomial regression estimator in Estimator~\ref{est:lpr}. \citet{mcgrath2022undersmoothing} proposed a similar adaptation of an orthogonalized wavelet estimator.  As they showed for the wavelet estimator, the known covariate density in Estimator~\ref{est:cdalpr} could be replaced by the estimated covariate density, and our subsequent results would follow if the covariate density were sufficiently smooth (smoother than in Assumption~\ref{asmp:density_smooth}) and its estimator sufficiently accurate. Other work has considered the setting where one has access to an auxiliary ``unsupervised'' dataset of only covariates where one could construct an accurate estimator of the covariate density, which is an adaptation that could be useful in practice \citep{liu2020nearly}. However, because the properties of the resulting DCDR estimator are not well understood when the covariate density is not sufficiently smooth, we leave analyzing estimators incorporating the estimated covariate density to future work. 

\medskip
\color{black}
The subsequent analysis combines two versions of covariate-density-adapted kernel regression, with different kernels.
\begin{subestimator} \label{est:cdalpr_higher} \textbf{\emph{(Higher-order covariate-density-adapted kernel regression)}} 
    The higher-order covariate-density-adapted kernel regression has symmetric and bounded kernel $K$ that is of order $\lceil \alpha + \beta \rceil$ and satisfies $K(x/h) \lesssim \one( \lVert x \rVert \leq h)$, $\int K(x) dx = 1$, $\int K(x)^2 dx \asymp 1$, and $\int \lVert x \rVert^{\alpha + \beta} K(x)dx \lesssim 1$ \citep{gyorfi2002distribution, tsybakov2009introduction}.
\end{subestimator} 
This version of the estimator uses a higher-order localized kernel, which allows it to adapt to the sum of the smoothnesses of the nuisance functions.  See, e.g., Section 5.3, \citet{gyorfi2002distribution} and Section 1.2.2, \citet{tsybakov2009introduction} for a review of higher-order kernels and how to construct bounded kernels of arbitrary order.  To complement this estimator, we require a smooth estimator.
\begin{subestimator} \label{est:cdalpr_smooth} \emph{\textbf{(Smooth covariate-density-adapted kernel regression)}}
    The smooth covariate-density-adapted kernel regression has continuous and bounded kernel $K$ satisfying $K(x/h) \lesssim \one \left(\lVert x \rVert \leq h \right), \int K(x) dx = 1$, $\int K(x)^2 dx \asymp 1$.
\end{subestimator}
Because the kernel in the smooth estimator is localized and \emph{continuous}, it allows the DCDR estimator to adapt to the sum of smoothnesses of the nuisance functions through the higher-order kernel estimator. For this purpose, the smooth kernel must be continuous, but need not control higher-order bias terms. Therefore, a simple kernel is adequate, such as the Epanechnikov kernel --- $K(x) = \frac{3}{4} \left(1 - \lVert x \rVert^2 \right) \one \left(\lVert x \rVert \leq 1 \right)$. 

\subsection{Minimax optimality}

The following result shows that the DCDR estimator using covariate-density-adapted kernel regression estimators is minimax optimal.  

\begin{restatable}{theorem}{thmminimax}\label{thm:minimax} \emph{\textbf{(Minimax optimality)}} 
    Suppose Assumptions~\ref{asmp:dgp},~\ref{asmp:bdd_density},~\ref{asmp:holder}, and~\ref{asmp:density_smooth} hold.  If $\psi_{ecc}$ is estimated with the DCDR estimator $\widehat \psi_n$ from Algorithm~\ref{alg:dcdr}, one nuisance function is estimated with the smooth covariate-density-adapted kernel regression (Estimator~\ref{est:cdalpr_smooth}) with bandwidth decreasing at any rate such that the estimator is consistent, and the other nuisance function is estimated with the higher-order covariate-density-adapted kernel regression (Estimator \ref{est:cdalpr_higher}) with bandwidth that scales at $n^{\frac{-2}{2\alpha + 2\beta + d}}$, then
    \begin{equation}
        \begin{cases}
            \sqrt{\frac{n}{\bbV \{ \varphi(Z) \}}} (\widehat \psi_n - \psi_{ecc}) \indist N(0, 1) &\text{ if } \frac{\alpha + \beta}{2} > d/4, \\[10pt]
            \bbE | \widehat \psi_n - \psi_{ecc} | = O_\bbP \left( n^{-\frac{2\alpha + 2\beta}{2\alpha + 2\beta + d}} \right) &\text{ otherwise.}
        \end{cases}
    \end{equation}
\end{restatable}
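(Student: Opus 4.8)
The plan is to apply the structure-agnostic expansion of Lemma~\ref{lem:expansion}, which requires nothing beyond Assumptions~\ref{asmp:dgp} and~\ref{asmp:bdd_density} and so holds verbatim here, and then control each of its three pieces using the explicit kernel form of Estimators~\ref{est:cdalpr_higher} and~\ref{est:cdalpr_smooth}. Writing
\[
\widehat\psi_n - \psi_{ecc} = (\bbP_n - \bbP)\{\varphi(Z)\} + R_{1,n} + R_{2,n},
\]
the leading term is a centered average of i.i.d. copies of the \emph{fixed} function $\varphi$, which has finite and nondegenerate variance by Assumption~\ref{asmp:dgp}. Everything therefore reduces to two regime-dependent bookkeeping claims: in the $\sqrt n$-regime ($\frac{\alpha+\beta}{2} > d/4$) both remainders are $o_\bbP(n^{-1/2})$, so that $\sqrt{n/\bbV\{\varphi(Z)\}}\,(\widehat\psi_n - \psi_{ecc}) \indist N(0,1)$ by the Lindeberg--L\'evy CLT and Slutsky's theorem; and in the complementary regime both remainders are $O_\bbP(n^{-(2\alpha+2\beta)/(2\alpha+2\beta+d)})$, which dominates the $O_\bbP(n^{-1/2})$ linear term and yields the stated rate.

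For $R_{2,n}$ I would first establish the covariance bound $\bbE\big[\,|\cov\{\widehat\eta(X_i),\widehat\eta(X_j)\mid X_i,X_j\}|\,\big] = O_\bbP(n^{-1})$ for the covariate-density-adapted kernel estimator, by the localization argument behind Lemma~\ref{lem:covariance}: the summands defining $\widehat\eta(X_i)$ and $\widehat\eta(X_j)$ share training data only when $\lVert X_i - X_j\rVert \lesssim h_\eta$, an event of probability $\asymp h_\eta^d$ under Assumption~\ref{asmp:bdd_density}, on which the conditional covariance is bounded by the pointwise variance $\asymp (n h_\eta^d)^{-1}$, the product being $\asymp n^{-1}$. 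Feeding this into Proposition~\ref{prop:spectral} gives $\rho(\Sigma_n)/n \lesssim \bbE\lVert\widehat\varphi - \varphi\rVert_\bbP^2/n$, and since $\varphi$ is Lipschitz in its nuisances, $\bbE\lVert\widehat\varphi-\varphi\rVert_\bbP^2 \lesssim \lVert b_\pi^2\rVert_\infty + \lVert s_\pi^2\rVert_\infty + \lVert b_\mu^2\rVert_\infty + \lVert s_\mu^2\rVert_\infty$. With $h_\mu \asymp n^{-2/(2\alpha+2\beta+d)}$ the dominant contribution is the higher-order estimator's variance $\lVert s_\mu^2\rVert_\infty \asymp (n h_\mu^d)^{-1}$, so that $R_{2,n} \asymp \sqrt{\lVert s_\mu^2\rVert_\infty/n} \asymp n^{-(2\alpha+2\beta)/(2\alpha+2\beta+d)}$; this same quantity is $o(n^{-1/2})$ in the $\sqrt n$-regime precisely because $d < 2(\alpha+\beta)$ there.

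The crux is the bias $R_{1,n} = \langle b_\pi, b_\mu\rangle_\bbP = \int b_\pi(x) b_\mu(x) f(x)\,dx$, where I would improve the Cauchy--Schwarz bound $\lVert b_\pi\rVert_\bbP\lVert b_\mu\rVert_\bbP$ to the combined rate $h_\mu^{\alpha+\beta}$. Two features make this possible: double cross-fitting makes the expected bias exactly the inner product of the two \emph{deterministic} bias functions (by independence of $D_\pi$ and $D_\mu$), and the higher-order kernel of Estimator~\ref{est:cdalpr_higher} annihilates all polynomial terms up to degree $\lceil\alpha+\beta\rceil - 1$ in its local expansion. Substituting $b_\mu(x) = \int K_\mu(v)\{\mu(x+h_\mu v) - \mu(x)\}\,dv$ and transferring the shift onto the companion factor $b_\pi f$ --- which is continuous and inherits smoothness from the continuity of the smooth kernel in Estimator~\ref{est:cdalpr_smooth}, from $\pi \in \text{H\"older}(\alpha)$, and from $f \in \text{H\"older}(\gamma)$ with $\gamma \geq \alpha\vee\beta$ --- the kernel moment conditions cancel the leading contributions and leave a remainder governed by the combined H\"older exponent $\alpha + \beta$, independently of the bandwidth $h_\pi$. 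This is the ``minimum of two bias products'' phenomenon noted after Lemma~\ref{lem:expansion} and established for wavelets by \citet{mcgrath2022undersmoothing}; adapting it to kernels, and in particular verifying that the smoothness transfer does not cost a factor of $h_\pi^\alpha$ (which would otherwise force $h_\pi \lesssim h_\mu$ rather than the claimed ``any consistent rate''), is the main obstacle and the step I expect to require the most care.

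Finally, I would assemble the two regimes. With $h_\mu \asymp n^{-2/(2\alpha+2\beta+d)}$ we have $R_{1,n} \asymp h_\mu^{\alpha+\beta} = n^{-(2\alpha+2\beta)/(2\alpha+2\beta+d)}$, matching $R_{2,n}$; in the non-$\sqrt n$-regime this common rate dominates $n^{-1/2}$ and gives $\bbE|\widehat\psi_n - \psi_{ecc}| = O_\bbP(n^{-(2\alpha+2\beta)/(2\alpha+2\beta+d)})$. In the $\sqrt n$-regime the equivalence $\frac{2(\alpha+\beta)}{2\alpha+2\beta+d} > \frac12 \iff \frac{\alpha+\beta}{2} > \frac d4$ shows both remainders are $o_\bbP(n^{-1/2})$, so the expansion collapses to its linear term and the CLT follows.
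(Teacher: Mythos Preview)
Your proposal is correct and follows essentially the same route as the paper: invoke Lemma~\ref{lem:expansion}, control $R_{2,n}$ via the localization covariance bound (the paper's Lemma~\ref{lem:cdalpr_upper_bounds}) and Proposition~\ref{prop:spectral}, and obtain the sharp bias bound $R_{1,n}\lesssim h_\mu^{\alpha+\beta}$ by showing $b_\pi = \bbE\{\widehat\pi\}-\pi$ is H\"older$(\alpha)$ (because the smooth kernel is continuous; the paper's Lemma~\ref{lem:cdakernel_holder}) and then exploiting the higher-order kernel's moment conditions on the convolution $b_\pi f * \bar\mu$ (the paper's Lemma~\ref{lem:bias_kernel}, which uses the Gin\'e--Nickl result that $gf*\bar\mu\in\text{H\"older}(\alpha+\beta)$). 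The ``smoothness transfer'' subtlety you flag is exactly what Lemma~\ref{lem:cdakernel_holder} resolves, and it is indeed independent of $h_\pi$.
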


Theorem~\ref{thm:minimax} establishes that the DCDR estimator with covariate-density-adapted kernel regression estimators is $\sqrt{n}$-consistent and asymptotically normal under minimal conditions and minimax optimal in the non-$\sqrt{n}$ regime.  The result relies on knowledge of the smoothness of the nuisance functions, as well as shrinking one of the two bandwidths faster than $n^{-1/d}$. The proof relies on the smoothing properties of convolutions and an adaptation of Theorem 1 from \citet{gine2008simple}, as well as results from \citet{gine2008uniform} and Chapter 4 of \citet{gine2021mathematical}.  While Theorem~\ref{thm:minimax} is the first result applied to local averaging estimators such as kernel regression, \citet{mcgrath2022undersmoothing} proved the same result using approximate wavelet kernel projection estimators for the nuisance functions. Their result relies on the orthogonality (in expectation) of the wavelet estimator's predictions and residuals.  

\begin{remark}
    To guarantee asymptotic normality in the $\sqrt{n}$-regime, it is necessary that the smooth covariate-density-adapted estimator is consistent. If one were only interested in convergence rates, as in \citet{mcgrath2022undersmoothing}, one could replace the smooth estimator by any smooth estimator with bounded variance. Indeed, supposing without loss of generality that $\widehat \mu$ were the higher-order kernel estimator, one could set $\widehat \pi = 0$ and  instead implement the plug-in estimator for the ECC from \citet{newey2018cross}, given by $\widehat \psi = \mathbb{P}_n \big[ A \big\{ Y- \widehat \mu(X) \big\} \big]$. This plug-in approach requires only single cross-fitting since it involves just one nuisance estimator. In contrast to the DCDR estimator analysis presented in previous sections, this plug-in estimator leverages the benefits of undersmoothing in a manner more consistent with the classical literature, where the specialized construction of the nuisance estimator enables adaptation to the underlying smoothness properties (e.g., \citet{gine2008simple}).
\end{remark}

\subsection{Slower-than-$\sqrt{n}$ CLT}

In addition to minimax optimality, asymptotic normality is possible in the non-$\sqrt{n}$ regime.  The DCDR estimator in Theorem~\ref{thm:minimax} balances bias and variance; intuitively, if the DCDR estimator were undersmoothed one might expect it to converge to a Normal distribution centered at the ECC at a sub-optimal slower-than-$\sqrt{n}$ rate.  We demonstrate this in the next result.  First, we incorporate two further assumptions.
\begin{assumption} \label{asmp:boundedAY}
    \textbf{(Boundedness)} There exists $M > 0$ such that $|A| < M$ and $|Y| < M$.
\end{assumption}
\begin{assumption} \label{asmp:continuous_cond_var}
    \textbf{(Continuous conditional variance)} $\bbV(A \mid X = x)$ and $\bbV(Y \mid X = x)$ are continuous in $x$. 
\end{assumption}
Assumption~\ref{asmp:boundedAY} asserts that $A$ and $Y$ are bounded.  Assumption~\ref{asmp:continuous_cond_var} dictates that the conditional variances of $A$ and $Y$ are continuous in $X$, which is used to show that the limit of the standardizing variance in \eqref{eq:limiting_var}, below, exists. It may be possible to relax these assumptions with more careful analysis.  Nonetheless, with them it is possible to establish the following result.	
\begin{restatable}{theorem}{thminference}\label{thm:inference} \emph{\textbf{(Slower-than-$\sqrt{n}$ CLT)}}
    Under the conditions of Theorem~\ref{thm:minimax}, suppose $\frac{\alpha + \beta}{2} < \frac{d}{4}$ and Assumptions~\ref{asmp:boundedAY} and~\ref{asmp:continuous_cond_var} hold.  Suppose $\widehat \mu$ is the undersmoothed nuisance function estimator with bandwidth $h_\mu$ scaling at $n^{-\frac{2 + \varepsilon}{2\alpha + 2\beta + d}}$ for $0 < \varepsilon < \frac{4(\alpha + \beta)}{d}$ while $\widehat \pi$ is the smooth consistent estimator. Then,
    \begin{equation} \label{eq:slow_clt}
        \sqrt{\frac{n}{\bbV \{\widehat \varphi(Z)\mid D_\pi, D_\mu \}}} (\widehat \psi_n - \psi_{ecc}) \indist N(0, 1).
    \end{equation}
    Moreover,
    \begin{equation} \label{eq:limiting_var}
        n h_\mu^d \bbV \{\widehat \varphi(Z)\mid D_\pi, D_\mu \} \inas \bbE\left\{ \frac{\bbV(A \mid X) Y^2}{f(X)} \right\} \bbE\left\{\frac{K_\mu(X)^2}{f(X)}\right\}, 
    \end{equation}
    where $K_\mu$ is the kernel for $\widehat \mu$. If the roles of $\widehat \mu$ and $\widehat \pi$ were reversed, then \eqref{eq:slow_clt} holds and
    \begin{equation} \label{eq:limiting_var_pi}
        n h_\pi^d \bbV \{\widehat \varphi(Z)\mid D_\pi, D_\mu \} \inas\bbE\left\{ \frac{\bbV(Y \mid X) A^2}{f(X)} \right\} \bbE\left\{\frac{K_\pi(X)^2}{f(X)}\right\}.
    \end{equation}
\end{restatable}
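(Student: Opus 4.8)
The plan is to work conditionally on the training samples $(D_\pi, D_\mu)$, where $\widehat\pi$ and $\widehat\mu$ are frozen, so that $\widehat\psi_n=\bbP_n\{\widehat\varphi(Z)\}$ becomes an average of conditionally i.i.d. summands, and then apply a triangular-array CLT. First I would split
$$
\widehat\psi_n - \psi_{ecc} = \big(\bbP_n - \bbE[\,\cdot\mid D_\pi, D_\mu]\big)\{\widehat\varphi(Z)\} + \big(\bbE[\widehat\varphi(Z)\mid D_\pi,D_\mu] - \psi_{ecc}\big).
$$
Using $\bbE(A\mid X)=\pi$, $\bbE(Y\mid X)=\mu$, and independence of the test point from the training data, the second (conditional bias) term equals $\bbE_X[\{\pi(X)-\widehat\pi(X)\}\{\mu(X)-\widehat\mu(X)\}]$, while the first is a centered sum of conditionally i.i.d. variables with conditional variance $V_n := \bbV\{\widehat\varphi(Z)\mid D_\pi,D_\mu\}$, exactly the standardizer in \eqref{eq:slow_clt}.

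Second, to prove the variance limit \eqref{eq:limiting_var} I would isolate the dominant contribution of $V_n$. Since $\widehat\pi$ is consistent while $\widehat\mu$ is heavily undersmoothed, the leading term is $\int \bbV(A\mid x)\,\widehat\mu(x)^2 f(x)\,dx$; the squared conditional mean is $O(1)$ and is annihilated after multiplying by $nh_\mu^d$, which tends to $0$ in the non-$\sqrt n$ regime under this bandwidth. Writing $\widehat\mu(x)^2=\sum_{i,j}w_i(x)w_j(x)Y_iY_j$ with $w_i(x)=K_\mu((X_i-x)/h_\mu)/(nh_\mu^d f(X_i))$ and integrating, the diagonal ($i=j$) part gives, after scaling,
$$
n h_\mu^d\!\!\int \bbV(A\mid x)\,\widehat\mu(x)^2 f(x)\,dx = \Big(\int K_\mu^2\Big)\,\frac{1}{n}\!\!\sum_{Z_i\in D_\mu}\!\frac{Y_i^2\,\bbV(A\mid X_i)}{f(X_i)} + o_\bbP(1),
$$
where the local averaging uses continuity of $\bbV(A\mid\cdot)$ and $f$ (Assumption~\ref{asmp:continuous_cond_var}); the off-diagonal part has $\asymp n^2 h_\mu^d$ nonzero entries each of order $(n^2h_\mu^d)^{-1}$, hence is $O(1)$ and vanishes after scaling. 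The strong law (valid by boundedness, Assumption~\ref{asmp:boundedAY}) then yields $nh_\mu^d V_n \inas \bbE\{\bbV(A\mid X)Y^2/f(X)\}\,\bbE\{K_\mu(X)^2/f(X)\}$, which is \eqref{eq:limiting_var}.

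Third, I would verify the two conditions converting the conditional CLT into \eqref{eq:slow_clt}. For bias negligibility, the conditional bias is $O_\bbP(h_\mu^{\alpha+\beta})$ by the convolution/higher-order-kernel argument already used for Theorem~\ref{thm:minimax} (the smooth consistent $\widehat\pi$ contributes no extra bias order), while the standard error is $\sqrt{V_n/n}\asymp (n h_\mu^{d/2})^{-1}$; their ratio is $O_\bbP\big(n h_\mu^{\,d/2+\alpha+\beta}\big)=O_\bbP\big(n^{-\varepsilon(d/2+\alpha+\beta)/(2\alpha+2\beta+d)}\big)=o_\bbP(1)$ precisely because $\varepsilon>0$. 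For the conditional CLT I would use the Lyapunov criterion with exponent $2+\delta=4$ applied to $\xi_i=\widehat\varphi(Z_i)-\bbE[\widehat\varphi\mid D_\pi,D_\mu]$: by boundedness of $A,Y,\widehat\pi$, the conditional fourth moment is dominated by $\int\widehat\mu(x)^4 f(x)\,dx$, whose fully-diagonal contribution is $\asymp (n^3 h_\mu^{3d})^{-1}$ and beats every partial-diagonal term by factors of $nh_\mu^d\to 0$. The Lyapunov ratio $\bbE[\xi_1^4\mid\cdot]/(nV_n^2)$ then reduces to $\asymp (n^2 h_\mu^d)^{-1}$, which vanishes iff $n^2 h_\mu^d\to\infty$, i.e. iff $\varepsilon<4(\alpha+\beta)/d$ — exactly the stated upper bound. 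Finally I would de-condition: the conditional characteristic function of the standardized stochastic term converges to $e^{-t^2/2}$ almost surely, so dominated convergence gives the marginal limit, and Slutsky combined with the bias and variance steps yields \eqref{eq:slow_clt}. The reversed-roles case \eqref{eq:limiting_var_pi} is identical with $A,Y,\pi,\mu$ interchanged.

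The main obstacle is the fourth-moment / Lyapunov step: because the undersmoothed $\widehat\mu$ is spiky (its sup norm grows like $h_\mu^{-d}$), its conditional moments grow like powers of $(nh_\mu^d)^{-1}$, and one must carefully track which multi-index diagonal term dominates $\int\widehat\mu(x)^4 f\,dx$ to extract the sharp rate and thereby pin the admissible window $0<\varepsilon<4(\alpha+\beta)/d$. A secondary technical point is upgrading the variance convergence from convergence in mean to the almost-sure statement \eqref{eq:limiting_var}, which needs a law-of-large-numbers argument for the diagonal $V$-statistic together with Borel--Cantelli-type control of the off-diagonal and squared-mean remainders.
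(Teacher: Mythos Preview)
Your outline tracks the paper's proof closely: conditional decomposition, conditional Lyapunov-type CLT, dominated-convergence deconditioning, and diagonal/off-diagonal analysis of the conditional variance. The paper uses the third absolute moment via Berry--Esseen rather than your fourth-moment Lyapunov, but both routes yield the same constraint $\varepsilon<4(\alpha+\beta)/d$ (from $n h_\mu^{d/2}\to\infty$), and the paper likewise controls the off-diagonals of $V_n$ through U-statistic moment bounds plus Borel--Cantelli, which is exactly what you anticipate.

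There is one genuine gap. You assert that the conditional bias $\int\{\pi-\widehat\pi\}\{\mu-\widehat\mu\}f$ is $O_\bbP(h_\mu^{\alpha+\beta})$ ``by the convolution/higher-order-kernel argument already used for Theorem~\ref{thm:minimax}.'' But that argument bounds only the \emph{unconditional} bias $\bbE(\widehat\varphi-\varphi)$: it relies on $\bbE\{\widehat\pi(\cdot)\}$ being H\"older$(\alpha)$, whereas the realized $\widehat\pi(\cdot)$ is not H\"older$(\alpha)$ with a uniform constant. The conditional bias is a random quantity whose fluctuation around its mean must be handled separately. The paper makes this explicit by writing
\[
\frac{\text{cond.\ bias}}{\sqrt{V_n/n}}
=\sqrt{\tfrac{\bbV(\widehat\varphi)}{V_n}}\Bigl\{\underbrace{\tfrac{\bbE(\widehat\varphi\mid D^n)-\bbE(\widehat\varphi)}{\sqrt{\bbV(\widehat\varphi)/n}}}_{T_2}+\underbrace{\tfrac{\bbE(\widehat\varphi-\varphi)}{\sqrt{\bbV(\widehat\varphi)/n}}}_{T_3}\Bigr\},
\]
then shows $T_3=o(1)$ via convolution and $T_2=o_\bbP(1)$ by proving $\bbV\bigl[\bbE(\widehat\varphi\mid D^n)\bigr]\lesssim n^{-1}$, which in turn uses the independence of $D_\pi,D_\mu$ and the localization covariance bound from Proposition~\ref{prop:spectral}/Lemma~\ref{lem:cdalpr_upper_bounds}. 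In your notation, this says the conditional bias is $O_\bbP(h_\mu^{\alpha+\beta}+n^{-1/2})$, not $O_\bbP(h_\mu^{\alpha+\beta})$; the extra $n^{-1/2}$ still vanishes against the standard error (ratio $=\sqrt{nh_\mu^d}\to 0$), so your conclusion survives, but the argument you cite does not deliver the rate you claim and you need the additional variance step.
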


Theorem~\ref{thm:inference} shows that the DCDR estimator can be suitably undersmoothed in the non-$\sqrt{n}$ regime so the DCDR estimator is sub-optimal but converges to a Normal distribution around the ECC.  Moreover, Theorem~\ref{thm:inference} establishes that the conditional variance by which the error is standardized converges almost surely to a constant which can be estimated from the data.  Therefore, Wald-type confidence intervals for the ECC can be constructed using \eqref{eq:slow_clt} and \eqref{eq:limiting_var} or \eqref{eq:limiting_var_pi}.  As far as we are aware, this is the first result demonstrating slower-than-$\sqrt{n}$ inference for a cross-fit estimator of a causal functional.  

\medskip

Here, we give some intuition for the result, which might best be understood through its unorthodox denominator in the standardization term in \eqref{eq:slow_clt}: the \emph{conditional variance of the estimated efficient influence function}.  This denominator is unorthodox both because it includes an \emph{estimated} efficient influence function and because it is a \emph{conditional} variance.  The estimated efficient influence function arises because $\widehat \psi_n$ is undersmoothed to such an extent that its scaled variance, $\mathbb{V} \left( \sqrt{n} \widehat \psi_n \right)$, is growing with sample size.  Similarly, $\bbV \{\widehat \varphi(Z)\mid D_\pi, D_\mu \}$ is also growing at the same rate with sample size, and thus standardizing by this term appropriately concentrates the variance of the standardized statistic, $\sqrt{\frac{n}{\bbV \{\widehat \varphi(Z)\mid D_\pi, D_\mu \}}} (\widehat \psi_n - \psi_{ecc})$.  Indeed, \eqref{eq:limiting_var} demonstrates that $\bbV\{ \widehat \varphi(Z) \mid D_\pi, D_\mu \}$ is growing with sample size because $nh_\mu^d \to 0$ as $n \to \infty$ by the assumption on the bandwidth. This result relies on a bound for higher moments of a U-statistic (Proposition 2.1, \citet{gine2000exponential}) which guarantees control of the sum of off-diagonal terms in  $\bbV\{ \widehat \varphi(Z) \mid D_\pi, D_\mu \}$. 

\medskip

Meanwhile, the \emph{conditional} variance is required so that a normal limiting distribution can be attained. While the non-$\sqrt{n}$ regime is often characterized by non-normal limiting distributions, a normal limiting distribution can be established applying the Berry-Esseen inequality (Theorem 1.1, \citet{bentkus1996berry}) after conditioning on the training data and showing that the standardized statistic satisfies a conditional central limit theorem almost surely and, therefore, an unconditional central limit theorem.  

\medskip

This approach --- using sample splitting to conduct inference --- is an old method which has recently been examined in several contexts, including, for example, estimating U-statistics \citep{robins2016asymptotic, kim2024dimension}, estimating variable importance measures \citep{rinaldo2019bootstrapping}, high-dimensional model selection \citep{wasserman2009high}, and post-selection inference \citep{meinshausen2010stability, dezeure2015high}. Earlier references include \citet{cox1975note}, \citet{hartigan1969using}, and \citet{moran1973dividing}.

\medskip
    
While this section and previous sections have established several theoretical results for the DCDR estimator, in the next section we investigate and illustrate these properties via simulation.

\section{Simulations} \label{sec:simulations}

In this section, we study the behavior of double cross-fit doubly robust (DCDR) estimators and compare them to single cross-fit doubly robust with MSE-minimizing nuisance estimators (SCDR-MSE).  First, we provide evidence for why double cross-fitting leads to undersmoothing the nuisance estimators for optimal convergence rates, reinforcing our theoretical analysis in Section~\ref{sec:modelfree}. Then, we construct H\"{o}lder smooth nuisance functions and examine when the distribution of standardized SCDR-MSE and DCDR estimates converge to standard Gaussians, and the coverage of Wald-style confidence intervals, reinforcing our theoretical results from Sections~\ref{sec:unknown} and \ref{sec:known}. Finally, we examine the Monte Carlo error of the estimators to understand whether the additional cross-fitting for double cross-fitting harms the overall performance of the estimator.
   
\medskip

\noindent All code and analysis is available at \href{https://github.com/alecmcclean/DCDR}{https://github.com/alecmcclean/DCDR}

\subsection{Intuition for undersmoothing}

As discussed in Section~\ref{sec:modelfree}, under certain covariance conditions on the nuisance estimators, double cross-fitting must be coupled with undersmoothed nuisance estimators for faster convergence rates. We reinforce this intuition here. We consider a data generating process where $X$ is uniform, $A = Y$, and both nuisance functions are the Doppler function (see Figure~\ref{fig:doppler}). Formally, the data generating process is 
\begin{align}
    X &\sim \text{Unif}(0,1), \\
    \pi(X) = \mu(X) &=  \sqrt{ X (1 - X) } \sin \left( \frac{2.1 \pi}{X + 0.05} \right), \label{eq:doppler} \\
    A = Y &= \pi(X) + \varepsilon, \varepsilon \sim N(0, \psi_{ecc} = 0.1).
\end{align}
Because $A = Y$, the ECC is the variance of the error noise in $A$ and $Y$. We chose $\psi_{ecc} = 0.1$ to give a strong signal-to-noise ratio for the estimators.

\begin{figure}[H]
    \centering
    \includegraphics[height = 2.5in]{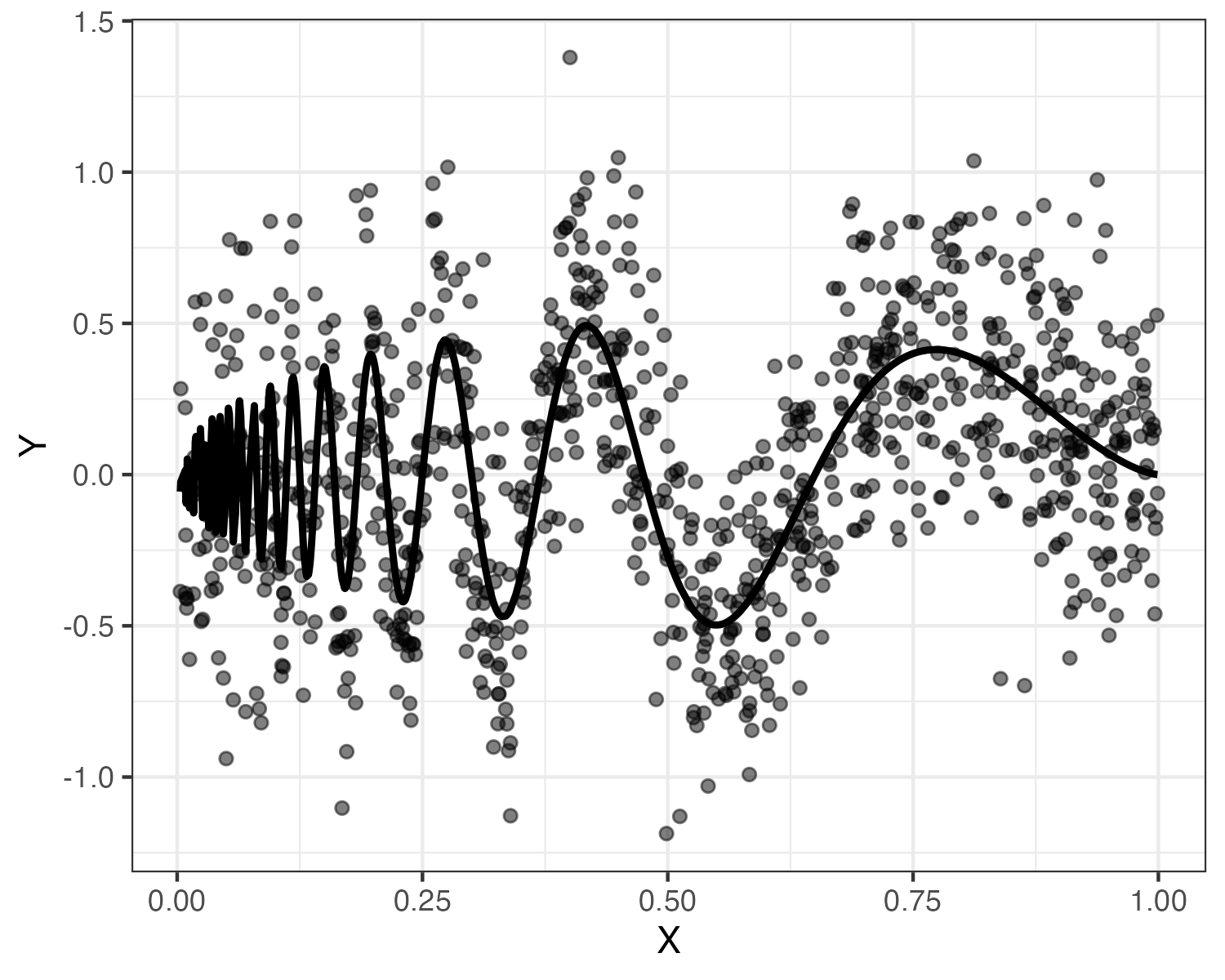}
    \caption{The Doppler function with $N(0, 0.1)$ random noise as in \eqref{eq:doppler}; this nuisance function was used for Figure~\ref{fig:changing_k}.}
    \label{fig:doppler}
\end{figure}

\medskip
We generated $500$ datasets with three folds of sizes $\{ 50, 100, 200, 500, 1000, 2000 \}$ and estimated each nuisance function with k-Nearest-Neighbors for $k$ from $1$ to $30$.  We estimated the ECC with the DCDR estimator and the SCDR estimator; for the SCDR estimator we trained the nuisance functions on the same fold and discarded the unused third fold (see Remark~\ref{rem:splitting}).  For each $k$, we computed the average mean squared error (MSE) of the nuisance function estimators and the DCDR and SCDR estimators over $500$ datasets. 

\medskip

To understand when undersmoothing is optimal, we calculated the optimal $k$ corresponding to the lowest average MSE over $500$ datasets for the DCDR, SCDR, and nuisance function estimators.  Figure~\ref{fig:changing_k} displays the optimal number of neighbors (y-axis) for each fold size (x-axis), with different colors denoting  estimator/estimand combinations.  For instance, the green point in the bottom left corner signifies that $k = 2$ gave the lowest average MSE over $500$ repetitions for the DCDR estimator estimating the ECC with datasets with folds of size $50$.  The black points and line represent the optimal $k$ for $\widehat \pi$ estimating $\pi$, orange represents $\widehat \mu$ estimating $\mu$, blue represents the SCDR estimator estimating the ECC, and green represents the DCDR estimator estimating the ECC (blue, orange, and black are the same line for the most part, so the blue line completely obscures the orange and partially obscures the black).  Figure~\ref{fig:changing_k} demonstrates the anticipated phenomenon: the optimal number of neighbors is lower for the DCDR estimator compared to the SCDR estimator and the nuisance function estimators, and it increases at a slower rate as sample size increases.  Equivalently, the optimal $k$ for the DCDR estimator corresponds to undersmoothed nuisance function estimators while the optimal $k$ for the SCDR estimator corresponds to optimal nuisance function estimators.

\begin{remark}
    The Doppler function is highly non-smooth and, therefore, is well suited to a structure-agnostic analysis like in Section~\ref{sec:modelfree}. Therefore, our first set of simulations are targeted to shed light on those results, rather than on subsequent results with smoothness assumptions in Sections~\ref{sec:unknown} and \ref{sec:known}.  To that end, we consider nuisance estimators using the same number of neighbors $k$. However, as our results in Section~\ref{sec:known} established, and our simulations in the next sections illustrate, an optimal DCDR estimator might consider different numbers of neighbors for each nuisance estimator, depending on the smoothness of the underlying nuisance function.
\end{remark}

\color{black}
\begin{remark} \label{rem:splitting}
    Figure~\ref{fig:changing_k} does not describe whether the SCDR estimator or DCDR estimator is more accurate, nor is that the goal of the analysis for Figure~\ref{fig:changing_k}.  Because we discarded a third of the data available to the SCDR estimator, it is not possible to compare the estimators directly.  Instead, Figure~\ref{fig:changing_k} shows that the DCDR estimator requires undersmoothed nuisance function estimators for optimal accuracy, while the SCDR estimator requires optimal nuisance function estimators.   In the next set of simulations, we cycle the folds so that the estimators can be directly compared.
\end{remark}

\begin{figure}[H]
    \centering    
    \includegraphics[height = 3in]{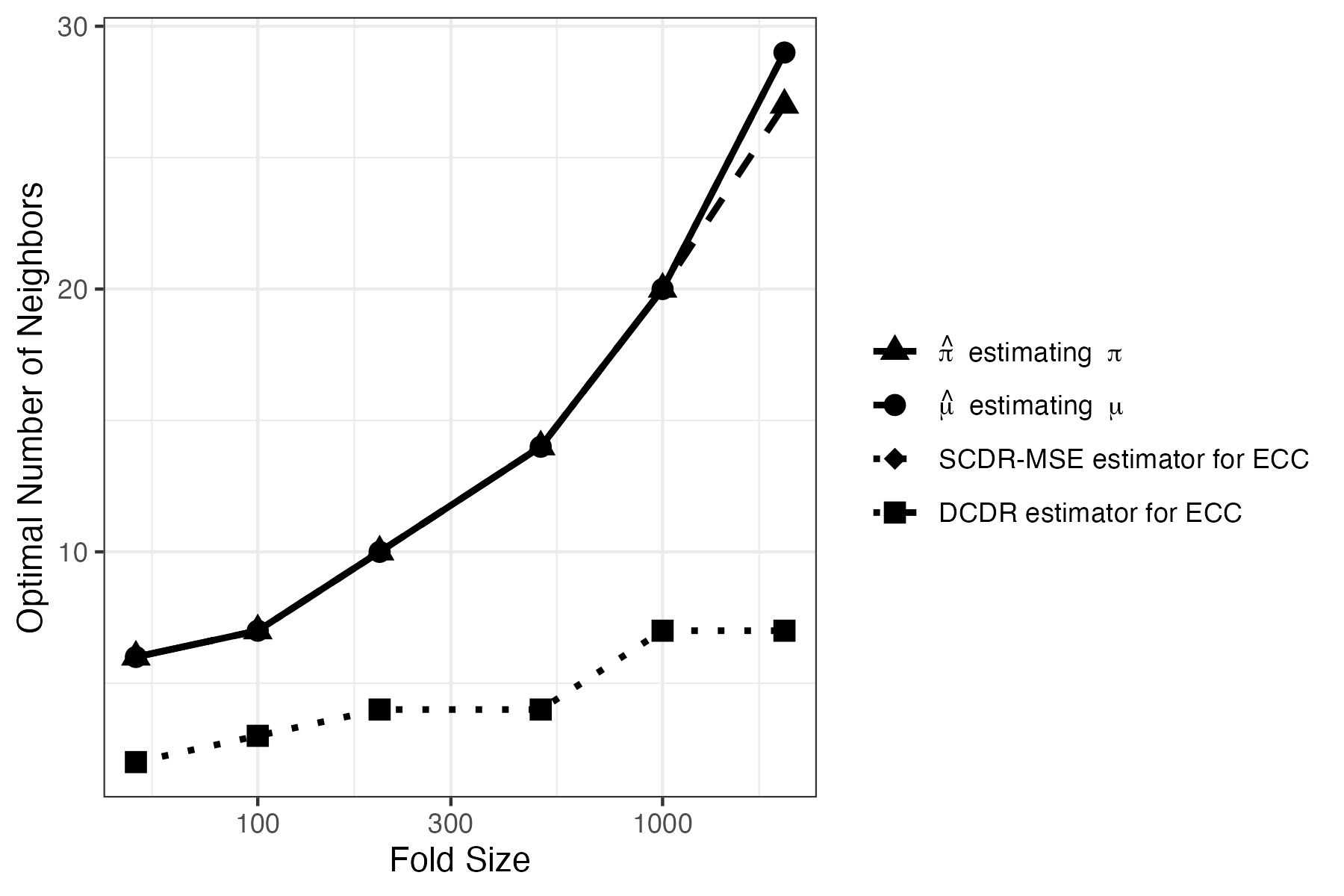}
    \caption{Fold size (x-axis) versus optimal number of neighbors (y-axis), where optimal is in terms of average MSE over 500 datasets; triangles and circles indicate the k-Nearest-Neighbors estimators for $\pi(X)$ and $\mu(X)$, respectively, while diamonds indicate the SCDR estimator for the ECC and squares indicate the DCDR estimator for the ECC.}
    \label{fig:changing_k}
\end{figure}

\subsection{Inference and coverage}

Theorem~\ref{thm:semiparametric} in Section~\ref{sec:unknown} and Theorem~\ref{thm:inference} in Section~\ref{sec:known} provided convergence guarantees for the DCDR estimator under smoothness assumptions. Here, we demonstrate these results via simulation.

\medskip

To facilitate our analysis, we constructed suitably smooth nuisance functions.  Specifically, we consider both 1-dimensional and 4-dimensional covariates uniform on the unit cube, $\psi_{ecc} = 10$, and $\pi$ and $\mu$ H\"{o}lder smooth. Throughout, we set both nuisance functions $\pi$ and $\mu$ to be of the same smoothness such that $\alpha = \beta = s$, and we control the smoothness $s$.  To construct appropriately smooth functions, we employed the lower bound minimax construction for regression (see, \citet{tsybakov2009introduction}, pg. 92).  These functions vary with sample size, and Figure~\ref{fig:example_holder} provides an illustration for $d = 1$, with smoothness levels $s \in \{ 0.1, 0.35, 0.6\}$ and dataset sizes $N \in \{ 100, 1000, 5000 \}$.   To generate 4-dimensional H\"{o}lder smooth functions, we added four functions that are univariate H\"{o}lder smooth in each dimension. 

\begin{figure}
    \centering
    \includegraphics[width = 0.8\textwidth]{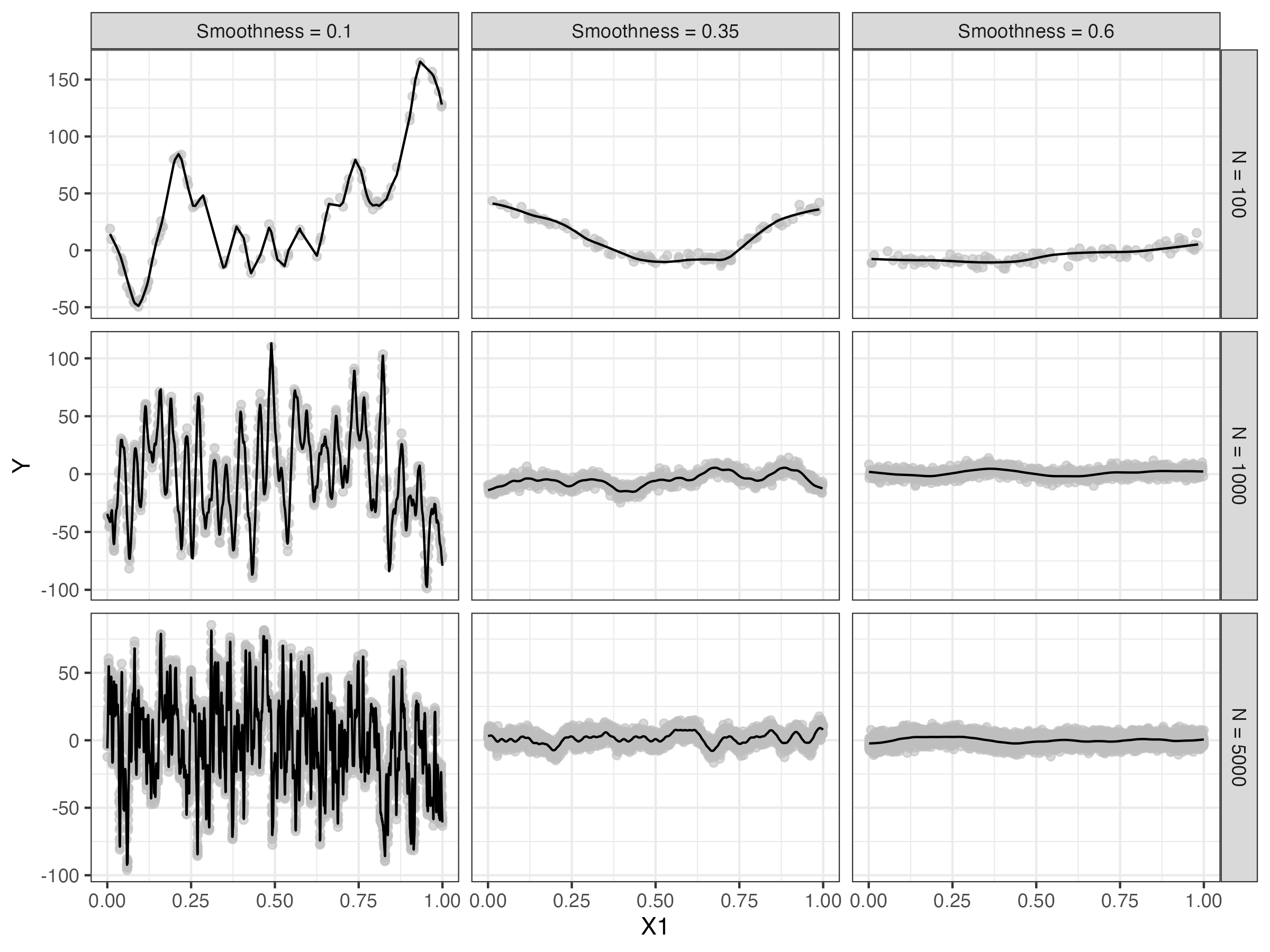}
    \caption{Example Holder smooth functions (black) of order $s \in \{0.1, 0.35, 0.6\}$ smoothness for $n \in \{100, 1000, 5000 \}$ observed data points (grey) with $N(0, 10)$ random noise.}
    \label{fig:example_holder}
\end{figure}

\medskip

We generated datasets for fold sizes $\{ 100, 200, 350, 700, 1500, 3000 \}$ where each dataset consisted of three folds.  When $d = 1$, we constructed nuisance functions with smoothnesses $\{ 0.1, 0.35, 0.6 \}$, and when $d = 4$ with smoothnesses $\{ 0.6, 1.5, 2.5 \}$.  For each fold size-dimension-smoothness combination, we generated $100$ datasets and constructed three estimators:
\begin{enumerate}
    \itemsep0.05in
    \item \textbf{SCDR-MSE} For $d \in \{1, 4\}$, we constructed the SCDR estimator with covariate-density-adapted kernel regressions (Estimator~\ref{est:cdalpr}), where we tuned the bandwidth at the optimal rate with sample size, using the smoothness of the underlying nuisance functions to do so. This is an approximation of the typical SCDR-MSE estimator, which we use as a benchmark to compare with the DCDR estimators.
     
    \item \textbf{DCDR undersmoothed local polynomial regression} For only $d = 1$, we constructed the DCDR estimator with undersmoothed local polynomial regression (Estimator~\ref{est:lpr}) \textit{without leveraging knowledge of the covariate density or the smoothness of the nuisance functions}. To undersmooth the nuisance estimators, we constructed adaptive bandwidths using the $10$ nearest neighbors to each estimation point in the training data. This is an ad-hoc method to scale the bandwidths at the appropriate rate $\left( \log n / n \right)^{-1/d}$, as in Theorem~\ref{thm:semiparametric}.
    
    \item \textbf{DCDR known density and smoothness} For $d \in \{1, 4\}$, we constructed the DCDR estimator with covariate-density-adapted kernel regressions (Estimator~\ref{est:cdalpr}), using knowledge of the covariate density and smoothness. We tuned the bandwidth of one nuisance estimator so it was consistent and undersmoothed to such a degree that the DCDR estimator itself was undersmoothed and could achieve a Gaussian limiting distribution even in the non-$\sqrt{n}$ regime, as in Theorem~\ref{thm:inference}. 
\end{enumerate}
For all estimators, we used two folds to construct nuisance estimators and the third fold to construct the functional estimator; then, we cycled the folds two times, repeated the process, and averaged across the full sample. Hence, all estimators were constructed using the full sample. \color{black} For all estimators, we constructed Wald-type 95\% confidence intervals for the ECC using the sample variance of the estimated efficient influence functions to estimate the limiting variance.

\medskip

Figures~\ref{fig:qq} and \ref{fig:coverage} show the inferential properties of the estimators.  Figure~\ref{fig:qq} contains QQ plots for the standardized statistics for different smoothnesses (rows) and fold sizes (columns) for dimension equal to one.  The black circles represent the \textit{DCDR known density and smoothness} estimator, while the orange squares represent the \textit{DCDR undersmoothed local polynomial} estimator, and the blue triangles represent the \textit{SCDR-MSE} estimator. The diagonal line is $y = x$.  Figure~\ref{fig:coverage} displays the coverage of the associated Wald-type confidence intervals, with the dimension and smoothness varying by column, and the sample size on the x-axis. 

\medskip

The results in Figures~\ref{fig:qq} and \ref{fig:coverage} confirm that non-$\sqrt{n}$ inference is possible, as in Theorem~\ref{thm:inference}.  As the sample size increases (moving across the panels in Figure~\ref{fig:qq}), the quantiles of the \textit{DCDR known density and smoothness} estimates in black converge to the quantiles of the standard normal distribution. Additionally, as sample size increases (moving across the x-axis in Figure~\ref{fig:coverage}), the coverage of the confidence intervals approach appropriate coverage.  These findings align with what was anticipated by the limiting distribution result in Theorem~\ref{thm:inference}.  This occurs \emph{even when} $s < d/4$.  

\medskip

Figures \ref{fig:qq} and \ref{fig:coverage} also confirm that the \textit{DCDR undersmoothed local polynomial regression} estimator facilitates $\sqrt{n}$-convergence and inference under the minimal smoothness condition, when $s > d/4$, as in Theorem~\ref{thm:semiparametric}, when the \textit{SCDR-MSE} estimator does not. This is corroborated in the middle rows of Figures~\ref{fig:qq} and \ref{fig:coverage}, where the quantiles of the \textit{DCDR undersmoothed local polynomial regression} estimator converge to the quantiles of the standard normal for $s/d = 0.35$ and the confidence intervals achieve appropriate coverage.  However, the quantiles diverge and the confidence intervals fail to achieve appropriate coverage when $s < d/4$, as shown by the orange squares in the top rows.  Meanwhile, as a benchmark, Figures~\ref{fig:qq} and~\ref{fig:coverage} illustrate that the SCDR-MSE only achieves $\sqrt{n}$-convergence when $s > d/2$. When $s > d/2$, the SCDR-MSE quantiles in the bottom row of Figure~\ref{fig:qq} converge closely to the normal quantiles, and do not converge otherwise.  The same phenomenon occurs for the confidence intervals in Figure~\ref{fig:coverage}, which do not achieve appropriate coverage when $s < d/2$.  In summary, these results support the theoretical conclusion that the DCDR estimators are $\sqrt{n}$-consistent and asymptotically normal in sufficiently non-smooth regimes ($d/4 < s < d/2$) where the SCDR-MSE estimator is not.

\begin{remark} \label{rem:drawback}
    The SCDR-MSE estimator we consider is a useful benchmark against which to compare the DCDR estimators because it is a reasonable stand-in for the typical modern SCDR-MSE pipeline, whereby one constructs nuisance estimators to minimize MSE. However, it is important to note that the SCDR estimator could instead be coupled with undersmoothed linear smoothers to achieve $\sqrt{n}$-convergence under the weakest smoothness conditions, but this is not demonstrated here \citep{mcgrath2022undersmoothing}. 
\end{remark}

\subsection{Monte Carlo error}

In this section, we compare the efficiency of the estimators. The results come from the same data generating process and estimators as in the previous section, and the results are in Figure~\ref{fig:monte_carlo}. Figure~\ref{fig:monte_carlo} shows the point estimates and 95\% confidence intervals for squared bias, variance, and MSE over $100$ simulations; the lower bound of the 95\% confidence intervals was excluded if it equaled zero. 

\medskip

The DCDR estimators both perform well compared to the \textit{SCDR-MSE} estimator. When $s/d = 0.1$ (top row), the \textit{DCDR known density and smoothness} has the highest variance but the lowest bias, which makes sense because this estimator is undersmoothed to guarantee a slower-than-$\sqrt{n}$ CLT. Interestingly, the lower bias outweighs the higher variance, and the \textit{DCDR known density and smoothness} estimator has the lowest MSE. Meanwhile, for $s/d = 0.35$ (middle row) and $s/d = 0.6$ (bottom row), the \textit{DCDR undersmoothed local polynomial regression} estimator performs the best, with the lowest bias and variance and therefore the lowest MSE.  

\begin{remark}
    Although the \textit{SCDR-MSE} estimator we consider is a useful benchmark for evaluating the DCDR estimators, it may be possible to construct a \textit{SCDR-MSE} estimator with better finite-sample performance by adaptively choosing the bandwidth using cross-validation rather than choosing the asymptotically optimal bandwidth as we do here. 
\end{remark}

\begin{figure}[H]
    \centering
    \includegraphics[width=0.95\textwidth]{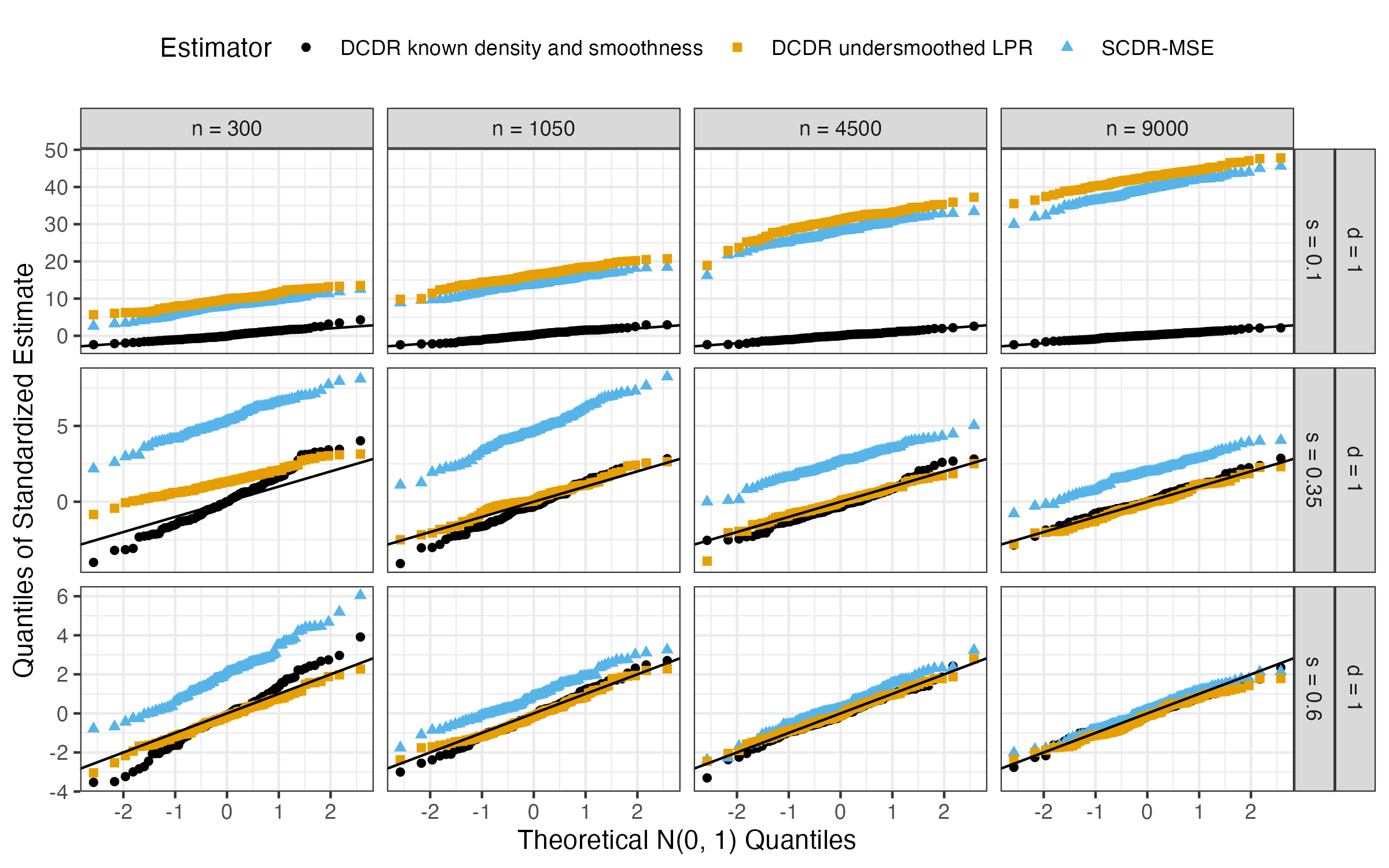}
    \caption{QQ Plots for the standardized statistics for different dimensions and smoothnesses (columns) and fold sizes (rows). Black circles represent the \textit{DCDR known density and smoothness} estimator, orange squares represent the \textit{DCDR undersmoothed local polynomial regression} estimator, and blue triangles represent the \textit{SCDR-MSE} estimator. The diagonal line is $y = x$.}
    \label{fig:qq}
\end{figure}

\begin{figure}[H]    
    \centering
    \includegraphics[height=4in]{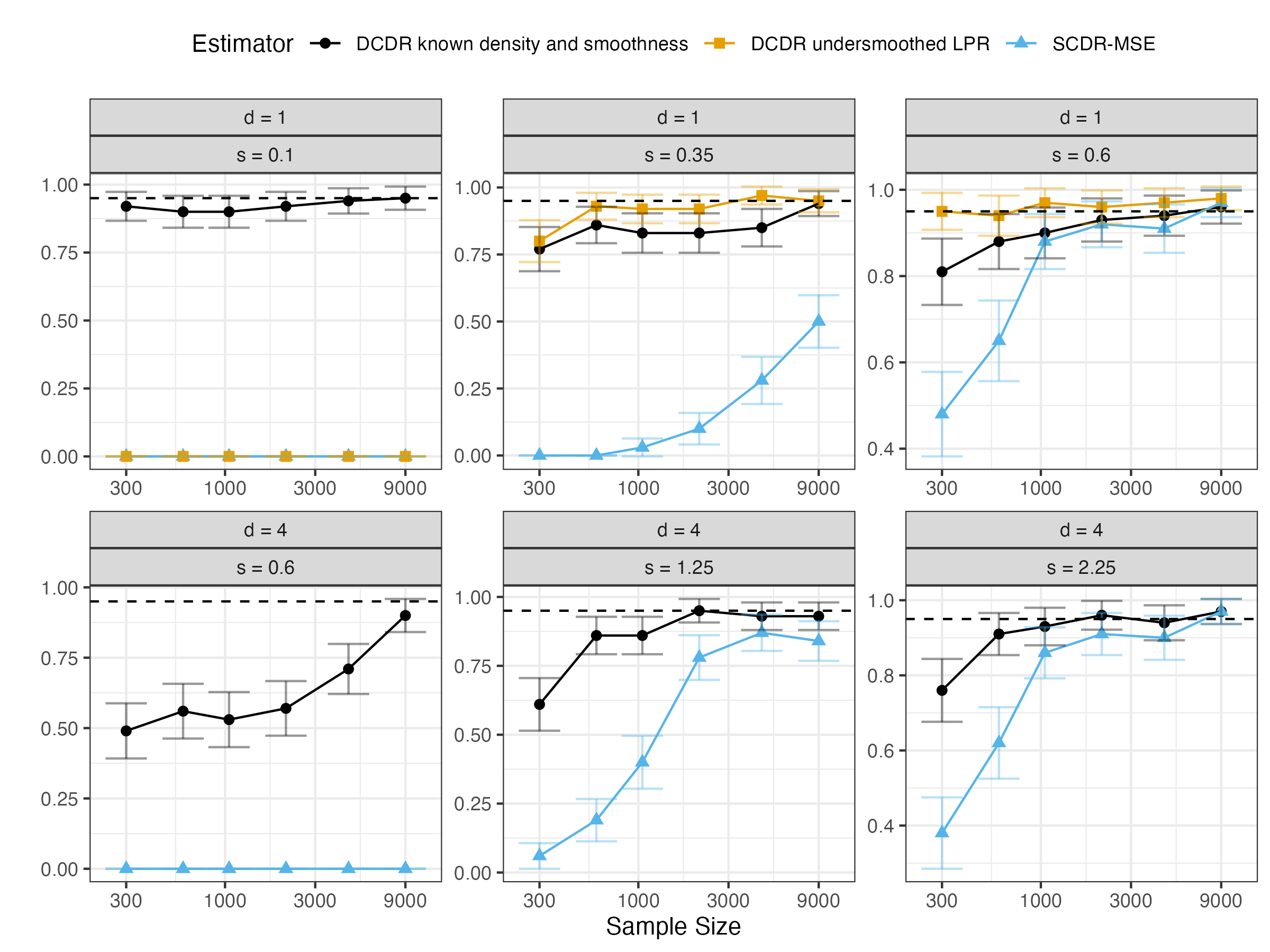}
    \caption{Points represent the coverage of 95\% confidence intervals over $100$ datasets constructed for different dimensions and smoothnesses (panels) and fold sizes (x-axis).  Error bars represent 95\% confidence intervals for the coverage of Wald-type confidence intervals.  Black circles represent the \textit{DCDR known density and smoothness} estimator, orange squares represent the \textit{DCDR undersmoothed local polynomial regression} estimator, and blue triangles represent the \textit{SCDR-MSE} estimator.}
    \label{fig:coverage}
\end{figure}

\begin{figure}[htbp]
    \centering
    \includegraphics[height=4in]{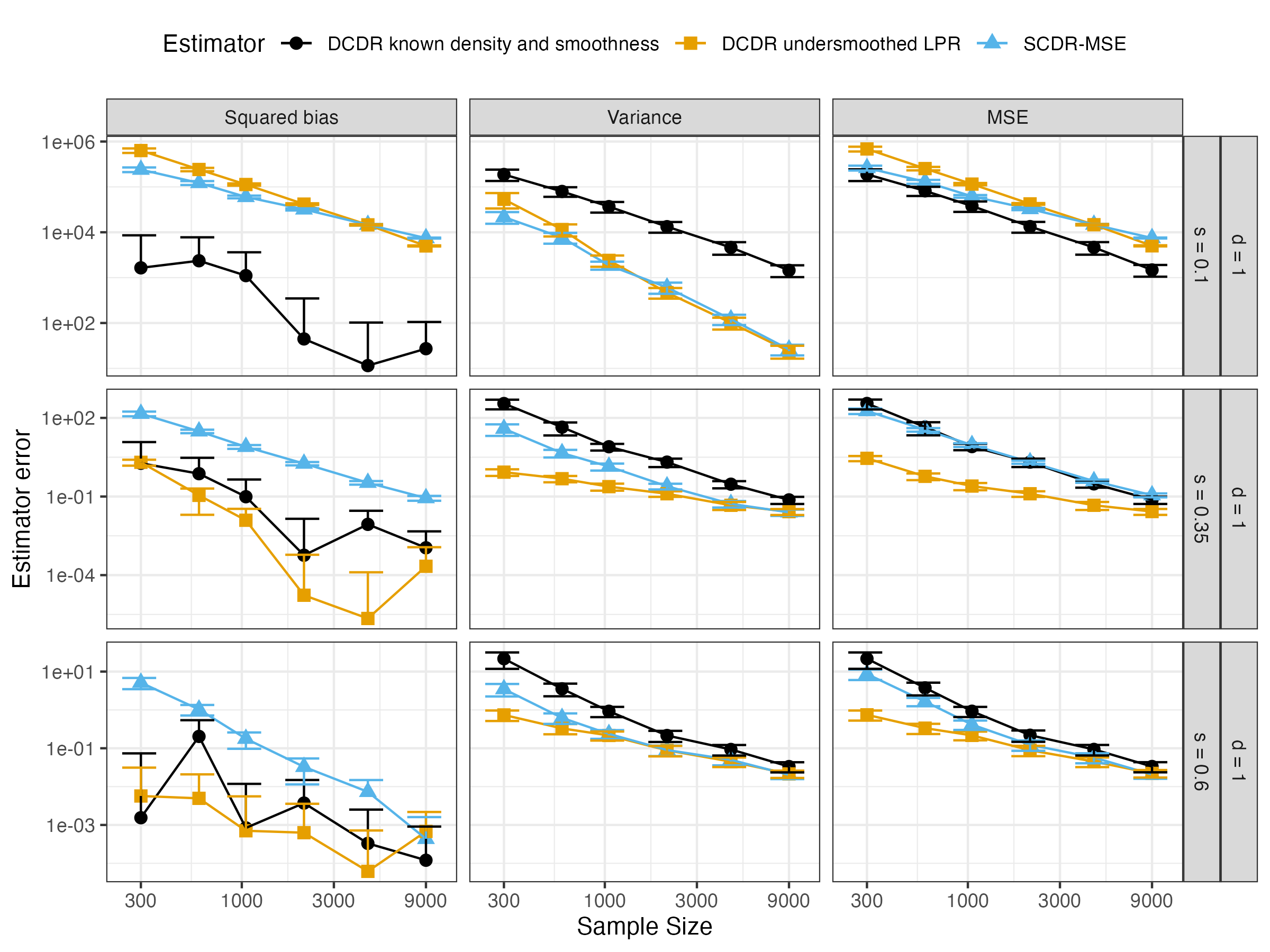}
    \caption{Illustrating the efficiency of double cross-fit estimators. Points represent the average squared bias (left column), variance (middle column), and MSE (right column) over $100$ datasets constructed for different dimensions and smoothnesses (panels) and fold sizes (x-axis). Black circles represent the \textit{DCDR known density and smoothness} estimator, orange squares represent the \textit{DCDR undersmoothed local polynomial regression} estimator, and blue triangles represent the \textit{SCDR-MSE} estimator. 95\% confidence intervals are shown; when the lower bound of the CI equals zero, the lower bound on the CI is excluded.}
    \label{fig:monte_carlo}
\end{figure}


\color{black}
\section{Discussion} \label{sec:discussion}

In this paper, we studied a double cross-fit doubly robust (DCDR) estimator for the Expected Conditional Covariance (ECC). We first provided a novel structure-agnostic error analysis for the DCDR estimator, which holds for generic data generating processes and nuisance function estimators. We observed  that a faster convergence rate is possible by undersmoothing the nuisance function estimators, provided that these estimators satisfy a covariance condition.  We established that several linear smoothers satisfy this covariance condition, and focused on the DCDR estimator with local averaging estimators for the nuisance functions, which had not been studied previously.  We showed that the DCDR estimator based on undermoothed local polynomial regression is $\sqrt{n}$-consistent and asymptotically normal under minimal conditions without knowledge of the covariate density or the smoothness of the nuisance functions.  When the covariate density is known, we demonstrated that the DCDR estimator based on undersmoothed covariate-density-adapted kernel regression is minimax optimal.  Moreover, we proved an undersmoothed DCDR estimator satisfies a slower-than-$\sqrt{n}$ central limit theorem.  Finally, we conducted simulations that support our findings, providing intuition for double cross-fitting and undersmoothing, demonstrating when the DCDR estimator can facilitate $\sqrt{n}$-consistency and asymptotic normality under minimal conditions, and illustrating slower-than-root-n asymptotic normality for the undersmoothed DCDR estimator in the non-$\sqrt{n}$ regime. 

\medskip

There are several potential extensions of our work. While we focus on the ECC, the principles applied here may generalize to wider classes of functionals. Indeed, \citet{newey2018cross} derived general results for the class of ``average linear functionals'' (\citet{newey2018cross}, Section 3). Beyond those results, similarly general results might be possible for the larger class of ``mixed bias functionals" \citep{rotnitzky2021characterization}. Mixed bias functionals satisfy bias decompositions of the form \( \bbE \left( \widehat \psi - \psi \right) = \bbE \left[ f(Z) \{ \widehat \eta_1(Z) - \eta_1(Z) \} \{ \widehat \eta_2(Z) - \eta_2(Z) \} \right] \), where $\eta_1, \eta_2$ are nuisance functions and $f(Z)$ is another function. This is a similar bias decomposition to what we observed for the ECC, and therefore convergence guarantees may be possible using similar arguments to our structure-agnostic analysis in Section~\ref{sec:modelfree}. However, achieving this would entail developing principled approaches for undersmoothing estimators of non-standard nuisance functions --- $\eta_1$ and $\eta_2$ are not always conditional means, and therefore straightforward regression undersmoothing methods may not apply. 

\medskip

Finally, the results in Sections~\ref{sec:modelfree} and~\ref{sec:unknown} can imply practical implementations of the DCDR estimator that achieve faster convergence rates. In Section~\ref{sec:unknown}, we observed that there are simple ad-hoc methods to undersmooth local polynomial regression or series regression at an appropriate rate with sample size by undersmoothing as much as possible. In Section~\ref{sec:simulations}, we observed that this approach worked well in practice. Future work could investigate how these approaches perform with real data and how to generalize these ideas to other machine learning nuisance estimators more rigorously.

\color{black}
\section*{Acknowledgments}

The authors thank several anonymous reviewers, Zach Branson, the CMU causal inference reading group, and participants at ACIC 2023 for helpful comments and feedback.

\bibliographystyle{abbrvnat}
\bibliography{references}

\newpage

\setcounter{section}{0} 
\renewcommand{\thesection}{\Alph{section}} 

\Large \noindent \textbf{Appendix} 

\medskip
\normalsize 

\noindent These supplemental materials are arranged into eight sections:
\begin{itemize}
    \item In Appendix~\ref{app:practical}, we investigate when and how one might conduct undersmoothing with generic machine learning estimators.
    \item  In Appendix~\ref{app:modelfree}, we prove Lemma~\ref{lem:expansion} and Proposition~\ref{prop:spectral} from Section~\ref{sec:modelfree}. 
    \item In Appendix~\ref{app:nf_ests}, we prove bias, variance, and covariance bounds for the nuisance function estimators considered in Section \ref{sec:unknown} --- k-Nearest-Neighbors and local polynomial regression. 
    \item In Appendix~\ref{app:unknown}, we use the results from Appendices~\ref{app:modelfree} and~\ref{app:nf_ests} to prove Lemma~\ref{lem:covariance} and Theorem~\ref{thm:semiparametric} from Section~\ref{sec:unknown}.  
    \item In Appendix~\ref{app:random-forest}, we establish bias, variance, and covariance bounds for centered random forest estimators.
    \item In Appendix~\ref{app:cda_kernel}, we prove a variety of results for covariate-density-adapted kernel regression, including conditional and unconditional variance upper and lower bounds. 
    \item In Appendix~\ref{app:known}, we prove Theorems~\ref{thm:minimax} and~\ref{thm:inference} from Section~\ref{sec:known}, making use of the results in Appendix~\ref{app:cda_kernel}. 
    \item In Appendix~\ref{app:technical}, we prove three technical results regarding properties of the covariate density.  
    \item In Appendix~\ref{app:slln}, we provide a simple strong law of large numbers for triangular arrays of bounded random variables. 
    \item Finally, in Appendix~\ref{app:series}, we review series regression nuisance function estimators, and state and prove several results based on these estimators, which are equivalent to Lemma~\ref{lem:covariance} and Theorem~\ref{thm:semiparametric} in Section~\ref{sec:unknown} of the paper. 
\end{itemize}

\section{Small steps towards undersmoothing in practice} \label{app:practical}

In this appendix, we informally investigate the structure-agnostic results in further detail to gain intuition on how they might inform undersmoothing in practice with generic machine learning estimators. To that end, we consider the following simplifying assumptions:
\begin{enumerate}
    \itemsep0.05in
    \item $\bbE \| \widehat \varphi - \varphi \|_\bbP^2 \lesssim \| b_\pi^2 \|_\infty + \| s_\pi^2 \|_\infty + \| b_\mu^2 \|_\infty + \| s_\mu^2 \|_\infty$. 
    \item The nuisance estimators satisfy the covariance condition $\bbE \Big[ \big| \cov \{ \widehat \eta(X_i), \widehat \eta(X_j) \mid X_i, X_j \} \big| \Big] = O_\bbP( n^{-1} )$.
    \item There is a monotone bias-variance trade-off over the considered range of each tuning parameter, meaning that bias increases and variance decreases as the tuning parameter moves in one direction.
    \item The supremum variance of each nuisance estimator remains bounded within the considered range of tuning parameters.
\end{enumerate}
The first assumption states that the estimation error of the EIF is bounded above by the sum of the supremum squared bias and variance terms from the nuisance estimators; this typically holds under mild conditions (including those used in this paper for the ECC). The second assumption bounds the expected covariance term asymptotically, and must be established for the nuisance estimator under consideration. The third assumption formalizes a known directionality in the bias-variance trade-off associated with tuning parameters. Although this assumption may not strictly hold when performing empirical loss minimization over highly non-convex function classes, it remains plausible in practice with many machine learning methods (e.g., number of boosting steps in gradient-boosted trees), or serves as a reasonable approximation (e.g., tree depth in random forests). The fourth assumption---that the supremum variance is bounded---is similarly reasonable in many practical scenarios.

\medskip

Under the first two assumptions, the spectral radius term from Proposition~\ref{prop:spectral} satisfies
\[
\frac{\rho(\Sigma_n)}{n} = O_\bbP \left(\frac{\bbE \lVert \widehat \varphi - \varphi \rVert_\bbP^2}{n} \right) = O_\bbP\left(\frac{ \lVert b_{\pi}^2 \rVert_\infty + \lVert s_{\pi}^2 \rVert_\infty +   \lVert b_{\mu}^2 \rVert_\infty + \lVert s_{\mu}^2 \rVert_\infty }{n} \right).
\]
Therefore, revisiting the linear expansion from Lemma~\ref{lem:expansion} reveals
\begin{equation} \label{eq:remainder2}
    \widehat \psi_n - \psi_{ecc} = (\bbP_n - \bbP) \{ \varphi(Z) \} + O\left( \| b_\mu \|_\bbP \| b_\pi \|_\bbP \right) + O_\bbP \left( \sqrt{\frac{ \lVert b_{\pi}^2 \rVert_\infty + \lVert s_{\pi}^2 \rVert_\infty +   \lVert b_{\mu}^2 \rVert_\infty + \lVert s_{\mu}^2 \rVert_\infty  }{n}} \right).
\end{equation}
Combining this expansion with the third and fourth assumptions provides a straightforward heuristic for practically minimizing the error terms:
\begin{quote}
    \emph{Undersmooth the nuisance estimators as much as possible.}
\end{quote}
We can see that the first bias term will dominate because the second error term is already standardized by $n^{-1/2}$; hence, undersmoothing the nuisance estimators as much as possible to drive $\| b_\mu \|_\bbP \| b_\pi \|_\bbP$ to zero is imperative.

\medskip

This guideline is actionable with many estimators. For instance, with local polynomial regression, we would drive the bandwidth as small as possible while still retaining a well-defined estimator.  Indeed, this approach is precisely the one we adopted for local polynomial regression estimators in Section~\ref{sec:simulations}. More generally, this heuristic may offer useful practical guidance for more complex estimators commonly employed in modern functional estimation. However, it is not immediately clear whether this approach can be extended to general complex estimators. Nonetheless, in Appendix~\ref{app:random-forest}, we make a first step in this direction by establishing that the regularity conditions above hold for centered random forests, and therefore these estimators could be undersmoothed for faster rates when estimating the ECC \citep{biau2012analysis}.

\medskip

We also note that reducing the bias of nuisance estimators as much as possible may result in a non-negligible asymptotic error if $\| s_\mu^2 \|_\infty, \| s_\pi^2 \|_\infty \asymp 1$. Then, the error in the linear expansion in \eqref{eq:remainder2} is only $O_\bbP(n^{-1/2})$ rather than $o_\bbP(n^{-1/2})$.  Therefore, Wald-style confidence intervals based on the CLT for $(\bbP_n - \bbP) \{ \varphi(Z) \}$ might not have appropriate coverage. In simulations, we found that Wald-style confidence intervals performed well (e.g., in Figure~\ref{fig:smoothness-intro}) even in this scenario. This issue suggests a possible amendment to the heuristic:
\begin{quote}
    \emph{Undersmooth the nuisance estimators as much as possible while retaining consistency.}
\end{quote}
Under this amended guideline, we achieve the more desirable expansion $\widehat \psi_n - \psi_{ecc} = (\bbP_n - \bbP){\varphi(Z)} + o_\bbP(n^{-1/2})$. However, this modified heuristic may be less actionable in practice because it provides limited guidance on precisely how to select tuning parameters. An alternative strategy is to retain our original heuristic---full undersmoothing---and to instead employ inference methods robust to non-negligible bias, such as Adaptive HulC \citep{kuchibhotla2024hulc}.

\color{black}
\section{Section~\ref{sec:modelfree} proofs: Lemma~\ref{lem:expansion} and Proposition~\ref{prop:spectral}} \label{app:modelfree}

\lemexpansion*

\begin{proof}
    We first expand $\widehat \psi_n - \psi_{ecc}$ into the term in the statement of the lemma plus two remainder terms, $R_1$ and $R_2$:
    \begin{align}
        \widehat \psi_n - \psi_{ecc} &= \bbP_n \{ \widehat \varphi(Z) \} - \bbE \{ \varphi(Z) \} \nonumber \\
        &= (\bbP_n - \bbE) \{ \varphi(Z) \} + \underbrace{\bbE \{ \widehat \varphi(Z) - \varphi(Z) \}}_{R_{1,n}} + \underbrace{(\bbP_n - \bbE) \{ \widehat \varphi(Z) - \varphi(Z) \}}_{R_{2,n}} \label{eq:decomp}
    \end{align}
    where $\bbE$ refers to expectation over the estimation \emph{and training} data. The first term in \eqref{eq:decomp} appears in the statement of the lemma, so we manipulate it no further.  
    
    \medskip
    \large    
    \noindent \textbf{$R_{1,n}$ and bounding the bias of $\widehat \psi_n$:}
    
    \normalsize
    \noindent The second term in \eqref{eq:decomp}, $R_{1,n}$, is the bias of the estimator $\widehat \psi_n$.  It is not random.  A simple analysis shows
    \begin{align*}
        \bbE \{ \widehat \varphi(Z) - \varphi(Z) \} &\equiv \bbE \left[ \{ A - \widehat \pi(X) \} \{ Y - \widehat \mu(X) \} - \{ A - \pi(X) \} \{ Y - \mu (X) \} \right] \\
        &= \bbE \big[ \{ A - \widehat \pi(X) \} \{ \mu(X) - \widehat \mu(X) \} + \{ Y - \mu(X) \} \{ \pi(X) - \widehat \pi(X) \} \big] \\
        &= \bbE \big[ \{ \widehat \pi(X) - \pi(X) \} \{ \widehat \mu(X) - \mu(X) \} \big]
    \end{align*}
    where the final line follows by iterated expectations.  By the independence of the training datasets, we have
    $$
    \bbE \big[ \{ \widehat \pi(X) - \pi(X) \} \{ \widehat \mu(X) - \mu(X) \} \big] = \bbE \big[ \bbE \{ \widehat \pi(X) - \pi(X) \mid X \} \bbE \{ \widehat \mu(X) - \mu(X) \mid X \} \big] \leq \lVert b_\pi \rVert_\bbP \lVert b_\mu \rVert_\bbP
    $$
    where the inequality follows by Cauchy-Schwarz and the definition of $b_\eta = \bbE \{ \widehat \eta(X) - \eta(X) \mid X \}$.  
    
    \medskip
    \large    
    \noindent \textbf{$R_{2,n}$ and bounding the variance of $\widehat \psi_n$:}
    
    \normalsize
    \noindent The final term in \eqref{eq:decomp}, $R_{2,n}$, is centered and mean-zero.  The statement in Lemma~\ref{lem:expansion} is implied by Chebyshev's inequality after bounding the variance of $R_{2,n}$.  Thus, the rest of this proof is devoted to a bound on $\bbV (R_{2,n})$, which must account for randomness across both the estimation and training samples.
    
    \medskip
    
    Since $\bbE \{ \widehat \varphi(Z) - \varphi(Z) \}$ is not random, and by successive applications of the law of total variance, we have
    \begin{align}
        \bbV \left[ (\bbP_n - \bbE) \{ \widehat \varphi(Z) - \varphi(Z) \} \right] &= \bbE \left( \bbV \Big[ \bbP_n \{ \widehat \varphi(Z) - \varphi(Z) \} \mid X^n_\varphi, D_\pi, D_\mu \Big] \right) \nonumber \\
        &+ \bbV \left( \bbE \Big[ \bbP_n \{ \widehat \varphi(Z) - \varphi(Z) \} \mid X^n_\varphi, D_\pi, D_\mu \Big] \right) \nonumber \\
        &= \bbE \left( \bbV \Big[ \bbP_n \{ \widehat \varphi(Z) - \varphi(Z) \} \mid X^n_\varphi, D_\pi, D_\mu \Big] \right) \label{eq:app_a1} \\
        &\hspace{0.1in} + \bbE \left\{ \bbV \left( \bbE \Big[ \bbP_n \{ \widehat \varphi(Z) - \varphi(Z) \} \mid X^n_\varphi, D_\pi, D_\mu \Big] \mid X^n_\varphi \right) \right\} \label{eq:app_a2} \\
        &\hspace{0.1in} + \bbV \left\{ \bbE \left( \bbE \Big[ \bbP_n \{ \widehat \varphi(Z) - \varphi(Z) \} \mid X^n_\varphi, D_\pi, D_\mu \Big] \mid X^n_\varphi \right) \right\} \label{eq:app_a3}
    \end{align}
    where $X_\varphi^n$ are the covariates in the estimation data.  Expression \eqref{eq:app_a1} can be upper bounded using the fact that the data are iid and $\bbV(X) \leq \bbE(X^2)$:
    \begin{align*}
        \bbE \left( \bbV \Big[ \bbP_n \{ \widehat \varphi(Z) - \varphi(Z) \} \mid X^n_\varphi, D_\pi, D_\mu \Big] \right) = \bbE \left[ \frac1n \bbV \Big\{ \widehat \varphi(Z) - \varphi(Z) \mid X^n_\varphi, D_\pi, D_\mu \Big\} \right] \leq \frac{\bbE \Big[ \{ \widehat \varphi(Z) - \varphi(Z) \}^2 \Big]}{n}.
    \end{align*}
    Similarly expression \eqref{eq:app_a3} can be upper bounded using linearity of expectation, iid data, and that $\bbV(X) \leq \bbE(X^2)$ and Jensen's inequality:
    \begin{align*}
        \bbV \left\{ \bbE \left( \bbE \Big[ \bbP_n \{ \widehat \varphi(Z) - \varphi(Z) \} \mid X^n_\varphi, D_\pi, D_\mu \Big] \mid X^n_\varphi \right) \right\} &= \bbV \left( \bbE \Big[ \bbP_n \{ \widehat \varphi(Z) - \varphi(Z) \} \mid X^n_\varphi \Big] \right) \\
        &= \bbV \left( \bbP_n \Big[ \bbE \{ \widehat \varphi(Z) - \varphi(Z)\mid X^n_\varphi  \} \Big] \right) \\
        &= \frac{1}{n^2} \sum_{i=1}^{n} \bbV \Big[ \bbE \{ \widehat \varphi(Z_i) - \varphi(Z_i)\mid X^n_\varphi  \} \Big] \\
        &\leq \frac{\bbE \Big[ \{ \widehat \varphi(Z) - \varphi(Z) \}^2 \Big]}{n}.
    \end{align*}
    \noindent Finally, for expression \eqref{eq:app_a2}, by linearity of expectation, and the definition of $\widehat b_\varphi(X_i)$ and $\Sigma_n$, we have 
    \begin{align*}
        \bbE \left\{ \bbV \left( \bbE \Big[ \bbP_n \{ \widehat \varphi(Z) - \varphi(Z) \} \mid X^n_\varphi, D_\pi, D_\mu \Big] \mid X^n_\varphi \right) \right\} &= \bbE \left[ \bbV \left\{ \frac{1}{n} \sum_{i=1}^{n} \widehat b_\varphi(X_i) \mid X_\varphi^n \right\} \right] \\
        &= \frac{1}{n^2} \sum_{i=1}^{n} \sum_{j=1}^{n} \bbE \left[ \cov \left\{ \widehat b_\varphi (X_i), \widehat b_\varphi (X_j) \mid X_\varphi^n \right\} \right] \\
        &= \frac{1}{n^2} \one^T \Sigma_n \one
    \end{align*}
    where $\one$ the $n$-length vector of $1$'s.  Since $\Sigma_n$ is positive semi-definite and symmetric, $\Sigma_n = Q \Lambda Q^T$ where $Q$ is the orthonormal eigenvector matrix and $\Lambda = \text{diag}(\lambda_1, ..., \lambda_n)$ is the diagonal eigenvalue matrix.  Then,
    $$
    \one^T \Sigma_n \one = \one^T Q \Lambda Q^T \one = \sum_{i=1}^{n} \lambda_i \lVert q_i \rVert^2 = \sum_{i=1}^{n} \lambda_i \leq n \rho(\Sigma_n)
    $$
    where the third equality follows because the $q_i$ are normalized, and the inequality follows by the definition of the spectral radius.  Therefore, $\frac{1}{n^2} \one^T \Sigma_n \one \leq \frac1n \rho(\Sigma_n)$, and the result follows.
\end{proof}

\propspectral*

\begin{proof}
    Since the spectral radius of a matrix is less than its Frobenius norm and the data are iid, 
    $$
    \frac{\rho(\Sigma_n)}{n} \leq \frac{1}{n} \bbE \left[ \bbV \left\{ \widehat b_\varphi (X) \mid X_\varphi^n  \right\} \right] + \frac{n-1}{n}  \bbE \left[ \cov_{i\neq j} \left\{ \widehat b_\varphi (X_i), \widehat b_\varphi (X_j) \mid X_\varphi^n  \right\} \right].
    $$
    For the first summand, we have
    $$
    \frac1n  \bbE \left[ \bbV \left\{ \widehat b_\varphi (X) \mid X_\varphi^n  \right\} \right] \leq \frac{\bbE \lVert \widehat \varphi - \varphi \rVert_\bbP^2}{n}
    $$
    because $\bbV(X) \leq \bbE(X^2)$.  For $i \neq j$, we must analyze the covariance term in more detail. Omitting arguments (e.g., $\pi_i \equiv \pi(X_i)$), 
    \begin{align*}
        \bbE &\left[ \cov \left\{ \widehat b_\varphi (X_i), \widehat b_\varphi (X_j) \mid X_\varphi^n \right\} \right] \\
        &= \bbE \left\{ \cov \Big[ \bbE \{ \widehat \varphi(Z_i) - \varphi(Z_i)  \mid X^n_\varphi, D_\pi, D_\mu \}, \bbE \{ \widehat \varphi(Z_j) - \varphi(Z_j)  \mid X^n_\varphi, D_\pi, D_\mu \} \mid X^n_\varphi \Big] \right\} \\
        &= \bbE \left[ \cov \Big\{ ( \widehat \pi_i - \pi_i ) ( \widehat \mu_i - \mu_i ), (\widehat \pi_j - \pi_j) (\widehat \mu_j - \mu_j) \mid X_i, X_j \Big\} \right] \\
        &= \bbE \Big[ \bbE \Big\{ ( \widehat \pi_i - \pi_i ) ( \widehat \mu_i - \mu_i ) (\widehat \pi_j - \pi_j) (\widehat \mu_j - \mu_j) \mid X_i, X_j \Big\} \\
        &\hspace{1in} - \bbE \Big\{ ( \widehat \pi_i - \pi_i ) ( \widehat \mu_i - \mu_i ) \mid X_i, X_j \Big\} \bbE \Big\{ (\widehat \pi_j - \pi_j) (\widehat \mu_j - \mu_j) \mid X_i, X_j \Big\} \Big] \\
        &= \bbE \left[ \bbE \Big\{ ( \widehat \pi_i - \pi_i ) (\widehat \pi_j - \pi_j) \mid X_i, X_j \Big\} \bbE \Big\{ ( \widehat \mu_i - \mu_i )(\widehat \mu_j - \mu_j) \mid X_i, X_j \Big\} \right] \\
        &\hspace{0.5in} - \bbE \left\{ \bbE \big( \widehat \pi_i - \pi_i  \mid X_i \big) \bbE \big( \widehat \mu_i - \mu_i \mid X_i \big) \bbE \big( \widehat \pi_j - \pi_j \mid X_j \big) \bbE \big( \widehat \mu_j - \mu_j \mid X_j \big) \right\} \\
        &\hspace{-0.2in}= \bbE \Big[ \left\{ \cov \Big( \widehat \pi_i, \widehat \pi_j \mid X_i, X_j \Big) + \bbE \Big( \widehat \pi_i - \pi_i \mid X_i \Big) \bbE \Big( \widehat \pi_j - \pi_j \mid X_j \Big) \right\} \cdot \\
        &\hspace{0.2in}\left\{ \cov \Big( \widehat \mu_i, \widehat \mu_j \mid X_i, X_j \Big) + \bbE \Big( \widehat \mu_i - \mu_i \mid X_i \Big) \bbE \Big( \widehat \mu_j - \mu_j \mid X_j \Big) \right\}  \Big] \\
        &- \bbE \left\{ \bbE \big( \widehat \pi_i - \pi_i  \mid X_i \big) \bbE \big( \widehat \mu_i - \mu_i \mid X_i \big) \bbE \big( \widehat \pi_j - \pi_j \mid X_j \big) \bbE \big( \widehat \mu_j - \mu_j \mid X_j \big) \right\} \\
        &\hspace{-0.2in} = \bbE \left\{ \cov \Big( \widehat \pi_i, \widehat \pi_j \mid X_i, X_j \Big)  \bbE (\widehat \mu_i - \mu_i \mid X_i) \bbE(\widehat \mu_j - \mu_j \mid X_j) \right\} + \stepcounter{equation}\tag{\theequation}\label{eq:three_summands1} \\
        &+ \bbE \left\{ \cov \Big( \widehat \mu_i, \widehat \mu_j \mid X_i, X_j \Big)  \bbE (\widehat \pi_i - \pi_i \mid X_i) \bbE( \widehat \pi_j - \pi_j \mid X_j) \right\} \stepcounter{equation}\tag{\theequation}\label{eq:three_summands2} \\
        &+ \bbE \left\{ \cov \Big( \widehat \pi_i, \widehat \pi_j \mid X_i, X_j \Big)  \cov \Big( \widehat \mu_i, \widehat \mu_j \mid X_i, X_j \Big) \right\} \stepcounter{equation}\tag{\theequation}\label{eq:three_summands3}
    \end{align*}
    where the first equality follows by definition, the second and third by the definition of $\widehat \varphi, \varphi$, and covariance, the fourth by the independence of the training datasets, the fifth again by the definition of covariance and because $\pi_i, \pi_j, \mu_i, \mu_j$ are not random conditional on $X_i, X_j$, and the final line by canceling terms. 
    
    \medskip
    
    For \eqref{eq:three_summands1}, 
    \begin{align*}
        &\bbE \left\{ \cov \Big( \widehat \pi_i, \widehat \pi_j \mid X_i, X_j \Big)   \bbE (\widehat \mu_i - \mu_i \mid X_i) \bbE(\widehat \mu_j - \mu_j \mid X_j)  \right\} \leq \\
        &\hspace{1.5in} \bbE \left[ \left| \cov \Big\{ \widehat \pi(X_i), \widehat \pi(X_j) \mid X_i, X_j \Big\} \right| \right] \sup_{x_i, x_j \in \mathcal{X}} \Big| \bbE \{ \widehat \mu(x_i) - \mu(x_i) \} \bbE \{ \widehat \mu(x_j) - \mu(x_j) \} \Big| \\
        &\hspace{1.5in}= \bbE \left[ \left| \cov \Big\{ \widehat \pi(X_i), \widehat \pi(X_j) \mid X_i, X_j \Big\} \right| \right] \left\{ \sup_{x \in \mathcal{X}} \Big| \bbE \{ \widehat \mu(x) - \mu(x) \} \Big|\right\}^2 \\
        &\hspace{1.5in}\leq \bbE \left[ \left| \cov \Big\{ \widehat \pi(X_i), \widehat \pi(X_j) \mid X_i, X_j \Big\} \right| \right] \sup_{x \in \mathcal{X}} \bbE \{ \widehat \mu(x) - \mu(x) \}^2 \\
        &\hspace{1.5in} \equiv  \bbE \left[ \left| \cov \Big\{ \widehat \pi(X_i), \widehat \pi(X_j) \mid X_i, X_j \Big\} \right| \right] \lVert b_\mu^2 \rVert_\infty
    \end{align*}
    where the first inequality is H\"{o}lder's inequality, the second because $|ab| = |a||b|$, the penultimate by Jensen's inequality, and the final by the definition of $\lVert b_\mu \rVert_\infty$. The same result applies for \eqref{eq:three_summands2} with $\mu$ and $\pi$ swapped.  Next, notice that, 
    \begin{align*}
        \cov \Big\{ \widehat \pi(X_i), \widehat \pi(X_j) \mid X_i, X_j \Big\} &= \bbE \left( \Big[ \widehat \pi(X_i) - \bbE \{ \widehat \pi(X_i) \mid X_i, X_j \} \Big] \Big[ \widehat \pi(X_j) - \bbE \{ \widehat \pi(X_j) \mid X_i, X_j \} \Big] \mid X_i, X_j \right)    \\
        &= \bbE \left( \Big[ \widehat \pi(X_i) - \bbE \{ \widehat \pi(X_i) \mid X_i \} \Big] \Big[ \widehat \pi(X_j) - \bbE \{ \widehat \pi(X_j) \mid X_j \} \Big] \mid X_i, X_j \right) \\ 
        &\leq \sqrt{ \bbE \left( \Big[ \widehat \pi(X_i) - \bbE \{ \widehat \pi(X_i) \mid X_i \} \Big]^2 \mid X_i \right) \bbE \left( \Big[ \widehat \pi(X_j) - \bbE \{ \widehat \pi(X_j) \mid X_j \} \Big]^2 \mid X_j \right)   } \\
        &= \sqrt{ \bbV \{ \widehat \pi(X_i) \mid X_i \} \bbV \{ \widehat \pi(X_j) \mid X_j \} } 
    \end{align*}
    where the first line follows by definition, the second because $\widehat \pi(X_i) \ind X_j$ for $X_i \neq X_j$, the third by Cauchy-Schwarz, and the fourth by the definition of the variance. Therefore, for \eqref{eq:three_summands3},  
    \begin{align*}
        \bbE \left\{ \cov \Big( \widehat \pi_i, \widehat \pi_j \mid X_i, X_j \Big)  \cov \Big( \widehat \mu_i, \widehat \mu_j \mid X_i, X_j \Big) \right\} &\leq 
        \bbE \left\{ \sqrt{ \bbV \{ \widehat \pi(X_i) \mid X_i \} \bbV \{ \widehat \pi(X_j) \mid X_j \} } \left| \cov \Big( \widehat \mu_i, \widehat \mu_j \mid X_i, X_j \Big) \right| \right\} \\
        &\leq \sup_{x_i, x_j \in \mathcal{X}} \sqrt{ \bbV \{ \widehat \pi(x_i) \} \bbV \{ \widehat \pi(x_j) \} } \bbE \left\{ \left| \cov \Big( \widehat \mu_i, \widehat \mu_j \mid X_i, X_j \Big) \right| \right\} \\
        &= \sup_{x}  \bbV \{ \widehat \pi(x) \} \bbE \left\{ \left| \cov \Big( \widehat \mu_i, \widehat \mu_j \mid X_i, X_j \Big) \right| \right\} \\
        &\equiv \lVert s_\pi^2 \rVert_\infty \bbE \left\{ \left| \cov \Big( \widehat \mu_i, \widehat \mu_j \mid X_i, X_j \Big) \right| \right\} 
    \end{align*}
    where the first line follows by H\"{o}lder's inequality, the second by the argument in the previous paragraph, the third because $|ab| = |a||b|$, and the last line follows by definition of $\lVert s_\pi^2 \rVert_\infty$. 
    
    \medskip
    
    The result in Proposition~\ref{prop:spectral} follows by repeating the process in the previous paragraph with the roles of $\pi$ and $\mu$ reversed. In fact, Proposition~\ref{prop:spectral} can be improved because we can take the minimum rather than the sum of the variances at the final step so that
    \begin{align}
        \frac{\rho(\Sigma_n)}{n} \leq \frac{\bbE \lVert \widehat \varphi - \varphi \rVert_\bbP^2}{n} &+ \lVert b_{\pi}^2 \rVert_\infty \bbE \Big[ \left| \cov \{ \widehat \mu(X_i), \widehat \mu(X_j) \mid X_i, X_j \} \right| \Big] + \lVert b_{\mu}^2 \rVert_\infty \bbE \Big[ \left| \cov \{ \widehat \pi(X_i), \widehat \pi(X_j) \mid X_i, X_j \} \right| \Big] \nonumber \\
        &\hspace{-1.2in}+ \min \left( \lVert s_\pi^2 \rVert_\infty \bbE \Big[ \big| \cov \{ \widehat \mu(X_i), \widehat \mu(X_j) \mid X_i, X_j \} \big| \Big], \lVert s_\mu^2 \rVert_\infty \bbE \Big[ \big| \cov \{ \widehat \pi(X_i), \widehat \pi(X_j) \mid X_i, X_j \} \big| \Big] \right). \label{eq:spectral_minimum}
    \end{align}
    Proposition~\ref{prop:spectral} follows because the minimum in \eqref{eq:spectral_minimum} is upper bounded by the sum. We will also use \eqref{eq:spectral_minimum} subsequently, referring to it in the proof of Theorems~\ref{thm:minimax} and~\ref{thm:inference}.
\end{proof}

\section{k-Nearest-Neighbors and local polynomial regression} \label{app:nf_ests}

In Sections~\ref{sec:unknown}, we defined two linear smoother estimators.  In this section, we state and prove several results for each estimator, including bounds on their bias and variance, as well as bounds on their expected absolute covariance, $\bbE \left[ \left| \cov \left\{ \widehat \eta(X_i), \widehat \eta(X_j) \mid X_i, X_j \right\} \right| \right]$. In the following, we state and prove the results for $Y$ and $\mu(X)$.  All results also apply to $A$ and $\pi(X)$.

\subsection{k-Nearest-Neighbors}

The analysis of the bias of the k-Nearest-Neighbors estimator relies on control of the nearest neighbor distance.  The nearest neighbor distance is well understood, and general results can be be found in, for example, Chapter 6 of \citet{gyorfi2002distribution}, Chapter 2 of \citet{biau2015lectures}, and \citet{dasgupta2021nearest}. By leveraging Assumption~\ref{asmp:bdd_density}, that the density is upper and lower bounded (which is a stronger assumption than generally required), we provide a simple result that is sufficient for our subsequent analysis, which uses similar techniques to those in the proof of Lemma 6.4 (and Problem 6.7) in \citet{gyorfi2002distribution}.
\begin{lemma} \label{lem:knn_distance}
    Suppose  we observe $\{ X_i \}_{i=1}^{n}$ sampled iid from a distribution satisfying Assumption~\ref{asmp:bdd_density}.  Then, for $0 < p \leq 2d$ and $x \in \mathcal{X}$,
    \begin{equation}
        \bbE \lVert X_{(1)} (x) - x \rVert^p \lesssim n^{-p/d}.
    \end{equation}
\end{lemma}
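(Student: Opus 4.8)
The plan is to bound $\bbE\lVert X_{(1)}(x) - x\rVert^p$ through the tail-integral (layer-cake) representation of a $p$-th moment, which reduces the problem to controlling the probability that no sample lands in a small ball around $x$. Write $R = \lVert X_{(1)}(x) - x\rVert$, and note that $R \le \mathrm{diam}(\mathcal{X}) =: D < \infty$ by the compactness of the support in Assumption~\ref{asmp:bdd_density}. Then
$$
\bbE[R^p] = \int_0^D p\, r^{p-1}\, \bbP(R > r)\, dr .
$$
Because the $X_i$ are iid, the event $\{R > r\}$ is precisely the event that every sample avoids the ball $B(x,r)$, so $\bbP(R > r) = \big(1 - \mu(B(x,r))\big)^n$, where $\mu(B(x,r)) = \bbP\{X \in B(x,r)\}$ denotes the mass of the ball. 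Everything then hinges on a lower bound for this mass.

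The key step, and the main obstacle, is a uniform small-ball lower bound: there exist constants $a > 0$ and $r_0 > 0$ with $\mu(B(x,r)) \ge a\, r^d$ for all $x \in \mathcal{X}$ and $0 \le r \le r_0$. The lower density bound $f \ge c$ from Assumption~\ref{asmp:bdd_density} reduces this to showing $\mathrm{Leb}(B(x,r) \cap \mathcal{X}) \gtrsim r^d$. For interior $x$ this is immediate, since $B(x,r) \subseteq \mathcal{X}$ once $r$ is small; the delicate case is $x$ near the boundary, where one needs $\mathcal{X}$ to be regular enough (e.g. to satisfy an interior cone condition) that a fixed fraction of every small ball stays inside the support. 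This is exactly where the shape of the compact set $\mathcal{X}$ enters, following the treatment of Lemma 6.4 in \citet{gyorfi2002distribution}. Combining the interior and boundary regimes with monotonicity of $\mu(B(x,\cdot))$ then yields $\mu(B(x,r)) \ge a\,\min(r,r_0)^d$ for all $r \ge 0$.

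With the mass bound established, the remainder is a routine computation. Using $1 - u \le e^{-u}$ gives $\bbP(R > r) \le \exp(-n\, a\, \min(r,r_0)^d)$, and I would split the integral at $r_0$. On $[0,r_0]$ the substitution $w = r^d$ and the Gamma integral give
$$
\int_0^{r_0} p\, r^{p-1} e^{-n a r^d}\, dr \le \int_0^\infty p\, r^{p-1} e^{-n a r^d}\, dr = \frac{p}{d}\,\Gamma(p/d)\,(na)^{-p/d} \lesssim n^{-p/d}.
$$
On $[r_0, D]$ the exponent is the constant $-n a r_0^d$, so
$$
\int_{r_0}^D p\, r^{p-1} e^{-n a r_0^d}\, dr \le D^p\, e^{-n a r_0^d},
$$
which decays exponentially in $n$ and is thus $o(n^{-p/d})$. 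Adding the two pieces gives $\bbE[R^p] \lesssim n^{-p/d}$, as claimed. I note that the argument works for any $p > 0$ (the factor $\Gamma(p/d)$ stays finite), so the restriction $0 < p \le 2d$ is not needed here and merely records the range of moments invoked elsewhere in the paper.
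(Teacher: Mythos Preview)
Your proposal is correct and follows essentially the same approach as the paper: both use the tail-integral representation of the $p$-th moment, the iid factorization $\bbP(R>r)=(1-\mu(B(x,r)))^n$, the small-ball lower bound $\mu(B(x,r))\gtrsim r^d$ from Assumption~\ref{asmp:bdd_density}, the inequality $1-u\le e^{-u}$, and a Gamma-integral computation. Your treatment is slightly more careful about the boundary (via an interior-cone argument) whereas the paper simply asserts the small-ball bound, and your observation that the constraint $p\le 2d$ is not actually used is accurate.
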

\begin{proof}
    Let $B_r(x)$ denote a ball of radius $r$ centered at $x$. Then,
    \begin{align*}
        \bbE \lVert X_{(1)} (x) - x \rVert^p &= \int_{0}^{\infty} \bbP \left\{ \lVert X_{(1)} (x) - x \rVert^p > t \right\} dt \\
        &= \int_{0}^{\infty} \bbP \left\{ \lVert X_{(1)} (x) - x \rVert > t^{1/p} \right\} dt \\
        &= \int_{0}^{\infty} \bbP \left\{ \lVert X - x \rVert > t^{1/p} \right\}^n dt \\
        &= \int_{0}^{\infty} \Big[ 1 - \bbP \left\{ X \in B_{t^{1/p}}(x) \right\} \Big]^n dt 
    \end{align*}
    where the third line follows because the observations $\{X_i\}_{i=1}^{n}$ are iid.  Then, by Assumption~\ref{asmp:bdd_density}, for all $r > 0$, $\bbP \{ X \in B_r (x) \} \geq cKr^d \wedge 1$, where $c$ is the lower bound on the density and $K$ is a constant arising from the volume of the d-dimensional sphere.  Therefore,
    \begin{align*}
        \int_{0}^{\infty} \Big[ 1 - \bbP \left\{ X \in B_{t^{1/p}}(x) \right\} \Big]^n dt &\leq \int_{0}^{\infty} \Big\{ \left(1 - cK t^{d/p} \right) \vee 0 \Big\}^n dt \\
        &= \int_{0}^{(cK)^{-p/d}} \left(1 - cK t^{d/p} \right)^n dt \\
        &\leq \int_{0}^{(cK)^{-p/d}} \exp \left(- cK nt^{d/p} \right) dt \\
        &\leq \int_{0}^{\infty} \exp \left(- cK nt^{d/p} \right) dt.
    \end{align*}
    where the penultimate line follows because $1-x \leq e^{-x}$ and the final line because $e^{-x} > 0$.
    
    \medskip
    
    Next, notice that
    \begin{align*}
        \int_{0}^{\infty} \exp \left(- cK nt^{d/p} \right) dt &= -(cKn)^{-p/d} \frac{\Gamma( p/d, cKnt^{d/p})}{d / p} \bigg|_0^\infty \\
        &\lesssim n^{-p/d}
    \end{align*}
    where the first line follows from standard rules of integration and where $\Gamma(s, t)$ is the incomplete gamma function, which satisfies $\Gamma(s,x) = \int_x^\infty t^{s-1} e^{-t} dt$, and the second line follows because $\Gamma(p/d, \infty) = 0$ while $\Gamma(p/d, 0), d/p$, and $cK$ are constants that do not depend on $n$.  Therefore,
    \begin{equation}
        \bbE \lVert X_{(1)} (x) - x \rVert^p \lesssim n^{-p/d}.
    \end{equation}
\end{proof}

\noindent The next result provides pointwise bias and variance bounds for the k-Nearest-Neighbors estimator. Notice that the variance scales at the mean squared error rate due to the randomness over the training data .

\begin{lemma} \label{lem:knn_bounds}
    \emph{(k-Nearest-Neighbors Bounds)} Suppose Assumptions~\ref{asmp:dgp},~\ref{asmp:bdd_density} and~\ref{asmp:holder} hold.  Then, if $\widehat \mu(x)$ is a k-Nearest-Neighbors estimator (Estimator~\ref{est:knn}) for $\mu(x)$ constructed on $D_\mu$, 
    \begin{align}
        \sup_{x \in \mathcal{X}} \left| \bbE \{ \widehat \mu(x) - \mu(x) \} \right| &\lesssim \left( \frac{n}{k} \right)^{- \frac{\beta \wedge 1}{d}} \text{ and } \label{eq:knn_bias} \\
        \sup_{x \in \mathcal{X}} \bbV \{ \widehat \mu(x)  \} &\lesssim \frac1k + \left( \frac{n}{k} \right)^{- \frac{2(\beta \wedge 1)}{d}}. \label{eq:knn_var}
    \end{align}
\end{lemma}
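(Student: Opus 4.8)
The entire lemma reduces to a single moment bound on the $k$-th nearest neighbor distance, which extends Lemma~\ref{lem:knn_distance} from the first ($k=1$) to the $k$-th neighbor. The plan is to first establish that, uniformly in $x \in \mathcal{X}$,
\begin{equation} \label{eq:knn_k_dist}
    \bbE \lVert X_{(k)}(x) - x \rVert^p \lesssim \left( \frac{n}{k} \right)^{-p/d} \qquad \text{for } 0 < p \leq 2d,
\end{equation}
and then to read off the bias bound \eqref{eq:knn_bias} by applying \eqref{eq:knn_k_dist} with $p = \beta \wedge 1$, and the ``location'' part of the variance bound \eqref{eq:knn_var} by applying it with $p = 2(\beta \wedge 1)$.

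\medskip

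\textbf{Main step: the distance bound \eqref{eq:knn_k_dist}.} This is the crux, and the main obstacle. Writing the moment as a tail integral, $\bbE \lVert X_{(k)}(x) - x \rVert^p = \int_0^\infty \bbP\{ \lVert X_{(k)}(x) - x \rVert > r \}\, p r^{p-1}\, dr$, the event $\{ \lVert X_{(k)}(x) - x \rVert > r \}$ is exactly the event that fewer than $k$ of the $n$ training covariates land in $B_r(x)$; that is, $\bbP\{ \lVert X_{(k)}(x) - x \rVert > r \} = \bbP(S_r < k)$ where $S_r \sim \mathrm{Bin}(n, p_r)$ and $p_r = \bbP\{ X \in B_r(x) \} \geq cKr^d \wedge 1$ by Assumption~\ref{asmp:bdd_density}, as in Lemma~\ref{lem:knn_distance}. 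I would split the integral at $r_0 \asymp (k/n)^{1/d}$, chosen so that $n p_{r_0} \geq 2k$. For $r \leq r_0$ I bound the tail probability by $1$, contributing $\lesssim r_0^p \asymp (n/k)^{-p/d}$. For $r > r_0$ the binomial mean $n p_r \geq 2k$, so a multiplicative Chernoff bound for the lower tail gives $\bbP(S_r < k) \leq \exp(-n p_r / 8) \leq \exp(-c' n r^d)$, and the resulting gamma-type integral scales like $n^{-p/d} \leq (n/k)^{-p/d}$. The delicate point is keeping the small-ball estimate $p_r \gtrsim r^d \wedge 1$ uniform in $x$ despite boundary effects, which I would handle exactly as in the proof of Lemma~\ref{lem:knn_distance}.

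\medskip

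\textbf{Bias \eqref{eq:knn_bias}.} Taking expectation over the outcome noise first and using $\bbE(Y_i \mid X_i) = \mu(X_i)$, the bias equals $\bbE[ \frac{1}{k} \sum_{i \in N_k(x)} \{ \mu(X_i) - \mu(x) \} ]$, where $N_k(x)$ indexes the $k$ nearest neighbors. Every neighbor satisfies $\lVert X_i - x \rVert \leq \lVert X_{(k)}(x) - x \rVert$, so Assumption~\ref{asmp:holder} (of which k-Nearest-Neighbors can only exploit the Lipschitz part, hence the exponent $\beta \wedge 1$) gives $|\mu(X_i) - \mu(x)| \lesssim \lVert X_{(k)}(x) - x \rVert^{\beta \wedge 1}$; the triangle inequality followed by \eqref{eq:knn_k_dist} with $p = \beta \wedge 1$ yields $\sup_x |\bbE\{\widehat \mu(x) - \mu(x)\}| \lesssim (n/k)^{-(\beta \wedge 1)/d}$.

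\medskip

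\textbf{Variance \eqref{eq:knn_var}.} I would apply the law of total variance conditioning on the training covariates $X_\mu^n$. The inner (noise) term is $\bbV\{\widehat \mu(x) \mid X_\mu^n\} = \frac{1}{k^2} \sum_{i \in N_k(x)} \bbV(Y_i \mid X_i) \lesssim 1/k$ by the bounded conditional second moment in Assumption~\ref{asmp:dgp}, so its expectation is $\lesssim 1/k$. For the outer term, $\bbE\{\widehat \mu(x) \mid X_\mu^n\} = \mu(x) + \frac{1}{k}\sum_{i \in N_k(x)}\{\mu(X_i) - \mu(x)\}$, so using $\bbV(W) \leq \bbE(W^2)$, Jensen, and the same Hölder bound as above, its variance is $\lesssim \bbE \lVert X_{(k)}(x) - x \rVert^{2(\beta \wedge 1)} \lesssim (n/k)^{-2(\beta \wedge 1)/d}$ by \eqref{eq:knn_k_dist} with $p = 2(\beta \wedge 1)$. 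Summing the two contributions gives \eqref{eq:knn_var}; this reuse of \eqref{eq:knn_k_dist} is exactly the source of the variance scaling at the mean-squared-error rate noted before the lemma.
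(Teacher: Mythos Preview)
Your proposal is correct, but it takes a genuinely different route to the key distance bound than the paper does. You prove a new $k$-th--neighbor moment bound \eqref{eq:knn_k_dist} directly via a binomial Chernoff argument on $S_r = \sum_i \one\{X_i \in B_r(x)\}$. The paper instead never bounds $\bbE\lVert X_{(k)}(x)-x\rVert^p$ at all: it writes the bias as $\frac{1}{k}\sum_{j=1}^k \bbE\lVert X_{(j)}(x)-x\rVert^{\beta\wedge 1}$, then splits the $n$ training covariates into $k$ disjoint blocks of size $\lfloor n/k\rfloor$ and uses the deterministic inequality $\lVert X_{(j)}(x)-x\rVert \leq \lVert \widetilde X_{(1)}^{\,j}(x)-x\rVert$, where $\widetilde X_{(1)}^{\,j}$ is the first nearest neighbor within block $j$. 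This reduces everything to the already--proved Lemma~\ref{lem:knn_distance} (the $k=1$ case) applied with sample size $\lfloor n/k\rfloor$, giving $(n/k)^{-(\beta\wedge 1)/d}$ with no new concentration argument. Your approach is more self-contained and gives the slightly stronger statement \eqref{eq:knn_k_dist} as a by-product; the paper's splitting trick is shorter and avoids any new tail machinery by leveraging the existing first--neighbor lemma. The variance argument (law of total variance, inner noise term $\lesssim 1/k$, outer term bounded by the squared conditional bias) is essentially identical in both.
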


\begin{proof}
    We prove the bounds for generic $x$, and the supremum bounds will follow because $\mathcal{X}$ is assumed compact in Assumption~\ref{asmp:bdd_density}. Note that, if $\mu \in \text{H\"{o}lder}(\beta)$ for $\beta > 1$ then $\mu \in \text{H\"{o}lder}(1)$ (in other words, $\mu$ is Lipschitz). For the bias in  \eqref{eq:knn_bias}, we have
    \begin{align*}
        \left| \bbE \{ \widehat \mu(x) - \mu(x) \} \right| &= \left| \bbE \left\{ \frac1k \sum_{i=1}^{n} \one \Big( \lVert X_i - x \rVert \leq \lVert X_{(k)} (x) - x \rVert \Big) Y_i - \mu(x) \right\} \right| \\
        &= \left| \frac1k \sum_{j=1}^{k} \bbE \left[ \mu \{ X_{(j)} (x) \} - \mu(x)  \right] \right| \\
        &\lesssim  \left|\frac1k \sum_{j=1}^{k} \bbE \{ \lVert X_{(j)}(x) - x \rVert^{\beta \wedge 1}  \} \right| \\
        &\leq \frac1k \sum_{j=1}^{k} \bbE \lVert X_{(j)}(x) - x \rVert^{\beta \wedge 1}
    \end{align*}
    where the first line follows by definition, the second by iterated expectations on the training covariates and then by definition, the first inequality by the smoothness assumption on $\mu$, and the second by Jensen's inequality. 
    
    \medskip
    
    For $k = 1$, one can invoke Lemma~\ref{lem:knn_distance} directly, giving
    \begin{equation}
        \left| \bbE \{ \widehat \mu(x) - \mu(x) \} \right|  \leq n^{\frac{-\beta \wedge 1}{d}}.
    \end{equation}
    Otherwise, split the $n$ datapoints into $k+ 1$ subsets, where the first $k$ subsets are of size $\lfloor n / k \rfloor$. Let $\widetilde{X}_{(1)}^j (x) $ denote the nearest neighbor to $x$ in the $j$th split.  Then, the following deterministic inequality holds:
    $$
    \frac1k \sum_{j=1}^{k} \bbE \lVert X_{(j)}(x) - x \rVert^{\beta \wedge 1} \leq \frac{1}{k} \sum_{j=1}^{k} \bbE\lVert \widetilde{X}_{(1)}^j (x) - x \rVert^{\beta \wedge 1}.
    $$
    Thus, applying Lemma~\ref{lem:knn_distance} to $\bbE\lVert \widetilde{X}_{(1)}^j (x) - x \rVert^{\beta \wedge 1}$ yields
    \begin{equation}
        \left| \bbE \{ \widehat \mu(x) - \mu(x) \} \right|  \lesssim \left( \lfloor n / k \rfloor \right)^{\frac{-\beta \wedge 1}{d}} \asymp \left( n / k \right)^{\frac{-\beta \wedge 1}{d}}.
    \end{equation}
    For the variance in \eqref{eq:knn_var}, we have
    \begin{align*}
        \bbV \{ \widehat \mu(x) \} &= \bbV \big[ \bbE \{ \widehat \mu(x) \mid X_\mu^n \} \big] + \bbE \big[ \bbV \{ \widehat \mu(x) \mid X_\mu^n \} \big] \\
        &= \bbV \big[ \bbE \{ \widehat \mu(x) - \mu(x) \mid X_\mu^n \} \big] + \bbE \big[ \bbV \{ \widehat \mu(x) \mid X_\mu^n \} \big] \\
        &\leq \bbE \big[ \bbE \{ \widehat \mu(x) - \mu(x) \mid X_\mu^n \}^2 \big] + \bbE \big[ \bbV \{ \widehat \mu(x) \mid X_\mu^n \} \big] \\
        &\lesssim  \left( \frac{n}{k} \right)^{- \frac{2(\beta \wedge 1)}{d}}  + \frac{1}{k}
    \end{align*}
    where the first line follows by the law of total variance, the second because $\mu(x)$ is non-random, the third because $\bbV(X) \leq \bbE(X^2)$, the fourth by the bound on the bias, and the final line because $\{ Y_1, \dots, Y_n \}$ are independent conditional on $X_\mu^n$ and have bounded conditional variance by Assumption~\ref{asmp:dgp}.
    
    \medskip
    
    \noindent The supremum bound follows since the proof holds for arbitrary $x$ and $\mathcal{X}$ is compact by Assumption~\ref{asmp:bdd_density}.
\end{proof}

\noindent The final result of this section provides a bound on the covariance term that appears in Proposition~\ref{prop:spectral} and Lemma~\ref{lem:covariance}.

\begin{lemma} \label{lem:knn_covariance} 
    \emph{(k-Nearest-Neighbors covariance bound)} Suppose Assumptions~\ref{asmp:dgp} and~\ref{asmp:bdd_density} hold and $\widehat \mu(x)$ is a k-Nearest-Neighbors estimator (Estimator~\ref{est:knn}) for $\mu(x)$ constructed on $D_\mu$.  Then,
    $$
    \bbE \big[ \left| \cov \{ \widehat \mu(X_i), \widehat \mu(X_j) \mid X_i, X_j \} \right| \big] \lesssim \left\{ \frac1k + \left( \frac{n}{k} \right)^{- \frac{2(\beta \wedge 1)}{d}} \right\} \left( \frac{k}{n} \right).
    $$
\end{lemma}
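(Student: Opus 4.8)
The plan is to fix the two test points $x_i, x_j$, treat the covariance as being taken over the training sample $D_\mu$, and exploit the fact that the $k$-nearest-neighbor estimator at $x$ depends on $D_\mu$ only through the training observations falling in a ball of radius $R(x) = \lVert X_{(k)}(x) - x\rVert$ around $x$. Since $R(x) \asymp (k/n)^{1/d}$ with high probability, the two estimators should interact only when $x_i$ and $x_j$ are within a distance $r_n \asymp (k/n)^{1/d}$ of each other, and the probability that two independent test points are this close is $\lesssim r_n^d \asymp k/n$ by Assumption~\ref{asmp:bdd_density}. This $k/n$ factor, multiplied by the global Cauchy--Schwarz bound $|\cov| \le \sup_x \bbV\{\widehat\mu(x)\}$ (which Lemma~\ref{lem:knn_bounds} controls by $\frac1k + (n/k)^{-2(\beta\wedge1)/d}$), is exactly the claimed bound.

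Concretely, I would first apply the law of total covariance, conditioning additionally on the training covariates $X^n_\mu$, to split $\cov\{\widehat\mu(x_i),\widehat\mu(x_j)\mid x_i,x_j\}$ into an outcome-noise term $T_1 = \bbE[\cov\{\widehat\mu(x_i),\widehat\mu(x_j)\mid X^n_\mu\}]$ and a conditional-mean term $T_2 = \cov\{m(x_i;X^n_\mu), m(x_j;X^n_\mu)\}$, where $m(x;X^n_\mu) = \frac1k\sum_{\ell\in N(x)}\mu(X_\ell)$ and $N(x)$ is the index set of the $k$ nearest neighbors. Conditional on $X^n_\mu$ the outcomes are independent with conditionally bounded variance $\bar\sigma^2$ (Assumption~\ref{asmp:dgp}), so $T_1 = \bbE[\frac{1}{k^2}\sum_{\ell\in N(x_i)\cap N(x_j)}\bbV(Y_\ell\mid X_\ell)] \le \frac{\bar\sigma^2}{k}\,\bbP\{N(x_i)\cap N(x_j)\neq\emptyset\}$, using $|N(x_i)\cap N(x_j)|\le k$.

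Next I would bound the overlap probability. The neighborhoods can intersect only if $\lVert x_i - x_j\rVert \le R(x_i)+R(x_j)$; introducing the threshold $r_n\asymp(k/n)^{1/d}$ gives $\bbP\{N(x_i)\cap N(x_j)\neq\emptyset\}\le \one(\lVert x_i-x_j\rVert\le 2r_n) + \bbP\{R(x_i)>r_n\} + \bbP\{R(x_j)>r_n\}$. Because the density is bounded below (Assumption~\ref{asmp:bdd_density}), $\bbE[\#\{X_\ell\in B(x,r_n)\}]\gtrsim k$, so a binomial/Chernoff tail bound makes $\bbP\{R(x)>r_n\}$ exponentially small in $k$. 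For $T_2$, writing $m(x;X^n_\mu)=\mu(x)+\frac1k\sum_{\ell\in N(x)}\{\mu(X_\ell)-\mu(x)\}$ removes a deterministic term, and H\"older smoothness bounds each residual by $\lVert X_\ell - x\rVert^{\beta\wedge1}\lesssim R(x)^{\beta\wedge1}$, so that $\sup_x\bbV\{m(x;X^n_\mu)\}\lesssim (n/k)^{-2(\beta\wedge1)/d}$. Finally, integrating over the independent test points, $\bbE_{X_i,X_j}\one(\lVert X_i-X_j\rVert\le 2r_n)\lesssim r_n^d\asymp k/n$, and combining the close-regime bound $\sup_x\bbV\{\widehat\mu(x)\}\cdot\frac kn$ with the exponentially small rare-radius contributions yields the stated bound.

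The hard part is the conditional-mean term $T_2$ for well-separated test points. Although disjoint neighborhoods make the two local averages depend on training points in disjoint balls, they remain functions of the same $n$ iid draws, so they are not exactly independent --- a simple order-statistic example ($k=1$, with $x_i,x_j$ at opposite ends of the support) shows $T_2\neq0$ even when the neighborhoods never share a point. Controlling this residual dependence so that the far-field contribution of $T_2$ is lower order than $(n/k)^{-2(\beta\wedge1)/d}\cdot\frac kn$ requires more than disjointness: one must quantify the weak dependence induced by the shared global configuration, for example via concentration of $R(x)$ together with a binomial-to-Poisson coupling (or a negative-association argument) for the occupancy of disjoint regions. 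I expect this weak-dependence bound, rather than the overlap and variance bookkeeping, to be where the real work lies.
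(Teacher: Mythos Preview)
Your route differs from the paper's. The paper does not split via the law of total covariance; it simply asserts that $\cov\{\widehat\mu(X_i),\widehat\mu(X_j)\mid X_i,X_j\}=0$ on the event $\|X_i-X_j\|>\|X_i-X_{(2k)}(X_i)\|$, bounds by $\sup_x\bbV\{\widehat\mu(x)\}$ (from Lemma~\ref{lem:knn_bounds}) on the complement, and computes the probability of the complement to be exactly $\frac{2k}{n+1}$ by exchangeability: since $X_j$ and the $n$ training points are i.i.d.\ given $X_i$, the chance that $X_j$ ranks among the closest $2k$ of them is $\frac{2k}{n+1}$. This random-threshold-plus-symmetry device replaces your deterministic $r_n$ and Chernoff tail with a one-line argument and avoids any explicit concentration of $R(x)$.

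Your self-identified gap is genuine, and the paper's proof does not actually resolve it either. The vanishing-covariance assertion on the far event is only obvious for the outcome-noise piece (your $T_1$): conditional on $X^n_\mu$, disjoint neighbor sets draw on independent $Y$'s. For the conditional-mean piece (your $T_2$), the two local averages $m(x_i;X^n_\mu)$ and $m(x_j;X^n_\mu)$ remain functions of the same order statistics of $X^n_\mu$, and your $k=1$ endpoint example already shows the covariance is nonzero --- for $\mu(x)=x$ on $[0,1]$ one gets $\cov(X_{\min},X_{\max})=\frac{1}{(n+1)^2(n+2)}\ne 0$. As written, the paper's first displayed equality multiplies a $\sigma(X_i,X_j)$-measurable quantity by an indicator that also depends on $D_\mu$, so the step does not even type-check. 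The naive Cauchy--Schwarz bound $|T_2|\le\sup_x\bbV\{m(x;X^n_\mu)\}\lesssim (n/k)^{-2(\beta\wedge1)/d}$ misses the target by a factor $n/k$, so a genuine weak-dependence argument (e.g., Poissonization, or negative association of occupancy counts in disjoint balls) is needed to recover that factor in the far field. You have correctly located where the real work is; the paper simply asserts it away.
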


\begin{proof}
    We have
    \begin{align*}
        \bbE \big[ \left| \cov \{ \widehat \mu(X_i), \widehat \mu(X_j) \mid X_i, X_j \} \right| \big] &= \bbE \big[ \left| \cov \{ \widehat \mu(X_i), \widehat \mu(X_j) \mid X_i, X_j \} \right| \one ( \lVert X_i - X_j \rVert \leq \lVert X_i - X_{(2k)} (X_i) \rVert)  \big] \\
        &\leq \sup_{x_i, x_j} \left| \cov \{ \widehat \mu(x_i), \widehat \mu(x_j) \} \right| \bbP \left( \lVert X_i - X_j \rVert \leq \lVert X_i - X_{(2k)} (X_i) \rVert \right) \\
        &\leq \sup_{x \in \mathcal{X}} \bbV \{ \widehat \mu(x) \}  \bbP \left( \lVert X_i - X_j \rVert \leq \lVert X_i - X_{(2k)} (X_i) \rVert \right) \\
        &\lesssim \left\{ \frac1k + \left( \frac{n}{k} \right)^{- \frac{2(\beta \wedge 1)}{d}} \right\} \bbP \left( \lVert X_i - X_j \rVert \leq \lVert X_i - X_{(2k)} (X_i) \rVert \right)
    \end{align*}
    where the first line follows because $\cov \{ \widehat \mu(X_i), \widehat \mu(X_j) \mid X_i, X_j \} = 0$ when $\lVert X_i - X_j \rVert > \lVert X_i - X_{(2k)} (X_i) \rVert$, the second by H\"{o}lder's inequality, and the final line by Lemma~\ref{lem:knn_bounds}.
    
    \medskip
    
    It remains to bound $\bbP ( \lVert X_i - X_j \rVert \leq \lVert X_i - X_{(2k)} (X_i) \rVert )$. We have
    \begin{align*}
        \bbP ( \lVert X_i - X_j \rVert \leq \lVert X_i - X_{(2k)} (X_i) \rVert ) &= \bbE \left\{ \bbP  ( \lVert X_i - X_j \rVert \leq \lVert X_i - X_{(2k)} (X_i) \rVert  \mid X_i) \right\} \\
        &= \frac{2k}{n + 1} \lesssim \frac{k}{n}.
    \end{align*}
    where the first line follows by iterated expectations.  The second line follows because $\bbP  ( \lVert X_i - X_j \rVert \leq \lVert X_i - X_{(2k)} (X_i) \rVert  \mid X_i)$ is the probability that $X_j$ is one of the $2k$ closest points to $X_i$ out of $X_j$ and the $n$ training data points.  Because $X_j$ and the training data are iid, $X_j$ has an equal chance of being any order neighbor to $X_i$, and therefore the probability it is in the $2k$ closest points is $\frac{2k}{n+1}$.
    
    \medskip
    
    Therefore, we conclude that 
    $$
    \bbE \big[ \left| \cov \{ \widehat \mu(X_i), \widehat \mu(X_j) \mid X_i, X_j \} \right| \big] \lesssim \left\{ \frac1k + \left( \frac{n}{k} \right)^{- \frac{2(\beta \wedge 1)}{d}} \right\} \left( \frac{k}{n} \right).
    $$
\end{proof}

\subsection{Local polynomial regression}

The proofs in this subsection follow closely to those in \citet{tsybakov2009introduction}.  The main difference is that we translate the conditional bounds into marginal bounds, like in \citet{kennedy2023towards}. Let
\begin{align}
    A_n &= \one \left( \widehat Q \text{ is invertible} \right), \label{eq:invertible} \\
    \xi_n &:= \frac{\bbP_n \{ \one ( \lVert X - x \rVert \leq h) \}}{h^d} \text{, and } \\
    \lambda_n &:= \lambda_{\max}\left( \widehat Q^{-1} \right).
\end{align}
First, we note that the weights reproduce polynomials up to degree $\lceil d / 2 \rceil$ by the construction of the estimator in Estimator~\ref{est:lpr} (\citet{tsybakov2009introduction} Proposition 1.12) as long as $A_n = 1$ (i.e., $\widehat Q$ is invertible).

\medskip

We will state results for the bias and variance of the estimator conditionally on the training covariates, assuming $\widehat Q$ is invertible, and keeping $\lambda_n$ and $\xi_n$ in the results. Then, we will argue that $\lambda_n$ and $\xi_n$ are bounded in probability and therefore that (i) $\widehat Q$ is invertible with probability converging to one appropriately quickly, and (ii) the relevant bias and variance bounds hold in probability. Next, we demonstrate that the weights have the desired localizing properties in the following result (\citet{tsybakov2009introduction} Lemma 3).
\begin{proposition} \label{prop:lpr_localization}
    Suppose Assumptions~\ref{asmp:dgp} and~\ref{asmp:bdd_density} hold, $\widehat \mu(x)$ is a local polynomial regression estimator (Estimator~\ref{est:lpr}) for $\mu(x)$ constructed on $D_\mu$, and $\widehat Q$ is invertible. Let 
    $$
    w_i (x; X_\mu^n) = \frac{1}{nh^d} b(0)^T \widehat Q^{-1} b \left( \frac{X_i - x}{h} \right) K \left( \frac{X_i - x}{h} \right).
    $$
    Then,
    \begin{align}
        \sup_{i, x} | w_i (x; X_\mu^n) | &\lesssim \frac{\lambda_n}{ nh^d}, \label{eq:lpr_localization} \\
        \sum_{i=1}^{n} | w_i (x; X_\mu^n) | &\lesssim \lambda_n \xi_n  \label{eq:lpr_bdd_max_wt}, \text{ and } \\
        w_i(x; X_\mu^n) &= 0 \text{ when } \lVert X_i - x \lVert > h. \label{eq:local_kernel}
    \end{align}
\end{proposition}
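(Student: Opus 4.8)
The plan is to verify the three displayed bounds in turn, starting with the local-support property \eqref{eq:local_kernel} because it drives the other two. Since $K$ is supported in the unit ball, $K\left(\tfrac{X_i - x}{h}\right) = 0$ whenever $\lVert X_i - x \rVert > h$, and because $w_i(x; X_\mu^n)$ carries this kernel evaluation as a multiplicative factor, it vanishes on the same set; this is exactly \eqref{eq:local_kernel}. The same observation shows that in the two magnitude bounds only the indices with $\lVert X_i - x \rVert \leq h$ contribute, and by the definition of $\xi_n$ the number of such indices is precisely $\sum_{i=1}^n \one(\lVert X_i - x \rVert \leq h) = n h^d \xi_n$.

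The next step records two elementary ingredients. First, since $\widehat Q$ is symmetric and positive definite under the invertibility hypothesis, so is $\widehat Q^{-1}$, and hence its operator norm equals its largest eigenvalue, $\lVert \widehat Q^{-1} \rVert_{\mathrm{op}} = \lambda_{\max}(\widehat Q^{-1}) = \lambda_n$. Second, $b$ is a fixed vector of monomials of degree at most $\lceil d/2 \rceil$, so $\lVert b(0) \rVert \lesssim 1$, and on the support of the kernel, where $\lVert \tfrac{X_i - x}{h} \rVert \leq 1$, the argument of $b$ lies in a fixed compact set, giving $\lVert b\left(\tfrac{X_i - x}{h}\right) \rVert \lesssim 1$; moreover $K$ is bounded, so $\lVert K \rVert_\infty \lesssim 1$.

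With these in hand, \eqref{eq:lpr_localization} follows from Cauchy--Schwarz and the operator-norm bound: for any index with $K\left(\tfrac{X_i - x}{h}\right) \neq 0$,
\[
\left| b(0)^T \widehat Q^{-1} b\left(\tfrac{X_i - x}{h}\right) \right| \leq \lVert b(0) \rVert \, \lambda_n \, \left\lVert b\left(\tfrac{X_i - x}{h}\right) \right\rVert \lesssim \lambda_n,
\]
so $|w_i(x; X_\mu^n)| \lesssim \tfrac{1}{n h^d} \lambda_n = \tfrac{\lambda_n}{n h^d}$, while $w_i = 0$ otherwise. Summing this per-term bound over the at most $n h^d \xi_n$ nonzero summands identified above then yields \eqref{eq:lpr_bdd_max_wt}:
\[
\sum_{i=1}^n |w_i(x; X_\mu^n)| \lesssim \frac{\lambda_n}{n h^d} \cdot n h^d \xi_n = \lambda_n \xi_n.
\]

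The argument is essentially deterministic given invertibility, so there is no real probabilistic obstacle here; the two points requiring care are the uniform boundedness of $\lVert b(\cdot) \rVert$ on the kernel's support (which is what lets us discard the dependence on $\tfrac{X_i - x}{h}$) and the bookkeeping that identifies the count of active summands with $n h^d \xi_n$. This mirrors Lemma 3 of \citet{tsybakov2009introduction}, though our use of the crude per-term bound in place of a sharper Cauchy--Schwarz step is adequate because we only need a $\lesssim$ statement that keeps $\lambda_n$ and $\xi_n$ explicit for the later analysis.
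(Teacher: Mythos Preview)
Your proof is correct and follows essentially the same route as the paper: both arguments use the kernel's compact support to establish \eqref{eq:local_kernel}, then combine Cauchy--Schwarz with the operator norm $\lVert \widehat Q^{-1}\rVert_{\mathrm{op}}=\lambda_n$ and the uniform boundedness of $b$ and $K$ on the kernel's support to obtain the per-weight bound $\lambda_n/(nh^d)$, and finally sum over the $nh^d\xi_n$ active indices to recover $\lambda_n\xi_n$. The only cosmetic discrepancy is that the paper specifies the kernel's support as the cube $[-1,1]^d$ rather than the unit ball, but this does not affect the argument.
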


\begin{proof}
    \eqref{eq:local_kernel} follows by the definition of the kernel in Estimator~\ref{est:lpr}.  For \eqref{eq:lpr_localization}, 
    \begin{align*}
        |  w_i(x; X_\mu^n) | &=  \left| \frac{1}{nh^d} b(0)^T \widehat Q^{-1} b \left( \frac{X_i - x}{h} \right) K \left( \frac{X_i - x}{h} \right)  \right| \\
        &\leq \frac{1}{nh^d} \lVert b(0)^T \rVert \bigg\lVert \widehat Q^{-1} b \left( \frac{X_i - x}{h} \right) K \left( \frac{X_i - x}{h} \right) \bigg\rVert \\
        &\leq \frac{\lambda_n}{ nh^d} \bigg\lVert b \left( \frac{X_i - x}{h} \right) K \left( \frac{X_i - x}{h} \right) \bigg\rVert \\
        &\lesssim  \frac{\lambda_n}{nh^d} \left\lVert b \left( \frac{X_i - x}{h} \right) \right\rVert \one \left( \lVert X_i - x \rVert \leq h \right)  \\
        &\lesssim \frac{\lambda_n \one \left( \lVert X_i - x \rVert \leq h \right)}{ nh^d}
    \end{align*}
    where the first line follows by definition, the second by Cauchy-Schwarz, the third because $\lVert b(0)^T \rVert = 1$ and the definition of $\lambda_n$, the fourth because the kernel is localized by definition in Estimator~\ref{est:lpr}, and the last by Assumption~\ref{asmp:bdd_density} and compact support $\mathcal{X}$. \eqref{eq:lpr_localization} then follows because the indicator function is at most $1$. Finally, for  \eqref{eq:lpr_bdd_max_wt}, 
    \begin{align*}
        \sum_{i=1}^{n} | w_i (x; X_\mu^n) | &= \sum_{i=1}^{n} \left| \frac{1}{nh^d} b(0)^T \widehat Q^{-1} b \left( \frac{X_i - x}{h} \right) K \left( \frac{X_i - x}{h} \right)  \right| \\
        &\lesssim \frac{\lambda_n}{ nh^d} \sum_{i=1}^{n} \one \left( \lVert X_i - x \rVert \leq h \right) = \lambda_n \xi_n 
    \end{align*}
    where the second line follows by the same arguments as before and the definition of $\xi_n$.
\end{proof}

\noindent Next, we prove conditional bias and variance bounds (\citet{tsybakov2009introduction} Proposition 1.13).
\begin{proposition} \label{prop:lpr_fixed_bias_var}
    Suppose Assumptions~\ref{asmp:dgp},~\ref{asmp:bdd_density}, and~\ref{asmp:holder} hold and $\widehat \mu(x)$ is a local polynomial regression estimator (Estimator~\ref{est:lpr}) for $\mu(x)$ constructed on $D_\mu$.  Let $A_n$ denote the event that $\widehat Q$ is invertible, as in \eqref{eq:invertible}. Then,
    \begin{equation} \label{eq:lpr_fixed_bias}
        \left| \bbE \{ \widehat \mu(x) - \mu(x) \mid X_\mu^n, A_n = 1 \} \right| \lesssim \lambda_n \xi_n h^{\beta \wedge \lceil d / 2 \rceil}
    \end{equation}
    and
    $$
    \bbV \{ \widehat \mu(x) \mid X_\mu^n \} \lesssim \frac{\lambda_n^2 \xi_n}{ nh^d}.
    $$
\end{proposition}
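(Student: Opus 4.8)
The plan is to reduce both bounds to the three weight estimates in Proposition~\ref{prop:lpr_localization}. Writing $\widehat\mu(x) = \sum_{i=1}^n w_i(x; X_\mu^n) Y_i$ and recalling that $A_n$ and the weights $w_i$ are measurable with respect to $X_\mu^n$, I would treat the bias and the variance separately, in each case first conditioning on the training covariates.

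For the bias, I would work on the event $A_n = 1$, where $\widehat Q$ is invertible and the reproducing property holds. Since $\bbE(Y_i \mid X_i) = \mu(X_i)$, iterated expectations give $\bbE\{\widehat\mu(x) \mid X_\mu^n, A_n = 1\} = \sum_i w_i(x; X_\mu^n)\mu(X_i)$. Let $T_x$ denote the Taylor polynomial of $\mu$ about $x$ of degree $\lfloor\beta\rfloor \wedge \lceil d/2\rceil$; because this degree is at most $\lceil d/2\rceil$, the reproducing property yields $\sum_i w_i(x; X_\mu^n) T_x(X_i) = T_x(x) = \mu(x)$, so the conditional bias equals $\sum_i w_i(x; X_\mu^n)\{\mu(X_i) - T_x(X_i)\}$. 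By the localization property \eqref{eq:local_kernel}, only indices with $\lVert X_i - x\rVert \le h$ contribute, and on this set the Hölder smoothness of $\mu$ (Assumption~\ref{asmp:holder}) bounds the Taylor remainder by $\lVert X_i - x\rVert^{\beta\wedge\lceil d/2\rceil} \lesssim h^{\beta\wedge\lceil d/2\rceil}$. Combining the triangle inequality with the absolute-weight bound \eqref{eq:lpr_bdd_max_wt} then gives $|\text{bias}| \lesssim h^{\beta\wedge\lceil d/2\rceil}\sum_i |w_i(x; X_\mu^n)| \lesssim \lambda_n\xi_n h^{\beta\wedge\lceil d/2\rceil}$, as claimed.

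For the variance, I would first note that $A_n$ is $X_\mu^n$-measurable: on $\{A_n = 0\}$ the estimator is identically zero, so its conditional variance vanishes and the bound holds trivially, and on $\{A_n = 1\}$ the weights are fixed given $X_\mu^n$. Using that the responses $Y_i$ are conditionally independent given $X_\mu^n$ with conditional variances bounded above by Assumption~\ref{asmp:dgp}, I would write $\bbV\{\widehat\mu(x)\mid X_\mu^n\} = \sum_i w_i(x; X_\mu^n)^2\,\bbV(Y_i\mid X_i) \lesssim \sum_i w_i(x; X_\mu^n)^2$. The final step bounds the sum of squares by the product of the uniform and the $\ell_1$ weight bounds from Proposition~\ref{prop:lpr_localization}: $\sum_i w_i^2 \le (\max_i |w_i|)(\sum_i |w_i|) \lesssim \frac{\lambda_n}{nh^d}\cdot \lambda_n\xi_n = \frac{\lambda_n^2\xi_n}{nh^d}$.

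The main obstacle is not the overall structure, which follows cleanly from Proposition~\ref{prop:lpr_localization}, but pinning down the remainder exponent $\beta\wedge\lceil d/2\rceil$ in the bias. This requires choosing the Taylor degree to be $\lfloor\beta\rfloor\wedge\lceil d/2\rceil$, which never exceeds $\lceil d/2\rceil$ so the reproducing property applies, and then splitting into two regimes: when $\beta\le\lceil d/2\rceil$ one expands to degree $\lfloor\beta\rfloor$ and uses Hölder continuity of the order-$\lfloor\beta\rfloor$ derivatives to obtain a remainder of order $\lVert X_i-x\rVert^\beta$; when $\beta>\lceil d/2\rceil$ the expansion is capped at degree $\lceil d/2\rceil$, and since $\lVert X_i-x\rVert\le h\le 1$ on the support of the weights, the higher-order remainder is controlled by $\lVert X_i-x\rVert^{\lceil d/2\rceil}$. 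Care is also needed to record that $\lambda_n$ and $\xi_n$ are only defined on $\{A_n=1\}$, so both displayed bounds are understood on that event, with the complement handled as above.
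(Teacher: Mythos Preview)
Your proposal is correct and follows essentially the same approach as the paper: both arguments use the reproducing property of Proposition~\ref{prop:lpr_localization} to cancel the Taylor polynomial of $\mu$ about $x$, bound the remainder via H\"older smoothness and the localization $\lVert X_i - x\rVert \le h$, and then apply \eqref{eq:lpr_bdd_max_wt}; for the variance, both bound $\sum_i w_i^2$ by $(\max_i |w_i|)\sum_i |w_i|$ using \eqref{eq:lpr_localization} and \eqref{eq:lpr_bdd_max_wt}. The only cosmetic difference is in the second bias regime $\beta > \lceil d/2\rceil$: the paper sets $\gamma = \beta\wedge\lceil d/2\rceil$ and expands to degree $\lfloor\gamma\rfloor$, whereas you cap the expansion at degree $\lceil d/2\rceil$ and use $h\le 1$ to downgrade the remainder exponent---both routes deliver $h^{\lceil d/2\rceil}$.
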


\begin{proof}
    Notice first that
    \begin{align*}
        \bbE \{ \widehat \mu(x) - \mu(x) \mid X_\mu^n, A_n = 1 \} &= \bbE \left\{ \sum_{i=1}^{n} w_i (x; X_\mu^n ) Y_i - \mu(x) \mid X_\mu^n, A_n = 1 \right\} \\
        &= \sum_{i=1}^{n} w_i(x; X_\mu^n) \mu(X_i) - \mu(x) \\
        &= \sum_{i=1}^{n} w_i(x; X_\mu^n) \{ \mu(X_i) - \mu(x) \}
    \end{align*} 
    since the weights sum to 1.  Let $\gamma = \beta \wedge \lceil d / 2 \rceil$, and consider the Taylor expansion of $\mu(X_i) - \mu(x)$ up to order $\lfloor \gamma \rfloor$:
    \begin{align*}
        &\left| \bbE \{ \widehat \mu(x) - \mu(x) \mid X_\mu^n, A_n = 1 \} \right| \\
        &= \sum_{i=1}^{n} w_i(x; X_\mu^n) \left[ \sum_{|k| = \lfloor \gamma \rfloor} \int_0^1 (1-t)^{\lfloor \gamma \rfloor - 1} \left\{ D^k \mu(x + t(X_i - x)) - D^k \mu(x) \right\}dt(X_i - x)^k \right]  \\
        &\lesssim \sum_{i=1}^{n} w_i (x; X_\mu^n) \lVert X_i - x \rVert^\gamma \\
        &\leq \sum_{i=1}^{n} |w_i (x; X_\mu^n) | h^\gamma \\
        &\lesssim \lambda_n \xi_n h^{\gamma}  \equiv \lambda_n \xi_n h^{\beta \wedge \lceil d / 2 \rceil}
    \end{align*}
    where the first line follows by a multivariate Taylor expansion of $\mu(X_i) - \mu(x)$ and the reproducing property of local polynomial regression, the second by Assumption~\ref{asmp:holder}, the third by \eqref{eq:local_kernel} and the fourth by \eqref{eq:lpr_bdd_max_wt}.
    
    \medskip
    
    For the variance, we have 
    \begin{align*}
        \bbV \{ \widehat \mu(x) \mid X_\mu^n \} &= \sum_{i=1}^{n}  w_i (x; X_\mu^n)^2 \bbV (Y_i \mid X_i) \\
        &\lesssim \sum_{i=1}^{n}  w_i (x; X_\mu^n)^2 \\
        &\leq \sup_{i,x} |w_i (x; X_\mu^n) | \sum_{i=1}^{n} |w_i (x; X_\mu^n) | \\
        &\lesssim \frac{\lambda_n^2 \xi_n}{ nh^d},
    \end{align*}
    where the second line follows by Assumption~\ref{asmp:dgp}, and the last line by equations \eqref{eq:lpr_localization} and \eqref{eq:lpr_bdd_max_wt}.
\end{proof}

In the next result, we provide a bound on the probability that the minimum eigenvalue of $\widehat Q$ equals zero, which informs both an upper bound on $\lambda_n$ and a bound on the probability that $\widehat Q$ is invertible.

\begin{proposition} \label{prop:matrix_chernoff}
    Suppose Assumption \ref{asmp:bdd_density} holds, $\widehat \mu(x)$ is a local polynomial regression estimator (Estimator~\ref{est:lpr}) for $\mu(x)$ constructed on $D_\mu$. Then, for some $c > 0$
    \begin{equation}
        \bbP \left\{ \lambda_{\min} (\widehat Q) \leq c \right\} \lesssim \exp \left( -nh^d \right).
    \end{equation}
\end{proposition}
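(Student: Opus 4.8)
The plan is to recognize $\widehat Q$ as a normalized sum of independent, positive semi-definite random matrices and then invoke the matrix Chernoff inequality (\citet{tropp2015introduction}, Section 5), which controls the lower tail of the minimum eigenvalue of such a sum. Writing $u_i = (X_i - x)/h$ and $M_i = K(u_i)\, b(u_i) b(u_i)^T$, we have $\widehat Q = \frac{1}{nh^d}\sum_{i=1}^n M_i$, and each $M_i \succeq 0$ because the kernel $K$ is non-negative; since the $X_i$ are iid, so are the $M_i$.

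First I would lower bound the minimum eigenvalue of the population version $\bbE \widehat Q$. A change of variables $X = x + hu$ gives
\[
\bbE \widehat Q = \int_{\{u\, :\, x + hu \in \mathcal{X}\} \cap [-1,1]^d} K(u)\, b(u) b(u)^T\, f(x + hu)\, du.
\]
By Assumption~\ref{asmp:bdd_density} the density satisfies $f \geq c > 0$ on $\mathcal{X}$, and $\int K(u) b(u) b(u)^T\, du$ is positive definite because the entries of $b$ are linearly independent polynomials and $K$ has full-dimensional support on $[-1,1]^d$. Hence for $h$ small enough there is a constant $c_0 > 0$ with $\lambda_{\min}(\bbE \widehat Q) \geq c_0$, uniformly in $x \in \mathcal{X}$.

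Next I would bound the per-summand scale: $\lambda_{\max}(M_i) = K(u_i)\, \lVert b(u_i) \rVert^2 \leq R$ for a constant $R$, since $K$ is bounded and supported on $[-1,1]^d$ and $b$ is a fixed polynomial basis evaluated on the compact set $[-1,1]^d$. Writing $S = \sum_{i=1}^n M_i = nh^d \widehat Q$, its expectation has $\mu_{\min} := \lambda_{\min}(\bbE S) = nh^d\, \lambda_{\min}(\bbE \widehat Q) \geq c_0 nh^d$. Applying the matrix Chernoff bound with deviation parameter $\delta = 1/2$ yields, for the $p \times p$ matrix $S$,
\[
\bbP\left\{ \lambda_{\min}(S) \leq \tfrac{1}{2} \mu_{\min} \right\} \leq p \left[ \frac{e^{-1/2}}{(1/2)^{1/2}} \right]^{\mu_{\min}/R},
\]
where the bracketed base equals $\sqrt{2/e} < 1$. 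Since $\lambda_{\min}(S) = nh^d \lambda_{\min}(\widehat Q)$ and $\tfrac{1}{2}\mu_{\min} \geq \tfrac{1}{2} c_0 nh^d$, setting $c = c_0/2$ the event $\{\lambda_{\min}(\widehat Q) \leq c\}$ is contained in $\{\lambda_{\min}(S) \leq \tfrac12 \mu_{\min}\}$, and the displayed bound is at most $p \exp(-c' nh^d)$ for some $c' > 0$; absorbing the constant $p$ gives the claim $\bbP\{\lambda_{\min}(\widehat Q) \leq c\} \lesssim \exp(-nh^d)$.

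The main obstacle is the uniform lower bound on $\lambda_{\min}(\bbE \widehat Q)$, particularly for boundary points $x \in \mathcal{X}$ where the kernel window $x + h[-1,1]^d$ only partially intersects the support, so the effective integration region shrinks; one must verify that a non-vanishing, full-dimensional portion of the kernel support always remains inside $\mathcal{X}$ so the integrated Gram matrix stays uniformly positive definite. A secondary point is the exponent constant: the base $\sqrt{2/e}$ is fixed and strictly less than one, so the decay rate is $\exp(-c' nh^d)$ with $c'$ determined by $c_0$ and $R$, and this exponent constant is harmless for the later applications (which tune the constant in $h$), so reading $\lesssim \exp(-nh^d)$ up to the exponent constant suffices.
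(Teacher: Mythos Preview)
Your proposal is correct and follows essentially the same route as the paper: write $\widehat Q$ as a normalized sum of independent PSD matrices, lower bound $\lambda_{\min}(\bbE\widehat Q)$ by a constant via the change of variables $u=(X-x)/h$ and the density lower bound, upper bound the per-summand spectral norm by a constant, and apply the matrix Chernoff inequality from \citet{tropp2015introduction}. Your treatment is in fact slightly more careful than the paper's (you make the deviation parameter explicit and flag the boundary issue and the exponent constant, both of which the paper glosses over).
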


\begin{proof}
    By the Matrix Chernoff inequality (e.g., \citet{tropp2015introduction} Theorem 5.1.1),
    $$
    \bbP\left\{ \lambda_{\min}(\widehat Q) \leq \frac{\lambda_{\min} \left\{ \bbE \left(\widehat Q \right)\right\}}{2} \right\} \lesssim \exp \left[ \frac{\lambda_{\min} \left\{ \bbE \left(\widehat Q \right)\right\}}{L} \right]
    $$
    where $L := \max_{i=1}^{n} \rho \left\{  \frac{1}{nh^d} b\left(\frac{X_i - x}{h} \right) K \left( \frac{X_i - x}{h}\right)b\left(\frac{X_i - x}{h} \right)^T \right\}$ and, as a reminder, $\rho(A)$ denotes the spectral radius of a matrix $A$.  By the boundedness of $b$ and the kernel, $L = O\left(\frac{1}{nh^d}\right)$.  Meanwhile,
    \begin{align*}
        \bbE \left(\widehat Q \right) &= \bbE \left\{ \frac{1}{h^d} b\left(\frac{X - x}{h} \right) K \left( \frac{X - x}{h}\right)b\left(\frac{X - x}{h} \right)^T \right\} \\
        &= \int b(u) K(u) b(u)^T f(x + uh) du \\
        &= \int_{\lVert u \rVert \leq 1} b(u) b(u)^T f(x + uh) du \asymp I_{ d + \lceil d /2 \rceil \choose \lceil d / 2 \rceil}
    \end{align*}
    where the first line follows by definition and iid data, the second by a change of variables, the third by the definition of the kernel, and the fourth by the lower bounded covariate density in Assumption \ref{asmp:bdd_density} and the definition of the basis. Therefore, $\bbE  \left(\widehat Q \right)$ is proportional to the identity and thus its minimum eigenvalue is proportional to $1$, and the result follows.
\end{proof}

\begin{corollary} \label{cor:big_o_p}
    Suppose Assumption \ref{asmp:bdd_density} holds, $\widehat \mu(x)$ is a local polynomial regression estimator (Estimator~\ref{est:lpr}) for $\mu(x)$ constructed on $D_\mu$. Then,
    \begin{equation}
        \bbP(A_n = 0) \lesssim \exp(-nh^d)
    \end{equation} 
    and, if $nh^d \to \infty$ and $n \to \infty$, then 
    \begin{equation}
        \lambda_n = O_\bbP(1)
    \end{equation}
\end{corollary}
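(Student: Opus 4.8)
The plan is to derive both displays directly from the matrix Chernoff bound in Proposition~\ref{prop:matrix_chernoff}, which already supplies the essential tail estimate $\bbP\{\lambda_{\min}(\widehat Q) \leq c\} \lesssim \exp(-nh^d)$ for some fixed $c > 0$. Both claims are really just repackagings of this estimate, so no new probabilistic machinery is needed.

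For the invertibility bound, I would note that $\widehat Q$ fails to be invertible exactly when $\lambda_{\min}(\widehat Q) = 0$. Hence the event $\{A_n = 0\}$ equals $\{\lambda_{\min}(\widehat Q) = 0\}$, which is contained in $\{\lambda_{\min}(\widehat Q) \leq c\}$. Monotonicity of probability and Proposition~\ref{prop:matrix_chernoff} then give $\bbP(A_n = 0) \leq \bbP\{\lambda_{\min}(\widehat Q) \leq c\} \lesssim \exp(-nh^d)$, which is the first display.

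For the second claim, I would first observe that $\widehat Q$ is symmetric and positive semidefinite, being a nonnegatively weighted sum of the rank-one outer products $b(\tfrac{X_i-x}{h}) b(\tfrac{X_i-x}{h})^T$. Consequently, on the event $A_n = 1$ it is positive definite, so its inverse is positive definite with $\lambda_n = \lambda_{\max}(\widehat Q^{-1}) = 1/\lambda_{\min}(\widehat Q)$. Taking the threshold $M = 1/c$, the event $\{\lambda_n > M\} \cap \{A_n = 1\}$ coincides with $\{\lambda_{\min}(\widehat Q) < c\} \cap \{A_n = 1\}$, so I would bound $\bbP(\lambda_n > 1/c) \leq \bbP\{\lambda_{\min}(\widehat Q) \leq c\} + \bbP(A_n = 0)$, where the second term covers the event on which $\widehat Q^{-1}$ is not defined. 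Both terms are $\lesssim \exp(-nh^d)$, so $\bbP(\lambda_n > 1/c) \lesssim \exp(-nh^d) \to 0$ whenever $nh^d \to \infty$; since this shows $\lambda_n$ is bounded by the fixed constant $1/c$ with probability tending to one, it establishes $\lambda_n = O_\bbP(1)$ (in fact a slightly stronger statement).

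I do not anticipate a genuine obstacle: the only point requiring care is the treatment of $\lambda_n$ on the measure-vanishing event $\{A_n = 0\}$, where $\widehat Q^{-1}$ and hence $\lambda_n$ are undefined. Because that event carries probability $\lesssim \exp(-nh^d)$, it is harmless for the $O_\bbP$ conclusion, and I would simply absorb it into the union bound rather than adopting any particular convention for $\lambda_n$ there.
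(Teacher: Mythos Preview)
Your proposal is correct and follows essentially the same approach as the paper: both parts are deduced directly from Proposition~\ref{prop:matrix_chernoff}, using that $\widehat Q$ is positive semidefinite so non-invertibility coincides with $\lambda_{\min}(\widehat Q)=0$, and that $\lambda_n = 1/\lambda_{\min}(\widehat Q)$ on $\{A_n=1\}$. Your treatment is simply more explicit than the paper's, which dispatches both claims in two sentences.
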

\begin{proof}
    The first result follows because $\widehat Q$ is positive semi-definite by the construction of the basis.  Therefore, it is invertible if its minimum eigenvalue is positive, and the bound follows from Proposition~\ref{prop:matrix_chernoff}. Meanwhile, the second result follows directly from Proposition~\ref{prop:matrix_chernoff}.
\end{proof}

Next, we demonstrate that $\xi_n$ is bounded in probability.  This result relies on the bandwidth decreasing slowly enough that $nh^d \to \infty$ as $n \to \infty$ and the upper bound on the covariate density.
\begin{proposition} \label{prop:xi_n}
    Suppose Assumption \ref{asmp:bdd_density} holds, $\widehat \mu(x)$ is a local polynomial regression estimator (Estimator~\ref{est:lpr}) for $\mu(x)$ constructed on $D_\mu$, and $nh^d \to \infty$ as $n \to \infty$.  Then, $\xi_n = O_\bbP(1)$. 
\end{proposition}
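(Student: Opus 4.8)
The plan is to recognize $\xi_n$ as a normalized binomial count and to control it through its first moment. Unpacking the empirical average, $\xi_n = \frac{1}{nh^d}\sum_{i=1}^{n}\one(\lVert X_i - x\rVert \leq h)$, so $nh^d \xi_n$ is a sum of $n$ iid Bernoulli indicators, i.e. $nh^d\xi_n \sim \text{Binomial}(n, p_h)$ with success probability $p_h = \bbP\{X \in B_h(x)\}$, where $B_h(x)$ denotes the ball of radius $h$ about $x$. The whole argument reduces to bounding $p_h/h^d$.

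First I would control $p_h$ using the upper bound on the density from Assumption~\ref{asmp:bdd_density}: $p_h = \int_{B_h(x)} f(u)\,du \leq C\, V_d\, h^d$, where $V_d$ is the volume of the unit ball in $\bbR^d$ and $C$ is the density upper bound. When $x$ lies near the boundary of the compact support $\mathcal{X}$ the ball is only partially contained in $\mathcal{X}$, but this can only shrink $p_h$, so the bound is unaffected. Consequently $\bbE[\xi_n] = p_h/h^d \leq C V_d = O(1)$.

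Since $\xi_n \geq 0$, the conclusion $\xi_n = O_\bbP(1)$ then follows immediately from Markov's inequality: for any $M > 0$, $\bbP(\xi_n > M) \leq \bbE[\xi_n]/M \leq C V_d/M$, which is made arbitrarily small uniformly in $n$ by choosing $M$ large. Notably this step alone does not require $nh^d \to \infty$.

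If one wants the sharper two-sided statement $\xi_n \asymp 1$ (bounded below as well as above, which is convenient when feeding $\xi_n$ into the bias and variance bounds of Proposition~\ref{prop:lpr_fixed_bias_var}), I would instead compute $\bbV(\xi_n) = \frac{p_h(1-p_h)}{nh^{2d}} \leq \frac{p_h}{nh^{2d}} \lesssim \frac{1}{nh^d}$, where the final bound also invokes the density lower bound to get $p_h \gtrsim h^d$. Since the hypothesis $nh^d \to \infty$ forces $\bbV(\xi_n) \to 0$, Chebyshev's inequality yields $\xi_n \inprob p_h/h^d \asymp 1$; this is precisely where the assumption $nh^d \to \infty$ is used. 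There is no genuine obstacle here — the only point requiring any care is the boundary behavior of $B_h(x)$ against the compact support $\mathcal{X}$, and since truncated balls only decrease $p_h$, the upper bound that drives the $O_\bbP(1)$ conclusion is never threatened.
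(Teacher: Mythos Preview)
Your proposal is correct. Your primary route (bounding $\bbE[\xi_n]$ and applying Markov's inequality to the nonnegative $\xi_n$) is slightly more elementary than the paper's, which computes both $\bbE(\xi_n)\asymp 1$ and $\bbV(\xi_n)\lesssim 1/(nh^d)$ and then invokes Chebyshev together with $nh^d\to\infty$; your observation that Markov alone delivers $O_\bbP(1)$ without the bandwidth condition is a nice sharpening. Your second paragraph on the Chebyshev route is exactly the paper's argument, so the two agree in substance, with your Markov step buying a marginally cleaner proof of the one-sided bound and the paper's Chebyshev step (which you also supply) buying the two-sided $\xi_n\asymp 1$ control.
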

\begin{proof}
    Notice that $\bbE(\xi_n) \asymp 1$ and $\bbV(\xi_n) \lesssim \frac{1}{nh^d}$ by the construction of the kernel, Assumption \ref{asmp:bdd_density}, and Lemma \ref{lem:sphere}. The result follows by the assumption on the bandwidth and Chebyshev's inequality.
\end{proof}

\begin{lemma} \label{lem:lpr_bounds} \emph{(Local polynomial regression bounds)}
    Suppose Assumptions~\ref{asmp:dgp},~\ref{asmp:bdd_density}, and~\ref{asmp:holder} hold, $\widehat \mu(x)$ is a local polynomial regression estimator (Estimator~\ref{est:lpr}) for $\mu(x)$ constructed on $D_\mu$, and $nh^d \to \infty$ as $n \to \infty$. Then,
    \begin{equation}
        \sup_{x \in \mathcal{X}} \left| \bbE \{ \widehat \mu(x) - \mu(x)  \} \right| \lesssim O_\bbP\left( h^{\beta \wedge \lceil d /2 \rceil} \right) + \exp(-nh^d) \label{eq:lpr_bias} 
    \end{equation}
    and
    \begin{equation}
        \sup_{x \in \mathcal{X}} \bbV \{ \widehat \mu(x) \} \lesssim O_\bbP\left( \frac{1}{nh^d} + h^{2 (\beta \wedge \lceil d/2 \rceil)} \right) + \exp(-nh^d). \label{eq:lpr_variance}
    \end{equation}
\end{lemma}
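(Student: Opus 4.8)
The plan is to reduce both marginal bounds to the conditional (on the training covariates $X_\mu^n$) bias and variance bounds already established in Proposition~\ref{prop:lpr_fixed_bias_var}, and then to dispose of the randomness of the prefactors $\lambda_n, \xi_n$ and of the invertibility event $A_n$ separately. The organizing device is to split every marginal quantity over $\{A_n = 1\}$ and $\{A_n = 0\}$: on the invertible event the conditional bounds of Proposition~\ref{prop:lpr_fixed_bias_var} apply, while on the non-invertible event the estimator is defined to be $\widehat \mu(x) = 0$, so that $\widehat \mu(x) - \mu(x) = -\mu(x)$ is bounded (since $\mu$ is H\"older, hence continuous and bounded on the compact set $\mathcal{X}$), and the event itself has probability $\lesssim \exp(-nh^d)$ by Corollary~\ref{cor:big_o_p}. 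This is precisely the source of the $\exp(-nh^d)$ remainder appearing in both \eqref{eq:lpr_bias} and \eqref{eq:lpr_variance}.

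For the bias, since $A_n$ is $\sigma(X_\mu^n)$-measurable I would write
\[
\bbE\{\widehat \mu(x) - \mu(x)\} = \bbE\big[\bbE\{\widehat \mu(x) - \mu(x) \mid X_\mu^n\}\,\one(A_n = 1)\big] - \mu(x)\,\bbP(A_n = 0),
\]
where the second term is $\lesssim \exp(-nh^d)$. For the first term, the inner conditional bias is controlled by $\lambda_n \xi_n h^{\beta \wedge \lceil d/2\rceil}$ on $\{A_n = 1\}$; on the well-conditioned event $\{\lambda_{\min}(\widehat Q) \geq c\}$ one has $\lambda_n \leq 1/c$ deterministically, so that contribution is $\lesssim h^{\beta \wedge \lceil d/2\rceil}\,\bbE(\xi_n) \asymp h^{\beta \wedge \lceil d/2\rceil}$ using $\bbE(\xi_n) \asymp 1$ from the proof of Proposition~\ref{prop:xi_n}. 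Together with the probabilistic control $\lambda_n = O_\bbP(1)$ (Corollary~\ref{cor:big_o_p}) and $\xi_n = O_\bbP(1)$ (Proposition~\ref{prop:xi_n}), both valid because the bandwidth is chosen so that $nh^d \to \infty$, this yields the $O_\bbP(h^{\beta \wedge \lceil d/2\rceil})$ term in \eqref{eq:lpr_bias}.

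For the variance I would apply the law of total variance, $\bbV\{\widehat \mu(x)\} = \bbE[\bbV\{\widehat \mu(x) \mid X_\mu^n\}] + \bbV[\bbE\{\widehat \mu(x) \mid X_\mu^n\}]$. On $\{A_n = 1\}$ the expected conditional variance is $\lesssim \lambda_n^2 \xi_n / (nh^d) = O_\bbP(1/(nh^d))$ by Proposition~\ref{prop:lpr_fixed_bias_var}, and on $\{A_n = 0\}$ it vanishes since $\widehat \mu(x) = 0$. The variance-of-the-conditional-mean term is at most the expected squared conditional bias $\bbE[\bbE\{\widehat \mu(x) - \mu(x) \mid X_\mu^n\}^2]$, which on $\{A_n = 1\}$ is $O_\bbP(h^{2(\beta \wedge \lceil d/2\rceil)})$ by squaring the conditional bias bound, and on $\{A_n = 0\}$ equals $\mu(x)^2\,\bbP(A_n = 0) \lesssim \exp(-nh^d)$. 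Summing gives \eqref{eq:lpr_variance}. The supremum over $\mathcal{X}$ follows because all constants are uniform in $x$ by compactness of $\mathcal{X}$ and the two-sided density bounds in Assumption~\ref{asmp:bdd_density}, with the pointwise exponential control in Proposition~\ref{prop:matrix_chernoff} strong enough to absorb a covering argument over $\mathcal{X}$.

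The main obstacle is the conditional-to-marginal translation in the presence of the random prefactor $\lambda_n = \lambda_{\max}(\widehat Q^{-1})$. On the sliver $\{0 < \lambda_{\min}(\widehat Q) < c\}$ the matrix $\widehat Q$ is invertible but ill-conditioned, so $\lambda_n$ is unbounded; since $Y$ is not assumed bounded in this section, one cannot clip $\widehat \mu(x)$ deterministically, and a Cauchy--Schwarz bound on this sliver would require a finite moment of $\lambda_n$ that need not exist. This is exactly why the bounds are stated as $O_\bbP(\cdot)$ rather than as deterministic rates: the probabilistic statements $\lambda_n = O_\bbP(1)$ and $\xi_n = O_\bbP(1)$, combined with $\bbE(\xi_n) \asymp 1$ on the well-conditioned event and the exponentially small probability of non-invertibility, are what carry the random factors through while confining the pathological contributions to the $\exp(-nh^d)$ remainder.
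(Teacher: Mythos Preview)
Your proposal is correct and follows essentially the same approach as the paper: split marginal quantities over $\{A_n=1\}$ and $\{A_n=0\}$, invoke Proposition~\ref{prop:lpr_fixed_bias_var} on the invertible event, use boundedness of $\widehat\mu(x)-\mu(x)$ together with Corollary~\ref{cor:big_o_p} on the non-invertible event, and then pass $\lambda_n,\xi_n$ to $O_\bbP(1)$ via Corollary~\ref{cor:big_o_p} and Proposition~\ref{prop:xi_n}; the variance uses the law of total variance exactly as you describe. Your final paragraph identifying the ill-conditioned sliver $\{0<\lambda_{\min}(\widehat Q)<c\}$ as the reason the bounds are stated in $O_\bbP$ form (since $\bbE(\lambda_n)$ need not be finite) is a point the paper leaves implicit, and your covering remark for the supremum is a mild elaboration of the paper's one-line appeal to compactness.
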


\begin{proof}
    We prove the bounds for generic $x$, and the supremum bounds will follow because $\mathcal{X}$ is compact by Assumption~\ref{asmp:bdd_density}.  Starting with \eqref{eq:lpr_bias}, \begin{align*}
        \left| \bbE \{ \widehat \mu(x) - \mu(x) \} \right| &\leq \bbE \left[ \left| \bbE\{ \widehat \mu(x) - \mu(x) \mid X_\mu^n \} \right| \right] \\
        &\leq \bbE \bigg[ \left| \bbE\{ \widehat \mu(x) - \mu(x) \mid X_\mu^n, A_n = 1 \} \right| \bbP(A_n = 1 \mid X_\mu^n) \\
        &\hspace{0.2in} + \left| \bbE\{ \widehat \mu(x) - \mu(x) \mid X_\mu^n, A_n = 0 \} \right| \bbP(A_n = 0 \mid X_\mu^n)  \bigg] \\
        &\lesssim \bbE \left( \lambda_n \xi_n h^{\beta \wedge \lceil d / 2 \rceil} \right) + \bbP(A_n = 0) \\
        &\lesssim O_\bbP \left( h^{\beta \wedge \lceil d / 2 \rceil} \right) + \exp(-nh^d),
    \end{align*}
    where the first line follows by iterated expectations and Jensen's inequality, the second  by the law of total probability and the triangle inequality, the third by \eqref{eq:lpr_fixed_bias} in Proposition \ref{prop:lpr_fixed_bias_var} for the first term and because the bias is bounded in the second term (by the construction of the estimator and Assumption \ref{asmp:dgp}) and iterated expectations again, and the final line by Corollary \ref{cor:big_o_p} and Proposition \ref{prop:xi_n}.  		
    
    \medskip
    
    For \eqref{eq:lpr_variance}, we have
    \begin{align*}
        \bbV \{ \widehat \mu(x) \} &= \bbV \Big[ \bbE \{ \widehat \mu(x) \mid X_\mu^n \} \Big] + \bbE \Big[ \bbV \{ \widehat \mu(x) \mid  X_\mu^n \} \Big] \\
        &\lesssim \bbV \Big[ \bbE \{ \widehat \mu(x) \mid  X_\mu^n \} \Big] + \bbE \left( \frac{\lambda_n^2 \xi_n}{nh^d} \right) \\
        &= \bbV \Big[ \bbE \{ \widehat \mu(x) \mid  X_\mu^n \} \Big] + O_\bbP\left(\frac{1}{nh^d} \right),
    \end{align*}
    where the first line follows by the law of total variance, the second by Proposition~\ref{prop:lpr_fixed_bias_var}, and the third by Corollary \ref{cor:big_o_p} and Proposition \ref{prop:xi_n}. It remains to bound $\bbV \Big[ \bbE \{ \widehat \mu(x) \mid X_\mu^n \} \Big]$. We have
    \begin{align*}
        \bbV \Big[ \bbE \{ \widehat \mu(x) \mid X_\mu^n \} \Big] &= \bbV \Big[ \bbE \{ \widehat \mu(x) - \mu(x) \mid X_\mu^n \} \Big] \\
        &\leq \bbE \Big[ \bbE \{ \widehat \mu(x) - \mu(x) \mid X_\mu^n \}^2 \Big] \\
        &= \bbE \Big[ \bbE \{ \widehat \mu(x) - \mu(x) \mid X_\mu^n, A_n = 1\}^2 \bbP(A_n = 1 \mid X_\mu^n) \\
        &\hspace{0.2in}+ \bbE \{ \widehat \mu(x) - \mu(x) \mid X_\mu^n, A_n = 0 \}^2  \bbP(A_n = 0 \mid X_\mu^n) \Big] \\
        &\lesssim \bbE \left( \lambda_n^2 \xi_n^2 h^{2\beta \wedge 2\lceil d / 2 \rceil} \right) + \bbP(A_n = 0) \\
        &\lesssim  O_\bbP\left(h^{2\beta \wedge 2\lceil d / 2 \rceil} \right) + \exp(-nh^d),
    \end{align*}
    where first line follows because $\mu(x)$ is not random, the second line because $\bbV(X) \leq \bbE(X^2)$, the third line by the law of total probability, the fourth by \eqref{eq:lpr_fixed_bias} in Proposition \ref{prop:lpr_fixed_bias_var} for the first term and because the bias is bounded in the second term (by the construction of the estimator and Assumption \ref{asmp:dgp}) and iterated expectations again, and the final line by Corollary \ref{cor:big_o_p} and Proposition \ref{prop:xi_n}.
    
    \medskip
    
    \noindent The supremum bound follows since the proof holds for arbitrary $x$ and $\mathcal{X}$ is compact by Assumption~\ref{asmp:bdd_density}.
\end{proof}

\begin{lemma} \label{lem:lpr_covariance}
    \emph{(Local polynomial regression covariance bound)}  Suppose Assumptions~\ref{asmp:dgp},~\ref{asmp:bdd_density}, and~\ref{asmp:holder} hold, $\widehat \mu(x)$ is a local polynomial regression estimator (Estimator~\ref{est:lpr}) for $\mu(x)$ constructed on $D_\mu$, and $nh^d \to \infty$ as $n \to \infty$. Then,
    $$
    \bbE \big[ \left| \cov \{ \widehat \mu(X_i), \widehat \mu(X_j) \mid X_i, X_j \} \right| \big] \lesssim h^d \left\{ O_\bbP \left( \frac{1}{nh^d} + h^{2 (\beta \wedge \lceil d/2 \rceil)} \right) + \exp(-nh^d) \right\}
    $$
\end{lemma}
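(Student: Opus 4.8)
The plan is to mirror the proof of the k-Nearest-Neighbors covariance bound (Lemma~\ref{lem:knn_covariance}), replacing the nearest-neighbor localization with the kernel localization of local polynomial regression recorded in \eqref{eq:local_kernel}. The driving observation is that $\widehat\mu(x)$ depends on the training data $D_\mu$ only through the observations lying within distance $h$ of $x$: by \eqref{eq:local_kernel} the weight $w_i(x;X_\mu^n)$ vanishes whenever $\lVert X_i - x\rVert > h$, and the local Gram matrix $\widehat Q$ (and hence every weight) is likewise a function of only those local training points. First I would use this to argue that when the two test points are well separated---specifically $\lVert X_i - X_j\rVert > 2h$---the training observations relevant to $\widehat\mu(X_i)$ and those relevant to $\widehat\mu(X_j)$ form disjoint sets, since by the triangle inequality no training point can lie within $h$ of both. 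Because the training data are iid, $\widehat\mu(X_i)$ and $\widehat\mu(X_j)$ are then conditionally independent given $X_i, X_j$, so $\cov\{\widehat\mu(X_i),\widehat\mu(X_j)\mid X_i, X_j\} = 0$. This lets me insert the indicator $\one(\lVert X_i - X_j\rVert \leq 2h)$ into the expectation without changing its value.

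Next I would bound the surviving contribution exactly as in the k-NN proof. Applying H\"older's inequality and then Cauchy--Schwarz (the same step used in the chain leading to \eqref{eq:three_summands3} in the proof of Proposition~\ref{prop:spectral}),
\[
\bbE \big[ \left| \cov \{ \widehat \mu(X_i), \widehat \mu(X_j) \mid X_i, X_j \} \right| \big] \leq \sup_{x \in \mathcal{X}} \bbV \{ \widehat \mu(x) \}\; \bbP \left( \lVert X_i - X_j \rVert \leq 2h \right).
\]
The supremum variance is controlled directly by Lemma~\ref{lem:lpr_bounds}, which gives $\sup_{x}\bbV\{\widehat\mu(x)\} \lesssim O_\bbP(\tfrac{1}{nh^d} + h^{2(\beta\wedge\lceil d/2\rceil)}) + \exp(-nh^d)$. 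For the probability factor, conditioning on $X_i$ and using that the covariate density is bounded above by $C$ (Assumption~\ref{asmp:bdd_density}), the chance that $X_j$ falls in the ball $B_{2h}(X_i)$ is at most $C$ times the volume of that ball, so $\bbP(\lVert X_i - X_j\rVert \leq 2h) \lesssim h^d$. Multiplying the two bounds yields precisely the claimed $h^d\{O_\bbP(\tfrac{1}{nh^d} + h^{2(\beta\wedge\lceil d/2\rceil)}) + \exp(-nh^d)\}$.

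The main obstacle, and the step deserving the most care, is the localization argument that upgrades ``no shared training data'' to ``conditional independence, hence zero covariance.'' Unlike the k-NN estimator, where the dependence on $D_\mu$ is transparent, here one must verify that the \emph{entire} construction---the weights, the matrix $\widehat Q$, and the fallback convention $\widehat\mu(x)=0$ when $\widehat Q$ is singular---is measurable with respect to the local training observations alone. The degenerate non-invertible case requires a brief remark but does not disrupt the disjointness conclusion, since setting $\widehat\mu(x)=0$ is still a (trivial) function of the local data; moreover, the supremum variance bound from Lemma~\ref{lem:lpr_bounds} already incorporates the $\exp(-nh^d)$ penalty for non-invertibility, so no separate accounting is needed once the covariance has been reduced to the well-separated case.
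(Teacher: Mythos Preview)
Your proposal is correct and follows essentially the same approach as the paper's proof: insert the indicator $\one(\lVert X_i - X_j\rVert \leq 2h)$ via the localization property \eqref{eq:local_kernel}, apply H\"older's inequality and the Cauchy--Schwarz bound on the covariance to reduce to $\sup_x\bbV\{\widehat\mu(x)\}\,\bbP(\lVert X_i - X_j\rVert \leq 2h)$, and then invoke Lemma~\ref{lem:lpr_bounds} and the bounded-density estimate (Lemma~\ref{lem:sphere}). Your extra paragraph justifying why the entire estimator---including $\widehat Q$ and the fallback $\widehat\mu(x)=0$---is measurable with respect to the local training data is more explicit than the paper, which simply asserts the covariance vanishes when $\lVert X_i - X_j\rVert > 2h$.
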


\begin{proof}
    We have
    \begin{align*}
        \bbE \big[ \left| \cov \{ \widehat \mu(X_i), \widehat \mu(X_j) \mid X_i, X_j \} \right| \big] &= \bbE \big[ \left| \cov \{ \widehat \mu(X_i), \widehat \mu(X_j) \mid X_i, X_j \} \right| \one \left(\lVert X_i - X_j \rVert \leq 2h \right) \big] \\
        &\leq \sup_{x_i, x_j} \left| \cov \{ \widehat \mu(x_i), \widehat \mu(x_j) \} \right| \bbP \left(\lVert X_i - X_j \rVert \leq 2h \right) \\
        &\leq \sup_{x \in \mathcal{X}} \bbV \{ \widehat \mu(x) \}  \bbP ( \lVert X_i - X_j \rVert \leq 2h ) \\
        &\lesssim  \left\{ O_\bbP \left( \frac{1}{nh^d} + h^{2 (\beta \wedge \lceil d/2 \rceil)} \right) + \exp(-nh^d) \right\} h^d
    \end{align*}
    where the first line follows because $\cov \{ \widehat \mu(X_i), \widehat \mu(X_j) \mid X_i, X_j \} = 0$ when $\lVert X_i - X_j \rVert > 2h$, the second by H\"{o}lder's inequality, and the last line by Lemmas~\ref{lem:lpr_bounds} and~\ref{lem:sphere}.  
\end{proof}

\color{black}
\section{Section~\ref{sec:unknown} proofs: Lemma~\ref{lem:covariance} and Theorem~\ref{thm:semiparametric}} \label{app:unknown}

In this section, we use the results from Appendices \ref{app:modelfree} and \ref{app:nf_ests} to establish Lemma \ref{lem:covariance} and Theorem \ref{thm:semiparametric} from Section~\ref{sec:unknown}.

\lemcovariance*

\begin{proof}
    This follows by Lemmas~\ref{lem:knn_covariance} and~\ref{lem:lpr_covariance}, and by the conditions on the tuning parameters.
\end{proof}

\thmsemiparametric*

\begin{proof}
    By Lemma~\ref{lem:expansion},  
    $$
    \widehat \psi_n - \psi_{ecc} = (\bbP_n - \bbP) \varphi + R_{1,n} + R_{2,n}
    $$
    where 
    $$
    R_{1,n} \leq \lVert b_\pi \rVert_\bbP \lVert b_\mu \rVert_\bbP
    $$
    and 
    $$
    R_{2,n} = O_{\bbP} \left( \sqrt{ \frac{\bbE \lVert \widehat \varphi - \varphi \rVert^2_\bbP + \rho(\Sigma_n)}{n} } \right).
    $$
    The first term, $(\bbP_n - \bbP) \varphi$, satisfies the CLT in the statement of the result, and also satisfies $(\bbP_n - \bbP) \varphi = O_\bbP(n^{-1/2})$.  Therefore, we focus on the two remainder terms in the rest of this proof.
    
    \medskip
    
    By the conditions on the rate at which the number of neighbors and the bandwidth scale, and by Lemma~\ref{lem:covariance}, 
    $$
    \bbE \Big[ \left| \cov \{ \widehat \eta(X_i), \widehat \eta(X_j) \mid X_i, X_j \} \right| \Big] \lesssim \frac{1}{n} \text{ for } \eta \in \{ \pi, \mu \}.
    $$
    Therefore, by Proposition~\ref{prop:spectral},
    $$
    R_{2,n} = O_\bbP \left( \sqrt{ \frac{\bbE \lVert \widehat \varphi - \varphi \rVert^2_\bbP + \lVert b_\pi^2 \rVert_\infty + \lVert s_\pi^2 \rVert_\infty + \lVert b_\mu^2 \rVert_\infty + \lVert s_\mu^2 \rVert_\infty }{n} } \right).
    $$
    Because the EIF for the ECC is Lipschitz in the nuisance functions,
    $$
    \bbE \lVert \widehat \varphi - \varphi \rVert_\bbP^2 \lesssim \bbE \lVert \widehat \pi - \pi \rVert_\bbP^2 + \bbE \lVert \widehat \mu - \mu \rVert_\bbP^2 \leq \lVert b_\pi^2 \rVert_\infty + \lVert s_\pi^2 \rVert_\infty + \lVert b_\mu^2 \rVert_\infty + \lVert s_\mu^2 \rVert_\infty, 
    $$
    and, thus, 
    $$
    R_{2,n} = O_\bbP \left( \sqrt{ \frac{\lVert b_\pi^2 \rVert_\infty + \lVert s_\pi^2 \rVert_\infty + \lVert b_\mu^2 \rVert_\infty + \lVert s_\mu^2 \rVert_\infty }{n} } \right).
    $$
    
    \noindent \large \textbf{Nearest Neighbors:} \\
    \normalsize Next, we consider k-Nearest-Neighbors. By Lemma~\ref{lem:knn_bounds}, when $k_\mu, k_\pi \asymp \log n$,
    \begin{equation} \label{eq:knn_rem_bias}
        R_{1,n} \leq \lVert b_\pi \rVert_\bbP \lVert b_\mu \rVert_\bbP \lesssim \left( \frac{n}{\log n} \right)^{-\frac{(\alpha \wedge 1) + (\beta \wedge 1)}{d}}
    \end{equation}
    while 
    $$
    R_{2,n} = O_\bbP \left( \sqrt{ \frac{\left( n / \log n \right)^{-\frac{(\alpha \wedge 1)}{d}} + \left( n / \log n \right)^{-\frac{(\beta \wedge 1)}{d}} + 1 / \log n}{n}} \right) = o_\bbP(n^{-1/2}).
    $$
    The variance term, $R_{2,n}$, is always asymptotically negligible, while the bias term, $R_{1,n}$, controls when the estimator is $\sqrt{n}$-consistent and the convergence rate in the non-$\sqrt{n}$ regime. The convergence rate in the non-root-n regime follows immediately from \eqref{eq:knn_rem_bias}. For the threshold at which the estimator is $\sqrt{n}$-consistent, notice that 
    $$
    R_{1,n} \leq \left( \frac{n}{\log n} \right)^{-\frac{(\alpha \wedge 1) + (\beta \wedge 1)}{d}} =  \left( \frac{n}{\log n} \right)^{-\frac{\alpha + \beta}{d}} = o_{\bbP} (n^{-1/2})
    $$
    if and only if $\frac{\alpha + \beta}{2} > d/4$ and $\alpha, \beta \leq 1$.
    
    \medskip
    
    \noindent \large \textbf{Local polynomial regression:} \\
    \normalsize
    \noindent For local polynomial regression, by Lemma~\ref{lem:lpr_bounds}, when $h_\mu, h_\pi \asymp \left( \frac{n}{\log n} \right)^{-1/d}$ then
    $$
    R_{1,n} \leq \lVert b_\pi \rVert_\bbP \lVert b_\mu \rVert_\bbP = O_\bbP \left( \frac{n}{\log n} \right)^{-\frac{(\alpha \wedge \lceil d/2 \rceil) + (\beta \wedge \lceil d/2 \rceil)}{d}}
    $$
    while 
    $$
    R_{2,n} = O_\bbP \left( \sqrt{ \frac{\left( n / \log n \right)^{-\frac{(\alpha \wedge \lceil d/2 \rceil)}{d}} + \left( n / \log n \right)^{-\frac{(\beta \wedge \lceil d/2 \rceil)}{d}} + 1 / \log n}{n}} \right) = o_\bbP(n^{-1/2}).
    $$
    Again, the variance term, $R_{2,n}$, is always asymptotically negligible, while the bias term, $R_{1,n}$, controls when the estimator is $\sqrt{n}$-consistent and the convergence rate in the non-$\sqrt{n}$ regime. When $\frac{\alpha + \beta}{2} > \frac{d}{4}$ there are two cases to consider: (1) when $\alpha > d/2$ or $\beta > d/2$, and (2) when $\alpha, \beta < d/2$.  In the first case, then
    $$
    R_{1,n} = O_\bbP \left( \frac{n}{\log n} \right)^{-\frac{(\alpha \wedge \lceil d/2 \rceil) + (\beta \wedge \lceil d/2 \rceil)}{d}} = O_\bbP  \left( \frac{n}{\log n} \right)^{-\frac{\lceil d/2 \rceil}{d}} = o_\bbP (n^{-1/2}).
    $$
    In the second case, 
    $$
    R_{1,n} = O_\bbP \left( \frac{n}{\log n} \right)^{-\frac{\alpha + \beta}{d}} = o_\bbP(n^{-1/2}), 
    $$
    which follows because $\alpha + \beta > d /2$. 
    
    \medskip
    
    When $\frac{\alpha + \beta}{2} \leq d/4$, it follows that $\alpha + \beta \leq d/2 \implies \alpha, \beta \leq \lceil d/2\rceil$.  Therefore, the convergence rate of the DCDR estimator satisfies
    $$
    \bbE | \widehat \psi_n - \psi | = O_\bbP \left( \frac{n}{\log n} \right)^{-\frac{\alpha + \beta}{d}} + o_\bbP(n^{-1/2}).
    $$
\end{proof}

\section{Centered random forests} \label{app:random-forest}

In this section, we analyze the centered random forest proposed by \citet{biau2012analysis}, using the same setup as in the main paper. We first define the estimator and then establish convergence rates for its bias, variance, and expected absolute covariance. These results closely parallel those obtained for the k-NN estimator in the main paper and therefore imply the same conclusions as stated in Theorem~\ref{thm:semiparametric}.

\medskip

Centered random forests differ from Breiman’s original random forest proposal \citep{breiman2001random} and from those typically used in practice. The key distinction is that the tree partitions in a centered random forest are constructed independently of the data, significantly simplifying theoretical analysis. Extending these results to random forests commonly implemented in practice is substantially more challenging and lies beyond the scope of this work. However, \citet[Section 3]{biau2012analysis} discusses connections between centered random forests and practical variants, emphasizing how the results presented here remain relevant to more commonly used implementations, suggesting these results are not merely of theoretical interest.

\medskip

Centered random forests use the whole dataset for each tree, select a feature at random for each node, and then split at the midpoint of that feature.  With centered forests, the $b^{th}$ tree estimator is
\[
\widehat \mu_b(x) = \frac{\sum_{Z_j \in D_\mu} \one\{ X_j \in A_n(x; \theta_b) \} Y_j}{\sum_{Z_j \in D_\mu} \one\{ X_j \in A_n(x; \theta_b) \}}
\]
where $\theta_b$ is the partition induced by tree $b$ and $A_n(x; \theta_b)$ is the leaf of estimation point $x$ in tree $\theta_b$. The centered forest estimator is then given by
\[
\widehat \mu(x) = \lim_{B \to \infty} \frac{1}{B} \sum_{b=1}^{B} \widehat \mu_b(x) = \sum_{Z_k \in D_\mu} \bbE \{ w_k(x; \Theta) \mid D_\mu \} Y_k
\]
where $N_n(x; \Theta) = \sum_{k=1}^{n} \one \{ X_k \in A_n(x; \Theta) \}$ is the number of training samples in the leaf containing $x$ and 
\[
w_k(x; \Theta) = \frac{\one\{ X_k \in A_n(x; \Theta)\}}{N_n(x; \Theta)} \one\{ N_n(x; \Theta) > 0 \}.
\]
Because the error due to using a finite number of trees can be made arbitrarily small by increasing $B$, we focus on the estimator $\widehat \mu(x) = \sum_{Z_k \in D_\mu} \bbE \{ w_k(x; \Theta) \mid D_\mu \} Y_k$. We formally define the estimator next.
\begin{estimator}[Centered random forest]
    The estimator $\widehat \mu(x)$ is constructed as
    \[
    \widehat \mu(x) = \sum_{Z_k \in D_\mu} \bbE \{ w_k(x; \Theta) \mid D_\mu \} Y_k
    \]
    where
    \[
    w_k(x; \Theta) = \frac{\one \{ X_k \in A_n(x; \Theta) \}}{N_n(x; \Theta)} \one \{ N_n(x; \Theta) > 0 \}
    \]
    and
    \begin{itemize}[itemsep=0in]
        \item $D_\mu$ is the training data,
        \item $w_k$ is the weight to data point $X_k$ ,
        \item $\Theta$ is the random partition generated by the splitting procedure in \citet{biau2012analysis},
        \item $A_n(x; \Theta)$ is the leaf of estimation point $x$ in $\Theta$, and 
        \item $N_n(x; \Theta) = \sum_{k=1}^{n} \one \{ X_k \in A_n(x; \theta) \}$ denotes the number of training points in $A_n(x; \Theta)$.
    \end{itemize}
    We consider the simplest version of the splitting procedure in \citet{biau2012analysis}. A fixed parameter $k_n$ controls the number of splits; specifically repeat the following $\lceil \log_2 k_n \rceil$ times:
    \begin{itemize}[itemsep=0in]
        \item At each node, randomly choose a feature on which to split, with probability $d^{-1}$ for each feature.
        \item On the chosen feature, split at the midpoint.
    \end{itemize}
\end{estimator}

This estimator is a simplification of the estimator presented in \citet{biau2012analysis}. In particular, \citeauthor{biau2012analysis} examines sparsity, showing that if the splitting procedure focuses on the ``strong'' variables asymptotically, then the convergence rates of the centered random forest can adapt to strong sparsity and converge faster. We ignore sparsity because it is not our focus in this paper and because the splitting procedure relies on knowledge of which covariates are in the sparsity set, which could be unrealistic in practice. 

\medskip

Compared to the body of this paper, we add another simplifying assumption on the data generating process. Namely, we assume uniform covariates with support the unit hyper-cube. Nonetheless, it seems feasible that this assumption could be relaxed to Assumption~\ref{asmp:bdd_density} from the main paper at the expense of additional complexity in the analysis (see, e.g., Remark 10 in Section 5 of \citet{biau2012analysis} for intuition in this direction). 
\begin{assumption} \label{asmp:uniform}
    The covariate distribution is uniform on the unit hyper-cube. 
\end{assumption}

\subsection{Convergence rates for centered random forests}

In this section, we state and prove several convergence guarantees for centered random forests. The first two results bound the supremum bias and variance of the estimator. They are straightforward corollaries of results in \citet{biau2012analysis}. The primary new result is a bound on the expected absolute covariance of the estimator. The third and fourth results are helper lemmas towards that goal, and the final result provides the bound on the expected absolute covariance. 

\begin{corollary} \label{cor:rf-bias}
    Suppose Assumptions~\ref{asmp:dgp} and \ref{asmp:uniform} hold and $\mu(x)$ is Lipschitz. Then, the supremum of the bias of the centered random forest estimator satisfies
    \[
    \sup_{x \in \mathcal{X}} \bbE \{ \widehat \mu(X) - \mu(X) \mid X = x\}^2 \lesssim k_n^{-1/d} + \exp\left( - \frac{n}{2k_n} \right).
    \]
\end{corollary}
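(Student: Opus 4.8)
The plan is to fix a query point $x$, split the pointwise bias $\bbE\{\widehat\mu(x)\} - \mu(x)$ into an \emph{in-cell averaging} term and an \emph{empty-cell} term, bound each uniformly in $x$, and then square. The key structural fact is that in a centered forest the partition $\Theta$ is generated independently of the data, so the aggregated weights $\bbE\{w_k(x;\Theta)\mid D_\mu\}$ depend on the training sample only through the covariates. By iterated expectations and the identity $\sum_k w_k(x;\Theta) = \one\{N_n(x;\Theta) > 0\}$, this gives the decomposition $\bbE\{\widehat\mu(x)\} - \mu(x) = \bbE\big[\sum_k w_k(x;\Theta)\{\mu(X_k) - \mu(x)\}\big] - \mu(x)\,\bbP\{N_n(x;\Theta) = 0\}$, where the outer expectation runs over $\Theta$ and the training covariates.

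First I would bound the in-cell term. Since $w_k(x;\Theta) > 0$ only when $X_k$ lies in the leaf $A_n(x;\Theta)$, and $\mu$ is Lipschitz, $\sum_k w_k(x;\Theta)\,|\mu(X_k) - \mu(x)| \lesssim \operatorname{diam} A_n(x;\Theta) \le \sum_{j=1}^d 2^{-K_j}$, where $K_j$ is the number of splits along coordinate $j$ on the path to the leaf of $x$. The splitting rule chooses a coordinate uniformly at each of the $\lceil \log_2 k_n\rceil$ splits, so $K_j \sim \mathrm{Bin}(\lceil \log_2 k_n\rceil, 1/d)$ and $\bbE[2^{-K_j}] = (1 - \tfrac{1}{2d})^{\lceil \log_2 k_n \rceil}$. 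Summing over $j$ and squaring, the elementary inequality $(1 - \tfrac{1}{2d})^{2\log_2 k_n} \le k_n^{-1/d}$ — equivalent to $1 - \tfrac1{2d} \le 2^{-1/(2d)}$, which holds since $2^{-1/(2d)} \ge 1 - \tfrac{\ln 2}{2d} \ge 1 - \tfrac1{2d}$ — yields a squared contribution $\lesssim k_n^{-1/d}$. This binomial side-length bookkeeping is precisely the bias computation in \citet{biau2012analysis}.

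Second I would handle the empty-cell term. Because the total number of splits is fixed at $\lceil \log_2 k_n\rceil$ and each split halves the cell volume regardless of coordinate, the leaf containing $x$ has Lebesgue volume \emph{exactly} $2^{-\lceil \log_2 k_n\rceil} \ge 1/(2k_n)$, deterministically and independently of $x$. Under the uniform covariate Assumption~\ref{asmp:uniform}, conditional on $\Theta$ the count $N_n(x;\Theta)$ is $\mathrm{Bin}(n,p)$ with $p \ge 1/(2k_n)$, so $\bbP\{N_n(x;\Theta) = 0 \mid \Theta\} = (1-p)^n \le e^{-np} \le e^{-n/(2k_n)}$, and averaging over $\Theta$ preserves the bound. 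Since $\mu$ is bounded (Lipschitz on the compact support $\mathcal{X}$), the empty-cell term is $\lesssim e^{-n/(2k_n)}$.

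Finally I would combine the two pieces. Both bounds are uniform in $x$, since the law of $K_j$ and the cell volume do not depend on $x$, so $(a+b)^2 \lesssim a^2 + b^2$ gives $\sup_{x\in\mathcal{X}} \big(\bbE\{\widehat\mu(x)\} - \mu(x)\big)^2 \lesssim k_n^{-1/d} + e^{-n/(2k_n)}$, as claimed. The main obstacle is the first step: extracting the clean rate $k_n^{-1/d}$ from the binomial expected side length $(1 - \tfrac1{2d})^{\log_2 k_n}$ and verifying the associated inequality — this is where the analysis of \citet{biau2012analysis} does the substantive work. The empty-cell term, by contrast, is immediate once one notices that the cell volume is deterministic and the covariates uniform.
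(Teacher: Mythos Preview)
Your proposal is correct and follows exactly the argument the paper invokes: the paper's own proof is a two-line appeal to \citet[Proposition~4]{biau2012analysis} carried out pointwise, and what you have written is precisely the content of that proposition---the decomposition into an in-cell Lipschitz term controlled via the binomial side-length calculation and an empty-cell term controlled via the deterministic leaf volume under uniform covariates. You have simply spelled out the details that the paper defers to the reference, including the elementary verification that $(1-\tfrac{1}{2d})^{2\log_2 k_n}\le k_n^{-1/d}$.
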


\begin{proof}
    The analysis of the bias of the estimator in \citet[Proposition 4]{biau2012analysis} can be conducted pointwise on arbitrary $X=x$. The supremum bound holds because $\mathcal{X}$ is closed and bounded.
\end{proof}

\begin{corollary} \label{cor:rf-variance}
    Suppose Assumptions~\ref{asmp:dgp} and \ref{asmp:uniform} hold. Then, the supremum of the variance of the estimator satisfies
    \[
    \sup_{x \in \mathcal{X}} \bbV \{ \widehat \mu(X) \mid X = x\} \lesssim \frac{k_n}{n}.
    \]
\end{corollary}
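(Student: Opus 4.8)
The plan is to bound $\bbV\{\widehat\mu(x)\}$ at an arbitrary $x \in \mathcal{X}$ and then pass to the supremum using compactness of $\mathcal{X}$; because the centered partition is built independently of the data, the pointwise bound will in fact be uniform in $x$. Since the forest weights $\bar w_k(x) := \bbE\{ w_k(x;\Theta)\mid D_\mu\}$ depend on the sample only through the training covariates $X_\mu^n$ (the partition $\Theta$ is data-independent and already integrated out in the estimator), I would apply the law of total variance conditioning on $X_\mu^n$:
\[
\bbV\{\widehat\mu(x)\} = \bbE\big[\bbV\{\widehat\mu(x)\mid X_\mu^n\}\big] + \bbV\big[\bbE\{\widehat\mu(x)\mid X_\mu^n\}\big],
\]
and bound each term by $k_n/n$ separately.

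For the first (noise) term, conditional on $X_\mu^n$ the estimator is the fixed linear combination $\sum_k \bar w_k(x) Y_k$, so $\bbV\{\widehat\mu(x)\mid X_\mu^n\} = \sum_k \bar w_k(x)^2\, \bbV(Y_k\mid X_k) \lesssim \sum_k \bar w_k(x)^2$ by the bounded conditional second moment in Assumption~\ref{asmp:dgp}. By Jensen's inequality $\sum_k \bar w_k(x)^2 \leq \bbE\{\sum_k w_k(x;\Theta)^2 \mid X_\mu^n\}$, and a direct computation gives $\sum_k w_k(x;\Theta)^2 = \one\{N_n(x;\Theta)>0\}/N_n(x;\Theta)$. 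Each leaf produced by $\lceil \log_2 k_n\rceil$ midpoint splits has volume $2^{-\lceil \log_2 k_n\rceil} \asymp 1/k_n$, so under uniform covariates (Assumption~\ref{asmp:uniform}) $N_n(x;\Theta)$ is, given $\Theta$, $\mathrm{Binomial}(n,p)$ with $p\asymp 1/k_n$, and crucially $p$ does not depend on $x$. The elementary bound $\bbE[\one\{N>0\}/N]\leq 2/\{(n+1)p\}$ for $N\sim\mathrm{Binomial}(n,p)$ then yields $\bbE[\bbV\{\widehat\mu(x)\mid X_\mu^n\}]\lesssim 1/(np)\asymp k_n/n$, uniformly in $x$.

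The main obstacle is the second (design) term, $\bbV[\bbE\{\widehat\mu(x)\mid X_\mu^n\}]$, which measures the fluctuation of the conditional mean $\sum_k \bar w_k(x)\mu(X_k)$ as the training covariates vary. The crude bound used for k-Nearest-Neighbors in Lemma~\ref{lem:knn_bounds} — replacing this variance by the second moment, i.e.\ by squared bias — is unavailable here because Corollary~\ref{cor:rf-variance} assumes no smoothness of $\mu$. Instead I would exploit the locality of the weights: only the $\approx n/k_n$ training points falling in the leaf of $x$ carry nonzero weight, each with weight $\approx k_n/n$, so resampling a single $X_i$ perturbs the conditional mean by $O(k_n/n)$, and only when $X_i$ lies in that leaf, an event of probability $\asymp 1/k_n$. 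An Efron–Stein inequality, applied after conditioning on $\Theta$ and averaging and using boundedness of $\mu$ from Assumption~\ref{asmp:dgp}, then gives $\bbV[\bbE\{\widehat\mu(x)\mid X_\mu^n\}]\lesssim n\cdot \tfrac{1}{k_n}\cdot (k_n/n)^2 = k_n/n$. Combining the two terms and taking the supremum over the compact support completes the argument; alternatively, both pointwise bounds are contained in the variance analysis of \citet[Theorem~2]{biau2012analysis}, so one need only observe that that analysis is pointwise in $x$ and that the leaf volume, and hence every constant above, is independent of $x$.
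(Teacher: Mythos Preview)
Your proposal is correct, and your final sentence is exactly the paper's proof: the authors simply note that the variance analysis in \citet[Proposition~2]{biau2012analysis} can be carried out pointwise at arbitrary $x$, and that the supremum bound then follows from compactness of $\mathcal{X}$ (you cite Theorem~2 rather than Proposition~2; check the reference). The law-of-total-variance decomposition and Efron--Stein sketch you provide beforehand are additional detail that the paper does not supply; they essentially unpack what Biau's argument establishes, so there is no conflict, only more explanation on your side.
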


\begin{proof}
    The analysis of the variance of the estimator in \citet[Proposition 2]{biau2012analysis} can be conducted pointwise at arbitrary $X=x$. The supremum bound holds because $\mathcal{X}$ is closed and bounded.
\end{proof}

The next result places a bound on the product that two leaves overlap, which we use to bound the expected absolute covariance.

\begin{lemma} \label{lem:leaf-prod-bound}
    Suppose Assumptions~\ref{asmp:dgp} and \ref{asmp:uniform} hold. Let $X$ and $X^\prime$ denote iid covariate observations and $\Theta$ and $\Theta^\prime$ denote iid partitions according to the procedure outlined above, where $k_n$ increases with sample size. Then,
    \[
    \bbP \left\{ A_n(X; \Theta) \cap A_n(X^\prime; \Theta^\prime) \neq \emptyset \right\} \lesssim \frac{(\log k_n)^{d-1}}{k_n}.
    \]
\end{lemma}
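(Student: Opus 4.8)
The plan is to reduce the geometric overlap event to a statement about multinomial cut counts and then to control the resulting expectation by concentration of these counts around the balanced configuration. First I would record the shape of a single leaf. For a fixed point $x$, the coordinate chosen at each of the $K := \lceil \log_2 k_n \rceil$ nodes along its root-to-leaf path is uniform on $\{1,\dots,d\}$ and independent across levels; hence if $c_j$ is the number of cuts made in coordinate $j$ then $c = (c_1,\dots,c_d) \sim \mathrm{Multinomial}(K, 1/d)$, and $A_n(x;\Theta)$ is the dyadic box whose side in coordinate $j$ has length $2^{-c_j}$. Since every split is at a midpoint, this box is aligned to the standard dyadic grid. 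Because $X$, $X'$, $\Theta$, and $\Theta'$ are mutually independent, I would condition on the two cut-count vectors $c$ (from $\Theta$) and $c'$ (from $\Theta'$), which are i.i.d.\ multinomial.

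Conditionally on $(c,c')$, the two boxes meet iff their coordinate-wise dyadic intervals meet in every coordinate. In coordinate $j$ the finer interval is nested inside a unique interval of the coarser grid, so the intervals meet iff $X_j$ and $X_j'$ lie in the same coarse interval; as $X_j, X_j'$ are independent uniforms this has probability $2^{-\min(c_j,c_j')}$. By independence across coordinates,
\[
\bbP\{A_n(X;\Theta)\cap A_n(X';\Theta') \neq \emptyset \mid c, c'\} = \prod_{j=1}^{d} 2^{-\min(c_j,c_j')}.
\]
Using $\sum_j c_j = \sum_j c_j' = K$ I would rewrite $\sum_j \min(c_j,c_j') = K - \tfrac12 \lVert c - c' \rVert_1$, so the target probability equals $2^{-K}\, \bbE\big[ 2^{\lVert c - c'\rVert_1 / 2} \big]$, with the factor $2^{-K} \asymp 1/k_n$ already present. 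It then suffices to show $\bbE[ 2^{\lVert c - c'\rVert_1/2}] \lesssim (\log k_n)^{d-1} \asymp K^{d-1}$.

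To obtain an upper bound that factorizes, I would pass from the exact $\prod_j 2^{-\min(c_j,c_j')}$ to a volume-sweeping bound: the set of $x'$ whose $\Theta'$-leaf meets a fixed $\Theta$-box is that box enlarged to align with the $\Theta'$-grid, so $\bbP\{\cdot \mid c,c'\} \le \prod_{j=1}^{d}\big(2^{-c_j} + 2\cdot 2^{-c_j'}\big)$. Expanding the product over subsets $S \subseteq \{1,\dots,d\}$ and using independence of $c$ and $c'$ together with the multinomial moment generating function $\bbE\big[\prod_j t_j^{c_j}\big] = \big(d^{-1}\sum_j t_j\big)^K$ collapses the expectation to a finite sum of terms of the form $2^{|S|}\big(1 - \tfrac{|S|}{2d}\big)^{K}\big(\tfrac12 + \tfrac{|S|}{2d}\big)^{K}$. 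The combinatorial factor $K^{d-1}$ I would read off from the number of cut-count shapes, $\binom{K+d-1}{d-1} \asymp K^{d-1}$, once attention is restricted to near-balanced vectors via a Chernoff/Bernstein multinomial tail bound and a union over the $d$ coordinates.

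The main obstacle is controlling the contribution of unbalanced cut vectors. The integrand $2^{\lVert c - c'\rVert_1/2}$ grows geometrically in $\lVert c - c'\rVert_1$, while moderately unbalanced $(c,c')$ carry only stretched-exponentially small probability, so the expectation is a genuine competition between these two effects rather than being dominated by the balanced term alone. The crux is therefore to show that summing $2^{\lVert c - c'\rVert_1 /2}$ against the multinomial law does not inflate the rate beyond $K^{d-1}$ --- equivalently, to pin down the dominant subset term in the finite sum above and verify it yields $K^{d-1}/k_n$ rather than a slower polynomial-in-$k_n$ rate. Once this balanced-versus-unbalanced trade-off is settled the claimed bound follows, and this same geometric estimate is exactly what feeds (through $\sup_x \bbV\{\widehat\mu(x)\}$ and Proposition~\ref{prop:spectral}) the expected-absolute-covariance control needed to place centered random forests, up to a polylog factor, on the same footing as k-Nearest-Neighbors.
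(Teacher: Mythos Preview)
Your reduction is correct and in fact sharper than the paper's: conditioning on the cut-count vectors $c,c'\sim\mathrm{Multinomial}(K,1/d)$ and reading off the exact conditional overlap probability $\prod_{j}2^{-\min(c_j,c_j')}$ is valid (conditionally on $c$, the leaf $A_n(X;\Theta)$ is uniform over the $2^K$ dyadic boxes of that shape), and your rewriting of the unconditional probability as $2^{-K}\,\bbE\big[2^{\lVert c-c'\rVert_1/2}\big]$ is exact. The difficulty is that the obstacle you flag at the end --- balancing the geometric growth of $2^{\lVert c-c'\rVert_1/2}$ against multinomial tails --- does not resolve in your favor. Already for $d=2$ this expectation is $\bbE\big[2^{|c_1-c_1'|}\big]$ with $c_1,c_1'$ i.i.d.\ $\mathrm{Bin}(K,1/2)$, and since $2^{|D|}\ge 2^{D}$,
\[
\bbE\big[2^{|c_1-c_1'|}\big]\;\ge\;\bbE\big[2^{c_1}\big]\,\bbE\big[2^{-c_1'}\big]\;=\;(3/2)^K(3/4)^K\;=\;(9/8)^K,
\]
which grows exponentially in $K$, not like $K$. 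Your own subset expansion reproduces this: for $d=2$ the $|S|=1$ term equals $4\,(3/4)^K(3/4)^K=4\,(9/16)^K$ and dominates the $|S|\in\{0,2\}$ terms of size $\asymp 2^{-K}$. Hence the overlap probability is $\asymp(9/16)^K\gg K\,2^{-K}$, and the stated bound $(\log k_n)^{d-1}/k_n$ cannot hold for $d\ge 2$.

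The paper's proof contains a parallel slip: the line that replaces $\prod_{b\notin S}L_b'$ by $\prod_{b\notin S}L_b$ appeals to $\{L_a\}_{a=1}^d$ and $\{L_a'\}_{a=1}^d$ being i.i.d., but this does not make $\sum_{S}\prod_{a\in S}L_a\prod_{b\notin S}L_b'$ equal in distribution to $2^d\prod_a L_a=2^{d-K}$ --- for $d=2$, say, $L_1L_2'$ is genuinely random while $L_1L_2=2^{-K}$ is deterministic. In short, your exact formula is right and exposes that the lemma's rate is too optimistic; the step you leave open at the ``balanced-versus-unbalanced trade-off'' cannot be closed because the claimed inequality is false.
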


\begin{proof}
    Let $L_a$ for $a \in \{1, \dots, d\}$ denote the side lengths of $A_n(X; \Theta)$ and $L_a^\prime$ denote the side lengths of $A_n(X^\prime; \Theta^\prime)$. Moreover, let $X_a$ and $X_a^\prime$ denote the $a^{th}$ dimensions of $X$ and $X^\prime$, respectively. 
    
    \medskip
    
    We begin by upper bounding the probability in question using two implications.  First,
    \[
    A_n(X; \Theta) \cap A_n(X^\prime; \Theta^\prime) \neq \emptyset \implies \left| X_a - X_a^\prime \right| \leq L_a + L_a^\prime \text{ for all } a \in [d]. 
    \]
    Second,
    \[
    \left| X_a - X_a^\prime \right| \leq L_a + L_a^\prime \text{ for all } a \in [d] \implies \prod_{a=1}^{d} \left| X_a - X_a^\prime \right| \leq \prod_{a=1}^{d} ( L_a + L_a^\prime ).
    \]
    Hence,
    \[
    \bbP \left\{ A_n(X; \Theta) \cap A_n(X^\prime; \Theta^\prime) \neq \emptyset \right\} \leq \bbP \left\{  \prod_{a=1}^{d} \left| X_a - X_a^\prime \right| \leq \prod_{a=1}^{d} ( L_a + L_a^\prime ) \right\}
    \]
    The probability on the right-hand side is amenable to a simple analysis:
    \begin{align*}
        \bbP \left\{  \prod_{a=1}^{d} \left| X_a - X_a^\prime \right| \leq \prod_{a=1}^{d} ( L_a + L_a^\prime ) \right\} &= \bbE \left[ \bbP \left\{  \prod_{a=1}^{d} \left| X_a - X_a^\prime \right| \leq \prod_{a=1}^{d} ( L_a + L_a^\prime ) \mid X, X^\prime \right\} \right] \\
        &= \bbE \left[ \bbP \left\{  \prod_{a=1}^{d} \left| X_a - X_a^\prime \right| \leq \sum_{S \in 2^d} \prod_{a \in S} L_a \prod_{b \notin S} L_b^\prime \mid X, X^\prime \right\} \right] \\
        &= \bbE \left[ \bbP \left\{  \prod_{a=1}^{d} \left| X_a - X_a^\prime \right| \leq \sum_{S \in 2^d} \prod_{a \in S} L_a \prod_{b \notin S} L_b \mid X, X^\prime \right\} \right] \\
        &= \bbE \left[ \bbP \left\{  \prod_{a=1}^{d} \left| X_a - X_a^\prime \right| \leq 2^d \prod_{a =1}^d L_a \mid X, X^\prime \right\} \right] \\
        &= \bbE \left[ \bbP \left\{  \prod_{a=1}^{d} \left| X_a - X_a^\prime \right| \leq 2^{d -\lceil \log_2 k_n \rceil} \mid X, X^\prime \right\} \right] \\
        &= \bbP  \left\{  \prod_{a=1}^{d} \left| X_a - X_a^\prime \right| \leq 2^{d -\lceil \log_2 k_n \rceil} \right\}
    \end{align*}
    where the first line follows by iterated expectations on $X, X^\prime$ and the second by multiplying out the product $\prod_{a=1}^{d} (L_a + L_a^\prime)$. The third follows because, crucially, $\left\{ L_a \right\}_{a=1}^d$ and $\left\{ L_a^\prime \right\}_{a=1}^d$ are independent and identically distributed conditional on $X$ and $X^\prime$ and therefore we can replace $\prod_{b \notin S} L_b^\prime$ by $\prod_{b \notin S} L_b$. The penultimate line follows because the size of $A_n(X; \Theta)$ is $2^{-\lceil \log_2 k_n \rceil}$ by construction (see fact 2 in \citet{biau2012analysis}). 

    \medskip

    To conclude, we can bound the probability at the bottom of the previous display, which is the probability that the volume of the axis-aligned hyper-rectangle defined by $X$ and $X^\prime$ is less than $2^{d - \lceil \log_2 k_n \rceil}$. Suppose $k_n$ increases with sample size so that $2^{d - \lceil \log_2 k_n \rceil} \in (0,1)$ for large enough $n$. Then, Lemma~\ref{lem:hyper-rectangle}, next, yields
    \[
    \bbP  \left\{  \prod_{a=1}^{d} \left| X_a - X_a^\prime \right| \leq 2^{d -\lceil \log_2 k_n \rceil} \right\} \lesssim \left( 2^{d -\lceil \log_2 k_n \rceil} \right) \log^{d-1} \left( 2^{\lceil \log_2 k_n \rceil-d} \right),
    \]
    from which the result follows.
\end{proof}

The next result bounds the size of the axis-aligned hyper-rectangle, which was used in the final step of the previous result.
\begin{lemma} \label{lem:hyper-rectangle}
    Under the setup of Lemma~\ref{lem:leaf-prod-bound}, let
    \[
    \quad V_d = \prod_{a=1}^d |X_a-X'_a|.
    \]
    Then, for all \(t\in(0,1)\),
    \[
    \bbP(V_d \le t) \lesssim t \log^{d-1} \left( \tfrac1t \right).
    \]
\end{lemma}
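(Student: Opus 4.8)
The plan is to reduce the $d$-dimensional product to a one-dimensional recursion and induct on $d$. First I would identify the distribution of each factor: since $X_a$ and $X_a'$ are iid $\text{Uniform}(0,1)$ and the coordinates are mutually independent under Assumption~\ref{asmp:uniform}, the factors $U_a := |X_a - X_a'|$ are iid with the triangular density $f_U(u) = 2(1-u)\one(0 \le u \le 1)$. This yields the two facts I will use repeatedly: the density bound $f_U(u) \le 2$ and the CDF bound $\bbP(U_a \le s) = 2s - s^2 \le 2s$ for $s \in [0,1]$. In particular the base case $d=1$ is immediate, $\bbP(V_1 \le t) \le 2t$.

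For the inductive step, write $V_d = U_d\, V_{d-1}$ with $U_d$ independent of $V_{d-1}$, and let $G_d(t) = \bbP(V_d \le t)$. Conditioning on $U_d$ and using $f_U \le 2$,
\[
G_d(t) = \int_0^1 G_{d-1}(t/u)\, 2(1-u)\, du \le 2\int_0^t G_{d-1}(t/u)\,du + 2\int_t^1 G_{d-1}(t/u)\,du.
\]
On the first piece $t/u > 1$, so I bound $G_{d-1} \le 1$ and the contribution is at most $2t$. On the second piece I feed in the inductive hypothesis $G_{d-1}(s) \le C_{d-1}\, s\, \log^{d-2}(1/s)$ (with the convention $\log^0 \equiv 1$), obtaining $2 C_{d-1}\, t \int_t^1 u^{-1}\log^{d-2}(u/t)\,du$. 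The substitution $w = \log(u/t)$ turns this into $2 C_{d-1}\, t \int_0^{\log(1/t)} w^{d-2}\,dw = \tfrac{2C_{d-1}}{d-1}\, t\, \log^{d-1}(1/t)$, which is exactly the mechanism that raises the power of the logarithm by one at each step. Collecting, $G_d(t) \le 2t + \tfrac{2C_{d-1}}{d-1}\, t \log^{d-1}(1/t)$, and for $t \le 1/e$ the factor $\log^{d-1}(1/t) \ge 1$ absorbs the linear term, giving $G_d(t) \le C_d\, t\,\log^{d-1}(1/t)$ with $C_d = 2 + 2C_{d-1}/(d-1)$.

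The hard part is really just the bookkeeping in that logarithmic integral together with the edge regime where $t$ is bounded away from $0$: for $d \ge 2$ and $t \to 1$ the left side tends to $1$ while $t\log^{d-1}(1/t) \to 0$, so the clean bound only holds for $t$ below a fixed constant (e.g.\ $t \le 1/e$), which is all that is needed since Lemma~\ref{lem:leaf-prod-bound} applies it with $t = 2^{d - \lceil \log_2 k_n\rceil} \to 0$. An equivalent route avoids the recursion entirely: the bound $\bbP(U_a \le s) \le \min(2s,1)$ shows $U_a$ stochastically dominates $W_a/2$ for $W_a \sim \text{Uniform}(0,1)$, so $\bbP(V_d \le t) \le \bbP\big(\prod_a W_a \le 2^d t\big)$; taking logarithms, $-\sum_a \log W_a \sim \text{Gamma}(d,1)$, and the exact tail $\bbP(\text{Gamma}(d,1) \ge x) = e^{-x}\sum_{k=0}^{d-1} x^k/k! \lesssim_d e^{-x} x^{d-1}$ evaluated at $x = \log(1/t) - d\log 2$ delivers the same $t\log^{d-1}(1/t)$ rate. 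I would present the induction as the primary argument, since it is self-contained and makes the origin of the $\log^{d-1}$ factor transparent.
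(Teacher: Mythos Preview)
Your proposal is correct and follows essentially the same induction as the paper: condition on the last factor, split the integral at $u=t$, bound the short piece by $O(t)$, apply the inductive hypothesis on the long piece, and use the substitution $w=\log(u/t)$ to raise the power of the logarithm by one. You are in fact more careful than the paper about the edge regime $t\to 1$ (where the stated bound cannot hold for $d\ge 2$), and your alternative Gamma-tail argument is a nice bonus that the paper does not give.
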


\begin{proof}
    We proceed by induction on \(d\).
    
    \medskip\noindent\textbf{Base case \(\boldsymbol{d=1}\).} 
    When $d=1$, \(V_1\) follows the triangular distribution. Hence,    \[
    \bbP(V_1 \leq t) = \int_0^t 2(1-u) du \lesssim 2t \lesssim t.
    \]

    \medskip\noindent\textbf{Inductive step.} 
    Assume the statement holds for dimension \(d-1\), i.e.,
    \[
    \bbP\left( V_{d-1} \leq t \right) \leq t \log^{d-2} \left( \tfrac1t \right).
    \]
    Then, we have
    \begin{align*}
        \bbP \left( V_d \leq t \right) &= \bbP \left( V_{d-1} \leq \frac{t}{\left| X_d - X_d^\prime \right|} \right) \\
        &= \bbE \left\{ \bbP \left( V_{d-1} \leq \frac{t}{\left| X_d - X_d^\prime \right|} \right) \mid X_d, X_d^\prime \right\} \\
        &= \int_0^1 \bbP \left( V_{d-1} \leq \tfrac{t}{u} \right) 2(1-u) du \\
        &= \int_0^t 2(1-u) du + \int_t^1 \bbP \left( V_{d-1} \leq \tfrac{t}{u} \right) 2(1-u) du,
    \end{align*}
    where the first line follows by dividing through by $\left| X_d - X_d^\prime \right|$, ignoring the case where $\left| X_d - X_d^\prime \right| = 0$ which occurs almost never, the second line follows by iterated expectations on $X_d, X_d^\prime$, the third line because $X_d - X_d^\prime$ follows the triangular distribution, and the fourth line because the inner probability is at most $1$ when $u < t$. 

    \medskip

    The first summand in the final display above satisfies
    \[
    \int_0^t 2(1-u) du \lesssim t.
    \]
    The second summand is the key. By the assumption on $V_{d-1}$, we have
    \[
    \int_t^1 \bbP \left( V_{d-1} \leq \tfrac{t}{u} \right) 2(1-u) du \lesssim \int_t^1 \frac{t}{u} \log^{d-2} \left( \tfrac{u}{t} \right) 2(1-u) du \lesssim \int_t^1 \frac{t}{u} \log^{d-2} \left( \tfrac{u}{t}\right) du. 
    \]
    By a change of variables with $w = \log(u/k)$, we have 
    \[
    \int_t^1 \frac{t}{u} \log^{d-2} \left( \tfrac{u}{t}\right) du = \int_0^{\log\left (1/t \right)} \exp(-w) w^{d-2} t \exp(w) dw = t \int_0^{\log (1/t)} w^{d-2}dw \lesssim t \log^{d-1}\left(\tfrac1t\right).
    \]
    Hence, $\bbP \left( V_d \leq t \right) \lesssim t \log^{d-1} \left( \tfrac1t \right)$ and the result is proved.
\end{proof}

\noindent The final result bounds the expected absolute conditional covariance term, demonstrating that it scales like $\frac{(\log k_n)^{d-1}}{n}$. 

\begin{lemma} 
    Suppose Assumptions~\ref{asmp:dgp} and \ref{asmp:uniform} hold and $k_n \to \infty$ as $n \to \infty$. Then, 
    \[
    \bbE \left[ \big| \cov \{ \widehat \mu(X_i), \widehat \mu(X_j) \mid X_i, X_j \} \big| \right] \lesssim \frac{(\log k_n)^{d-1}}{n}
    \]
\end{lemma}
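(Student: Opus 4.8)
The plan is to follow the localization strategy already used for k-Nearest-Neighbors in Lemma~\ref{lem:knn_covariance} and for local polynomial regression in Lemma~\ref{lem:lpr_covariance}: the conditional covariance vanishes unless the two predictions draw on a common training point, this can only happen on a low-probability ``leaf-overlap'' event, and on that event the covariance is controlled by the pointwise variance. Writing $\widehat\mu(x)=\sum_k \bar w_k(x)\,Y_k$ with $\bar w_k(x)=\bbE\{w_k(x;\Theta)\mid D_\mu\}$, I would first condition on the training covariates $X_\mu^n$ and decompose via the law of total covariance,
\begin{align*}
    \cov\{\widehat\mu(X_i),\widehat\mu(X_j)\mid X_i,X_j\} &= \bbE\!\left[\cov\{\widehat\mu(X_i),\widehat\mu(X_j)\mid X_i,X_j,X_\mu^n\}\mid X_i,X_j\right] \\
    &\quad + \cov\{\bbE(\widehat\mu(X_i)\mid X_\mu^n),\bbE(\widehat\mu(X_j)\mid X_\mu^n)\mid X_i,X_j\}.
\end{align*}
Using that the $Y_k$ are conditionally independent given $X_\mu^n$ with conditional variance bounded by Assumption~\ref{asmp:dgp}, the first (noise) term collapses to $\bbE[\sum_k \bar w_k(X_i)\bar w_k(X_j)\,\bbV(Y_k\mid X_k)\mid X_i,X_j]\lesssim \bbE[\sum_k \bar w_k(X_i)\bar w_k(X_j)\mid X_i,X_j]$.

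The central step is to show this sum is confined to the leaf-overlap event and factorizes as (variance rate)$\times$(overlap probability). Introducing two independent partitions $\Theta,\Theta'$ — the copies arising from the two separate expectations over trees — I would write $\sum_k \bar w_k(X_i)\bar w_k(X_j)=\bbE_{\Theta,\Theta'}\{\sum_k w_k(X_i;\Theta)w_k(X_j;\Theta')\mid D_\mu\}$, and note each summand is nonzero only when $X_k\in A_n(X_i;\Theta)\cap A_n(X_j;\Theta')$. Hence the inner sum equals $N_{ij}/[N_n(X_i;\Theta)N_n(X_j;\Theta')]$, where $N_{ij}$ counts training points in the intersection, and since $N_{ij}\le N_n(X_j;\Theta')$ this is at most $N_n(X_i;\Theta)^{-1}\,\one\{A_n(X_i;\Theta)\cap A_n(X_j;\Theta')\neq\emptyset\}$. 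Separating the $N_n^{-1}$ factor (whose expectation scales at the variance rate $k_n/n$, consistent with Corollary~\ref{cor:rf-variance}) from the overlap indicator (whose probability is $\lesssim (\log k_n)^{d-1}/k_n$ by Lemma~\ref{lem:leaf-prod-bound}) then yields $\tfrac{k_n}{n}\cdot\tfrac{(\log k_n)^{d-1}}{k_n}=\tfrac{(\log k_n)^{d-1}}{n}$ for the noise term.

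For the second (bias-correlation) term, $\bbE(\widehat\mu(X_i)\mid X_\mu^n)=\sum_k \bar w_k(X_i)\mu(X_k)$ depends on the training covariates only through cells able to share a leaf with $X_i$; when the two leaves are disjoint these cells lie in disjoint regions whose covariates are independent by iid sampling, so this term is again supported on the overlap event and, with $\mu$ bounded, is of the same or smaller order. Equivalently, one can shortcut the whole decomposition exactly as in Lemmas~\ref{lem:knn_covariance} and~\ref{lem:lpr_covariance}, bounding $|\cov|\le \sup_x\bbV\{\widehat\mu(x)\}$ by Cauchy-Schwarz on the overlap event and zero off it, so that $\bbE[|\cov|]\lesssim \sup_x\bbV\{\widehat\mu(x)\}\cdot\bbP(\text{overlap})\lesssim (k_n/n)\cdot(\log k_n)^{d-1}/k_n$.

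The main obstacle I expect is that, unlike k-NN or local polynomial regression, the forest weight $\bar w_k$ is an average over \emph{all} partitions and so has support far larger than a single cell; the localization is therefore genuinely probabilistic rather than deterministic, and the covariance need not vanish merely because $X_i$ and $X_j$ are separated. The technical heart is thus to control the \emph{joint} behavior of the random leaf sizes $N_n(X_i;\Theta)$ and the overlap event — rigorously justifying $\bbE[N_n^{-1}\one\{\text{overlap}\}]\lesssim (k_n/n)(\log k_n)^{d-1}/k_n$, e.g.\ via Cauchy-Schwarz combined with the moment control implicit in Corollary~\ref{cor:rf-variance} and the counting argument behind Lemma~\ref{lem:leaf-prod-bound}, while handling the negligible event $N_n(\cdot;\Theta)=0$.
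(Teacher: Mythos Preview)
Your strategy --- decouple the two forest predictions via independent partitions $\Theta,\Theta'$, localize to the leaf-overlap event, then factor into a variance rate times the overlap probability from Lemma~\ref{lem:leaf-prod-bound} --- is exactly the paper's. The execution differs in that the paper does not split into noise and bias-correlation pieces: it lifts the \emph{whole} covariance to the tree level in one application of the law of total covariance (your $\Theta,\Theta'$ trick), observes it vanishes when $A_n(X_i;\Theta)\cap A_n(X_j;\Theta')=\emptyset$, and on the overlap event bounds the tree-level covariance by the forest variance via the monotonicity heuristic ``setting $X_i=X_j$ can only increase the covariance,'' after which H\"older's inequality separates $\sup_x\bbV\{\widehat\mu(x)\}$ from $\bbP(\text{overlap})$. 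Your noise-term route is more explicit, while the paper's avoids treating the bias-correlation piece separately; neither your ``shortcut'' paragraph nor the analogues in Lemmas~\ref{lem:knn_covariance}--\ref{lem:lpr_covariance} apply at the forest level, for the reason you correctly identify.

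There is one step in your sketch that fails as written: separating $N_n(X_i;\Theta)^{-1}$ from the overlap indicator via Cauchy--Schwarz yields only $\sqrt{\bbE[N_n^{-2}]}\,\sqrt{\bbP(\text{overlap})}\asymp(k_n/n)\sqrt{(\log k_n)^{d-1}/k_n}$, which is too large by a factor $\sqrt{k_n/(\log k_n)^{d-1}}$. The correct mechanism is conditional independence, and it is precisely where Assumption~\ref{asmp:uniform} does work. Every leaf has fixed volume $2^{-\lceil\log_2 k_n\rceil}$, so conditional on $(\Theta,\Theta',X_i,X_j)$ the count $N_n(X_i;\Theta)$ is $\text{Binomial}\bigl(n,2^{-\lceil\log_2 k_n\rceil}\bigr)$ regardless of which leaf was selected or whether overlap holds. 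Hence $\bbE\bigl[N_n^{-1}\one\{N_n>0\}\mid\Theta,\Theta',X_i,X_j\bigr]$ is a deterministic constant $\asymp k_n/n$, and iterated expectations gives the exact product $(k_n/n)\cdot\bbP(\text{overlap})$ you want. The same device handles your bias-correlation term and the $N_n=0$ exception.
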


\begin{proof}
    We have
    \begin{align*}
        &\bbE \left[ \big| \cov \{ \widehat \mu(X_i), \widehat \mu(X_j) \mid X_i, X_j \} \big| \right] \\
        &= \bbE \left( \left| \cov \left[ \sum_{Z_k \in D_\mu} \bbE \{ w_k(X_i, \Theta) \mid X_i, D_\mu \} Y_k, \sum_{Z_l \in D_\mu} \bbE \{ w_l(X_j; \Theta) \mid X_j, D_\mu \} Y_l \,\Big|\, X_i, X_j \right] \right| \right) \\
        &= \bbE \left( \left| \cov \left[ \sum_{Z_k \in D_\mu} \bbE \{ w_k(X_i; \Theta) \mid X_i, D_\mu \} Y_k, \sum_{Z_l \in D_\mu} \bbE \{ w_l(X_j; \Theta^\prime) \mid X_j, D_\mu \} Y_l \,\Big|\, X_i, X_j \right] \right| \right) \\
        &= \bbE \left( \left| \cov \left[ \sum_{Z_k \in D_\mu} w_k(X_i; \Theta) Y_k, \sum_{Z_l \in D_\mu} w_l(X_j; \Theta^\prime) Y_l \,\Big|\, X_i, X_j \right] \right| \right)
    \end{align*}
    where the first line follows by definition, the second by replacing $\Theta$ by $\Theta^\prime$, an iid partition, in the right-hand side of the covariance, and the third line by the law of total covariance and because $\Theta \ind \Theta^\prime$.

    \bigskip

    Next, notice that the final term on the right-hand side is zero whenever $A_n(X_i; \Theta) \cap A_n(X_j; \Theta^\prime) = \emptyset$; i.e., if the leaves containing $X_i$ and $X_j$ do not intersect, then the estimators cannot share training data. Therefore, \footnotesize
    \begin{align*}
        &\bbE \left[ \big| \cov \{ \widehat \mu(X_i), \widehat \mu(X_j) \mid X_i, X_j \} \big| \right] \\
        &= \bbE \Bigg( \left| \cov \left[ \sum_{Z_k \in D_\mu} w_k(X_i; \Theta) Y_k, \sum_{Z_l \in D_\mu} w_l(X_j; \Theta^\prime) Y_l \,\Big|\, X_i, X_j, A_n(X_i; \Theta) \cap A_n(X_j; \Theta^\prime) \neq \emptyset \right] \right| \\
        &\hspace{0.5in} \cdot \bbP \left\{ A_n(X_i; \Theta) \cap A_n(X_j; \Theta^\prime) \neq \emptyset \mid X_i, X_j \right\} \Bigg). 
    \end{align*} \normalsize
    Then, we can bound the inner covariance:
    \begin{align*}
        &\left| \cov \left[ \sum_{Z_k \in D_\mu} w_k(X_i; \Theta) Y_k, \sum_{Z_l \in D_\mu} w_l(X_j; \Theta^\prime) Y_l \,\Big|\, X_i, X_j, A_n(X_i; \Theta) \cap A_n(X_j; \Theta^\prime) \neq \emptyset \right] \right| \\
        &\leq \left| \cov \left[ \sum_{Z_k \in D_\mu} w_k(X; \Theta) Y_k, \sum_{Z_l \in D_\mu} w_l(X; \Theta^\prime) Y_l \,\Big|\, X_i = X_j = X \right] \right| \\
        &= \left| \cov \left[ \sum_{Z_k \in D_\mu} \bbE \{  w_k(X; \Theta) \mid X, D_\mu \} Y_k, \sum_{Z_l \in D_\mu} \bbE \{  w_l(X; \Theta) \mid X, D_\mu \} Y_l \,\Big|\, X \right] \right| \\
        &= \bbV \{ \widehat \mu(X) \mid X \}.
    \end{align*}
    Note that the first inequality follows follows by setting $X_i = X_j$, which will increase the absolute value of the covariance and noting that $\one \{  A_n(X; \Theta) \cap A_n(X; \Theta^\prime) \neq 0 \} = 1$ because the leaves contain the same point, the second equality follows by the law of total covariance on $\Theta, \Theta^\prime$, and the third line follows by the definition of variance. 

    \medskip
    
    The variance of the estimator can be bounded by its supremum, and therefore H\"{o}lder's inequality and iterated expectations yield
    \[
    \bbE \left[ \big| \cov \{ \widehat \mu(X_i), \widehat \mu(X_j) \mid X_i, X_j \} \big| \right] \leq \sup_{x \in \mathcal{X}} \bbV \{ \widehat \mu(X) \mid X=x \}  \bbP \left\{ A_n(X_i; \Theta) \cap A_n(X_j; \Theta^\prime) \neq \emptyset \right\}.
    \]
    Lemma~\ref{lem:leaf-prod-bound} and Corollary~\ref{cor:rf-variance} imply
    \[
    \bbE \left[ \big| \cov \{ \widehat \mu(X_i), \widehat \mu(X_j) \mid X_i, X_j \} \big| \right] \lesssim \frac{k_n}{n} \cdot \frac{(\log k_n)^{d-1}}{k_n} \lesssim \frac{(\log k_n)^{d-1}}{n}.
    \]
\end{proof}

\color{black}
\section{Covariate-density-adapted kernel regression} \label{app:cda_kernel}

In this section, we establish six results for covariate-density-adapted kernel regression (Estimator~\ref{est:cdalpr}). The first result, Lemma~\ref{lem:cdalpr_upper_bounds}, establishes upper bounds on the variance and covariance.  The second result, Lemma~\ref{lem:cdalpr_lower_bounds}, establishes a lower bound on the unconditional variance. The third result, Lemma~\ref{lem:cdalpr_cond_lower_bounds}, establishes an almost sure limit for the conditional variance while the fourth result, Lemma \ref{lem:cdalpr_third_moment}, establishes an upper bound on the conditional third moment of the estimator. These two results are used in establishing Theorem~\ref{thm:inference} in Appendix~\ref{app:known}. The fifth result, Lemma~\ref{lem:cdakernel_holder}, demonstrates that $\bbE \{ \widehat \mu(x) \}$ is H\"{o}lder smooth when $\widehat \mu$ is the smooth covariate-density-adapted kernel regression (Estimator \ref{est:cdalpr_smooth}), while the sixth result, Lemma~\ref{lem:cdakernel_bounded}, demonstrates this estimator is bounded if the outcome is bounded.

\begin{lemma} \label{lem:cdalpr_upper_bounds} \emph{(Covariate-density-adapted kernel regression variance and covariance upper bounds)}
    Suppose Assumptions~\ref{asmp:dgp}, \ref{asmp:bdd_density}, \ref{asmp:holder}, and \ref{asmp:density_smooth} hold, and $\widehat \mu(x)$ is either a higher-order or smooth covariate-density-adapted kernel regression estimator (Estimator~\ref{est:cdalpr_higher} or~\ref{est:cdalpr_smooth}) for $\mu(x)$ constructed on $D_\mu$.  Then,
    \begin{align}
        \sup_{x \in \mathcal{X}} \bbV \{ \widehat \mu(x) \} &\lesssim \frac{1}{nh^d}\text{, and} \label{eq:cdalpr_variance} \\
        \bbE \big[ \left| \cov \{ \widehat \mu(X_i), \widehat \mu(X_j) \mid X_i, X_j \} \right| \big] &\lesssim \frac1n \label{eq:cdalpr_covariance}
    \end{align}
\end{lemma}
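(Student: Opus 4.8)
The plan is to exploit the defining feature of Estimator~\ref{est:cdalpr} that sets it apart from the k-Nearest-Neighbors and local polynomial estimators: each summand depends on only a \emph{single} training observation, because the weight $\frac{K((X_k-x)/h)}{nh^d f(X_k)}$ involves no Gram matrix and no neighbor structure. Writing $W_x(X,Y) = \frac{1}{h^d}\frac{K((X-x)/h)}{f(X)}Y$ for the per-observation contribution, the estimator becomes $\widehat\mu(x) = \frac1n\sum_{Z_k\in D_\mu} W_x(X_k,Y_k)$, a genuine average of iid terms, and I would build both bounds from this representation.

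First I would establish the variance bound \eqref{eq:cdalpr_variance}. Since the summands are iid, $\bbV\{\widehat\mu(x)\} = \frac1n\bbV\{W_x\}\leq\frac1n\bbE[W_x^2]$. Conditioning on $X$ and using the bounded conditional second moment from Assumption~\ref{asmp:dgp} gives $\bbE(Y^2\mid X)\lesssim 1$, so $\bbE[W_x^2]\lesssim h^{-2d}\int\frac{K((u-x)/h)^2}{f(u)}\,du$. After the change of variables $v=(u-x)/h$, the lower bound $f\geq c$ from Assumption~\ref{asmp:bdd_density} and the kernel normalization $\int K^2\asymp 1$ yield $\bbE[W_x^2]\lesssim h^{-2d}\cdot h^d = h^{-d}$, uniformly in $x$ (any $v$ with $x+hv\notin\mathcal{X}$ contributes nothing to the integral). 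Since this bound is independent of $x$, the supremum bound $\sup_{x\in\mathcal{X}}\bbV\{\widehat\mu(x)\}\lesssim \frac1{nh^d}$ follows at once.

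For the covariance bound \eqref{eq:cdalpr_covariance}, the single-point weight structure makes the reduction exact: for fixed test points $x_i,x_j$, distinct training observations are independent, so all cross terms vanish and $\cov\{\widehat\mu(x_i),\widehat\mu(x_j)\} = \frac1n\cov\{W_{x_i},W_{x_j}\}$, where $W_{x_i},W_{x_j}$ are evaluated at a single generic training point $(X,Y)$. I would then decompose $\cov\{W_{x_i},W_{x_j}\} = \bbE[W_{x_i}W_{x_j}] - \bbE[W_{x_i}]\bbE[W_{x_j}]$ and bound each piece after averaging over the independent estimation covariates $X_i,X_j$. The product term is localized: $|K((u-x_i)/h)K((u-x_j)/h)|$ vanishes unless $\|x_i-x_j\|\leq 2h$, and on that event Cauchy--Schwarz with $\int K^2\asymp 1$ and $f\geq c$ give $|\bbE[W_{x_i}W_{x_j}]|\lesssim h^{-d}$, exactly as in the variance step; since $\bbP(\|X_i-X_j\|\lesssim h)\lesssim h^d$ by the upper bound $f\leq C$, this contributes $\bbE[\,|\bbE[W_{X_i}W_{X_j}]|\,]\lesssim h^{-d}\cdot h^d = 1$. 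The mean-product term is not localized, but each factor satisfies $|\bbE[W_x]| = |\int K(v)\mu(x+hv)\,dv|\lesssim 1$ (using that $\mu$ is bounded and $\int|K|\lesssim 1$), so $\bbE[\,|\bbE[W_{X_i}]\bbE[W_{X_j}]|\,]\lesssim 1$ as well. Combining, $\bbE[\,|\cov\{W_{X_i},W_{X_j}\}|\,]\lesssim 1$, hence $\bbE[\,|\cov\{\widehat\mu(X_i),\widehat\mu(X_j)\mid X_i,X_j\}|\,]\lesssim\frac1n$.

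The main point of care is the covariance step. Unlike the localization arguments behind Lemmas~\ref{lem:knn_covariance} and~\ref{lem:lpr_covariance}, the conditional covariance here is \emph{not} exactly zero when the two test points are far apart: the mean-product term $-\frac1n\bbE[W_{x_i}]\bbE[W_{x_j}]$ persists at order $n^{-1}$ for all $x_i,x_j$. The exact iid decomposition sidesteps any appeal to vanishing far-apart covariances by keeping both terms and verifying that each integrates to $O(1)$ against the law of $(X_i,X_j)$ --- the localized first term through the $O(h^d)$ collision probability, and the non-localized second term through the uniform boundedness of $\bbE[\widehat\mu(x)]$. Everything else is routine once $\bbE(Y^2\mid X)\lesssim 1$, $c\leq f\leq C$, $\int K^2\asymp 1$, and $\int K = 1$ are in hand; notably the Hölder assumptions are not needed for these particular upper bounds, only boundedness of the nuisance function and the density.
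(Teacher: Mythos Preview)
Your proof is correct. For the variance bound you and the paper take the same path: since $\widehat\mu(x)$ is an average of $n$ iid terms $W_x(X_k,Y_k)$, the variance reduces to $n^{-1}\bbE[W_x^2]$, and $\bbE[W_x^2]\lesssim h^{-d}$ follows from $\bbE(Y^2\mid X)\lesssim 1$, $f\geq c$, and $\int K^2\asymp 1$. The paper inserts a law-of-total-variance split first but ends up bounding the same integrals.

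For the covariance bound your route is genuinely different. The paper reuses the localization argument of Lemmas~\ref{lem:knn_covariance} and~\ref{lem:lpr_covariance}, writing $\bbE\big[|\cov|\big]\leq\sup_x\bbV\{\widehat\mu(x)\}\cdot\bbP(\lVert X_i-X_j\rVert\leq 2h)\lesssim n^{-1}$, which implicitly treats the conditional covariance as zero once $\lVert X_i-X_j\rVert>2h$. You instead use the exact single-summand identity $\cov\{\widehat\mu(x_i),\widehat\mu(x_j)\}=n^{-1}\cov\{W_{x_i},W_{x_j}\}$ and bound the localized piece $\bbE[W_{x_i}W_{x_j}]$ and the non-localized piece $\bbE[W_{x_i}]\bbE[W_{x_j}]$ separately. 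Your observation that the mean-product term persists at order $n^{-1}$ for all $x_i,x_j$ is correct and is exactly what the paper's one-line argument glosses over: for this estimator the far-apart conditional covariance equals $-n^{-1}\bbE[W_{x_i}]\bbE[W_{x_j}]$, not zero. The omitted term is already $O(n^{-1})$, so the paper's conclusion is unaffected, but your decomposition is the cleaner and more accurate argument for this particular estimator.
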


\begin{proof}
    For the variance upper bound, we have
    \begin{align*}
        \bbV \{ \widehat \mu(x) \} &= \bbV \left\{ \sum_{i=1}^{n} \frac{K \left( \frac{X_i - x}{h} \right) \mu(X_i)}{n h^d f(X_i)} \right\} + \bbE \left[ \bbV \left\{ \sum_{i=1}^{n} \frac{K \left( \frac{ X_i - x }{h} \right) Y_i}{n h^d f(X_i)} \mid X_\mu^n \right\} \right]  \\
        &\lesssim \bbE \left\{ \frac{K \left( \frac{ X_i - x }{h} \right)^2 \mu(X_i)^2}{n h^{2d} f(X_i)^2} \right\} + \bbE \left\{ \frac{K \left( \frac{ X_i - x}{h} \right)^2}{n h^{2d} f(X_i)^2} \right\}  \\
        &\lesssim \frac{1}{nh^d},
    \end{align*}
    where the first line follows by the law of total variance, the second by iid data and Assumptions~\ref{asmp:dgp} and~\ref{asmp:bdd_density}, and the third line follows by the assumption on the kernel that $\int K(x)^2 dx \lesssim 1$ and Assumptions~\ref{asmp:dgp} and~\ref{asmp:bdd_density}.  The uniform bound follows because $\mathcal{X}$ is compact.
    
    \medskip
    
    \noindent For the covariance, since the estimator is localized, by the same argument as Lemmas~\ref{lem:knn_covariance} and~\ref{lem:lpr_covariance}
    $$
    \bbE \big[ \left| \cov \{ \widehat \mu(X_i), \widehat \mu(X_j) \mid X_i, X_j \} \right| \big] \leq \sup_{x \in \mathcal{X}} \bbV \{ \widehat \mu(x) \} \bbP ( \lVert X_i - X_j \rVert \leq 2h ) \lesssim \frac1n.
    $$
\end{proof}

\begin{lemma}  \label{lem:cdalpr_lower_bounds} \emph{(Covariate-density-adapted kernel regression variance lower bounds)}
    Suppose Assumptions~\ref{asmp:dgp}, \ref{asmp:bdd_density}, \ref{asmp:density_smooth}, and \ref{asmp:boundedAY} hold and $\widehat \mu(x)$ is a either a higher-order or smooth covariate-density-adapted kernel regression estimator (Estimator~\ref{est:cdalpr_higher} or~\ref{est:cdalpr_smooth}) for $\mu(x)$ constructed on $D_\mu$.  Then, 
    \begin{equation}
        \inf_{x \in \mathcal{X}} \bbV \{ \widehat \mu(x) \} \gtrsim \frac{1}{nh^d}. \label{eq:cdalpr_var_lower}
    \end{equation}
\end{lemma}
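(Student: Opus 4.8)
The plan is to exploit the fact that $\widehat\mu(x)$ is an empirical average of i.i.d.\ summands and then apply the law of total variance to isolate the irreducible noise coming from $Y$, reversing the inequality that produced the upper bound in Lemma~\ref{lem:cdalpr_upper_bounds}. Write $\widehat\mu(x) = \frac1n\sum_i T_i$ with $T_i = h^{-d} K\!\left(\tfrac{X_i-x}{h}\right) Y_i / f(X_i)$; the $T_i$ are i.i.d., so $\bbV\{\widehat\mu(x)\} = \tfrac1n \bbV(T_1)$. Conditioning on $X_1$ leaves only the randomness in $Y_1$, whose coefficient is then fixed, so $\bbV(T_1 \mid X_1) = h^{-2d} f(X_1)^{-2} K(\tfrac{X_1-x}{h})^2\, \bbV(Y\mid X_1)$, and the law of total variance yields the one-sided bound $\bbV(T_1) \ge \bbE\{\bbV(T_1\mid X_1)\}$.

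Next I would evaluate $\bbE\{\bbV(T_1\mid X_1)\}$ as an integral against $f$, cancel one power of $f$, and change variables $u = x + hv$:
\[
\bbE\{\bbV(T_1\mid X_1)\} = h^{-d}\int_{\{v:\, x+hv\in\mathcal{X}\}} \frac{K(v)^2}{f(x+hv)}\,\bbV(Y\mid X=x+hv)\,dv .
\]
By Assumption~\ref{asmp:dgp} the conditional variance $\bbV(Y\mid X=\cdot)$ is bounded below by a positive constant, by Assumption~\ref{asmp:bdd_density} the density $f$ is bounded above by $C$, and $K$ is supported on the unit ball, so
\[
\bbV\{\widehat\mu(x)\} = \tfrac1n\bbV(T_1) \ \gtrsim\ \frac{1}{nh^{d}}\int_{\{v:\,\|v\|\le 1,\ x+hv\in\mathcal{X}\}} K(v)^2\,dv .
\]
It then remains to show the surviving integral is bounded below by a constant uniformly in $x\in\mathcal{X}$, after which the claim $\bbV\{\widehat\mu(x)\}\gtrsim (nh^d)^{-1}$ follows.

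The main obstacle is precisely this last uniform lower bound, $\inf_{x\in\mathcal{X}} \int_{\{\|v\|\le 1,\ x+hv\in\mathcal{X}\}} K(v)^2\,dv \gtrsim 1$. For points $x$ in the interior of $\mathcal{X}$ (those with $B_h(x)\subseteq\mathcal{X}$) the constraint $x+hv\in\mathcal{X}$ is vacuous on the support of $K$, so the integral equals $\int K(v)^2\,dv \asymp 1$ by the kernel normalization and the bound is immediate. The difficulty is entirely at the boundary: when $x$ lies within $h$ of $\partial\mathcal{X}$ a fraction of the ball is excised, and for the higher-order kernel (Estimator~\ref{est:cdalpr_higher}), whose $K^2$ need not be spread evenly, one must ensure a fixed fraction of the $L^2$-mass survives. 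I would handle this by imposing (or invoking, via one of the technical results of Appendix~\ref{app:technical}) a standard regularity condition on the support $\mathcal{X}$ --- the same minimal-mass/volume property $|\mathcal{X}\cap B_r(x)|\gtrsim r^d$ already used implicitly in Lemma~\ref{lem:knn_distance} --- which guarantees that a solid angle of directions pointing into $\mathcal{X}$ of size bounded below is available at every boundary point, and hence that the retained portion of $\int K(v)^2\,dv$ stays bounded below uniformly in $x$.
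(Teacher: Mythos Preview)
Your approach is essentially identical to the paper's: both lower-bound $\bbV\{\widehat\mu(x)\}$ by $\tfrac{1}{n}\bbE\!\big[\bbV(T_1\mid X_1)\big]$ via the law of total variance (the paper conditions on all of $X_\mu^n$, but since the $T_i$ are conditionally independent this reduces to the same single-summand expression), then bounds the resulting integral below using the lower bound on $\bbV(Y\mid X)$, the upper bound on $f$, and $\int K^2 \asymp 1$.

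The one point of divergence is that you are more careful than the paper about the boundary. The paper writes the integral over $t\in\bbR$, changes variables, and invokes $\int K(u)^2\,du \asymp 1$ directly, without discussing the restriction $x+hv\in\mathcal{X}$; it then simply asserts the infimum bound by compactness of $\mathcal{X}$. Your observation that for $x$ within $h$ of $\partial\mathcal{X}$ only a portion of $\int K^2$ survives is correct, and your proposed fix via a cone/minimal-volume condition on $\mathcal{X}$ is the standard way to repair this. So your proposal is correct and matches the paper's argument, while additionally flagging and resolving a boundary technicality that the paper leaves implicit.
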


\begin{proof}
    We have, 
    \begin{align*}
        \bbV \{ \widehat \mu(x) \} &= \bbV \big[ \bbE \{ \widehat \mu(x) \mid X_\mu^n \} \big] + \bbE \big[ \bbV \{ \widehat \mu(x) \mid X_\mu^n \} \big] \\
        &\geq 0 + \bbE \left[ \bbV \left\{ \frac1n \sum_{i=1}^{n} \frac{ K \left( \frac{ X_i - x }{h} \right) Y_i}{f(X_i) h^d}   \ \Big| X_\mu^n \right\} \right] \\
        &= \frac{1}{n h^{2d}} \bbE \left\{ \frac{K \left( \frac{X - x}{h} \right)^2}{f(X)^2}  \bbV ( Y \mid  X) \right\} \\
        &= \frac{1}{n h^{2d}} \int_{t \in \bbR} K \left( \frac{t - x}{h} \right)^2 \frac{\bbV(Y \mid X = t)}{f(t)} dt \\
        &\gtrsim \frac{1}{n h^{2d}} \int_{t \in \bbR} K \left( \frac{t - x}{h} \right)^2  dt \\
        &= \frac{1}{n h^d}  \int_{u \in \bbR} K \left( u \right)^2 du &u = (t-x)/h \\
        &\gtrsim  \frac{1}{nh^d},
    \end{align*}
    where the second inequality follows by Assumption~\ref{asmp:dgp} and~\ref{asmp:bdd_density} (specifically, because we assume $0 < f(x), \bbV(Y \mid x) < C$ for all $x \in \mathcal{X}$), and the final line by the definition of the kernel in Estimator~\ref{est:cdalpr_higher} and ~\ref{est:cdalpr_smooth} (specifically, because $\int K(u)^2 du \asymp 1$).  These bounds hold for arbitrary $x \in \mathcal{X}$, and thus hold for the infimum over all $x \in \mathcal{X}$ since $\mathcal{X}$ is compact by Assumption~\ref{asmp:bdd_density}.
\end{proof}

\begin{lemma}  \label{lem:cdalpr_cond_lower_bounds} \emph{(Covariate-density-adapted kernel regression conditional variance lower bounds)}
    Suppose Assumptions~\ref{asmp:dgp}, \ref{asmp:bdd_density}, \ref{asmp:density_smooth}, \ref{asmp:boundedAY}, and \ref{asmp:continuous_cond_var} hold and $\widehat \mu(x)$ is a either a higher-order or smooth covariate-density-adapted kernel regression estimator (Estimator~\ref{est:cdalpr_higher} or~\ref{est:cdalpr_smooth}) for $\mu(x)$ constructed on $D_\mu$. Then, when $nh^d \asymp n^{-\alpha}$ for $\alpha > 0$ as $n \to \infty$, 
    \begin{equation} \label{eq:cond_muhat_as}
        nh^d \bbV \{ \widehat \mu(X) \mid D_\mu \} \inas \bbE \left\{ \frac{Y^2}{f(X)} \right\} \bbE \left\{ \frac{K(X)^2}{f(X)} \right\}.
    \end{equation}
\end{lemma}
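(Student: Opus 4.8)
The plan is to work from the variance decomposition, where the conditioning is on the training data $D_\mu$ and the variance is taken over a fresh test covariate $X \sim \mathcal{P}$ independent of $D_\mu$:
\[
\bbV \{ \widehat \mu(X) \mid D_\mu \} = \bbE \{ \widehat \mu(X)^2 \mid D_\mu \} - \left( \bbE \{ \widehat \mu(X) \mid D_\mu \} \right)^2 .
\]
Writing $\widehat \mu(x) = \frac{1}{nh^d} \sum_i \frac{K((X_i - x)/h)}{f(X_i)} Y_i$, I would expand the square and integrate the kernel products against $f$, setting $I_{ij} = \int K((X_i - x)/h) K((X_j - x)/h) f(x) dx$. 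Scaling by $nh^d$ throughout (note $nh^d \asymp n^{-\alpha} \to 0$ by assumption), the change of variables $u = (x - X_i)/h$ gives $I_{ii} = h^d g_n(X_i)$ with $g_n(X_i) = \int K(-u)^2 f(X_i + uh) du$, and $I_{ij} = h^d K_{ij}$ for $i \ne j$ where $K_{ij} = h^{-d} I_{ij}$ is bounded and vanishes unless $\lVert X_i - X_j \rVert \lesssim h$. This reduces the scaled second moment to
\[
nh^d \, \bbE \{ \widehat \mu(X)^2 \mid D_\mu \} = \frac{1}{n} \sum_i \frac{Y_i^2}{f(X_i)^2} g_n(X_i) + \frac{1}{n} \sum_{i \ne j} \frac{Y_i Y_j}{f(X_i) f(X_j)} K_{ij} .
\]

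For the diagonal term I would argue almost-sure convergence to the target constant. Since $f \in \text{H\"older}(\gamma)$ is continuous (Assumption~\ref{asmp:density_smooth}) and $K$ is localized, $g_n(X_i) \to f(X_i) \int K^2$, and the summands are uniformly bounded by Assumptions~\ref{asmp:bdd_density} and~\ref{asmp:boundedAY}. The row means satisfy $\bbE\{ \frac{Y^2}{f(X)^2} g_n(X) \} \to \int K^2 \, \bbE\{ Y^2/f(X) \}$ by dominated convergence (which also absorbs the boundary layer where $x + uh$ leaves $\mathcal{X}$, a set of covariate probability $O(h) \to 0$), using the cancellation $\bbE\{ K(X)^2/f(X) \} = \int K(u)^2 du$. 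Invoking the triangular-array strong law for bounded variables from Appendix~\ref{app:slln} then yields almost-sure convergence of the diagonal term to $\bbE\{ Y^2/f(X) \} \bbE\{ K(X)^2/f(X) \}$.

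For the off-diagonal term and the squared-mean term I would show both vanish almost surely. The off-diagonal average has expectation $O(nh^d) = O(n^{-\alpha}) \to 0$; computing its variance by the usual U-statistic index-sharing bookkeeping gives $O(nh^{2d} + h^d) \asymp n^{-1-2\alpha} + n^{-1-\alpha}$, which is summable, so Chebyshev together with Borel--Cantelli forces almost-sure convergence to $0$. For the mean, $\int K = 1$ gives $h^{-d}\int K((X_i - x)/h) f(x) dx \to f(X_i)$, so another application of the Appendix~\ref{app:slln} strong law yields $\bbE\{ \widehat \mu(X) \mid D_\mu \} \inas \bbE[Y]$, a bounded limit, whence $nh^d ( \bbE\{ \widehat \mu(X) \mid D_\mu \} )^2 \to 0$ almost surely. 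Summing the three almost-sure limits gives the stated result.

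The main obstacle I anticipate is upgrading in-expectation or in-probability control to genuine almost-sure convergence for these triangular arrays, whose summands change with $n$ through $h$: the diagonal and mean terms rely on the bounded-variable concentration underlying Appendix~\ref{app:slln}, while the dependent off-diagonal piece requires the separate summable-variance Borel--Cantelli argument above rather than a direct appeal to an independent-row strong law. The remaining steps---the kernel change of variables, the $g_n \to f \int K^2$ limit, and the boundary-layer bookkeeping---are routine given continuity of $f$, the localization and normalization of $K$, and the boundedness hypotheses.
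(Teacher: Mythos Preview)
Your proposal is correct and follows the same overall strategy as the paper---split into a diagonal piece handled by the triangular-array strong law of Appendix~\ref{app:slln}, and an off-diagonal piece killed almost surely via a Borel--Cantelli argument. There are two points of difference worth noting. First, you decompose the conditional variance as $\bbE\{\widehat\mu(X)^2\mid D_\mu\}-(\bbE\{\widehat\mu(X)\mid D_\mu\})^2$ and handle the squared mean separately, whereas the paper decomposes directly into $\sum_i\bbV\{\cdot\mid X_i,Y_i\}+\sum_{i\neq j}\cov\{\cdot,\cdot\mid X_i,Y_i,X_j,Y_j\}$. Your grouping is slightly cleaner: the cross term $K_{ij}$ in your second moment vanishes \emph{exactly} when $\lVert X_i-X_j\rVert>2h$, while in the paper's covariance grouping the far pairs contribute a residual $-\bbE\{\cdot\}\bbE\{\cdot\}\lesssim 1$ that must be tracked separately (their bound $nh^d\cdot\frac{1}{n^2}\sum_{\text{far}}1\lesssim nh^d\to0$). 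Second, for the close-pair off-diagonal you use a direct second-moment computation ($\bbV(S_n)=O(h^d+nh^{2d})$, summable) together with Chebyshev and Borel--Cantelli; the paper instead invokes Lemma~\ref{lem:separated_covariates}, which establishes $P_n/n\inas0$ via the higher-moment U-statistic inequality of \citet{gine2000exponential}. Your route is more elementary and suffices here because the summable-variance rate $n^{-(1+\alpha)}$ already follows from the index-sharing bookkeeping you describe.
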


\begin{proof}
    We will consider the diagonal variance terms and off-diagonal covariance terms separately
    \begin{align*}
        \bbV \{ \widehat \mu(X) \mid D_\mu \} &= \frac{1}{n^2} \sum_{i=1}^{n} \bbV \left\{ \frac{K \left( \frac{X_i - X}{h} \right)}{h^d f(X_i)} Y_i \mid X_i, Y_i \right\} \\
        &+ \frac{1}{n^2} \sum_{i=1}^{n} \sum_{j \neq i} \cov \left\{ \frac{K \left( \frac{X_i - X}{h} \right)}{h^d f(X_i)} Y_i, \frac{K \left( \frac{X_j - X}{h} \right)}{h^d f(X_j)} Y_j \mid X_i, Y_i, X_j, Y_j \right\}.
    \end{align*}
    For the diagonal terms,
    \begin{align*}
        nh^d \frac{1}{n^2} \sum_{i=1}^{n} \bbV \left\{ \frac{K \left( \frac{X_i - X}{h} \right)}{h^d f(X_i)} Y_i \mid X_i, Y_i \right\} &= \frac{1}{n} \sum_{i=1}^{n} \frac{Y_i^2}{f(X_i)^2 h^{d}} \bbV \left\{ K \left( \frac{X_i - X}{h} \right) \mid X_i \right\}.
    \end{align*}
    Notice that the right-hand side is an average of non-negative bounded random variables because $Y^2$ is upper bounded and $f(X)^2$ is lower bounded away from zero by assumption, and because
    \begin{align*}
        0 \leq \frac{1}{h^d} \bbV \left\{ K \left( \frac{X_i - X}{h} \right) \mid X_i \right\} &\leq \frac{1}{h^d} \bbE \left\{ K \left( \frac{X_i - X}{h} \right)^2 \mid X_i \right\} \\
        &= \frac{1}{h^d} \int_{\mathcal{X}} K\left( \frac{X_i - t}{h} \right)^2 f(t) dt  \\
        &= \int_{\mathcal{X}} K\left( u \right)^2 f(X_i - uh) du \asymp 1,
    \end{align*}
    where the final line follows by a change of variables and because the density is upper and lower bounded and $\int K(u)^2 du \asymp 1$ by assumption. 
    
    \medskip
    
    Therefore, the diagonal terms, multiplied by $nh^d$, are a sample average of bounded random variables with common mean. By a strong law of large numbers for triangular arrays of bounded random variables (Lemma \ref{lem:slln}),
    \begin{equation} \label{eq:lim_as}
        nh^d \frac{1}{n^2} \sum_{i=1}^{n} \bbV \left\{ \frac{K \left( \frac{X_i - X}{h} \right)}{h^d f(X_i)} Y_i \mid X_i, Y_i \right\} \inas \lim_{n \to \infty} \bbE\left[ \frac{Y_i^2}{f(X_i)^2 h^{d}} \bbV \left\{ K \left( \frac{X_i - X}{h} \right) \mid X_i \right\} \right],
    \end{equation}
    should the limit on the right-hand side exist.  Indeed, this limit exists. First, notice that
    \begin{align}
        \lim_{n \to \infty} \bbE &\left[ \frac{Y_i^2}{f(X_i)^2 h^{d}} \bbV \left\{ K \left( \frac{X_i - X}{h} \right) \mid X_i \right\} \right] \nonumber \\ 
        &=\lim_{n \to \infty}  \int_{\mathcal{X}} \frac{\bbE(Y^2 \mid X = s)}{f(s) h^d} \left[ \int_{\mathcal{X}} K\left( \frac{s-t}{h} \right)^2 f(t) dt  - \left\{ \int_{\mathcal{X}} K \left(\frac{s-t}{h} \right) f(t) dt \right\}^2 \right] ds \nonumber \\
        &= \lim_{n \to \infty}  \int_{\mathcal{X}} \frac{\bbE(Y^2 \mid X = s)}{f(s)} \left\{ \int_{\mathcal{U}} K\left( u \right)^2 f(s + uh) du \right\} ds \nonumber \\
        &\hspace{1in} - h^d \int_{\mathcal{X}} \frac{\bbE(Y^2 \mid X = s)}{f(s)} \left\{ \int_{\mathcal{U}} K \left(u \right) f(s + uh) du \right\}^2 ds. \label{eq:diff_ker_limit}
    \end{align}
    where the second equality follows by a change of variables, linearity of integration, and the symmetry of $K$.  By the assumed upper bound on $Y$ and lower bound on $f(X)$ and the integrability of $K$, and because $h^d \stackrel{n\to\infty}{\longrightarrow} 0$, the limit of the second summand is zero.
    
    \medskip

    Meanwhile, by the boundedness of $Y$ and $f(X)$, the integrability of $K^2$, and Fubini's theorem,
    $$
    \int_{\mathcal{X}} \frac{\bbE(Y^2 \mid X = s)}{f(s)} \left\{ \int_{\mathcal{U}} K\left( u \right)^2 f(s + uh) du \right\} ds = \int_{\mathcal{X}} \int_{\mathcal{U}}  \bbE(Y^2 \mid X = s) K(u)^2 \frac{f(s + uh)}{f(s)} du ds.
    $$
    Moreover, by the assumed continuity of $f$, $K(u)^2 f(s + uh) \stackrel{n \to \infty}{\longrightarrow} K(u)^2 f(s)$ uniformly in $u$ at all $s$ except for a set of Lebesgue measure-zero on  the boundary of $\mathcal{X}$. Indeed, at those points, if $u$
    ``points'' outside $\mathcal{X}$, then the limit is zero because $f(s + uh) = 0$ for all $h$.  This, combined with the boundedness of $Y$, $f$, and $K$ and the integrability of $K^2$, implies, by the dominated convergence theorem, that 
    \begin{align}
        \lim_{n \to \infty}  \int_{\mathcal{X}} \frac{\bbE(Y^2 \mid X = s)}{f(s)} \left\{ \int_{\mathcal{U}} K\left( u \right)^2 f(s + uh) du \right\} ds &=   \int_{\mathcal{X}} \int_{\mathcal{U}} \bbE(Y^2 \mid X = s) K(u)^2 \lim_{n\to\infty} \frac{f(s + uh)}{f(s)} duds \nonumber \\
        &= \int_{\mathcal{X}} \int_{\mathcal{X}} \bbE(Y^2 \mid X = s) K(u)^2 duds \nonumber \\
        &= \left\{ \int_{\mathcal{X}} \bbE(Y^2 \mid X = s) ds \right\} \left\{ \int_{\mathcal{X}} K(u)^2 du \right\} \nonumber \\
        &= \bbE \left\{ \frac{Y^2}{f(X)} \right\} \bbE \left\{ \frac{K(X)^2}{f(X)} \right\} \label{eq:lim_det}
    \end{align}
    Therefore, because the limits of both summands in \eqref{eq:diff_ker_limit} exist, the limit of the difference is the difference of the limits.  Hence, combining \eqref{eq:lim_as} and \eqref{eq:lim_det} yields
    \begin{equation} \label{eq:var_as}
        nh^d \frac{1}{n^2} \sum_{i=1}^{n} \bbV \left\{ \frac{K \left( \frac{X_i - X}{h} \right)}{h^d f(X_i)} Y_i \mid X_i, Y_i \right\} \inas \bbE \left\{ \frac{Y^2}{f(X)} \right\} \bbE \left\{ \frac{K(X)^2}{f(X)} \right\}.
    \end{equation}
    
    \medskip
    
    Next, consider the sum of off-diagonal covariance terms.  First, because the kernel is localized, notice that when the covariates are far apart such that $\lVert X_i - X_j \rVert > 2h$, then the two terms inside the covariance do not share non-zero support because $K(x/h) \lesssim \one( \lVert x \rVert \leq h)$.  For $f(X)$ and $g(X)$ that do not share non-zero support, $\bbE\{ f(X) g(X) \} = 0$ and so $\left|\cov\{ f(X), g(X) \} \right| = \left| \bbE \{ f(X) \} \bbE\{ g(X) \} \right|.$  In that case,
    \begin{align}
        \left| \cov \left\{ \frac{K \left( \frac{X_i - X}{h} \right)}{h^d f(X_i)} Y_i, \frac{K \left( \frac{X_j - X}{h} \right)}{h^d f(X_j)} Y_j \mid X_i, Y_i, X_j, Y_j \right\} \right| &= \left| \bbE \left\{ \frac{K \left(\frac{X_i - X}{h} \right)}{h^d f(X_i)} Y_i \right\} \bbE \left\{ \frac{K \left(\frac{X_j - X}{h} \right)}{h^d f(X_j)} Y_j \right\} \right| \nonumber \\
        &\lesssim \left| \frac{1}{h^{2d}} \int K \left( \frac{X_i - x}{h} \right)dx \int K \left( \frac{X_j - x}{h} \right)dx \right| \nonumber \\
        &= 1 \label{eq:cov_upper}
    \end{align}
    where the second line follows by lower bounded density and upper bounded outcome, while the final line follows by a change of variables and because $\int K(x) dx = 1$.
    
    \medskip
    
    Otherwise, when the covariates are far apart, the covariance can be upper bounded by the product of standard deviations by Cauchy-Schwarz, i.e., 
    \begin{align}
        &\left| \cov \left\{ \frac{K \left( \frac{X_i - X}{h} \right)}{h^d f(X_i)} Y_i, \frac{K \left( \frac{X_j - X}{h} \right)}{h^d f(X_j)} Y_j \mid X_i, Y_i, X_j, Y_j \right\} \right| \\
        &\leq \sqrt{ \bbV \left\{ \frac{K \left( \frac{X_i - X}{h} \right)}{h^d f(X_i)} Y_i \mid X_i, Y_i \right\} } \sqrt{ \bbV \left\{ \frac{K \left( \frac{X_j - X}{h} \right)}{h^d f(X_j)} Y_j \mid X_j,Y_j \right\} } \nonumber \\
        &\lesssim \frac{1}{h^{2d}} \sqrt{ \bbV \left\{  K \left( \frac{X_i - X}{h} \right) \mid X_i \right\} } \sqrt{ \bbV \left\{  K \left( \frac{X_j - X}{h} \right) \mid X_j \right\} } \nonumber \\
        &\lesssim \frac{1}{h^d}, \label{eq:cov_upper2}
    \end{align}
    where the second line follows because $Y$ and $f(X)$ are upper and lower bounded, respectively, by assumption and $X_i$ and $X_j$ are iid, and the third line follows because $\bbV \left\{  K \left( \frac{X_j - X}{h} \right) \mid X_j \right\} = h^d \int K(u)^2 du - h^{2d} \left\{ \int K(u) du \right\}^2 \lesssim h^d$ by a change of variables because $\int K(u)^2 du \lesssim 1$ by assumption.
    
    \medskip
    
    Then, the sum of off-diagonal covariance terms can be bounded by counting how many covariates are close and multiplying the count by the upper bound $\frac{1}{h^d}$ discussed in the previous paragraph.  Let $P_n$ denote (two times) the number of close covariate pairs as, i.e., 
    \begin{equation} \label{eq:close_covariates}
        P_n = \sum_{i=1}^{n} \sum_{j \neq i} \one \left( \lVert X_i - X_j \rVert \leq 2h \right).
    \end{equation}
    Combining \eqref{eq:cov_upper}, \eqref{eq:cov_upper2}, and \eqref{eq:close_covariates}, we have
    \begin{equation}
        \left| \frac{1}{n^2} \sum_{i=1}^{n} \sum_{j \neq i}  \cov \left\{ \frac{K \left( \frac{X_i - X}{h} \right)}{h^d f(X_i)} Y_i, \frac{K \left( \frac{X_j - X}{h} \right)}{h^d f(X_j)} Y_j \mid X_i, Y_i, X_j, Y_j \right\} \right| \lesssim \frac{P_n}{n^2} \frac{1}{h^d} + 1.
    \end{equation}
    Lemma~\ref{lem:separated_covariates} establishes that $\frac{P_n}{n} \inas 0$ under the assumed condition on the bandwidth that $nh^d \asymp n^{-\alpha}$ for some $\alpha > 0$. Hence, 
    \begin{equation} \label{eq:cov_as}
        nh^d \left[ \left| \frac{1}{n^2} \sum_{i=1}^{n} \sum_{j \neq i}  \cov \left\{ \frac{K \left( \frac{X_i - X}{h} \right)}{h^d f(X_i)} Y_i, \frac{K \left( \frac{X_j - X}{h} \right)}{h^d f(X_j)} Y_j \mid X_i, Y_i, X_j, Y_j \right\} \right| \right] \lesssim \frac{P_n}{n} + nh^d \inas 0.
    \end{equation}
    In conclusion, \eqref{eq:var_as} and \eqref{eq:cov_as} and the continuous mapping theorem imply the result.
\end{proof}

\begin{lemma}  \label{lem:cdalpr_third_moment} \emph{(Covariate-density-adapted kernel regression third moment upper bound)}
    Suppose Assumptions~\ref{asmp:dgp}, \ref{asmp:bdd_density}, \ref{asmp:density_smooth}, and \ref{asmp:boundedAY} hold and $\widehat \mu(x)$ is a either a higher-order or smooth covariate-density-adapted kernel regression estimator (Estimator~\ref{est:cdalpr_higher} or~\ref{est:cdalpr_smooth}) for $\mu(x)$ constructed on $D_\mu$. Then, when $nh^d \asymp n^{-\alpha}$ for $\alpha > 0$ as $n \to \infty$, 
    \begin{equation}
        nh^{\frac{3d}{2}}  \bbE\{ | \widehat \mu(X) |^3 \mid D^n \} \inas 0.
    \end{equation}
\end{lemma}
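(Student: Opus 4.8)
The plan is to condition on the training sample $D_\mu$, under which $\widehat\mu$ is a fixed function and the test point $X$ is an independent draw, and to rewrite the conditional third moment as an $L_3$-norm against the covariate density:
$$
\bbE\{|\widehat\mu(X)|^3 \mid D_\mu\} = \int_{\mathcal{X}} |\widehat\mu(x)|^3 f(x)\,dx .
$$
I would then interpolate between the sup-norm and the already-controlled second moment by H\"older's inequality,
$$
\int_{\mathcal{X}} |\widehat\mu(x)|^3 f(x)\,dx \leq \|\widehat\mu\|_\infty \int_{\mathcal{X}} \widehat\mu(x)^2 f(x)\,dx = \|\widehat\mu\|_\infty\, \|\widehat\mu\|_\bbP^2 ,
$$
so that it suffices to bound the conditional second moment and the sup-norm of $\widehat\mu$ almost surely. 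For the second moment, I would split $\|\widehat\mu\|_\bbP^2 = \bbV\{\widehat\mu(X)\mid D_\mu\} + \bbE\{\widehat\mu(X)\mid D_\mu\}^2$. The almost-sure limit established in Lemma~\ref{lem:cdalpr_cond_lower_bounds} gives $\bbV\{\widehat\mu(X)\mid D_\mu\} \asymp (nh^d)^{-1}$ almost surely, while $\bbE\{\widehat\mu(X)\mid D_\mu\}$ equals a sample average of bounded summands (using Assumption~\ref{asmp:boundedAY}, the lower bound on $f$, and $\int K = 1$) and is $O(1)$ almost surely by the strong law for triangular arrays (Lemma~\ref{lem:slln}). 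Hence $\|\widehat\mu\|_\bbP^2 \lesssim (nh^d)^{-1}$ almost surely.

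For the sup-norm, I would exploit the localization and boundedness of the kernel, $|K((X_i - x)/h)| \lesssim \one(\|X_i - x\| \le h)$, together with $|Y_i| \le M$ and $f \ge c$, to obtain the pointwise bound $|\widehat\mu(x)| \lesssim (nh^d)^{-1} N_h(x)$, where $N_h(x) = \#\{i : \|X_i - x\| \le h\}$. Since any points sharing a common radius-$h$ ball are pairwise within $2h$, $\sup_x N_h(x)$ is controlled by the largest pairwise-$2h$-close subset of the training covariates; a max-occupancy argument (Poissonization, or a union bound over a covering of $\mathcal{X}$ by $\asymp h^{-d}$ balls) yields $\sup_x N_h(x) = O_\bbP(\log n)$ almost surely under $nh^d \asymp n^{-\alpha}$, so $\|\widehat\mu\|_\infty \lesssim (nh^d)^{-1}\log n$ almost surely.

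Combining the two bounds gives $\bbE\{|\widehat\mu(X)|^3 \mid D_\mu\} \lesssim (nh^d)^{-2}\log n$ almost surely, whence
$$
nh^{\frac{3d}{2}}\, \bbE\{|\widehat\mu(X)|^3 \mid D_\mu\} \lesssim \frac{\log n}{n h^{d/2}} \asymp n^{-(1-\alpha)/2}\log n \inas 0 ,
$$
using $nh^d \asymp n^{-\alpha}$, so that $n h^{d/2} \asymp n^{(1-\alpha)/2} \to \infty$ and the leading factor $(nh^d)^{-2}$ (which reflects the genuine order of the third moment, dominated by the event that exactly one training point falls near $X$ and $\widehat\mu \sim (nh^d)^{-1}$) is beaten by $h^{3d/2}$. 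The main obstacle is the almost-sure sup-norm control: in this aggressively undersmoothed regime $nh^d \to 0$, radius-$h$ balls are typically empty and $\widehat\mu$ is large only on a set of vanishing measure, so one must quantify the worst-case occupancy $\sup_x N_h(x)$ uniformly over $\mathcal{X}$ and almost surely rather than merely in expectation. The remaining pieces reduce to the conditional-variance limit of Lemma~\ref{lem:cdalpr_cond_lower_bounds} (which also uses Lemma~\ref{lem:cdalpr_upper_bounds}) and routine bounded-summand strong laws. An alternative to the H\"older step is to expand $|\widehat\mu(x)|^3$ directly into a triple sum and integrate, reducing the third moment to $(nh^d)^{-3}h^d$ times a count of pairwise-$2h$-close ordered triples; there the deterministic diagonal ($n$ terms) produces the same leading rate, and the random off-diagonal counts are handled almost surely via the close-pair bound used in Lemma~\ref{lem:cdalpr_cond_lower_bounds} and an analogous triple count.
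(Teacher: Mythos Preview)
Your proposal is correct, and the alternative you sketch at the end is exactly the route the paper takes: expand $|\widehat\mu(x)|^3$ as a triple sum, integrate against $f(x)$ to pick up a factor $h^d$, and split the triple sum into the diagonal ($i=j=k$, contributing $n$ terms), the three two-index pieces (controlled by the close-pair count $P_n$), and the fully off-diagonal U-statistic (controlled by the close-triple count $Q_n$). The paper establishes $P_n/n\inas 0$ and $Q_n/n\inas 0$ via moment bounds for U-statistics and Borel--Cantelli, arriving at $nh^{3d/2}\bbE\{|\widehat\mu(X)|^3\mid D^n\}\lesssim (n+P_n+Q_n)/(n^2h^{d/2})\inas 0$.

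Your primary route---the H\"older interpolation $\|\widehat\mu\|_{3}^3\le\|\widehat\mu\|_\infty\|\widehat\mu\|_{\bbP}^2$ together with a max-occupancy bound on $\sup_x N_h(x)$---is a genuinely different argument. It is more modular: the second-moment piece recycles Lemma~\ref{lem:cdalpr_cond_lower_bounds} directly, and the sup-norm piece is a standalone Borel--Cantelli/union-bound argument (in fact your covering argument gives $\sup_x N_h(x)=O(1)$ a.s., not merely $O(\log n)$, since with $np\asymp n^{-\alpha}$ one can take a fixed $k>(2+\alpha)/\alpha$ and get summable tail probabilities across $\asymp h^{-d}$ balls). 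The paper's approach instead reuses and extends the $P_n$ machinery already needed for Lemma~\ref{lem:cdalpr_cond_lower_bounds} to a third-order count $Q_n$, avoiding any uniform-in-$x$ argument. Both land on the same leading rate $1/(nh^{d/2})$ and both require $nh^{d/2}\to\infty$ (equivalently $\alpha<1$); the paper imports this from the bandwidth constraint $\varepsilon<4(\alpha+\beta)/d$ in Theorem~\ref{thm:inference}, and your final display implicitly uses it too. One small notational slip: ``$O_\bbP(\log n)$ almost surely'' mixes modes---what you need and obtain is an almost-sure bound, not an in-probability one.
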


\begin{proof}
    We have 
    $$
    \bbE \{ \left|\widehat \mu(X) \right|^3 \mid D^n \} \lesssim \frac{1}{n^3 h^{3d}} \sum_{i=1}^{n} \sum_{j=1}^{n} \sum_{k=1}^{n} \left| \bbE \left\{ K \left(\frac{X_i - X}{h} \right) K \left(\frac{X_j - X}{h} \right) K \left(\frac{X_k - X}{h} \right) \mid X_i, X_j, X_k \right\} \right|.
    $$
    by Assumption~\ref{asmp:bdd_density} and Assumption~\ref{asmp:boundedAY} (bounded density and $Y$). By the localizing property of the kernel, all three covariates must be close to share non-zero support, and then the expectation of their product is $\lesssim h^d$ by the boundedness of the covariate density. Otherwise, $ \bbE \left\{ K \left(\frac{X_i - X}{h} \right) K \left(\frac{X_j - X}{h} \right) K \left(\frac{X_k - X}{h} \right) \mid X_i, X_j, X_k \right\} = 0$. Therefore, it suffices to consider the cases when all three covariates are close.  
    
    \medskip
    
    First, notice that the triple sum can be decomposed as
    $$
    \sum_{i=1}^n \sum_{j=1}^n \sum_{k=1}^n = \sum_{i = j = k} + \sum_{i \neq j = k} + \sum_{i = j \neq k} + \sum_{i =k \neq j} + \sum_{i\neq j \neq k},
    $$
    i.e., there are $n$ permutations where the indexes are the same, $3$ sets of double sums where two indexes are the same, left-overs are a U-statistic of order 3. Letting $P_n$ denote twice the number of covariate pairs, as in Lemma \ref{lem:separated_covariates}, and \\ $Q_n := \sum_{i\neq j \neq k}^{n} \one \left( \lVert X_i - X_j \rVert \leq 2h \right) \one \left( \lVert X_i - X_k \rVert \leq 2h \right) \one \left( \lVert X_j - X_k \rVert \leq 2h \right)$, it follows that
    $$
    \sum_{i=1}^{n} \sum_{j=1}^{n} \sum_{k=1}^{n} \one \left( \lVert X_i - X_j \rVert \leq 2h \right) \one \left( \lVert X_i - X_k \rVert \leq 2h \right) \one \left( \lVert X_j - X_k \rVert \leq 2h \right) = n + 3P_n + Q_n
    $$
    because the observations are iid. Hence, 
    \begin{equation}
        \frac{1}{n^3 h^{3d}} \sum_{i=1}^{n} \sum_{j=1}^{n} \sum_{k=1}^{n} \left| \bbE \left\{ K \left(\frac{X_i - X}{h} \right) K \left(\frac{X_j - X}{h} \right) K \left(\frac{X_k - X}{h} \right) \mid X_i, X_j, X_k \right\} \right| \lesssim \frac{h^d}{n^3 h^{3d}} \left(n + 3P_n + Q_n \right).
    \end{equation}
    Therefore,
    \begin{align*}
        nh^{\frac{3d}{2}} \bbE \{ \left|\widehat \mu(X) \right|^3 \mid D^n \} &\lesssim 	\frac{nh^{\frac{3d}{2}}}{n^3 h^{3d}} \sum_{i=1}^{n} \sum_{j=1}^{n} \sum_{k=1}^{n} \left| \bbE \left\{ K \left(\frac{X_i - X}{h} \right) K \left(\frac{X_j - X}{h} \right) K \left(\frac{X_k - X}{h} \right) \mid X_i, X_j, X_k \right\} \right| \\
        &\lesssim \frac{nh^{\frac{5d}{2}}}{n^3 h^{3d}} (n + P_n + Q_n) = \frac{n + P_n + Q_n}{n^2 h^{d/2}} \inas 0,
    \end{align*}	
    where the convergence results follows by Lemmas \ref{lem:separated_covariates} and \ref{lem:separated_triples}, which establish $\frac{P_n}{n} \inas 0$ and $\frac{Q_n}{n} \inas 0$, and the condition on the bandwidth that $\varepsilon < \frac{4(\alpha + \beta)}{d}$, which implies $\frac{1}{nh^{d/2}} = o(1)$. 
\end{proof}

Our penultimate result shows that the smooth covariate-density-adapted kernel regression, averaged over the training points, is itself H\"{o}lder smooth.  Notice that the result relies on the kernel being continuous, which is a mild assumption, but may not hold for the higher-order kernel.
\begin{lemma}
    \label{lem:cdakernel_holder} \emph{(Smooth covariate-density-adapted kernel regression is H\"{o}lder smooth)}
    Suppose Assumptions~\ref{asmp:dgp}, \ref{asmp:bdd_density}, \ref{asmp:holder}, and \ref{asmp:density_smooth} hold, and $\widehat \mu(x)$ is a smooth covariate-density-adapted kernel regression estimator (Estimator~\ref{est:cdalpr_smooth}).  Then, 
    $$
    \bbE \{ \widehat \mu(x) \} \in \text{H\"{o}lder}(\beta).
    $$
\end{lemma}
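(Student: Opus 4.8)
The plan is to evaluate the expectation explicitly—exploiting the covariate density appearing in the denominator of Estimator~\ref{est:cdalpr_smooth}—and then to recognize the result as a localized convolution of $\mu$ against a scaled kernel, so that H\"older regularity can be transferred from $\mu$ to $\bbE\{\widehat\mu(x)\}$ by the standard smoothing property of convolutions.

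First I would compute, using that the training data are iid and $\bbE(Y\mid X)=\mu(X)$,
\begin{align*}
    \bbE\{\widehat\mu(x)\} = \bbE\left\{\frac{K_\mu\left(\frac{X-x}{h_\mu}\right)}{h_\mu^d f(X)}\mu(X)\right\} = \frac{1}{h_\mu^d}\int_{\mathcal{X}} K_\mu\left(\frac{t-x}{h_\mu}\right)\mu(t)\,dt,
\end{align*}
where the crucial cancellation is that the $f(X_i)$ in the denominator exactly cancels the Lebesgue density of $X$. A change of variables $u=(t-x)/h_\mu$ gives $\bbE\{\widehat\mu(x)\} = \int_{\mathcal{U}_x} K_\mu(u)\,\mu(x+uh_\mu)\,du$, with $\mathcal{U}_x=\{u: x+uh_\mu\in\mathcal{X}\}$. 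In the interior of $\mathcal{X}$ (for $x$ at distance more than $h_\mu$ from $\partial\mathcal{X}$), the localization $K_\mu(u)\lesssim\one(\lVert u\rVert\le 1)$ makes $\mathcal{U}_x$ the full unit ball, so $\bbE\{\widehat\mu(x)\}$ is the genuine convolution $(K_{h_\mu}*\mu)(x)$ with $K_{h_\mu}(v)=h_\mu^{-d}K_\mu(v/h_\mu)$.

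Second, I would show this convolution lies in H\"older$(\beta)$. Writing $\beta=\lfloor\beta\rfloor+s$ with $s\in(0,1]$, I would differentiate under the integral sign, placing all derivatives on $\mu$ rather than on $K_\mu$ (this is exactly what the continuity, and mere integrability, of $K_\mu$ buys us, since $K_\mu$ itself need not be differentiable), obtaining $D^m\bbE\{\widehat\mu(x)\}=\int K_\mu(u)\,(D^m\mu)(x+uh_\mu)\,du$ for each multi-index $m$ with $\lvert m\rvert\le\lfloor\beta\rfloor$; this is justified by dominated convergence using boundedness of the partial derivatives of $\mu$ from Assumption~\ref{asmp:holder} and $\int\lvert K_\mu\rvert<\infty$. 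The top-order derivative then satisfies
\begin{align*}
    \left|D^m\bbE\{\widehat\mu(x)\}-D^m\bbE\{\widehat\mu(x')\}\right| \le \int \lvert K_\mu(u)\rvert\,\left|(D^m\mu)(x+uh_\mu)-(D^m\mu)(x'+uh_\mu)\right|\,du \lesssim \lVert x-x'\rVert^{s},
\end{align*}
since $\lvert (x+uh_\mu)-(x'+uh_\mu)\rvert=\lVert x-x'\rVert$ and $D^m\mu$ is H\"older$(s)$; the resulting constant depends only on the H\"older seminorm of $\mu$ and $\int\lvert K_\mu\rvert$, not on $h_\mu$.

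The hard part will be the boundary of $\mathcal{X}$: when $x$ lies within $h_\mu$ of $\partial\mathcal{X}$, the domain $\mathcal{U}_x$ is a proper, $x$-dependent subset of the unit ball, and differentiating under the integral now produces Leibniz boundary terms. To handle this I would extend $\mu$ to a function $\tilde\mu\in\text{H\"older}(\beta)$ on $\bbR^d$ with the same exponent (e.g.\ by a McShane/Whitney extension, valid since $\mathcal{X}$ is compact), and write $\bbE\{\widehat\mu(x)\}=(K_{h_\mu}*\tilde\mu)(x)-r(x)$, where $r(x)=h_\mu^{-d}\int_{\bbR^d\setminus\mathcal{X}}K_\mu(\tfrac{t-x}{h_\mu})\tilde\mu(t)\,dt$ is supported in the $h_\mu$-neighborhood of $\partial\mathcal{X}$. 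The first term is H\"older$(\beta)$ by the argument above, and I would argue that $r$ is H\"older$(\beta)$ for each fixed $h_\mu$, whence $\bbE\{\widehat\mu(x)\}\in\text{H\"older}(\beta)$. I expect the boundary remainder to be where the bookkeeping is most delicate—its H\"older constant can degrade as $h_\mu\to 0$—but since the lemma asserts membership at a given sample size, qualitative regularity is all that is required, and the boundary contribution is confined to a layer of width $h_\mu$ that is lower order in the eventual bias analysis underlying Theorem~\ref{thm:minimax}.
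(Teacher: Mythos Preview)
Your core argument---density cancellation to obtain $\bbE\{\widehat\mu(x)\}=\int K_\mu(u)\,\mu(x+uh_\mu)\,du$, then differentiating under the integral sign so that all derivatives land on $\mu$, and finally reading off the H\"older$(\beta)$ condition directly from that of $\mu$---is exactly the paper's proof. The paper does not address the boundary of $\mathcal{X}$ at all: it simply writes the convolution over the full kernel support and proceeds, so your interior argument already matches it in full.

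One caution on your extra boundary step: showing that $r(x)=h_\mu^{-d}\int_{\bbR^d\setminus\mathcal{X}}K_\mu\bigl(\tfrac{t-x}{h_\mu}\bigr)\tilde\mu(t)\,dt$ lies in H\"older$(\beta)$ for $\beta>1$ would require placing $\lfloor\beta\rfloor$ derivatives on $K_\mu$ (since the domain of integration is fixed in $t$), but Estimator~\ref{est:cdalpr_smooth} only assumes $K_\mu$ is continuous---the Epanechnikov example is not $C^1$. The paper simply does not engage with this issue; if you want to close the gap rigorously you would need either a smoother kernel assumption or to state the lemma on the interior of $\mathcal{X}$, which suffices for the downstream bias bound in Theorem~\ref{thm:minimax} up to a boundary layer of width $h_\mu$.
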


\begin{proof}
    To establish that $\bbE \{ \widehat \mu(x) \} \in \text{H\"{o}lder}(\beta)$, we will show that (1) it is $\lfloor \beta \rfloor$-times continuously differentiable with bounded partial derivatives, and (2) its $\lfloor \beta \rfloor$ order partial derivatives satisfy the H\"{o}lder continuity condition. 
    
    \medskip
    
    For $x \in \mathcal{X}$,
    $$
    \bbE \{ \widehat \mu(x) \} = \bbE \left\{ \frac{1}{n} \sum_{i=1}^{n} \frac{K\left(\frac{X_i - x}{h} \right)}{h^d f(X_i)} Y_i \right\} = \frac{1}{h^d} \int K \left(\frac{t - x}{h} \right)\mu(t) dt = \int K(u) \mu(uh + x) du,
    $$
    by the definition of the estimator and substitution. Let $D^j$ denote an arbitrary multivariate partial derivative operator of order $j > 0$.  Then, for $j \leq \lfloor \beta \rfloor$,
    $$
    D^j \bbE \{ \widehat \mu(x) \} = D^j \int K(u) \mu(uh + x) du  = \int K(u) D^j \mu(uh + x) du,
    $$
    where the second equality follows by the continuity and integrability assumptions on $K(u)$ and Leibniz' integral rule. Because $\mu \in \text{H\"{o}lder}(\beta)$ by Assumption~\ref{asmp:holder}, $D^j \mu(uh + x)$ exists and is continuous.  Moreover, for any two continuous functions $f$ and $g$, $\int f(x) g(x) dx$ is continuous, and therefore $D^j \bbE \{ \widehat \mu(x) \}$ exists and is continuous.  For boundedness, notice that	
    $$
    \left| D^j \bbE \{ \widehat \mu(x) \} \right| = \left| \int K(u) D^j \mu(uh + x) du \right| \leq \int \left|K(u) \right| \left|D^j \mu(uh + x) \right|du \lesssim 1,
    $$
    because $\mu \in \text{H\"{o}lder}(\beta)$ by Assumption~\ref{asmp:holder} and by the integrability of $K$.  Finally, for the H\"{o}lder continuity condition on the $\lfloor \beta \rfloor$ derivative, notice that for $x, x^\prime \in \mathcal{X}$, 
    \begin{align*}
        \left| D^{\lfloor \beta \rfloor} \bbE \{ \widehat \mu(x) \} - D^{\lfloor \beta \rfloor} \bbE \{ \widehat \mu(x^\prime) \} \right| &= \left| \int K(u) D^{\lfloor \beta \rfloor} \mu(uh + x) du - \int K(u) D^{\lfloor \beta \rfloor} \mu(uh + x^\prime) du \right| \\
        &= \left| \int K(u) \left\{ D^{\lfloor \beta \rfloor} \mu(uh + x) - D^{\lfloor \beta \rfloor} \mu(uh + x^\prime) \right\} du \right| \\
        &\leq  \int \left|K(u)\right|  \left| D^{\lfloor \beta \rfloor} \mu(uh + x) - D^{\lfloor \beta \rfloor} \mu(uh + x^\prime) \right| du \\
        &\lesssim \int \left|K(u)\right|  \lVert x - x^\prime \rVert^{\beta - \lfloor \beta \rfloor} du \\
        &\lesssim   \lVert x - x^\prime \rVert^{\beta - \lfloor \beta \rfloor},
    \end{align*}
    where the first line follows by the same argument as above, the second by linearity of the integral, the penultimate line by the H\"{o}lder assumption of $\mu$, and the final line by the integrability assumption on the kernel. Therefore, $\bbE \{ \widehat \mu(x) \}$ satisfies the conditions of being a H\"{o}lder$(\beta)$ smooth function.
\end{proof}

Our final result establishes that the smooth covariate-density adapted kernel regression estimator is bounded if the relevant outcome is bounded.
\begin{lemma}
    \label{lem:cdakernel_bounded} \emph{(Smooth covariate-density-adapted kernel regression is bounded)}
    Suppose Assumptions~\ref{asmp:dgp}, \ref{asmp:bdd_density}, \ref{asmp:holder}, \ref{asmp:density_smooth}, and \ref{asmp:boundedAY} hold, and $\widehat \mu(x)$ is a smooth covariate-density-adapted kernel regression estimator (Estimator~\ref{est:cdalpr_smooth}).  Then, there exists $M > 0$ such that $\left| \widehat \mu(X) \right| \leq M.$
\end{lemma}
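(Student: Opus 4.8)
The plan is to bound $|\widehat\mu(x)|$ uniformly over $x\in\mathcal{X}$ by reducing it to the supremum of a localized counting process and then controlling that supremum almost surely. First I would apply the triangle inequality together with the boundedness of the outcome from Assumption~\ref{asmp:boundedAY} (write $|Y_i|\le M_Y$ for the outcome bound), the lower bound $f(X_i)\ge c$ from Assumption~\ref{asmp:bdd_density}, and the boundedness and localization of the kernel from Estimator~\ref{est:cdalpr_smooth} ($|K(u)|\le \bar K$ and $K((X_i-x)/h)\lesssim \one(\|X_i-x\|\le h)$, so that $|K((X_i-x)/h)|\le \bar K\,\one(\|X_i-x\|\le h)$) to obtain the pointwise bound
\begin{equation*}
    |\widehat\mu(x)| = \left| \frac{1}{nh^d} \sum_{i=1}^n \frac{K\!\left( \frac{X_i - x}{h} \right)}{f(X_i)} Y_i \right| \le \frac{\bar K M_Y}{c}\, \frac{N_n(x)}{nh^d},
    \qquad N_n(x) := \sum_{i=1}^n \one\!\left( \|X_i - x\| \le h \right).
\end{equation*}
Thus it suffices to show that $\sup_{x\in\mathcal{X}} N_n(x)/(nh^d)$ is bounded by a constant, almost surely for all large $n$.

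Second, I would control this supremum by discretizing the compact support $\mathcal{X}$. Taking an $h$-net $\{c_1,\dots,c_{L_n}\}$ of $\mathcal{X}$, which can be chosen with $L_n\asymp h^{-d}$ points since $\mathcal{X}$ is compact, every ball $B_h(x)$ lies inside $B_{2h}(c_k)$ for the nearest net point, so $N_n(x)\le \widetilde N_n(c_k):=\#\{i:\|X_i-c_k\|\le 2h\}$ and hence $\sup_x N_n(x)\le \max_k \widetilde N_n(c_k)$. Each $\widetilde N_n(c_k)$ is a $\mathrm{Binomial}(n,p_k)$ variable with $p_k=\bbP(\|X-c_k\|\le 2h)\le C\,V_d\,(2h)^d\lesssim h^d$ by the upper bound on $f$ in Assumption~\ref{asmp:bdd_density}, so its mean is $O(nh^d)$. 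A Chernoff bound for the upper tail of a binomial then gives $\bbP\{\widetilde N_n(c_k)\ge \lambda\,nh^d\}\le \exp(-c'\,nh^d)$ for a suitable constant $\lambda$, and a union bound over the $L_n\asymp h^{-d}$ net points yields $\bbP\{\max_k\widetilde N_n(c_k)\ge \lambda\,nh^d\}\lesssim h^{-d}\exp(-c'\,nh^d)$.

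Third, since the smooth estimator uses a bandwidth for which it is consistent, we have $nh^d/\log n\to\infty$, so the bound $h^{-d}\exp(-c'\,nh^d)$ is summable in $n$; the Borel--Cantelli lemma then gives $\sup_x N_n(x)/(nh^d)\le\lambda$ almost surely for all large $n$. Combining with the first display, $\|\widehat\mu\|_\infty\le \bar K M_Y\lambda/c=:M$ almost surely eventually, and because the estimation point $X$ is independent of the training sample $D_\mu$ this bound applies at $X$, which proves the claim.

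The main obstacle is the passage from a pointwise control of $N_n(x)$ to a uniform one over the continuum of evaluation points: a per-$x$ law of large numbers is not enough, since we need a single constant $M$ valid simultaneously for all $x$ and all large $n$. The net-plus-Chernoff-plus-Borel--Cantelli argument resolves this, and it is precisely where the mild bandwidth requirement $nh^d\gtrsim\log n$ (guaranteed by consistency of the smooth estimator) enters.
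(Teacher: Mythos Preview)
Your argument is correct and substantially more careful than the paper's, which dispatches the lemma in one line: ``This follows immediately because the covariate density and outcome are bounded by assumption, and the kernel is bounded by construction.'' Read literally, that sentence only yields the crude bound $|\widehat\mu(x)|\le n\cdot \bar K M_Y/(c\,nh^d)=\bar K M_Y/(c h^d)$, which diverges as $h\to 0$ and so does not furnish a single $M$ uniform in $n$. You correctly recognize that the kernel's localization must be paired with control of the local count $N_n(x)/(nh^d)$, and your net--Chernoff--Borel--Cantelli argument is a clean way to obtain an almost-sure eventual bound uniform in $x$. That eventual almost-sure bound is exactly what the downstream applications need (for instance, the claim $\bbV\{(A-\widehat\pi)Y\mid D^n\}=O(1)$ in the proof of Lemma~\ref{lem:cond_variance_clt}), so your version is arguably the rigorous content behind the paper's one-liner rather than a genuinely different route.

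One small imprecision: pointwise or $L_2$ consistency of the smooth estimator only requires $h\to 0$ and $nh^d\to\infty$, not $nh^d/\log n\to\infty$, so your assertion that consistency implies the latter is a slight overreach. In the paper's setting this is harmless because the smooth estimator's bandwidth is a free parameter subject only to consistency, and one may always choose it to satisfy the mildly stronger condition your summability step needs; but it would be cleaner to state the bandwidth requirement $nh^d\gtrsim \log n$ as an explicit (and innocuous) assumption rather than claim it follows from consistency.
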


\begin{proof}
    This follows immediately because the covariate density and outcome are bounded by assumption, and the kernel is bounded by construction.
\end{proof}

\section{Section~\ref{sec:known} results: proofs of Theorems~\ref{thm:minimax} and~\ref{thm:inference}} \label{app:known}

For Theorems~\ref{thm:minimax} and~\ref{thm:inference}, we use properties of Sobolev smooth functions. Let $L_p (\bbR^d)$ denote the space of $p$-fold Lebesgue-integrable functions, i.e., 
$$
L_p (\bbR^d) = \left\{ f: \bbR^d \to \bbR : \int_{\bbR^d} \left| f(x) \right|^p dx < \infty \right\}.
$$
We will denote the class of Sobolev$(s, p)$ smooth functions as $H_p^s (\bbR^d)$.  For $s \in \mathbb{N}$, these classes can be defined as
$$
H_p^s (\bbR^d) = \left\{ f \in L_p (\bbR^d) : D^t f \in L_p (\bbR^d) \forall \ |t| \leq s : \left( \int_{\bbR^d} \left| f(x) \right|^p dx \right)^{1/p} + \sum_{|s| = t} \lVert D^t f \rVert_p < \infty \right\},
$$
where $D^t$ is the multivariate partial derivative operator (see Section~\ref{sec:notation}). One can also define Sobolev smooth functions for non-integer $s$ through their Fourier transform (e.g., \citet{gine2021mathematical} Chapter 4). We will omit such a definition here because it requires much additional and unnecessary notation, but still use $H_p^s(\bbR^d)$ to refer to such function classes. Importantly, H\"{o}lder$(s) = H_\infty^s(\bbR^d)$, and $H_\infty^s(\bbR^d) \subseteq H_p^s(\bbR^d)$ for $p \leq \infty$, i.e., H\"{o}lder classes are contained within Sobolev classes of the same smoothness.

\medskip

We begin with the following result, Lemma~\ref{lem:bias_kernel}, which is used in the proof of Theorem~\ref{thm:minimax}.  Lemma~\ref{lem:bias_kernel} follows very closely from Theorem 1 in \citet{gine2008simple} (also, Lemmas 4.3.16 and 4.3.18 in \citet{gine2021mathematical}).  The higher order property of the kernel in Estimator~\ref{est:cdalpr_higher} allows us to generalize the result to higher smoothness.
\begin{lemma} \label{lem:bias_kernel}
    Suppose Assumptions~\ref{asmp:dgp},~\ref{asmp:bdd_density},~\ref{asmp:holder}, and~\ref{asmp:density_smooth} hold, and $\widehat \mu(x)$ is a higher-order covariate-density-adapted kernel regression estimator (Estimator~\ref{est:cdalpr_higher}) for $\mu(x)$ constructed on $D_\mu$.  Let $g \in \text{H\"{o}lder}(\alpha)$.  Then,
    $$
    \sup_{x \in \mathcal{X}}  \left| \bbE \left( g(X) \Big[ \bbE \{ \widehat \mu(X) \mid X \} - \mu(X) \Big] \mid X = x \right) \right| \lesssim h_\mu^{\alpha + \beta}.
    $$
\end{lemma}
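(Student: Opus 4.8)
The plan is to reduce the claim to the integrated bias functional $\int \tilde g\,(K_{h_\mu}\!\ast\mu - \mu)$ and then exploit the smoothing of convolution, measured on the Fourier side, exactly as in Theorem~1 of \citet{gine2008simple} and Lemmas~4.3.16 and 4.3.18 of \citet{gine2021mathematical}, with the higher-order kernel supplying the extra vanishing moments needed to reach the combined smoothness $\alpha+\beta$ rather than the $\beta$ one would get from a crude sup-norm Taylor bound.

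First I would evaluate the inner conditional expectation. Since $\widehat\mu$ divides each kernel weight by the known density, iterated expectations over the training fold $D_\mu$ give, for $x$ in the interior of $\mathcal{X}$,
\[
\bbE\{\widehat\mu(x)\mid X=x\} = \frac{1}{h_\mu^d}\int_{\mathcal{X}} K_\mu\!\Big(\tfrac{t-x}{h_\mu}\Big)\mu(t)\,dt = \int K_\mu(u)\,\mu(x+u h_\mu)\,du = (K_{h_\mu}\!\ast\mu)(x),
\]
where $K_{h_\mu}(v)=h_\mu^{-d}K_\mu(v/h_\mu)$ and I use that $K_\mu$ is symmetric: the density cancels, leaving an unweighted convolution and pointwise bias $b_\mu = K_{h_\mu}\!\ast\mu - \mu$, a deterministic function of $X$. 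The outer expectation is therefore over $X$, and absorbing the covariate density into the weight the quantity to bound is $\big|\int \tilde g\,(K_{h_\mu}\!\ast\mu - \mu)\big|$ with $\tilde g = g f$; because $g\in\text{H\"{o}lder}(\alpha)$ and $f\in\text{H\"{o}lder}(\gamma)$ with $\gamma\ge\alpha\vee\beta$, the product $\tilde g$ is again $\text{H\"{o}lder}(\alpha)$ and, on the compact support, lies in the $L_2$-Sobolev space $H_2^\alpha$.

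Next I would pass to the Fourier domain by Plancherel, writing $\mathcal{F}$ for the Fourier transform (reserving the hat for estimators), to obtain
\[
\int \tilde g\,(K_{h_\mu}\!\ast\mu-\mu) = \int \mathcal{F}\tilde g(\xi)\,\overline{\mathcal{F}\mu(\xi)}\,\big(\mathcal{F}K_\mu(h_\mu\xi)-1\big)\,d\xi .
\]
The order-$\lceil\alpha+\beta\rceil$ property of $K_\mu$ (all moments up to that order vanish, while $\int\lVert u\rVert^{\alpha+\beta}|K_\mu(u)|\,du\lesssim1$) yields the multiplier bound $|\mathcal{F}K_\mu(h_\mu\xi)-1|\lesssim (h_\mu\lVert\xi\rVert)^{\alpha+\beta}$, valid for all $\xi$ (from the vanishing moments near the origin and from boundedness of $\mathcal{F}K_\mu$ for large $\lVert\xi\rVert$); this is the step where the higher-order kernel is essential. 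Splitting $\lVert\xi\rVert^{\alpha+\beta}=\lVert\xi\rVert^{\alpha}\lVert\xi\rVert^{\beta}$ and applying Cauchy--Schwarz then gives
\[
\Big|\int \tilde g\,(K_{h_\mu}\!\ast\mu-\mu)\Big| \lesssim h_\mu^{\alpha+\beta}\Big(\int|\mathcal{F}\tilde g|^2\lVert\xi\rVert^{2\alpha}\Big)^{1/2}\Big(\int|\mathcal{F}\mu|^2\lVert\xi\rVert^{2\beta}\Big)^{1/2} = h_\mu^{\alpha+\beta}\,\lVert\tilde g\rVert_{H_2^\alpha}\lVert\mu\rVert_{H_2^\beta}.
\]
Both Sobolev norms are finite because $\text{H\"{o}lder}(s)=H_\infty^s\subseteq H_2^s$ on the compact support, as recorded at the start of Appendix~\ref{app:known}, and the bound is uniform in the relevant sense, giving the claim.

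The main obstacle is not the Fourier estimate itself but the support and boundary bookkeeping that makes it rigorous: the density-adapted cancellation above is exact only in the interior of $\mathcal{X}$, and the Plancherel/Sobolev machinery is stated for functions on $\bbR^d$. I would handle this as in \citet{gine2021mathematical} by extending $\mu$, $g$, and $f$ to compactly supported functions on $\bbR^d$ that preserve their H\"older smoothness, and by controlling the $O(h_\mu)$-width boundary layer of $\mathcal{X}$ separately, where the cancellation is only partial. I would also note that the non-integer-smoothness case relies on the Fourier characterization of $H_2^s$, which is precisely why the argument is carried out through $\mathcal{F}$ rather than through a time-domain Taylor expansion that would only expose one function's smoothness at a time.
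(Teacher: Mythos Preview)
Your argument is correct but takes a genuinely different route from the paper. After the same reduction to $\int \tilde g\,(K_{h_\mu}\!\ast\mu-\mu)$ with $\tilde g=gf$, the paper stays entirely in the time domain: it applies Fubini to rewrite the integral as
\[
\int K_\mu(u)\bigl\{\tilde g\ast\bar\mu(uh_\mu)-\tilde g\ast\bar\mu(0)\bigr\}\,du,
\]
invokes Lemma~12 of \citet{gine2008uniform} to conclude that the convolution $\tilde g\ast\bar\mu$ lies in $\text{H\"older}(\alpha+\beta)$, and then Taylor-expands this single smoothed function to order $\lfloor\alpha+\beta\rfloor$, letting the higher-order kernel annihilate the polynomial part. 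Your Plancherel argument reaches the same $h_\mu^{\alpha+\beta}$ by splitting the Fourier multiplier across the two factors instead.

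Both approaches rest on the same convolution-smoothing phenomenon; the paper packages it as a H\"older result and reads off the bound in sup norm, while you package it via $H_2$ norms and Cauchy--Schwarz. The paper's version is marginally cleaner here because it works natively in $L^\infty$ and avoids the $H_\infty^s\subseteq H_2^s$ embedding (and hence some of the extension bookkeeping you flag). Your closing remark that a time-domain Taylor expansion ``would only expose one function's smoothness at a time'' is the one point to revise: the paper shows precisely how to avoid that, by expanding $\tilde g\ast\bar\mu$ rather than $\mu$ alone, so that both smoothnesses are already baked into the single function being expanded.
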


\begin{proof}
    Let $h \equiv h_\mu$ throughout. First note that
    \begin{align*}
        \bbE \{\widehat \mu (x)  \} &= \bbE \left\{ \sum_{Z_i \in D_\mu}  \frac{K \left( \frac{X_i - x}{h} \right)}{nh^d f(X_i)} Y_i \right\}  \\
        &= \bbE \left\{ \frac{K \left( \frac{X - x}{h} \right)}{h^d f(X)} \mu(X) \right\} \\
        &= \int_{t \in \mathcal{X}} \frac{K \left( \frac{t - x}{h} \right)}{h^d} \mu(t) dt. 
    \end{align*}
    
    Since $\mathcal{X}$ is compact in $\bbR^d$, we evaluate the following integrals over $\bbR^d$, with the understanding that outside the relevant sets the integrand evaluates to zero (e.g., after the change of variables).  Then, letting $g(x)f(x) = g f(x)$, $\overline h(x) = h(-x)$, and $\ast$ denote convolution,
    \begin{align*}
        \bbE &\left( g(X) \Big[ \bbE \{ \widehat \mu(X) \mid X \} - \mu(X) \Big] \mid X = x \right) \\
        &= \int_{x \in \bbR^d} g f(x) \left\{ \int_{t \in \bbR^d} \frac{1}{h^d} K \left( \frac{t - x}{h} \right) \mu(t) dt - \mu(x) \right\} dx \\
        &= \int_{x \in \bbR^d} g f(x) \left\{ \int_{u \in \bbR^d} K(-u) \mu(x - uh) du - \mu(x) \right\} dx &u = (x-t)/h \\
        &= \int_{x \in \bbR^d} g f(x) \left\{ \int_{u \in \bbR^d} K(u) \mu(x - uh) du - \mu(x) \right\} dx \\
        &= \int_{x \in \bbR^d} gf(x) \left[ \int_{u \in \bbR^d} K(u) \{ \mu(x - uh) - \mu(x) \} du \right] dx \\
        &= \int_{u \in \bbR^d} K(u) \left[ \int_{x \in \bbR^d} g f(x) \left\{ \mu(x - uh) - \mu(x) \right\} dx \right] du \\
        &= \int_{u \in \bbR^d} K(u) \left[ \int_{x \in \bbR^d} g f(x) \overline \mu(uh - x) - g f(x) \overline \mu(-x) dx \right] du \\
        &= \int_{u \in \bbR^d} K(u) \left\{ g f \ast \overline \mu (uh) - g f \ast \overline \mu (0) \right\} du. 
    \end{align*}
    where the first line follows by definition, the second by substitution, the third because $K$ is symmetric, the fourth because $\int K = 1$, the fifth by Fubini's theorem, and the last two again by definition. 
    
    \medskip
    
    Next, notice that $g f \in \text{H\"{o}lder}(\alpha) \subseteq H_2^\alpha (\bbR)$ because $g \in \text{H\"{o}lder}(\alpha)$ and $f \in \text{H\"{o}lder}(\alpha \vee \beta)$ by Assumption~\ref{asmp:density_smooth}, and $\mu \in \text{H\"{o}lder}(\beta) \implies \overline \mu \in \text{H\"{o}lder}(\beta) \subseteq H_2^\beta (\bbR)$.  Therefore, by Lemma 12 and Remark 11i in \citet{gine2008uniform}, $gf \ast \overline{\mu} \in \text{H\"{o}lder}(\alpha + \beta)$.  
    
    \medskip
    
    The rest of the proof continues by a standard Taylor expansion analysis of higher-order kernels.  See, e.g., \citet{scott2015multivariate} Chapter 6.  Let $D^j f$ denote the multivariate partial derivative of $f$ of order $j$ and let $\eta(x) = gf \ast \overline \mu(x)$ for simplicity. Then, we have
    \begin{align*}
        \int_{u} &K(u) \left\{ \eta(uh) - \eta(0) \right\} du \\
        &\hspace{-0.3in}= \int_{u} K(u) \Bigg[\sum_{0 < |j| < \lfloor \alpha + \beta \rfloor - 1} \frac{D^j \eta(0)}{j^!} (uh)^j   \\
        &\hspace{0.4in} + \sum_{|k| = \lfloor \alpha + \beta \rfloor} \frac{\lfloor \alpha + \beta \rfloor}{k^!} \int_{0}^{1} (1-t)^{\lfloor \alpha + \beta \rfloor - 1} \left\{ D^k \eta (tuh) - D^k \eta(0) \right\} (uh)^{\lfloor \alpha + \beta \rfloor} dt \Bigg]  du \\
        &\lesssim   \int_{u \in \bbR^d} K(u) (h \lVert u \rVert)^{\alpha + \beta - \lfloor \alpha + \beta \rfloor} (h \lVert u \rVert)^{\lfloor \alpha + \beta \rfloor} du  \\
        &= h^{\alpha + \beta} \int_{u \in \bbR^d} K(u) \lVert u \rVert^{\alpha + \beta} du \lesssim h^{\alpha + \beta},
    \end{align*}
    where the first line follows by a Taylor expansion of the difference $\eta(uh) - \eta(0)$; the second because (1) $\eta \in$ H\"{o}lder$(\alpha + \beta)$, (2) the kernel is of order at least $\lceil \alpha + \beta \rceil$, (3) $|u^k| \leq \lVert u \rVert^k$ (where $\lVert \cdot \rVert$ is the Euclidean norm), and (4) $\int_{0}^{1} (1-t)^{\lfloor \beta \rfloor - 1} = \frac{1}{\lfloor \beta \rfloor}$; and the final line follows again by assumption on the kernel.	
    
    \medskip
    
    \noindent The supremum over $x \in \mathcal{X}$ follows because $\mathcal{X}$ is compact by assumption.
\end{proof}

\color{black}

\thmminimax*

\begin{proof}
    Assume without loss of generality that $\widehat \pi$ is the consistent estimator and $\widehat \mu$ the undersmoothed estimator, with $h_\mu \asymp n^{-\frac{2}{2\alpha + 2\beta + d}}$.   Since $\widehat \mu$ and $\widehat \pi$ were trained on separate independent samples, the bias satisfies 
    $$
    \bbE \left( \widehat \psi_n - \psi \right) = \bbE \left \{ \widehat \varphi(Z) - \varphi(Z) \right\} = \bbE \left( \big[ \bbE \{ \widehat \mu(X) \mid X \} - \mu(X) \big] \big[ \bbE \{ \widehat \pi(X) \mid X \} - \pi(X) \big] \right).
    $$
    Lemma~\ref{lem:cdakernel_holder} demonstrates that $\bbE \{ \widehat \pi(x) \} \in \text{H\"{o}lder}(\alpha)$ under the assumptions given on the kernel in Estimator~\ref{est:cdalpr_smooth}. Therefore, $\bbE \{ \widehat \pi(x) \} - \pi(x) \in \text{H\"{o}lder}(\alpha) \subseteq H_2^\alpha(\bbR^d)$. Thus, by Lemma~\ref{lem:bias_kernel},  \begin{equation}\label{eq:dr_bias}
        \left| \bbE \left( \widehat \psi_n - \psi \right) \right| \lesssim h_\mu^{\alpha + \beta} \asymp n^{-\frac{2\alpha + 2\beta}{2\alpha + 2\beta + d}}.
    \end{equation}
    Because $\varphi$ is Lipschitz in its nuisance functions, and by the same arguments as in Lemma~\ref{lem:expansion} and Proposition~\ref{prop:spectral}, and by \eqref{eq:cdalpr_covariance} in Lemma~\ref{lem:cdalpr_upper_bounds}, the remainder term in Lemma~\ref{lem:expansion} satisfies
    $$
    R_{2,n} = O_\bbP \left( \frac{\lVert b_{\pi}^2 \rVert_\infty +  \lVert s_{\pi}^2 \rVert_\infty + \lVert b_{\mu}^2 \rVert_\infty + \lVert s_{\mu}^2 \rVert_\infty}{n} \right),
    $$
    Then, by \eqref{eq:cdalpr_variance} in Lemma~\ref{lem:cdalpr_upper_bounds},
    \begin{equation} \label{eq:minimax_var}
        R_{2,n} = O_\bbP \left( \frac{1}{n^2 h_\mu^d} \right) = O_\bbP \left( n^{-\frac{4\alpha + 4\beta}{2\alpha + 2\beta + d}} \right).
    \end{equation} 
    Hence, when $\frac{\alpha + \beta}{2} > d/4$, the CLT term dominates the expansion --- as in Theorem \ref{thm:semiparametric} --- whereas in the non-$\sqrt{n}$ regime bias and variance are balanced.
\end{proof}

\thminference*
        
\begin{proof}
    The proof relies on several helper lemmas stated after this proof.  We focus on the regime where $\frac{\alpha + \beta}{2} < \frac{d}{4}$, although a standard CLT could apply in the smoother regime.  In this non-$\sqrt{n}$ regime, the undersmoothed DCDR estimator does not achieve $\sqrt{n}$-convergence and we must instead prove slower-than-$\sqrt{n}$ convergence.
        
    \medskip
        
    We omit $Z$ arguments (e.g., $\varphi(Z) \equiv \varphi$) and let $D^n = \{ D_\mu, D_\pi \}$ denote all the training data.  First, note that by Lemma \ref{lem:cond_variance_clt}, $\bbV(\widehat \varphi \mid D^n) > 0$ almost surely, so that division by $\bbV(\widehat \varphi \mid D^n)$ is well-defined almost surely. Then, by the definition of $\widehat \psi_n, \psi_{ecc}, \widehat \varphi,$ and $\varphi$ and adding zero and multiplying by one, we have the following decomposition:
    \begin{align*}
        \frac{\widehat \psi_n - \psi_{ecc}}{\sqrt{\bbV ( \widehat \varphi \mid D^n ) / n}} &= \frac{\bbP_n \widehat \varphi - \bbE(\widehat \varphi)}{\sqrt{\bbV ( \widehat \varphi \mid D^n ) / n}} +\frac{\bbE (\widehat \varphi - \varphi)}{\sqrt{\bbV ( \widehat \varphi \mid D^n ) / n}} \\
        &= \frac{\bbP_n \widehat \varphi - \bbE(\widehat \varphi \mid D^n)}{\sqrt{\bbV ( \widehat \varphi \mid D^n ) / n}} + \frac{\bbE (\widehat \varphi \mid D^n) - \bbE(\widehat \varphi)}{\sqrt{\bbV ( \widehat \varphi \mid D^n ) / n}} + \frac{\bbE (\widehat \varphi - \varphi)}{\sqrt{\bbV ( \widehat \varphi \mid D^n ) / n}} \\
        &= \underbrace{\frac{\bbP_n \widehat \varphi - \bbE(\widehat \varphi \mid D^n)}{\sqrt{\bbV ( \widehat \varphi \mid D^n ) / n}}}_{\text{CLT}} + \underbrace{\sqrt{\frac{\bbV ( \widehat \varphi )}{\bbV ( \widehat \varphi \mid D^n )}}}_{T_1} \left\{ \underbrace{\frac{\bbE (\widehat \varphi \mid D^n) - \bbE(\widehat \varphi)}{\sqrt{\bbV ( \widehat \varphi ) / n}}}_{T_2} +  \underbrace{\frac{\bbE (\widehat \varphi - \varphi)}{\sqrt{\bbV ( \widehat \varphi ) / n}}}_{T_3} \right\}
    \end{align*}
    where the expectation and variance are over both the test and training data unless otherwise indicated by conditioning.  As the text underneath the underbraces indicates, we will show the limiting result for the first term --- the conditional standardized average.  That the unconditional standardized average converges to the conditional average in probability follows by Lemmas~\ref{lem:t1},~\ref{lem:t2}, and~\ref{lem:t3}, which establish that $T_1 = O_\bbP (1)$, $T_2 = o_\bbP(1)$, and $T_3 = o(1)$, respectively.  Therefore,
    $$
    T_1 (T_2 + T_3) = O_{\bbP}(1) \{ o_\bbP(1) + o(1) \} = o_\bbP (1).
    $$
    Returning to the CLT term, let $\Phi(\cdot)$ denote the cumulative distribution function of the standard normal. By iterated expectations and Jensen's inequality, 
    $$
    \lim_{n\to\infty} \sup_t \left| \bbP \left\{ \frac{\bbP_n \widehat \varphi - \bbE(\widehat \varphi \mid D^n)}{\sqrt{\bbV ( \widehat \varphi \mid D^n ) / n}} \leq t \right\} - \Phi(t) \right| \leq \lim_{n\to\infty} \bbE \left[ \sup_t \left|  \bbP \left\{ \frac{\bbP_n \widehat \varphi - \bbE(\widehat \varphi \mid D^n)}{\sqrt{\bbV ( \widehat \varphi \mid D^n ) / n}} \leq t \mid D^n \right\}  - \Phi(t) \right| \wedge 1 \right].
    $$
    Conditional on $D^n$, the summands in $\bbP_n \left\{ \frac{\widehat \varphi - \bbE(\widehat \varphi \mid D^n)}{\sqrt{\bbV ( \widehat \varphi \mid D^n ) / n}} \right\}$ are iid with mean zero and unit variance (almost surely). Therefore, by the Berry-Esseen inequality (Theorem 1.1, \citet{bentkus1996berry}), 
    $$
    \sup_t \left| \bbP \left\{ \frac{\bbP_n \widehat \varphi - \bbE(\widehat \varphi \mid D^n)}{\sqrt{\bbV ( \widehat \varphi \mid D^n ) / n}} \leq t \mid D^n \right\}  - \Phi(t) \right| \lesssim \frac{\bbE \Big[ \left| \widehat \varphi(Z) - \bbE \{ \widehat \varphi(Z) \mid D_\pi, D_\mu \} \right|^3 \mid D_\pi, D_\mu \Big]}{\sqrt{n}\ \bbV \{ \widehat \varphi(Z) \mid D_\pi, D_\mu \}^{3/2}} \inas 0,
    $$
    where convergence almost surely to zero follows by Lemma \ref{lem:lyapunov}.  Then, because \\ $\sup_t \left| \bbP \left\{ \frac{\bbP_n \widehat \varphi - \bbE(\widehat \varphi \mid D^n)}{\sqrt{\bbV ( \widehat \varphi \mid D^n ) / n}} \leq t \mid D^n \right\}  - \Phi(t) \right| \wedge 1$ is uniformly integrable and converges almost surely to zero, convergence in $L^1$ follows (Theorem 4.6.3, \citet{durrett2019probability}), i.e., 
    $$
    \lim_{n\to\infty} \bbE \left[ \sup_t \left|  \bbP \left\{ \frac{\bbP_n \widehat \varphi - \bbE(\widehat \varphi \mid D^n)}{\sqrt{\bbV ( \widehat \varphi \mid D^n ) / n}} \leq t \mid D^n \right\}  - \Phi(t) \right| \wedge 1 \right] = 0.
    $$
    Clearly, \eqref{eq:slow_clt} is satisfied.  Meanwhile, \eqref{eq:limiting_var} follows from Lemma \ref{lem:cond_variance_clt}.
\end{proof}

\begin{lemma} \label{lem:cond_variance_clt}
    Under the conditions of Theorem~\ref{thm:inference}, suppose without loss of generality that $\widehat \mu$ is the estimator with higher-order kernel $K_\mu$ and bandwidth scaling as $h_\mu \asymp n^{-\frac{2 + \varepsilon}{2\alpha + 2\beta + d}}$ while $\widehat \pi$ is consistent, smooth, and bounded. Then,
    \begin{equation}
        nh_\mu^d \bbV \{ \widehat \varphi(Z) \mid D^n \} \inas \bbE \left\{ \frac{\bbV (A \mid X)Y^2}{f(X)} \right\} \bbE\left\{ \frac{K_\mu (X)^2}{f(X)} \right\}.
    \end{equation}	
    If the roles of $\widehat \mu$ and $\widehat \pi$ were reversed, then
    \begin{equation}
        nh_\pi^d \bbV \{ \widehat \varphi(Z) \mid D^n \} \inas \bbE \left\{ \frac{\bbV (Y \mid X)A^2}{f(X)} \right\} \bbE\left\{ \frac{K_\pi (X)^2}{f(X)} \right\}.
    \end{equation}	
\end{lemma}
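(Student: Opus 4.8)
The plan is to condition on the training data $D^n = \{D_\mu, D_\pi\}$, under which $\widehat\mu$ and $\widehat\pi$ are fixed bounded functions (bounded by Lemma~\ref{lem:cdakernel_bounded} and Assumption~\ref{asmp:boundedAY}), and to compute the variance of $\widehat\varphi(Z) = \{A - \widehat\pi(X)\}\{Y - \widehat\mu(X)\}$ over a fresh draw $Z = (X, A, Y)$. First I would write $\widehat\varphi = \{A - \widehat\pi(X)\} Y - \{A - \widehat\pi(X)\}\widehat\mu(X)$ and expand the conditional variance into the variance of each piece plus twice their covariance. Because $A$, $Y$, and $\widehat\pi$ are bounded, $\bbV\{(A - \widehat\pi(X)) Y \mid D^n\} = O_\bbP(1)$, so after multiplication by $nh_\mu^d$ it vanishes, since the bandwidth scaling of Theorem~\ref{thm:inference} forces $nh_\mu^d \asymp n^{-c}$ for some $c > 0$ in the regime $\frac{\alpha+\beta}{2} < \frac{d}{4}$. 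By Cauchy–Schwarz the covariance cross-term is bounded by the geometric mean of the two variances, hence $O_\bbP((nh_\mu^d)^{-1/2})$, and so also vanishes after scaling. Thus only $nh_\mu^d\,\bbV\{(A - \widehat\pi(X))\widehat\mu(X) \mid D^n\}$ can survive, reducing the problem to the single term carrying the large fluctuations of the undersmoothed $\widehat\mu$.

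Next I would show this surviving term agrees with its second moment to leading order. The conditional mean $\bbE\{(A - \widehat\pi(X))\widehat\mu(X) \mid D^n\} = \bbE_X\{(\pi(X) - \widehat\pi(X))\widehat\mu(X)\}$ is $O_\bbP(1)$, so its square contributes negligibly after scaling, leaving $nh_\mu^d\,\bbE\{(A - \widehat\pi(X))^2 \widehat\mu(X)^2 \mid D^n\}$. Conditioning on $X$ gives $\bbE\{(A - \widehat\pi(X))^2 \mid X\} = \bbV(A \mid X) + \{\pi(X) - \widehat\pi(X)\}^2$, and I would bound the $\{\pi - \widehat\pi\}^2$ contribution by $\lVert \pi - \widehat\pi \rVert_\infty^2 \cdot nh_\mu^d\,\bbE_X\{\widehat\mu(X)^2\}$; since $nh_\mu^d\,\bbE_X\{\widehat\mu(X)^2\} = O_\bbP(1)$ (applying Lemma~\ref{lem:cdalpr_cond_lower_bounds} to $\widehat\mu$) and the smooth estimator $\widehat\pi$ is consistent and bounded, this is $o_\bbP(1)$. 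This leaves the target quantity $nh_\mu^d\,\bbE_X\{\bbV(A \mid X)\,\widehat\mu(X)^2\}$.

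From here I would mirror the bookkeeping of Lemma~\ref{lem:cdalpr_cond_lower_bounds}. Expanding $\widehat\mu(X)^2$ as a double sum over training indices and splitting into diagonal ($i=j$) and off-diagonal ($i \ne j$) parts, the diagonal part, after the change of variables $u = (X_i - x)/h_\mu$ and using continuity of $\bbV(A \mid \cdot)$ and of $f$ (Assumptions~\ref{asmp:continuous_cond_var} and~\ref{asmp:density_smooth}), reduces to $\frac1n \sum_i \frac{\bbV(A \mid X_i) Y_i^2}{f(X_i)} \int K_\mu(u)^2\,du$ up to asymptotically negligible error. The strong law for triangular arrays of bounded random variables (Lemma~\ref{lem:slln}), justified by the boundedness of $Y$ and the lower bound on $f$, together with dominated convergence, sends this to $\bbE\{\bbV(A \mid X) Y^2 / f(X)\}\,\bbE\{K_\mu(X)^2 / f(X)\}$, using the identity $\int K_\mu^2 = \bbE\{K_\mu(X)^2/f(X)\}$. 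For the off-diagonal part, the localization $K_\mu(x/h) \lesssim \one(\lVert x \rVert \le h)$ forces each cross term to vanish unless $\lVert X_i - X_j \rVert \le 2h_\mu$, so its scaled contribution is controlled by the number of close pairs $P_n$; since $P_n/n \inas 0$ (Lemma~\ref{lem:separated_covariates}) under the bandwidth condition, this part vanishes almost surely. Combining the two parts via the continuous mapping theorem yields the stated limit, and the reversed-roles identity follows by the identical argument with $A$ and $Y$ (and $\widehat\pi$, $\widehat\mu$) interchanged.

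The main obstacle I anticipate is making every reduction hold \emph{almost surely} rather than merely in expectation or in probability. In particular, the control of the $\{\pi - \widehat\pi\}^2\,\widehat\mu^2$ cross term needs uniform, almost-sure consistency of the smooth estimator $\widehat\pi$ (not just $L_2$ consistency), and in the diagonal step the extra weight $\bbV(A \mid X)$ means the summands $\bbV(A\mid X_i) Y_i^2/f(X_i)$ form a genuine triangular array whose boundedness and convergent common mean must be verified before invoking Lemma~\ref{lem:slln}. The remaining diagonal-versus-off-diagonal estimates are a careful but routine adaptation of those already carried out for the conditional variance of $\widehat\mu$ alone in Lemma~\ref{lem:cdalpr_cond_lower_bounds}.
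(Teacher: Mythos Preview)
Your proposal is correct and takes essentially the same approach as the paper: the same decomposition of $\widehat\varphi$ into $(A-\widehat\pi)Y$ and $(A-\widehat\pi)\widehat\mu$, the same isolation of $\bbE_X\{\bbV(A\mid X)\,\widehat\mu(X)^2\}$ as the only surviving term, and the same diagonal/off-diagonal analysis via Lemma~\ref{lem:slln} and Lemma~\ref{lem:separated_covariates}. The one point where you diverge is the $(\pi-\widehat\pi)^2\widehat\mu^2$ piece, where you invoke uniform almost-sure consistency of $\widehat\pi$; the paper instead expands that piece into its own diagonal/off-diagonal sum over $D_\mu$ and applies the triangular-array SLLN with expectation taken over \emph{both} $D_\mu$ and $D_\pi$, so that only the uniform mean-squared consistency $\sup_{x}\bbE_{D_\pi}\big[\{\widehat\pi(x)-\pi(x)\}^2\big]=o(1)$ from Lemma~\ref{lem:cdalpr_upper_bounds} is needed rather than a sup-norm almost-sure statement.
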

    
\begin{proof}
    Unless they are necessary for clarity, we omit $X$ and $Z$ arguments throughout for brevity (e.g., $\pi \equiv \pi(X)$).  By definition, 
    \begin{align}
        \bbV(\widehat \varphi \mid D^n) &= \bbV \{ (A - \widehat \pi) (Y - \widehat \mu) \mid D^n \} \nonumber \\
        &= \bbV \{ (A - \widehat \pi) Y \mid D^n \} + \bbV \{ (A - \widehat \pi) \widehat \mu \mid D^n \} + 2 \cov \{ (A - \widehat \pi) Y, (\widehat \pi - A) \widehat \mu \mid D^n \}. \label{eq:lem12a}
    \end{align}
    Since $\widehat \mu$ is the undersmoothed estimator, one might expect the second term in \eqref{eq:lem12a} to dominate this expansion and scale like $\bbV \{ \widehat \mu(X) \mid D^n \}$. We show this below.
    
    \medskip
    
    Starting with the first term in \eqref{eq:lem12a}, we have
    $$
    \bbV \{ (A - \widehat \pi) Y \mid D^n \} = O(1)
    $$
    by the boundedness assumption on $A$ and $Y$ in Assumption~\ref{asmp:boundedAY} and because $\widehat \pi$ is bounded by construction (Lemma \ref{lem:cdakernel_bounded}). Then, notice that the third term in \eqref{eq:lem12a} is upper bounded by the square root of the second term: by Cauchy-Schwarz and because $\bbV \{ (A - \pi) Y\} = O(1)$, 
    $$
    2 \left| \cov \{  (A - \pi)Y, (\widehat \pi - A) \widehat \mu \mid  D^n \} \right| \lesssim \sqrt{\bbV \{ (\widehat \pi - A) \widehat \mu \mid D^n \}}.
    $$
    Hence, demonstrating that the second term in \eqref{eq:lem12a} satisfies the almost sure limit when standardized by $nh_\mu^d$ ensures it will dominate the expansion.
    
    \medskip
    
    We have
    \begin{equation} \label{eq:varphi_cond_var}
        \bbV \{ ( A - \widehat \pi) \widehat \mu \} = \bbV\{ (\pi - \widehat \pi) \widehat \mu \mid D^n \} + \bbE \{ \bbV(A \mid X) \widehat \mu^2 \mid D^n \}.
    \end{equation}
    We will show that the first summand, when scaled by $nh^d$, converges to zero almost surely while the second summand satisfies the result.
    
    \medskip
    
    For the first summand in \eqref{eq:varphi_cond_var}, we have
    \begin{equation} \label{eq:pi_pihat_small}
        nh_\mu ^d \bbV\left\{ \left( \pi - \widehat \pi \right) \widehat \mu \mid D^n \right\} = \frac{1}{n} \sum_{D_\mu} \frac{Y_i^2}{f(X_i)^2 h_\mu^d} \bbV \left[ \{ \pi(X) - \widehat \pi(X) \} K \left(\frac{X_i - X}{h_\mu} \right) \mid X_i \right] + A_n
    \end{equation}
    where $A_n$ is the off-diagonal covariance terms.  $A_n \inas 0$ because $( \pi - \widehat \pi )$ is bounded by Assumption \ref{asmp:boundedAY} and Lemma \ref{lem:cdakernel_bounded}, and by the same argument as in Lemma \ref{lem:cdalpr_cond_lower_bounds}.
    
    \medskip
    
    The diagonal terms in \eqref{eq:pi_pihat_small} are a sample average of bounded random variables with common mean. Hence, by the strong law of large numbers for triangular arrays of bounded random variables (Lemma \ref{lem:slln}) and the continuous mapping theorem,
    \begin{equation} 
        nh_\mu^d \bbV\left\{ \left( \pi - \widehat \pi \right) \widehat \mu \mid D^n \right\} \inas \lim_{n \to \infty} \bbE\left(\frac{Y^2}{f(X)^2 h_\mu^d} \bbV \left[ \{ \pi(X^\prime) - \widehat \pi(X^\prime) \} K \left(\frac{X- X^\prime}{h} \right) \mid X \right] \right)  + 0,
    \end{equation}
    should the limit on the right-hand side exist.  Indeed, this limit exists, and is zero.  Notice that the expectation is taken over all the training data --- both $D_\mu$ and $D_\pi$.  Therefore,
    \begin{align*}
         &\bbE\left(\frac{Y^2}{f(X)^2 h_\mu^d} \bbV \left[ \{ \pi(X^\prime) - \widehat \pi(X^\prime) \} K \left(\frac{X- X^\prime}{h_\mu} \right) \mid X \right] \right) \\
         &\leq  \bbE\left(\frac{Y^2}{f(X)^2 h_\mu^d} \bbE \left[ \{ \pi(X^\prime) - \widehat \pi(X^\prime) \}^2 K \left(\frac{X- X^\prime}{h_\mu} \right)^2 \mid X \right] \right)  \\
         &= \bbE\left\{ \frac{Y^2}{f(X)^2 h_\mu^d} \bbE \left( \bbE_{D_\pi} \big[ \{ \pi(X^\prime) - \widehat \pi(X^\prime) \}^2 \mid D_\mu, X, X^\prime \big] K \left(\frac{X- X^\prime}{h_\mu} \right)^2 \mid X \right) \right\} \\
         &\leq \sup_{x^\prime \in \mathcal{X}} \bbE_{D_\pi} \big[ \{ \widehat \pi(x^\prime) - \pi(x^\prime) \}^2 \big] \bbE\left[ \frac{Y^2}{f(X)^2 h_\mu^d} \bbE \left\{ K \left(\frac{X - X^\prime}{h_\mu} \right)^2 \mid X \right\} \right] \\
         &= o(1),
    \end{align*}
    where the last line follows because $\sup_{x^\prime \in \mathcal{X}} \bbE_{D_\pi} \big[ \{ \widehat \pi(x^\prime) - \pi(x^\prime) \}^2 \big] = o(1)$ by Lemma \ref{lem:cdalpr_upper_bounds} and because the second multiplicand in the penultimate line is upper bounded (we added the $D_\pi$ subscript to emphasize that this expectation is over the training data for $\widehat \pi$).
    
    \medskip
    
    For the second summand in \eqref{eq:varphi_cond_var}, 
    \begin{align*}
        nh_\mu^d \bbE \{ \bbV(A \mid X) \widehat \mu^2 \mid D^n  \} &= \frac{1}{n} \sum_{i=1}^{n} \frac{Y_i^2}{f(X_i)^2 h_\mu^d} \bbE \left\{ K \left(\frac{X_i - X}{h_\mu}\right)^2 \bbV(A \mid X) \mid X_i \right\} + A_n
    \end{align*}
    where $A_n$ is the off-diagonal product terms.  $A_n \inas 0$ because $\bbV(A\mid X)$ is bounded by Assumption \ref{asmp:dgp} and by the same argument as in Lemma \ref{lem:cdalpr_cond_lower_bounds}.
    
    \medskip
    
    For the diagonal terms, because they are a sample average of bounded random variables with common mean, by a strong law of large numbers for triangular arrays (Lemma \ref{lem:slln}), 
    $$
    \hspace{-0.3in}\frac{1}{n} \sum_{i=1}^{n} \frac{Y_i^2}{f(X_i)^2 h_\mu^d} \bbE \left\{ K \left(\frac{X_i - X}{h_\mu} \right)^2 \bbV(A \mid X) \mid X_i \right\} \inas \lim_{n \to \infty} \bbE\left[ \frac{Y^2}{f(X)^2 h_\mu^d} \bbE \left\{ K \left( \frac{X - X^\prime}{h_\mu} \right)^2 \bbV(A \mid X^\prime) \mid X \right\} \right],
    $$
    should the limit on the right-hand side exist.  The rest of the proof follows by the same argument as in Lemma \ref{lem:cdalpr_cond_lower_bounds}. We have, by a change of variables and the symmetry of $K$,
    \begin{align*}
        \lim_{n \to \infty} &\bbE\left[ \frac{Y^2}{f(X)^2 h_\mu^d} \bbE \left\{ K \left( \frac{X - X^\prime}{h_\mu} \right) \bbV(A \mid X^\prime) \mid X \right\} \right] = \\
        &\hspace{1in} \lim_{n \to \infty} \int_{\mathcal{X}} \frac{\bbE(Y^2 \mid s)}{f(s)} \left\{ \int_{\mathcal{U}} K\left( u \right)^2 \bbV(A \mid s + uh) f(s + uh) du \right\} ds.
    \end{align*}
    By the boundedness of $Y$ and $f(X)$, the integrability of $K^2$, and Fubini's theorem, we can exchange integrals.  Then, by the assumed continuity of $f$ and $\bbV(A \mid x)$, $$
    K(u)^2 f(s + uh) \bbV(A \mid s + uh) \stackrel{n \to \infty}{\longrightarrow} K(u)^2 f(s) \bbV(A \mid s)
    $$
    uniformly in $u$ at all $s$ except for a set of Lebesgue measure-zero on  the boundary of $\mathcal{X}$. Indeed, at those points, if $u$ ``points'' outside $\mathcal{X}$, then the limit is zero because $f(s + uh) \bbV(A \mid s + uh) = 0$ for all $h$.  This, combined with the boundedness of $Y$, $f$, $A$, and $K$ and the integrability of $K^2$, implies, by the dominated convergence theorem, that 
    \begin{align}
        \frac{1}{n} \sum_{i=1}^{n} \frac{Y_i^2}{f(X_i)^2 h_\mu^d} \bbE \left\{ K \left(\frac{X_i - X}{h_\mu} \right)^2 \bbV(A \mid X) \mid X_i \right\} &\inas \int_{\mathcal{X}} \int_{\mathcal{U}} \bbE(Y^2 \mid X = s) K\left( u \right)^2 \bbV(A \mid s) du ds \nonumber \\
        &= \bbE \left\{ \frac{\bbE(Y^2 \mid X) \bbV(A \mid X)}{f(X)} \right\} \bbE \left\{ \frac{K(X)^2}{f(X)} \right\}. \label{eq:as_limit}
    \end{align}
    Then, plugging \eqref{eq:as_limit} into \eqref{eq:varphi_cond_var} and by the continuous mapping theorem,
    $$
    nh_\mu^d \bbV \{ ( A - \widehat \pi) \widehat \mu \} \inas \bbE\left\{ \frac{\bbV(A \mid X) Y^2}{f(X)} \right\} \bbE\left\{\frac{K(X)^2}{f(X)}\right\}.
    $$
    The result follows because $nh_\mu^d \bbV \{ ( A - \widehat \pi) \widehat \mu \}$ dominates the expansion in \eqref{eq:lem12a}.  The same argument follows with the roles of $\widehat \pi$ and $\widehat \mu$ reversed, but swapping the roles of $Y$ and $A$ and swapping $h_\mu$ and $K_\mu$ for $h_\pi$ and $K_\pi$.
\end{proof}

\begin{lemma} \label{lem:lyapunov}
    Under the setup from Theorem~\ref{thm:inference},
    \begin{equation} \label{eq:lyapunov}
        \frac{\bbE \Big[ \left| \widehat \varphi(Z) - \bbE \{ \widehat \varphi(Z) \mid D_\pi, D_\mu \} \right|^3 \mid D_\pi, D_\mu \Big]}{\sqrt{n}\ \bbV \{ \widehat \varphi(Z) \mid D_\pi, D_\mu \}^{3/2}} \inas 0.
    \end{equation} 
\end{lemma}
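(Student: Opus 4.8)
The plan is to reduce the numerator to the conditional third moment of $\widehat\mu$, which is controlled by Lemma~\ref{lem:cdalpr_third_moment}, to pin down the exact almost-sure order of the denominator using the variance limit in Lemma~\ref{lem:cond_variance_clt}, and then to observe that the two scalings cancel so tightly that the little-$o$ in the numerator forces the ratio to zero. Assume without loss of generality that $\widehat\mu$ is the undersmoothed higher-order kernel estimator with bandwidth $h_\mu$ and $\widehat\pi$ is the smooth consistent estimator, matching the conventions of Lemma~\ref{lem:cond_variance_clt}. First, working conditionally on $D^n = \{D_\pi, D_\mu\}$, I would apply the $c_r$ inequality: since $\bbE\{|W - \bbE W|^3\} \le 8\,\bbE|W|^3$ for any random variable $W$, it follows that $\bbE\big[\,|\widehat\varphi - \bbE(\widehat\varphi \mid D^n)|^3 \mid D^n\big] \lesssim \bbE\{|\widehat\varphi|^3 \mid D^n\}$.

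Next, I would bound $\bbE\{|\widehat\varphi|^3 \mid D^n\}$ by the third moment of $\widehat\mu$. Writing $\widehat\varphi = (A - \widehat\pi)(Y - \widehat\mu)$ and using that $A$ and $Y$ are bounded by Assumption~\ref{asmp:boundedAY} and that $\widehat\pi$ is bounded by Lemma~\ref{lem:cdakernel_bounded}, we have $|A - \widehat\pi|^3 \lesssim 1$ and $|Y - \widehat\mu|^3 \lesssim 1 + |\widehat\mu|^3$, so that
$$
\bbE\big[\,|\widehat\varphi - \bbE(\widehat\varphi \mid D^n)|^3 \mid D^n\big] \lesssim 1 + \bbE\{|\widehat\mu(X)|^3 \mid D^n\}.
$$
For the denominator, Lemma~\ref{lem:cond_variance_clt} gives $nh_\mu^d\,\bbV\{\widehat\varphi(Z) \mid D^n\} \inas c$ for a strictly positive constant $c$, hence $\bbV\{\widehat\varphi \mid D^n\}^{3/2} \asymp (nh_\mu^d)^{-3/2}$ almost surely and $\sqrt{n}\,\bbV\{\widehat\varphi \mid D^n\}^{3/2} \asymp (nh_\mu^{3d/2})^{-1}$ almost surely. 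This strict positivity also guarantees the ratio is well defined almost surely, the same fact invoked at the start of the proof of Theorem~\ref{thm:inference}.

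Combining the two bounds, the ratio in \eqref{eq:lyapunov} satisfies, almost surely,
$$
\frac{\bbE\big[\,|\widehat\varphi - \bbE(\widehat\varphi \mid D^n)|^3 \mid D^n\big]}{\sqrt{n}\,\bbV\{\widehat\varphi \mid D^n\}^{3/2}} \lesssim nh_\mu^{3d/2}\big(1 + \bbE\{|\widehat\mu(X)|^3 \mid D^n\}\big) = nh_\mu^{3d/2} + nh_\mu^{3d/2}\,\bbE\{|\widehat\mu(X)|^3 \mid D^n\}.
$$
The second summand converges to zero almost surely directly by Lemma~\ref{lem:cdalpr_third_moment}, and the first converges to zero because the undersmoothing forces $nh_\mu^d \to 0$ (as $\alpha + \beta < d/2 < d(1+\varepsilon)$ in this regime), whence $nh_\mu^{3d/2} = (nh_\mu^d)\,h_\mu^{d/2} \to 0$. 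The main obstacle here is purely the exponent bookkeeping: the conditional third moment is controlled by Lemma~\ref{lem:cdalpr_third_moment} at precisely the rate $(nh_\mu^{3d/2})^{-1}$ and the denominator matches this rate exactly, so the cancellation is sharp and it is the little-$o$ (rather than exact order) behavior of the numerator that delivers convergence to zero. No further estimates are needed.
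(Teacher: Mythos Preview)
Your proof is correct and follows essentially the same approach as the paper: both reduce the numerator to $O\big(1 + \bbE\{|\widehat\mu(X)|^3 \mid D^n\}\big)$, invoke Lemma~\ref{lem:cdalpr_third_moment} for the third-moment term, and use Lemma~\ref{lem:cond_variance_clt} to pin down the denominator as $\asymp (nh_\mu^{3d/2})^{-1}$ almost surely. The only cosmetic difference is that the paper expands $\widehat\varphi - \bbE(\widehat\varphi \mid D^n)$ explicitly before bounding, whereas you use the $c_r$ inequality to pass directly to $\bbE\{|\widehat\varphi|^3 \mid D^n\}$; your route is arguably a bit cleaner.
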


\begin{proof}
     Assume without loss of generality that $\widehat \pi$ is the smooth estimator (Estimator \ref{est:cdalpr_smooth}) and $\widehat \mu$ is the higher-order kernel estimator (Estimator \ref{est:cdalpr_higher}) so that $nh_\mu^d \to 0$ as $n \to \infty$, where $h_\mu$ is the bandwidth of the covariate-density-adapted kernel regression estimator. By Lemma~\ref{lem:cond_variance_clt}, the denominator in \eqref{eq:lyapunov} satisfies
    \begin{equation} \label{eq:lyapunov_denominator}
        nh_\mu^{\frac{3d}{2}} \sqrt{n} \bbV \{ \widehat \varphi(Z) \mid D^n \}^{3/2} = \left[ nh_\mu^d \bbV \{ \widehat \varphi(Z) \mid D^n \}\right]^{3/2} \inas \bbE\left\{ \frac{\bbV(A\mid X) Y^2}{f(X)} \right\}^{3/2} \bbE\left\{\frac{K(X)^2}{f(X)}\right\}^{3/2}.
    \end{equation}
    Meanwhile, the numerator in \eqref{eq:lyapunov} satisfies
    \begin{align*}
        \bbE \Big[ \left| \widehat \varphi(Z) - \bbE \{ \widehat \varphi(Z) \mid D^n \} \right|^3 \mid D^n \Big] &= \bbE \left[ \left| AY - \bbE(AY) + \widehat \pi(X) \{ \mu(X) - Y \} + \widehat \mu(X) \{ \pi(X) - A \} \right|^3 \mid D^n \right] \\
        &\lesssim \bbE \left[ \left| AY - \bbE(AY) \right|^3 \mid D^n \right] \\
        &+ \bbE \left[ \left|\widehat \pi(X) \{ \mu(X) - Y \} \right|^3 \mid D^n \right]  \\
        &+  \bbE \left[ \left|\widehat \mu(X) \{ \pi(X) - A \} \right|^3 \mid D^n \right] \\
        &= O \left[ 1 + \bbE \left\{ \left|\widehat \mu(X) \right|^3 \mid D^n \right\} \right]
    \end{align*}
    where the first line follows by definition and canceling terms and the last because $A$, $Y$, and $\widehat \pi$ are bounded by Assumption \ref{asmp:boundedAY} and construction (Lemma \ref{lem:cdakernel_bounded}). Lemma \ref{lem:cdalpr_third_moment} establishes that		
    \begin{equation} \label{eq:lyapunov_numerator}
        nh_\mu^{\frac{3d}{2}} \bbE \{ \left|\widehat \mu(X) \right|^3 \mid D^n \} \inas 0.
    \end{equation}
    Therefore, by the continuous mapping theorem,
    $$
    \frac{\bbE \Big[ \left| \widehat \varphi(Z) - \bbE \{ \widehat \varphi(Z) \mid D_\pi, D_\mu \} \right|^3 \mid D_\pi, D_\mu \Big]}{\sqrt{n}\ \bbV \{ \widehat \varphi(Z) \mid D_\pi, D_\mu \}^{3/2}} = \frac{nh_\mu^{\frac{3d}{2}} \bbE \Big[ \left| \widehat \varphi(Z) - \bbE \{ \widehat \varphi(Z) \mid D_\pi, D_\mu \} \right|^3 \mid D_\pi, D_\mu \Big]}{nh_\mu^{\frac{3d}{2}}\sqrt{n}\ \bbV \{ \widehat \varphi(Z) \mid D_\pi, D_\mu \}^{3/2}} \inas 0.
    $$
\end{proof}

\begin{lemma} \label{lem:marginal_variance_clt}
    Under the conditions of Theorem~\ref{thm:inference}, suppose without loss of generality that $\widehat \mu$ is the higher-order kernel estimator with bandwidth scaling as $h_\mu \asymp n^{-\frac{2 + \varepsilon}{2\alpha + 2\beta + d}}$ while $\widehat \pi$ is the smooth kernel estimator which is consistent. Then,
    $$
    \bbV \{ \widehat \varphi(Z) \} \asymp \frac{1}{nh_\mu^d}.
    $$
\end{lemma}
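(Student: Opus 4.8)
The plan is to compute the marginal variance via the law of total variance, conditioning on the test observation $Z=(X,A,Y)$ rather than on the training data, since $Z$ is drawn independently of $D^n=(D_\pi,D_\mu)$. This gives
\[
\bbV\{\widehat\varphi(Z)\} = \bbE_Z\big[\bbV\{\widehat\varphi(Z)\mid Z\}\big] + \bbV_Z\big[\bbE\{\widehat\varphi(Z)\mid Z\}\big],
\]
where the inner expectation and variance are over $D^n$. The key structural fact I would exploit is that, conditional on $X$, the evaluations $\widehat\pi(X)$ and $\widehat\mu(X)$ are independent because they are built from the independent folds $D_\pi$ and $D_\mu$. Writing $U=A-\widehat\pi(X)$ and $V=Y-\widehat\mu(X)$, which are independent given $Z$, the product-variance identity $\bbV(UV)=(\bbE U)^2\bbV V+(\bbE V)^2\bbV U+\bbV U\,\bbV V$ yields
\[
\bbV\{\widehat\varphi(Z)\mid Z\} = \{A-\bar\pi(X)\}^2\bbV\{\widehat\mu(X)\} + \{Y-\bar\mu(X)\}^2\bbV\{\widehat\pi(X)\} + \bbV\{\widehat\pi(X)\}\bbV\{\widehat\mu(X)\},
\]
where $\bar\eta(x)=\bbE\{\widehat\eta(x)\}$ and each pointwise variance is taken over the relevant training fold.

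Next I would control the first (conditional-variance) summand, which I expect to dominate. Taking expectation over $Z$, the leading term is $\bbE_X[\{\bbV(A\mid X)+(\pi(X)-\bar\pi(X))^2\}\,\bbV\{\widehat\mu(X)\}]$, using $\bbE[\{A-\bar\pi(X)\}^2\mid X]=\bbV(A\mid X)+\{\pi(X)-\bar\pi(X)\}^2$. Applying the matching pointwise bounds $\bbV\{\widehat\mu(x)\}\asymp (nh_\mu^d)^{-1}$ from Lemmas~\ref{lem:cdalpr_upper_bounds} and~\ref{lem:cdalpr_lower_bounds}, together with $0<\bbV(A\mid X)$ bounded away from zero (Assumption~\ref{asmp:dgp}), this contributes exactly order $(nh_\mu^d)^{-1}$: the upper bound is immediate and the lower bound follows since $\inf_x\bbV\{\widehat\mu(x)\}\,\bbE_X\{\bbV(A\mid X)\}\gtrsim (nh_\mu^d)^{-1}$. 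The remaining two terms involve $\bbV\{\widehat\pi(X)\}\lesssim (nh_\pi^d)^{-1}$ (Lemma~\ref{lem:cdalpr_upper_bounds}) and boundedness of $A,Y,\bar\pi,\bar\mu$ (Assumption~\ref{asmp:boundedAY} and Lemma~\ref{lem:cdakernel_bounded}), so they are $O((nh_\pi^d)^{-1})$, which is $o((nh_\mu^d)^{-1})$ because the smooth estimator's consistency forces $nh_\pi^d\to\infty$ while the undersmoothing gives $nh_\mu^d\to 0$. Hence $\bbE_Z[\bbV\{\widehat\varphi(Z)\mid Z\}]\asymp (nh_\mu^d)^{-1}$.

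Finally I would dispatch the conditional-mean summand. By independence of the folds, $\bbE\{\widehat\varphi(Z)\mid Z\}=\{A-\bar\pi(X)\}\{Y-\bar\mu(X)\}$, a bounded random variable, so $\bbV_Z[\bbE\{\widehat\varphi(Z)\mid Z\}]=O(1)$. Since $nh_\mu^d\to0$ under the stated bandwidth scaling, $(nh_\mu^d)^{-1}\to\infty$, so this $O(1)$ term is absorbed into the first summand, and combining gives $\bbV\{\widehat\varphi(Z)\}\asymp (nh_\mu^d)^{-1}$. The main obstacle is the lower bound in the previous paragraph: I must ensure the $\widehat\mu$-variance contribution is not cancelled and is genuinely the dominant scale, which hinges on the uniform lower bound $\inf_x\bbV\{\widehat\mu(x)\}\gtrsim(nh_\mu^d)^{-1}$ and on verifying that the $\widehat\pi$-driven and cross terms sit at the strictly smaller scale $(nh_\pi^d)^{-1}$ — i.e., that the undersmoothed bandwidth $h_\mu$ really is the smaller of the two.
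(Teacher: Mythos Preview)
Your argument is correct and takes a genuinely different route from the paper. The paper subtracts $\varphi$ and expands $\bbE[\{\widehat\varphi-\varphi\}^2]$ algebraically, adding and subtracting $\pi(X)$ and using iterated expectations to isolate $\bbE\{\bbV(A\mid X)(\widehat\mu-\mu)^2\}$ as the dominant piece; it then invokes the bias bound $\bbE(\widehat\varphi-\varphi)^2\lesssim h_\mu^{2(\alpha+\beta)}$ from the Theorem~\ref{thm:minimax} analysis. You instead condition on the test point $Z$ and exploit the double cross-fit structure directly: given $Z$, the factors $A-\widehat\pi(X)$ and $Y-\widehat\mu(X)$ are independent, so the product-variance identity delivers the decomposition in one line. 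Your route is more structural and avoids both the $\widehat\varphi-\varphi$ detour and the bias bound; the paper's route stays closer to its overall EIF-error framework and requires no special independence identity. One small citation slip: Lemma~\ref{lem:cdakernel_bounded} concerns only the smooth estimator, so it gives boundedness of $\widehat\pi$ (hence $\bar\pi$) but not of $\bar\mu$; the boundedness of $\bar\mu(x)=\int K_\mu(u)\mu(x+uh_\mu)\,du$ instead follows from $|\mu|\le M$ and $\int|K_\mu|<\infty$, which holds since the higher-order kernel is bounded with compact support.
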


\begin{proof}
    
    Since $\bbV\{ \varphi(Z) \}$ is a constant by Assumptions~\ref{asmp:dgp} and~\ref{asmp:bdd_density}. Therefore, if $\bbV \{ \widehat \varphi(Z) - \varphi(Z) \}$ is increasing with sample size then $\bbV \{ \widehat \varphi(Z) \} \asymp \bbV \{ \widehat \varphi(Z) - \varphi(Z) \}$. 
    We have 
    $$
    \bbV \{ \widehat \varphi(Z) - \varphi(Z) \} = \bbE [ \{ \widehat \varphi(Z) - \varphi(Z) \}^2 ] - \bbE \{ \widehat \varphi(Z) - \varphi(Z) \}^2.
    $$
    By the analysis in Theorem~\ref{thm:minimax}, 
    $$
    \bbE \{ \widehat \varphi(Z) - \varphi(Z) \}^2 \lesssim h_\mu^{2(\alpha + \beta)}
    $$
    Omitting $X$ arguments, 
    \begin{align*}
        \bbE [ \{ \widehat \varphi(Z) - \varphi(Z) \}^2 ] &= \bbE \left[ \big\{ ( A - \widehat \pi )( \mu - \widehat \mu ) + ( Y - \mu )( \pi - \widehat \pi ) \big\}^2 \right] \\
        &= \bbE \big\{ ( A - \widehat \pi )^2 ( \mu - \widehat \mu )^2 \big\} + \bbE \big\{ ( Y - \mu )^2 ( \pi - \widehat \pi )^2 \big\} \\
        &\hspace{0.5in} + 2 \bbE \left\{ ( A - \widehat \pi )( Y - \mu )( \pi - \widehat \pi )( \mu - \widehat \mu ) \right\} \\
        &= \bbE \big\{ ( A - \pi + \pi - \widehat \pi )^2 ( \mu - \widehat \mu )^2 \big\} + \bbE \left[ \bbE \big\{ ( Y - \mu )^2 \mid X \big\} ( \pi - \widehat \pi )^2 \right] \\
        &\hspace{0.5in} + 2 \bbE \left[ \big\{ A ( Y - \mu ) - \widehat \pi ( Y - \mu ) \big\} ( \mu - \widehat \mu )( \pi - \widehat \pi ) \right] \\
        &= \bbE \left[ \big\{ ( A - \pi )^2 + ( \pi - \widehat \pi )^2 \big\} ( \mu - \widehat \mu )^2 \right] + \bbE \Big( \bbV(Y \mid X) \{ \pi - \widehat \pi \}^2 \Big) \\
        &\hspace{0.5in} + 2 \bbE \big\{ \cov(A, Y \mid X)  ( \mu - \widehat \mu )( \pi - \widehat \pi ) \big\} \\
        &= \bbE \big\{ ( \pi - \widehat \pi )^2 ( \mu - \widehat \mu )^2 \big\} \\
        &\hspace{0.5in} + \bbE \Big\{ \bbV(A \mid X) ( \mu - \widehat \mu )^2 \Big\} + \bbE \Big\{ \bbV(Y \mid X) ( \pi - \widehat \pi )^2 \Big\} \\
        &\hspace{0.5in} + 2 \bbE \Big\{ \cov(A, Y \mid X)  ( \mu - \widehat \mu )( \pi - \widehat \pi ) \Big\} 
    \end{align*}
    where the first line follows by definition; the second by multiplying the square; the third by adding and subtracting $\pi(X)$ in the first term, iterated expectation on the second term, and multiplying out the third term; the fourth by multiplying out the square in the first term and iterated expectations on $X$ and the training data, by definition of $\bbV (Y \mid X)$ on the second term, and by iterated expectation on $X$ and the training data and by definition of $\cov(A, Y \mid X)$ on the third term; and the final line follows by iterated expectations on $X$, the definition of $\bbV(A \mid X)$, and rearranging.
    
    \medskip
    
    Notice that $\bbE \big\{ ( \pi - \widehat \pi )^2 ( \mu - \widehat \mu )^2 \big\} = O \left[ \bbE \{ (\widehat \mu - \mu)^2 \} \right]$ and $\bbE \Big\{ \bbV(Y \mid X) ( \pi - \widehat \pi )^2 \Big\} = O(1)$ because $\widehat \pi$ and $\pi$ are bounded by Assumption \ref{asmp:boundedAY} and construction (Lemma \ref{lem:cdakernel_bounded}), while  $2 \bbE \Big\{ \cov(A, Y \mid X)  ( \mu - \widehat \mu )( \pi - \widehat \pi ) \Big\}  = O \left[ \sqrt{\bbE \{ (\widehat \mu - \mu)^2 \}} \right]$ by Cauchy-Schwarz and Assumption~\ref{asmp:boundedAY}.  Finally, by Assumptions~\ref{asmp:dgp} and~\ref{asmp:bdd_density}, and Lemma~\ref{lem:cdalpr_lower_bounds},
    $$
    \bbE \left\{ \bbV(A \mid X) (\widehat \mu - \mu)^2 \right\} \gtrsim \frac{1}{nh_\mu^d}.
    $$
    Since $\frac{1}{nh_\mu^d}$ is increasing with sample size, this final term then dominates the expression and 
    $$
    \bbV \{ \widehat \varphi(Z) - \varphi (Z) \} \gtrsim \frac{1}{nh_\mu^d}.
    $$
    Moreover, because $\frac{1}{nh_\mu^d}$ is increasing with sample size, $\bbV \{ \widehat \varphi(Z) \} \asymp \bbV \{ \widehat \varphi(Z) - \varphi(Z) \} \gtrsim \frac{1}{nh_\mu^d}$.  The upper bound, $\bbV \{ \widehat \varphi(Z) - \varphi(Z) \} \lesssim \frac{1}{nh_\mu^d}$, follows by the same decomposition as above, but applying the upper bounds from Lemma~\ref{lem:cdalpr_upper_bounds}.
\end{proof}

\begin{lemma} \label{lem:t1}
    Under the conditions of Theorem~\ref{thm:inference}, 
    $$
    \frac{\bbV \{ \widehat \varphi(Z) \}}{\bbV \{ \widehat \varphi(Z) \mid D^n \}} = O_\bbP (1).
    $$
\end{lemma}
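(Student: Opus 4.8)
The plan is to scale both numerator and denominator by $nh_\mu^d$ and then invoke the two variance characterizations already in hand. Assume without loss of generality that $\widehat\mu$ is the undersmoothed (higher-order kernel) estimator with bandwidth $h_\mu$, matching the conventions of Lemmas~\ref{lem:cond_variance_clt} and~\ref{lem:marginal_variance_clt}. I would begin from the identity
\begin{equation*}
\frac{\bbV\{\widehat\varphi(Z)\}}{\bbV\{\widehat\varphi(Z)\mid D^n\}} = \frac{nh_\mu^d\,\bbV\{\widehat\varphi(Z)\}}{nh_\mu^d\,\bbV\{\widehat\varphi(Z)\mid D^n\}},
\end{equation*}
and treat the scaled numerator and scaled denominator separately. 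This reduces the entire claim to combining a deterministic two-sided bound on the numerator with an almost sure limit for the denominator.

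The scaled numerator carries no residual randomness (it is an unconditional variance), and by Lemma~\ref{lem:marginal_variance_clt} it satisfies $nh_\mu^d\,\bbV\{\widehat\varphi(Z)\} \asymp 1$; in particular it is bounded above and bounded away from zero for all large $n$, i.e.\ it is $O(1)$. The scaled denominator is random through $D^n$, and Lemma~\ref{lem:cond_variance_clt} supplies
\begin{equation*}
nh_\mu^d\,\bbV\{\widehat\varphi(Z)\mid D^n\} \inas \bbE\left\{\frac{\bbV(A\mid X)\,Y^2}{f(X)}\right\}\bbE\left\{\frac{K_\mu(X)^2}{f(X)}\right\},
\end{equation*}
and I would denote the limit on the right-hand side by $c$. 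Provided $c>0$, the continuous mapping theorem gives $\big(nh_\mu^d\,\bbV\{\widehat\varphi(Z)\mid D^n\}\big)^{-1} \inas c^{-1}<\infty$, so the inverse scaled denominator is $O_\bbP(1)$. Multiplying by the bounded deterministic sequence $nh_\mu^d\,\bbV\{\widehat\varphi(Z)\}=O(1)$ then yields
\begin{equation*}
\frac{\bbV\{\widehat\varphi(Z)\}}{\bbV\{\widehat\varphi(Z)\mid D^n\}} = O(1)\cdot O_\bbP(1) = O_\bbP(1),
\end{equation*}
and the same argument applies verbatim with the roles of $\widehat\mu$ and $\widehat\pi$ (and $h_\mu,h_\pi$) interchanged.

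The one genuine subtlety—and the step I would flag as the crux—is verifying strict positivity of the almost sure limit $c$, since a vanishing limit would make the inversion illegitimate and the ratio could in principle diverge. This is secured by the structural assumptions: $\bbV(A\mid X)$ is bounded below by Assumption~\ref{asmp:dgp}, $f$ is bounded above by Assumption~\ref{asmp:bdd_density}, and the kernel normalization $\int K_\mu^2 \asymp 1$ forces the second factor to be strictly positive, so both factors in $c$ are strictly positive. I would also note in passing that $\bbV\{\widehat\varphi(Z)\mid D^n\}>0$ almost surely (already recorded in the proof of Theorem~\ref{thm:inference}), so the ratio is well-defined to begin with. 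Beyond this positivity check, the proof is pure bookkeeping: all the analytic work has already been carried out in Lemmas~\ref{lem:cond_variance_clt} and~\ref{lem:marginal_variance_clt}.
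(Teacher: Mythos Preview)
Your proposal is correct and follows essentially the same approach as the paper: scale numerator and denominator by $nh_\mu^d$, then apply Lemma~\ref{lem:marginal_variance_clt} for the (deterministic) numerator and Lemma~\ref{lem:cond_variance_clt} for the (random) denominator, and conclude via the continuous mapping theorem. Your explicit check that the almost sure limit is strictly positive and that the conditional variance is positive almost surely is a nice addition; the paper leaves this implicit.
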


\begin{proof}
    Suppose without loss of generality that $\widehat \mu$ is the estimator with bandwidth scaling as $h_\mu \asymp n^{-\frac{2 + \varepsilon}{2\alpha + 2\beta + d}}$ while $\widehat \pi$ is consistent. By Lemma~\ref{lem:marginal_variance_clt}, 
    $$
    nh_\mu^d \bbV \{ \widehat \varphi(Z) \} \asymp 1.
    $$
    By Lemma~\ref{lem:cond_variance_clt}, 		
    $$	
    nh_\mu^d \bbV \{ \widehat \varphi(Z) \mid D^n \} \inas \bbE \left\{ \frac{\bbV (A \mid X)Y^2}{f(X)} \right\} \bbE\left\{ \frac{K_\mu (X)^2}{f(X)} \right\}.
    $$
    The result follows from these two combined.  The same holds if the roles of $\widehat \pi$ and $\widehat \mu$ were reversed.
\end{proof}

\begin{lemma} \label{lem:t2}
    Under the conditions of Theorem~\ref{thm:inference}, 
    $$
    \frac{\bbE \{ \widehat \varphi(Z) \mid D^n \} - \bbE\{ \widehat \varphi(Z) \}}{\sqrt{\bbV \{ \widehat \varphi(Z) \} / n}} \inprob 0.
    $$
\end{lemma}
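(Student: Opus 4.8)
The plan is to reduce the claim to a second-moment bound via Chebyshev's inequality and then control that moment through a variance decomposition that exploits the independence of the two training folds. Write $D^n = \{D_\mu, D_\pi\}$ and $g(D^n) := \bbE\{\widehat\varphi(Z)\mid D^n\}$. The numerator in the statement is $g(D^n) - \bbE\{g(D^n)\}$, which is mean zero, and the denominator $\sqrt{\bbV\{\widehat\varphi(Z)\}/n}$ is deterministic; hence $T_2$ has mean zero and $\bbE(T_2^2) = n\,\bbV[g(D^n)]/\bbV\{\widehat\varphi(Z)\}$. By Lemma~\ref{lem:marginal_variance_clt} the denominator satisfies $\bbV\{\widehat\varphi(Z)\} \asymp (n h_\mu^d)^{-1}$, so it suffices to show $n^2 h_\mu^d\,\bbV[g(D^n)] \to 0$, after which Chebyshev's inequality gives $T_2 \inprob 0$.

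Next I would compute $g(D^n)$ explicitly. Taking the test-point expectation and conditioning on $X$, the cross terms collapse through $\bbE(A\mid X)=\pi(X)$ and $\bbE(Y\mid X)=\mu(X)$ to
$$
g(D^n) = \psi_{ecc} + \bbE_X\big[\{\pi(X)-\widehat\pi(X)\}\{\mu(X)-\widehat\mu(X)\}\big],
$$
so only the second (random) summand contributes to $\bbV[g(D^n)]$. Writing $u = \pi - \widehat\pi$ (a function of $D_\pi$) and $v = \mu - \widehat\mu$ (a function of $D_\mu$), and splitting each into its deterministic mean and a centered fluctuation $\tilde u,\tilde v$ (the means are, up to sign, the biases $b_\pi,b_\mu$), the centered quantity becomes $\int (b_\pi\tilde v + \tilde u\,b_\mu + \tilde u\,\tilde v)\,f$ up to signs. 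Because $D_\pi$ and $D_\mu$ are independent and $\bbE\tilde u = \bbE\tilde v = 0$, every cross-covariance among these three summands vanishes, so $\bbV[g(D^n)]$ splits as a sum $\bbV[I_1]+\bbV[I_2]+\bbV[I_3]$.

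I would then bound each piece using the nuisance covariance controls already in hand. For $I_1$ and $I_2$, pull out the bounded biases ($\|b_\pi\|_\infty,\|b_\mu\|_\infty = O(1)$, since $\pi,\mu,\widehat\pi,\widehat\mu$ are all bounded, the latter by Lemma~\ref{lem:cdakernel_bounded} and its analogue for the higher-order kernel) and apply the covariance bound $\bbE[|\cov\{\widehat\eta(X_i),\widehat\eta(X_j)\mid X_i,X_j\}|]\lesssim n^{-1}$ from \eqref{eq:cdalpr_covariance}, giving $\bbV[I_1]+\bbV[I_2]\lesssim (\|b_\pi\|_\infty^2+\|b_\mu\|_\infty^2)/n$. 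For the double-fluctuation term $I_3 = \int\tilde u\,\tilde v\,f$, independence factorizes its variance into $\int\!\int \cov(\widehat\pi(x),\widehat\pi(y))\,\cov(\widehat\mu(x),\widehat\mu(y))\,f(x)f(y)\,dx\,dy$; bounding one covariance by the supremum variance via Cauchy--Schwarz (as in the proof of Proposition~\ref{prop:spectral}) and the other by \eqref{eq:cdalpr_covariance} yields $\bbV[I_3]\lesssim \min(\|s_\pi^2\|_\infty,\|s_\mu^2\|_\infty)/n \lesssim (n^2 h_\pi^d)^{-1}$, using \eqref{eq:cdalpr_variance} together with $h_\pi\geq h_\mu$. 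Multiplying through by $n^2 h_\mu^d$ produces contributions $n h_\mu^d(\|b_\pi\|_\infty^2+\|b_\mu\|_\infty^2)$ from $I_1,I_2$ and $(h_\mu/h_\pi)^d$ from $I_3$.

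Both then vanish: the bias contributions because $n h_\mu^d\to 0$ in the non-$\sqrt{n}$ regime (this uses $2\alpha+2\beta<d$ and the bandwidth rate $h_\mu\asymp n^{-(2+\varepsilon)/(2\alpha+2\beta+d)}$), and the $I_3$ contribution because the undersmoothed bandwidth is far smaller than the merely-consistent one, so that $(h_\mu/h_\pi)^d\to 0$. The main obstacle is winning the extra factor of $n$: the law of total variance alone only gives $\bbV[g(D^n)]=o((n h_\mu^d)^{-1})$, whereas the target $o((n^2 h_\mu^d)^{-1})$ is $n$ times smaller. That gap is exactly closed by the $O(n^{-1})$ covariance bounds for the local-averaging estimators, which show that the conditional mean $g(D^n)$ concentrates far faster than its own marginal scale would naively suggest. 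The remaining work is routine: the sign bookkeeping in the mean/fluctuation split, and confirming that $h_\mu/h_\pi\to 0$ for the admissible consistent choices of $h_\pi$ (any $h_\pi\asymp n^{-\delta}$ with $\delta<1/d$ works, since $h_\mu$ decays faster than $n^{-1/d}$ in this regime).
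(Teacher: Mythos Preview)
Your proposal is correct and follows essentially the same route as the paper: reduce to a second-moment bound, invoke Lemma~\ref{lem:marginal_variance_clt} for the denominator, and control $\bbV[g(D^n)]$ by decomposing the product $(\pi-\widehat\pi)(\mu-\widehat\mu)$ into bias and fluctuation pieces, then use independence of the two training folds together with the $O(n^{-1})$ covariance bound \eqref{eq:cdalpr_covariance}. The paper packages this last step through the iid-copies trick and the off-diagonal machinery of Proposition~\ref{prop:spectral}/\eqref{eq:spectral_minimum}, arriving at the same $\bbV[g(D^n)]\lesssim n^{-1}\{\lVert b_\pi^2\rVert_\infty+\lVert b_\mu^2\rVert_\infty+\min(\lVert s_\pi^2\rVert_\infty,\lVert s_\mu^2\rVert_\infty)\}$; your explicit $I_1,I_2,I_3$ decomposition is the same computation unpacked.

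Two small points. First, your justification that $\lVert b_\mu\rVert_\infty=O(1)$ because ``$\widehat\mu$ is bounded'' is not right in this regime: when $nh_\mu^d\to 0$ the higher-order kernel estimator is \emph{not} bounded (its variance diverges). The conclusion is still correct, but for a different reason: $\bbE\{\widehat\mu(x)\}=\int K(u)\mu(x+uh_\mu)\,du$ is bounded by $\lVert\mu\rVert_\infty\int|K|<\infty$, so the pointwise bias is bounded even though the estimator is not. Second, your $(h_\mu/h_\pi)^d\to 0$ argument for $I_3$ is more than you need: since $\lVert s_\pi^2\rVert_\infty=O(1)$ in any case, $n^2h_\mu^d\,\bbV[I_3]\lesssim nh_\mu^d\,\lVert s_\pi^2\rVert_\infty=O(nh_\mu^d)\to 0$ directly, which is exactly how the paper concludes without ever comparing the two bandwidths.
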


\begin{proof}
    We prove convergence in quadratic mean.  The expression in the lemma is mean zero by iterated expectations, 
    $$
    \bbE \left[ \frac{\bbE \{ \widehat \varphi(Z) \mid D^n \} - \bbE\{ \widehat \varphi(Z) \}}{\sqrt{\bbV \{ \widehat \varphi(Z) \} / n}}  \right] = 0.
    $$
    Therefore, it suffices to show that the variance of the expression in the lemma converges to zero; i.e., 
    $$
    \frac{n \bbV \left[ \bbE \{ \widehat \varphi (Z) \mid D^n \} \right]}{\bbV \{ \widehat \varphi(Z) \}} \to 0.
    $$
    By Lemma~\ref{lem:marginal_variance_clt}, 
    $$
    \bbV \{ \widehat \varphi(Z) \} \asymp \frac{1}{nh_\mu^d}.
    $$
    Consider $Z_i, Z_j$ drawn iid from the same distribution as $Z$, and which are independent of $D^n$ (like $Z$). Then,
    \begin{align*}
        \bbV \left[ \bbE \{ \widehat \varphi (Z) \mid D^n \} \right] &= \cov \Big[ \bbE \{ \widehat \varphi(Z_i) \mid D^n \}, \bbE \{ \widehat \varphi(Z_j) \mid D^n \} \Big] \\
        &= \cov \Big[ \bbE \{ \widehat \varphi(Z_i) - \varphi(Z_i) \mid D^n \}, \bbE \{ \widehat \varphi(Z_j) - \varphi(Z_j) \mid D^n \} \Big] \\
        &= \cov \{ \widehat \varphi(Z_i) - \varphi(Z_i), \widehat \varphi(Z_j) - \varphi(Z_j) \} - \bbE \Big[ \cov \{ \widehat \varphi(Z_i) - \varphi(Z_i), \widehat \varphi(Z_j) - \varphi(Z_j) \mid D^n \} \Big] \\
        &= \cov \{ \widehat \varphi(Z_i) - \varphi(Z_i), \widehat \varphi(Z_j) - \varphi(Z_j) \}
    \end{align*}
    where the first line follows because $Z, Z_i, Z_j$ are identically distributed, the second line because $\bbE \{ \varphi(Z) \mid D^n \}$ is not random because $\varphi$ does not depend on the training data, the third by the law of total covariance, and the last because $Z_i$ and $Z_j$ are independent. Like in the proof of Lemma~\ref{lem:expansion} in Appendix~\ref{app:modelfree}, we have 
    \begin{align*}
        &\cov \{ \widehat \varphi(Z_i) - \varphi(Z_i), \widehat \varphi(Z_j) - \varphi(Z_j) \} \\
        &= \cov \left[ \bbE \{ \widehat \varphi(Z_i) - \varphi(Z_i) \mid X_i, X_j, D^n \}, \bbE \{ \widehat \varphi(Z_j) - \varphi(Z_j) \mid X_i, X_j, D^n \} \right] + 0 \\
        &= \bbE \left( \cov \left[ \bbE \{ \widehat \varphi(Z_i) - \varphi(Z_i) \mid X_i, D^n \}, \bbE \{ \widehat \varphi(Z_j) - \varphi(Z_j) \mid X_j, D^n \} \mid X_i, X_j \right] \right) + 0 \\
        &\equiv \bbE \left[ \cov \left\{ \widehat b_\varphi(X_i), \widehat b_\varphi(X_j) \mid X_i, X_j \right\} \right]
    \end{align*}
    by successive applications of the law of total covariance, and where $\widehat b_\varphi(X_i)$ is defined in Lemma~\ref{lem:expansion}. From here, because $X_i \neq X_j$, we can use the same argument as in the proof of Proposition~\ref{prop:spectral} (see \eqref{eq:spectral_minimum}), and conclude
    \begin{equation*}
        \bbE \left[ \cov \left\{ \widehat b_\varphi(X_i), \widehat b_\varphi(X_j) \mid X_i, X_j \right\} \right] \lesssim \frac{\lVert b_\pi^2 \rVert_\infty + \lVert b_\mu^2 \rVert_\infty + \min ( \lVert s_\pi^2 \rVert_\infty, \lVert s_\mu^2 \rVert_\infty )}{n} \lesssim \frac{1}{n}.
    \end{equation*}
    where the first inequality follows by Proposition~\ref{prop:spectral} and Lemma~\ref{lem:cdalpr_upper_bounds}, and the second by Lemma~\ref{lem:cdalpr_upper_bounds}. Therefore,
    $$
    n \bbV \left[ \bbE \{ \widehat \varphi (Z) \mid D^n \} \right] \lesssim 1,
    $$
    and so
    $$
    \frac{n \bbV \left[ \bbE \{ \widehat \varphi (Z) \mid D^n \} \right]}{\bbV \{ \widehat \varphi(Z) \}} \lesssim nh_\mu^d \to 0 \text{ as } n \to \infty,
    $$
    where convergence to zero follows because $h_\mu \asymp n^{-\frac{2 + \varepsilon}{2\alpha + 2\beta + d}}$.
\end{proof}

\begin{lemma} \label{lem:t3}
    Under the conditions of Theorem~\ref{thm:inference}, 
    $$
    \frac{\bbE \{ \widehat \varphi(Z) - \varphi(Z) \}}{\sqrt{\bbV \{ \widehat \varphi(Z) \} / n}} \to 0.
    $$
\end{lemma}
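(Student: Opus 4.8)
The plan is to bound the numerator and denominator of $T_3$ separately and combine them with the prescribed bandwidth rate. First I would observe that the numerator $\bbE\{\widehat\varphi(Z) - \varphi(Z)\}$ is simply the bias of the DCDR estimator, and rewrite it using the same cross-term cancellation as in the proof of Lemma~\ref{lem:expansion}: since $\widehat\pi$ and $\widehat\mu$ are trained on the independent samples $D_\pi$ and $D_\mu$,
\begin{equation*}
\bbE\{\widehat\varphi(Z) - \varphi(Z)\} = \bbE\left[\bbE\{\widehat\pi(X) - \pi(X)\mid X\}\,\bbE\{\widehat\mu(X) - \mu(X)\mid X\}\right].
\end{equation*}
Writing $g(x) := \bbE\{\widehat\pi(x)\} - \pi(x)$, which is deterministic because the expectation is over the training sample $D_\pi$ (independent of the test point $X$), this expression is exactly the object bounded in Lemma~\ref{lem:bias_kernel}, with $g$ playing the role of the fixed weight function.

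The two key inputs are then in hand. By Lemma~\ref{lem:cdakernel_holder}, $\bbE\{\widehat\pi(x)\} \in \text{H\"{o}lder}(\alpha)$ because $\widehat\pi$ is the smooth covariate-density-adapted kernel regression; hence $g = \bbE\{\widehat\pi\} - \pi \in \text{H\"{o}lder}(\alpha)$ and Lemma~\ref{lem:bias_kernel} applies, giving $|\bbE\{\widehat\varphi - \varphi\}| \lesssim h_\mu^{\alpha+\beta}$. For the denominator, Lemma~\ref{lem:marginal_variance_clt} yields $\bbV\{\widehat\varphi(Z)\} \asymp (nh_\mu^d)^{-1}$, so that $\sqrt{\bbV\{\widehat\varphi(Z)\}/n} \asymp (n h_\mu^{d/2})^{-1}$. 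Combining the two rates gives
\begin{equation*}
T_3 = \frac{\bbE\{\widehat\varphi(Z) - \varphi(Z)\}}{\sqrt{\bbV\{\widehat\varphi(Z)\}/n}} \lesssim n\, h_\mu^{\alpha + \beta + d/2}.
\end{equation*}

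Finally I would substitute the bandwidth rate $h_\mu \asymp n^{-\frac{2+\varepsilon}{2\alpha+2\beta+d}}$ and simplify the exponent using the identity $\alpha+\beta+d/2 = (2\alpha+2\beta+d)/2$, which collapses the power of $h_\mu$ to $n^{-(2+\varepsilon)/2}$ and hence gives $T_3 \asymp n^{1 - (2+\varepsilon)/2} = n^{-\varepsilon/2} \to 0$, since $\varepsilon > 0$. There is no substantial obstacle: the lemma is essentially bookkeeping that pairs the bias rate of Lemma~\ref{lem:bias_kernel} with the variance rate of Lemma~\ref{lem:marginal_variance_clt}. The only points requiring care are verifying that $g = \bbE\{\widehat\pi\} - \pi$ is a genuine fixed $\text{H\"{o}lder}(\alpha)$ function so that Lemma~\ref{lem:bias_kernel} is applicable — which is precisely the content of Lemma~\ref{lem:cdakernel_holder} — and keeping careful track of which nuisance estimator is undersmoothed, so that the correct bandwidth $h_\mu$ appears consistently in both the bias and variance rates.
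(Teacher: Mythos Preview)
Your proposal is correct and follows essentially the same route as the paper: bound the numerator by $h_\mu^{\alpha+\beta}$ via the Theorem~\ref{thm:minimax} bias analysis (which is exactly your Lemma~\ref{lem:cdakernel_holder} plus Lemma~\ref{lem:bias_kernel} chain), bound the denominator via Lemma~\ref{lem:marginal_variance_clt}, and combine. Your explicit simplification to $n^{-\varepsilon/2}$ via the identity $\alpha+\beta+d/2 = (2\alpha+2\beta+d)/2$ is a bit cleaner than the paper's final display, but the argument is the same.
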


\begin{proof}
    The ratio $\frac{\bbE \{ \widehat \varphi(Z) - \varphi(Z) \}}{\sqrt{\bbV \{ \widehat \varphi(Z) \} / n}}$ is not random because the expectation and variance are over the estimation and training data.  By the analysis in Theorem~\ref{thm:minimax}, 
    $$
    \bbE \{ \widehat \varphi(Z) - \varphi(Z) \} \lesssim h_\mu^{\alpha + \beta} \lesssim n^{-\frac{(2 + \varepsilon)(\alpha + \beta)}{2\alpha +2\beta + d}}
    $$
    Assume without loss of generality that $\widehat \mu$ is the undersmoothed nuisance function estimator, then by Lemma~\ref{lem:marginal_variance_clt}, 
    $$
    \bbV \{ \widehat \varphi(Z) \} \asymp \frac{1}{nh_\mu^d}.
    $$
    Therefore, 
    $$
    \frac{\bbE \{ \widehat \varphi(Z) - \varphi(Z) \}}{\sqrt{\bbV \{ \widehat \varphi(Z) \} / n}} \lesssim n h_\mu^{d/2} n^{-\frac{(2 + \varepsilon)(\alpha + \beta)}{2\alpha + 2\beta + d}} \to 0 \text{ as } n \to \infty
    $$
    because $h_\mu \asymp n^{-\frac{2 + \varepsilon}{2\alpha + 2\beta + d}}$.
\end{proof}

\section{Technical results regarding the covariate density} \label{app:technical}

Below, we state and prove three technical lemmas about the covariates $\{X_i\}_{i=1}^{n}$ if their density is bounded above and below as in Assumption~\ref{asmp:bdd_density}.

\begin{lemma} \label{lem:sphere} \emph{(Sphere Lemma)} Assume $X$ has density $f(X)$ that satisfies Assumption~\ref{asmp:bdd_density} and let $B_h (x)$ denote a ball of radius $h$ around a fixed point $x \in \mathcal{X}$. Then
    \begin{equation}
        \bbP \{ X \in B_h (x) \} \asymp h^d
    \end{equation}
\end{lemma}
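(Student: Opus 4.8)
The plan is to reduce the probability to a volume computation and then sandwich it between constant multiples of $h^d$ using the two-sided density bound from Assumption~\ref{asmp:bdd_density}. Writing $B_h(x)$ for the Euclidean ball of radius $h$ centered at $x$, the starting point is simply $\bbP\{X \in B_h(x)\} = \int_{B_h(x) \cap \mathcal{X}} f(t)\, dt$, so both directions of $\asymp$ follow from controlling this integral.

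For the upper bound I would bound $f(t) \le C$ and discard the intersection with $\mathcal{X}$, giving $\bbP\{X \in B_h(x)\} \le C\, \text{vol}\{B_h(x)\} = C V_d h^d \lesssim h^d$, where $V_d$ denotes the volume of the unit ball in $\bbR^d$. This is immediate and holds for every $x \in \mathcal{X}$ and every $h > 0$, with no appeal to the geometry of the support.

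For the lower bound I would instead use $f(t) \ge c$ to obtain $\bbP\{X \in B_h(x)\} \ge c\, \text{vol}\{B_h(x) \cap \mathcal{X}\}$, which reduces the claim to showing $\text{vol}\{B_h(x) \cap \mathcal{X}\} \gtrsim h^d$. Since the lemma fixes $x \in \mathcal{X}$ and treats $h \to 0$, the clean route is to observe that when $x$ is interior, for all $h$ below the distance from $x$ to $\partial\mathcal{X}$ the ball is contained in $\mathcal{X}$, so $\text{vol}\{B_h(x) \cap \mathcal{X}\} = V_d h^d$ exactly and the constant is absorbed into $\asymp$. More uniformly, I would invoke the standard regularity of a compact support: there exist $\kappa \in (0,1]$ and $r_0 > 0$ with $\text{vol}\{B_h(x) \cap \mathcal{X}\} \ge \kappa V_d h^d$ for all $x \in \mathcal{X}$ and $h \le r_0$, which is exactly the volume lower bound already used implicitly in the proof of Lemma~\ref{lem:knn_distance} (where $\bbP\{X \in B_r(x)\} \ge cK r^d \wedge 1$).

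The main obstacle is this boundary behavior: the lower bound $\asymp h^d$ can degenerate at points $x$ on or near $\partial\mathcal{X}$, where $B_h(x) \cap \mathcal{X}$ may occupy only a small fraction of the ball, so $\text{vol}\{B_h(x) \cap \mathcal{X}\}$ need not be comparable to $V_d h^d$ without some mild regularity of $\partial\mathcal{X}$. I expect the honest statement to be the fixed-$x$ interior version, which suffices for all downstream uses (in Proposition~\ref{prop:xi_n}, Lemma~\ref{lem:lpr_bounds}, and Lemma~\ref{lem:cdalpr_cond_lower_bounds} the point $x$ is an evaluation point and the implied constants may depend on $x$), with the uniform-over-$\mathcal{X}$ version following under the support-regularity condition noted above.
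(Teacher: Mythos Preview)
Your proposal is correct and takes essentially the same approach as the paper, which simply notes that the volume of a $d$-ball scales like $h^d$ and invokes the two-sided density bound. Your treatment is more careful than the paper's two-sentence proof in that you explicitly flag the boundary issue for the lower bound and propose the standard support-regularity fix; the paper glosses over this entirely.
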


\begin{proof}
    The volume of a ball with radius $r$ in $d$ dimensions scales like $r^d$.  The result follows because the density is upper and lower bounded. 
\end{proof}

\begin{lemma}\label{lem:separated_covariates} \emph{(Well separated training covariates)}.  
Let $\{ X_i \}_{i=1}^{n}$ be $n$ covariate data points satisfying Assumption~\ref{asmp:bdd_density} (bounded density).  Let $P_n$ denote the random variable counting (twice) all pairs of covariates closer than $2h$ where $h$ is a bandwidth scaling with sample size; i.e.,
$$
P_n = \sum_{i=1}^{n} \sum_{j\neq i}^{n} \one \left( \lVert X_i - X_j \rVert \leq 2h \right).
$$
If $h$ satisfies $nh^d \asymp n^{-\alpha}$ for $\alpha > 0$ as $n \to \infty$, then 
\begin{equation} \label{eq:cov_prob_bound}
    \frac{P_n}{n} \inas 0.
\end{equation}
\end{lemma}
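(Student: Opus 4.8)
The plan is to establish almost-sure convergence through a first-moment computation, a variance bound, and the Borel--Cantelli lemma, exploiting that the mean of $P_n/n$ vanishes while its variance is summable. First I would compute the mean. Writing $\xi_{ij} = \one(\lVert X_i - X_j \rVert \le 2h)$, the data are iid, so conditioning on $X_i$ and applying the Sphere Lemma (Lemma~\ref{lem:sphere}) to the ball $B_{2h}(X_i)$ gives $\bbP(\xi_{ij} = 1) = \bbE\{\bbP(X_j \in B_{2h}(X_i) \mid X_i)\} \asymp h^d$. Hence $\bbE(P_n) = n(n-1)\,\bbP(\xi_{12}=1) \asymp n^2 h^d$ and $\bbE(P_n/n) \asymp n h^d \asymp n^{-\alpha} \to 0$ under the bandwidth condition $nh^d \asymp n^{-\alpha}$.

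Next I would bound the variance of $P_n$ by decomposing the double sum $\bbV(P_n) = \sum_{(i,j)}\sum_{(k,l)} \cov(\xi_{ij},\xi_{kl})$ (over ordered pairs with distinct entries) according to the number of shared indices. When the two ordered pairs index four distinct points the covariance vanishes by independence. For the $O(n^2)$ terms sharing both indices, each covariance is at most $\bbV(\xi_{ij}) \le \bbE(\xi_{ij}) \asymp h^d$. For the $O(n^3)$ terms sharing exactly one index---say $\cov(\xi_{ij},\xi_{ik})$ involving three distinct points---conditioning on the common point $X_i$ yields $\cov(\xi_{ij},\xi_{ik}) \le \bbE(\xi_{ij}\xi_{ik}) = \bbE\{\bbP(X_j \in B_{2h}(X_i)\mid X_i)\,\bbP(X_k \in B_{2h}(X_i)\mid X_i)\} \asymp h^{2d}$. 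Collecting terms gives $\bbV(P_n) \lesssim n^2 h^d + n^3 h^{2d}$, hence $\bbV(P_n/n) = \bbV(P_n)/n^2 \lesssim h^d + n h^{2d} \asymp n^{-1-\alpha} + n^{-1-2\alpha}$, where the last step uses $h^d \asymp n^{-1-\alpha}$.

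Finally I would combine the two bounds. Since $\bbE(P_n/n) \to 0$, for any fixed $\varepsilon > 0$ there is $N$ with $\bbE(P_n/n) \le \varepsilon/2$ for all $n \ge N$; for such $n$, Chebyshev's inequality yields $\bbP(P_n/n > \varepsilon) \le \bbP(|P_n/n - \bbE(P_n/n)| > \varepsilon/2) \le 4\varepsilon^{-2}\,\bbV(P_n/n) \lesssim \varepsilon^{-2}(n^{-1-\alpha} + n^{-1-2\alpha})$. Because $\alpha > 0$, both $\sum_n n^{-1-\alpha}$ and $\sum_n n^{-1-2\alpha}$ converge, so $\sum_n \bbP(P_n/n > \varepsilon) < \infty$; Borel--Cantelli together with the nonnegativity of $P_n/n$ then gives $P_n/n \inas 0$. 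The key conceptual point---and the main thing to get right---is the centering in this final step: the mean $\bbE(P_n/n) \asymp n^{-\alpha}$ is itself \emph{not} summable, so a naive Markov bound on the raw quantity would fail to produce Borel--Cantelli summability. Summability is recovered only after centering, where the controlling quantity becomes the variance $\bbV(P_n/n) \asymp n^{-1-\alpha}$, which decays summably for every $\alpha > 0$ precisely because of the extra factor $n^{-1}$ gained from dividing $\bbV(P_n)$ by $n^2$.
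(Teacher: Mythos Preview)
Your proof is correct and in fact more elementary than the paper's. The paper invokes decoupling (de la Pe\~na--Gin\'e) together with a $p$th-moment inequality for decoupled U-statistics (Gin\'e--Lata\l a--Zinn, Proposition~2.1) to bound $\bbE\{(P_n/n)^p\}$ for arbitrary $p$, and then applies Markov's inequality with $p$ chosen large enough (specifically $p > (1+\delta)/\alpha$) so that the dominant term $(nh^d)^p \asymp n^{-\alpha p}$ becomes summable. You instead compute the variance of $P_n$ directly via the standard three-way index decomposition, obtaining $\bbV(P_n/n) \lesssim h^d + nh^{2d} \asymp n^{-1-\alpha}$, which is already summable for every $\alpha > 0$; Chebyshev after centering then suffices. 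Your route avoids the external U-statistic machinery entirely and works because the polynomial rate hypothesis $nh^d \asymp n^{-\alpha}$ guarantees summability at the second moment; the paper's higher-moment approach would be needed only under weaker bandwidth assumptions (e.g., $nh^d \to 0$ without a polynomial rate), where second moments alone might not yield Borel--Cantelli summability. Your closing remark about centering is exactly the right diagnosis: a first-moment Markov bound gives only $n^{-\alpha}$, which is not summable, while the variance delivers the extra factor of $n^{-1}$.
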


\begin{proof}
The result follows by a moment inequality for U-statistics and the Borel-Cantelli lemma. First, we relate the un-decoupled U-statistic, $P_n$, to the relevant decoupled U-statistic. Let $\{ X_i^{(1)} \}_{i=1}^{n}$ and $\{X_j^{(2)} \}_{j=1}^{n}$ denote two independent sequences drawn from the same distribution as $\{ X_i \}_{i=1}^{n}$. Let
\begin{equation}
    P_n^\prime := \sum_{i=1}^{n} \sum_{j \neq i}^{n} \one \left(\lVert X_i^{(1)} - X_j^{(2)} \rVert \leq 2h \right).
\end{equation}
By Theorem 3.1.1 in \citet{de1999decoupling}, for $p \geq 1$,
\begin{equation}
    \bbE \left\{ \left( \frac{P_n}{n} \right)^p \right\} \lesssim \bbE\left\{ \left(\frac{P_n^\prime}{n}\right)^p \right\}.
\end{equation}
Then, by Proposition 2.1 and the right-hand side of (2.2) in \citet{gine2000exponential}, for all $p > 1$, 
\begin{equation} \label{eq:expansion}
    \bbE \left\{ \left( \frac{P_n^\prime}{n} \right)^p \right\} \lesssim (nh^d)^p + nh^{dp} + n^{2-p}h^d.
\end{equation}
This follows because the kernel is $\frac{\one\left( \lVert X_i^{(1)} - X_j^{(2)} \rVert \leq 2h \right)}{n}$, which satisfies
\begin{align*}
    \bbE \left\{ \frac{\one\left( \lVert X_i^{(1)} - X_j^{(2)} \rVert \leq 2h \right)}{n}  \right\}^p &\lesssim \left(\frac{h^d}{n} \right)^p, \\
    \bbE \left\{ \frac{\one\left( \lVert X_i^{(1)} - X_j^{(2)} \rVert \leq 2h \right)}{n} \mid X_i  \right\}^p &\lesssim \left(\frac{h^d}{n} \right)^p\text{, and} \\
    \bbE \left[ \left\{ \frac{\one\left( \lVert X_i^{(1)} - X_j^{(2)} \rVert \leq 2h \right)}{n} \right\}^p \right] &\lesssim \frac{h^d}{n^p}.
\end{align*}
To conclude, we prove an infinitely summable concentration inequality directly. Let $\epsilon > 0$. By \eqref{eq:expansion} and Markov's inequality, for all $p \geq 2$,
\begin{equation}
    \mathbb{P} \left( \frac{P_n}{n} \geq \epsilon \right) \lesssim (nh^d)^p + nh^{dp} + n^{2-p}h^d \asymp n^{-\alpha p} + o(n^{-(1 + \alpha)}) + o(n^{-(1+\alpha)}),
\end{equation}
where the right-hand side follows by the conditions on the bandwidth. Hence, for $p > \frac{1 + \delta}{\alpha}$ for any $\delta > 0$, $\mathbb{P} \left( \frac{P_n}{n} \geq \epsilon \right) = o(n^{-(1+\delta)})$ for all $\epsilon > 0$, and therefore the result follows by the Borel-Cantelli lemma.
\end{proof}

\begin{lemma} \label{lem:separated_triples} \emph{(Triply well separated training covariates)}. Let $\{ X_i \}_{i=1}^{n}$ be $n$ covariate data points satisfying Assumption~\ref{asmp:bdd_density} (bounded density).  Let $Q_n$ denote the random variable counting (six times) all triples of covariates closer than $2h$ where $h$ is a bandwidth scaling with sample size; i.e.,
\begin{equation}
    Q_n = \sum_{i \neq j \neq k}^n \one \left( \lVert X_i - X_j \lVert \leq 2h \right)\one \left(\lVert X_i - X_k \rVert \leq 2h \right)\one\left( \lVert X_j - X_k \rVert \leq 2h\right).
\end{equation}
If $h$ satisfies $nh^d \asymp n^{-\alpha}$ for $\alpha > 0$ as $n \to \infty$, then
\begin{equation}
    \frac{Q_n}{n} \inas 0.
\end{equation}
\end{lemma}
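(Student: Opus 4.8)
The plan is to follow the same route as the proof of Lemma~\ref{lem:separated_covariates}: recognize $Q_n$ as a U-statistic, decouple it, apply a moment inequality to control $\bbE\{(Q_n/n)^p\}$ for large $p$, and then conclude almost sure convergence via Markov's inequality and the Borel--Cantelli lemma. The only structural difference is that $Q_n$ is a U-statistic of order three, with bounded symmetric kernel $K_{ijk} = \one(\lVert X_i - X_j\rVert \le 2h)\,\one(\lVert X_i - X_k\rVert \le 2h)\,\one(\lVert X_j - X_k\rVert \le 2h)$, so the resulting moment bound will involve more terms than in the order-two case.

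First I would introduce three independent copies $\{X_i^{(1)}\}$, $\{X_j^{(2)}\}$, $\{X_k^{(3)}\}$ of the sample and the decoupled statistic $Q_n' = \sum_{i,j,k} \one(\lVert X_i^{(1)} - X_j^{(2)}\rVert \le 2h)\,\one(\lVert X_i^{(1)} - X_k^{(3)}\rVert \le 2h)\,\one(\lVert X_j^{(2)} - X_k^{(3)}\rVert \le 2h)$, and invoke the decoupling inequality (Theorem 3.1.1 in \citet{de1999decoupling}) to obtain $\bbE\{(Q_n/n)^p\} \lesssim \bbE\{(Q_n'/n)^p\}$ for every $p \ge 1$. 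I would then apply the moment inequality for decoupled U-statistics of general order (Proposition 2.1 and equation (2.2) in \citet{gine2000exponential}, now with $m = 3$); equivalently, since conditionally on the second and third samples the $n$ summands indexed by $i$ are i.i.d., one can iterate a Rosenthal-type bound. Either way the output is a finite sum of terms, each a product of a power of $n$ and a mixed moment of the bounded kernel $K_{ijk}/n$.

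The heart of the argument is to reduce every mixed moment to a power of $h^d$ via the Sphere Lemma (Lemma~\ref{lem:sphere}) together with two elementary geometric facts: two points lie within $2h$ with probability $\asymp h^d$, and, conditionally on two points already being within $2h$, a third point lies within $2h$ of both with probability $\asymp h^d$ (the intersection of the two radius-$2h$ balls has volume $\asymp h^d$). These give $\bbE K_{ijk} \asymp h^{2d}$, $\bbE(K_{ijk}^p) = \bbE K_{ijk} \asymp h^{2d}$ for the bounded indicator, and the various partially-integrated ``plane'' and ``line'' sums as further powers of $h^d$. The leading (fully-integrated) contribution is the mean term $\big(\sum_{i,j,k}\bbE(K_{ijk}/n)\big)^p \asymp (n^2 h^{2d})^p = (n h^d)^{2p}$, while the most degenerate term is $\sum_{i,j,k}\bbE\big((K_{ijk}/n)^p\big) \asymp n^{3-p}h^{2d}$; the remaining partition terms interpolate between these.

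Finally I would substitute $n h^d \asymp n^{-\alpha}$. The mean term becomes $n^{-2\alpha p}$, the most degenerate term becomes $n^{1 - p - 2\alpha}$, and each intermediate term is likewise $O(n^{-ap+b})$ for some constant $a > 0$ and some $b$ not depending on $p$; hence, for every $\epsilon > 0$ and all $p$ large enough, Markov's inequality gives $\bbP(Q_n/n \ge \epsilon) \lesssim \epsilon^{-p}\,\bbE\{(Q_n/n)^p\} = o(n^{-(1+\delta)})$ for some $\delta > 0$, and the Borel--Cantelli lemma yields $Q_n/n \inas 0$. The main obstacle is the bookkeeping in the previous step: correctly enumerating the partition terms produced by the order-three moment inequality and verifying that the dominant one scales as $(nh^d)^{2p}$ while all others decay at least as fast, which is exactly where the intersection-of-balls estimate (yielding $h^{2d}$ rather than $h^{3d}$) is essential.
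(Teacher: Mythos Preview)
Your proposal is essentially the paper's proof: decouple via Theorem~3.1.1 of \citet{de1999decoupling}, apply the order-three moment bound from Proposition~2.1 and~(2.2) of \citet{gine2000exponential} using the conditional moment estimates $\bbE K_{ijk}\asymp h^{2d}$, $\bbE(K_{ijk}\mid X_i,X_j)\asymp h^d$, etc., substitute $nh^d\asymp n^{-\alpha}$, and conclude by Markov plus Borel--Cantelli. Your identification of the dominant term $(nh^d)^{2p}$ and the subdominant terms matches the paper (your $n^{3-p}h^{2d}$ for the fully degenerate term is in fact slightly sharper than the paper's $n^{3-p}h^d$, but either suffices).
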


\begin{proof}
The result follows by the same approach as the previous lemma, but applying a moment inequality for U-statistics of order 3. First, let $\{ X_i^{(1)} \}_{i=1}^{n}$, $\{X_j^{(2)} \}_{j=1}^{n}$, and $\{ X_k^{(3)} \}_{k=1}^{n}$ denote three independent sequences drawn from the same distribution as $\{ X_i \}_{i=1}^{n}$. Moreover, let
\begin{equation}
    Q_n^\prime :=  \sum_{i \neq j \neq k}^n \one \left( \lVert X_i^{(1)} - X_j^{(2)} \lVert \leq 2h \right)\one \left(\lVert X_i^{(1)} - X_k^{(3)} \rVert \leq 2h \right)\one\left( \lVert X_j^{(2)} - X_k^{(3)} \rVert \leq 2h\right).
\end{equation}
Then, by Theorem 3.1.1 in \citet{de1999decoupling} and Proposition 2.1 and the right-hand side of (2.2) in \citet{gine2000exponential}, for all $p > 1$, 
\begin{equation} \label{eq:expansion2}
    \bbE \left\{ \left( \frac{Q_n}{n} \right)^p \right\} \lesssim (nh^d)^{2p} + n (nh^{2d})^p + n^2 h^{dp} + n^{3-p} h^d.
\end{equation}
This follows because the kernel is $\frac{ \one \left( \lVert X_i^{(1)} - X_j^{(2)} \lVert \leq 2h \right)\one \left(\lVert X_i^{(1)} - X_k^{(3)} \rVert \leq 2h \right)\one\left( \lVert X_j^{(2)} - X_k^{(3)} \rVert \leq 2h\right)}{n}$, which satisfies
\begin{align*}
    &\bbE\left\{ \frac{ \one \left( \lVert X_i^{(1)} - X_j^{(2)} \lVert \leq 2h \right)\one \left(\lVert X_i^{(1)} - X_k^{(3)} \rVert \leq 2h \right)\one\left( \lVert X_j^{(2)} - X_k^{(3)} \rVert \leq 2h\right)}{n} \right\}^p \lesssim \left(\frac{h^{2d}}{n}\right)^p, \\
    &\bbE\left\{ \frac{\one \left( \lVert X_i^{(1)} - X_j^{(2)} \lVert \leq 2h \right)\one \left(\lVert X_i^{(1)} - X_k^{(3)} \rVert \leq 2h \right)\one\left( \lVert X_j^{(2)} - X_k^{(3)} \rVert \leq 2h\right)}{n} \mid X_i^{(1)} \right\}^p \lesssim \left(\frac{h^{2d}}{n}\right)^p, \\
    &\hspace{-0.3in}\bbE\left\{ \frac{\one \left( \lVert X_i^{(1)} - X_j^{(2)} \lVert \leq 2h \right)\one \left(\lVert X_i^{(1)} - X_k^{(3)} \rVert \leq 2h \right)\one\left( \lVert X_j^{(2)} - X_k^{(3)} \rVert \leq 2h\right)}{n} \mid X_i^{(1)}, X_j^{(2)} \right\}^p \lesssim \left(\frac{h^{d}}{n}\right)^p\text{, and } \\
    &\bbE \left[ \left\{ \frac{\one \left( \lVert X_i^{(1)} - X_j^{(2)} \lVert \leq 2h \right)\one \left(\lVert X_i^{(1)} - X_k^{(3)} \rVert \leq 2h \right)\one\left( \lVert X_j^{(2)} - X_k^{(3)} \rVert \leq 2h\right)}{n} \right\}^p \right] \lesssim \frac{h^d}{n^p}.
\end{align*}    
Let $\epsilon > 0$. Then, by Markov's inequality, for all $p \geq 3$,
\begin{equation}
    \mathbb{P} \left( \frac{Q_n}{n} \geq \epsilon \right) \lesssim (nh^d)^{2p} + n (nh^{2d})^p + n^2 h^{dp} + n^{3-p} h^d \asymp n^{-2\alpha p} + o(n^{-(1+\alpha)}),
\end{equation}
where the right-hand side follows by the conditions on the bandwidth.  Hence, for $p > \frac{1 + \delta}{2\alpha}$ for any $\delta > 0$, $\mathbb{P} \left( \frac{Q_n}{n} \geq \epsilon \right) = o(n^{-(1+\delta)})$ for all $\epsilon > 0$, and therefore the result follows by the Borel-Cantelli lemma.
\end{proof}	

\section{A strong law of large number for a triangular array of bounded random variables} \label{app:slln}

The following result is a simple strong law of large numbers for a triangular array of bounded random variables.

\begin{lemma} \label{lem:slln}
Let $\{ \xi_{i,n} \}_{i=1}^{n} \stackrel{iid}{\sim} \mathcal{P}_n$ for $n \in \mathbb{N}$ denote a triangular array of random variables which are row-wise iid. If the random variables satisfy
\begin{enumerate}
    \item $|\xi_{i,n}| < B$ for all $i$ and $n$ and some $B < \infty$, and 
    \item $\bbE ( \xi_{1,n} ) \stackrel{n \to \infty}{\longrightarrow} \mu$ for some $\mu \in \bbR$,
\end{enumerate}
then
\begin{equation}
    \frac{1}{n} \sum_{i=1}^{n} \xi_{i,n} \inas \mu.
\end{equation}
\begin{proof}
    The proof follows by a combination of Hoeffding's inequality and the Borel-Cantelli lemma.
    
    \medskip
    
    Let $t > 0$. Because $\bbE ( \xi_{1,n} ) \stackrel{n \to \infty}{\longrightarrow} \mu$, there exists some $N \in \mathbb{N}$ such that $| \bbE ( \xi_{1,n} ) - \mu| < \frac{t}{2}$ for all $n \geq N$. Hence, for $n \geq N$, by the triangle inequality,
    \begin{align}
        \bbP \left( \left| \frac{1}{n} \sum_{i=1}^{n}  \xi_{i,n} - \mu \right| \geq t \right) &= \bbP \left( \left| \frac{1}{n} \sum_{i=1}^{n} \xi_{i,n} - \bbE ( \xi_{1,n} ) + \bbE ( \xi_{1,n} ) - \mu \right| \geq t \right) \\
        &\leq \bbP \left( \left| \frac{1}{n} \sum_{i=1}^{n} \xi_{i,n} - \bbE ( \xi_{1,n} ) \right| + \left| \bbE ( \xi_{1,n} ) - \mu \right| \geq t \right) \\
        &= \bbP \left( \left| \frac{1}{n} \sum_{i=1}^{n} \xi_{i,n} - \bbE ( \xi_{1,n} ) \right| \geq t - \left| \bbE ( \xi_{1,n} ) - \mu \right| \right) \\
        &\leq \bbP \left( \left| \frac{1}{n} \sum_{i=1}^{n} \xi_{i,n} - \bbE ( \xi_{1,n} ) \right| \geq \frac{t}{2} \right).
    \end{align}
    Applying Hoeffding's inequality to the final line gives
    \begin{equation}
        \bbP \left( \left| \frac{1}{n} \sum_{i=1}^{n} \xi_{i,n} - \mu \right| \geq t \right) \leq 2\exp \left\{ - \frac{2nt^2}{16B^2} \right\}.
    \end{equation}
    The result then follows because $\sum_{n=1}^\infty \bbP \left( \left| \frac{1}{n} \sum_{i=1}^{n} \xi_{i,n} - \mu \right| \geq t \right) < \infty$ and by the Borel-Cantelli lemma.
\end{proof}
\end{lemma}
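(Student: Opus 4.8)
The plan is to combine an exponential concentration inequality with the first Borel--Cantelli lemma, using the convergence of the row means to reduce a statement about deviations from $\mu$ to a statement about deviations from the moving row mean $\bbE(\xi_{1,n})$. Throughout I would assume, as the almost-sure conclusion implicitly requires, that all the variables $\{\xi_{i,n}\}$ are realized on a common probability space.

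First I would fix $t > 0$ and write the deviation as $\frac1n\sum_{i=1}^n \xi_{i,n} - \mu = \big(\frac1n\sum_{i=1}^n \xi_{i,n} - \bbE(\xi_{1,n})\big) + \big(\bbE(\xi_{1,n}) - \mu\big)$. By hypothesis (2) there is some $N$ with $|\bbE(\xi_{1,n}) - \mu| < t/2$ for all $n \ge N$, so the triangle inequality gives the event inclusion $\{\,|\frac1n\sum_{i=1}^n \xi_{i,n} - \mu| \ge t\,\} \subseteq \{\,|\frac1n\sum_{i=1}^n \xi_{i,n} - \bbE(\xi_{1,n})| \ge t/2\,\}$ for every $n \ge N$. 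This isolates a centered, row-wise iid average to which a tail bound applies.

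Next I would apply Hoeffding's inequality row by row: since the $\xi_{i,n}$ are iid in $i$ and bounded by $B$ in absolute value (hypothesis (1)), each summand lies in an interval of length $2B$, yielding $\bbP\big(|\frac1n\sum_{i=1}^n \xi_{i,n} - \bbE(\xi_{1,n})| \ge t/2\big) \le 2\exp\big(-nt^2/(8B^2)\big)$. Because this bound decays exponentially in $n$, the series $\sum_{n \ge N} \bbP\big(|\frac1n\sum_{i=1}^n \xi_{i,n} - \mu| \ge t\big)$ converges. The first Borel--Cantelli lemma then shows that the event $\{\,|\frac1n\sum_{i=1}^n \xi_{i,n} - \mu| \ge t\,\}$ occurs only finitely often almost surely; letting $t$ range over a countable sequence decreasing to zero and intersecting the resulting full-measure events gives $\frac1n\sum_{i=1}^n \xi_{i,n} \inas \mu$.

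The step I expect to require the most care is not the concentration estimate --- that is routine given boundedness --- but rather the bookkeeping around the triangular-array structure: the summability argument and Borel--Cantelli are only meaningful once all rows are coupled on a single probability space, and the reduction to deviations from $\bbE(\xi_{1,n})$ must be performed only for $n \ge N$ with $N$ depending on $t$. Everything else (the precise constant in the Hoeffding exponent and the choice of a countable set of thresholds shrinking to zero) is standard.
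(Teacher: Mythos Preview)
Your proposal is correct and follows essentially the same route as the paper: split the deviation via the triangle inequality using $|\bbE(\xi_{1,n})-\mu|<t/2$ for large $n$, apply Hoeffding's inequality to the centered row average, and conclude with Borel--Cantelli. Your write-up is in fact slightly more careful than the paper's, since you make explicit the need for a common probability space and the passage from ``for each fixed $t$'' to almost-sure convergence via a countable sequence $t\downarrow 0$.
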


\section{Series regression} \label{app:series}

In this section, we consider series regression for the nuisance function estimators, and establish equivalent results to Lemma~\ref{lem:covariance} and Theorem~\ref{thm:semiparametric}.  Series regression is well studied and includes bases such as the Legendre polynomial series, the local polynomial partition series, and the Cohen-Daubechies-Vial wavelet series \citep{belloni2015some, hansen2022econometrics}.   Here, we focus on regression splines \citep{newey2018cross, fisher2023threeway} and wavelet estimators \citep{mcgrath2022undersmoothing}.  Regression splines are a natural global averaging estimator to consider because, like the local averaging estimators we considered in Section~\ref{sec:unknown}, they do not require knowledge of the covariate density.  The wavelet estimators are a natural alternative because, like the covariate-density-adapted kernel regression we considered in Section~\ref{sec:known}, they can achieve the minimax rate in the non-$\sqrt{n}$ regime.  From a technical perspective, our examination of each of these estimators may be of interest because our proofs that they achieve $\sqrt{n}$-consistency and minimax optimality are different from those considered previously.

\subsection{Regression splines}

First, we review regression splines.
\begin{estimator} \textbf{\emph{(Regression Splines)}} \label{est:spline}   
    The regression spline estimator for $\mu(x) = \bbE(Y \mid X = x)$ is
    \begin{equation}
        \widehat \mu(x) = \sum_{Z_i \in D_\mu} \frac{g(x)^T \widehat Q^{-1} g(X_i)}{n} Y_i
    \end{equation}
    where $g: \bbR^d \to \bbR^{k_\mu}$ is a $k_\mu$ order polynomial spline basis, and
    $$
    \widehat Q = \frac1n \sum_{X_i \in X^n_\mu} g(X_i) g(X_i)^T.
    $$
    Additionally, the spline neighborhoods are approximately evenly sized (see, Assumption 3 in \citet{fisher2023threeway}), so that the distance between two points within a neighborhood scales like $\lesssim k_\mu^{-1/d}$. The regression spline estimator for $\pi(x) = \bbE(A \mid X = x)$ is defined analogously on $D_\mu$.
\end{estimator}

The additional condition we impose, that the neighborhoods are approximately evenly sized, can be enforced under Assumption~\ref{asmp:bdd_density} that the covariate density and covariate support are bounded. 

\subsection{Wavelet estimators}

Here, we review wavelet estimators.  For simplicity, we focus on the case where the covariate density is known and sufficiently smooth, as in Assumption~\ref{asmp:density_smooth}, and propose the same estimator as that considered in \citet{mcgrath2022undersmoothing}. 

\begin{estimator} \textbf{\emph{(Wavelet estimator)}} \label{est:wavelet}
    The wavelet estimator for $\mu(x) = \bbE(Y \mid X = x)$ is
    \begin{equation}
        \widehat \mu(x) = \sum_{Z_i \in D_\mu} \frac{K_{V_{k_\mu}} (x, X_i)}{n f(X_i)} Y_i
    \end{equation}
    where $K_{V_{k_\mu}}(x, X_i)$ denotes the orthogonal projection kernel onto the linear subspace $V_{k_\mu}$ as defined in Appendix A of \citet{mcgrath2022undersmoothing}.  The wavelet estimator for $\pi(x) = \bbE(A \mid X = x)$ is defined analogously on $D_\pi$.
\end{estimator}

\subsection{Lemma~\ref{lem:covariance} and Theorem~\ref{thm:semiparametric} for series regression}

For brevity, we simply assume standard bias and variance bounds for regression splines and wavelet estimators hold.
\begin{assumption}[Bias and variance bounds] \label{asmp:series_bounds}
    For regression splines and wavelet estimators, we suppose
    \begin{align}
        \sup_{x \in \mathcal{X}} \left| \bbE \{ \widehat \mu(x) - \mu(x) \} \right| &\lesssim k_\mu^{-\beta / d}, \text{and} \\
        \sup_{x \in \mathcal{X}} \bbV \{ \widehat \mu(x) \} &\lesssim \frac{k_\mu}{n}
    \end{align}
    and analogous results hold for $\pi(x)$ and $\widehat \pi(x)$.
\end{assumption}

These are the typical bias and variance bounds from the series regression literature. Further assumptions are typically necessary to establish them, analogous to those we enforced for the local polynomial regression estimator, so that the Gram matrix is invertible. The next assumption is a typical example for the design matrix.
\begin{assumption} \label{asmp:ortho_design} \emph{\textbf{(Bounded Minimum Eigenvalue)}}    
    For Estimator~\ref{est:spline}, there exists $\lambda_0 > 0$ such that, uniformly over all $n$,
    $$
    \lambda_{\min} \left[ \bbE \left\{ g(X) g(X)^T \right\} \right] \geq \lambda_0.
    $$
\end{assumption}

This assumption requires that the regressors $g_1(X), ..., g_k(X)$ are not too co-linear, and corresponds to Condition A.2 in \citet{belloni2015some} and Assumption 5 in \citet{fisher2023threeway}.  This assumption implicitly constrains the number of bases to grow no faster than the sample size, and constrains the convergence rate of the DCDR estimator in the non-$\sqrt{n}$ regime. We do not investigate this further, but see, e.g., \citet{belloni2015some} and \citet{fisher2023threeway} for comprehensive analyses.

\medskip

In the next result, we prove that the expected absolute covariance term from Lemma~\ref{lem:covariance} decreases inversely with sample size with both regression splines and wavelet estimators.

\begin{lemma} \label{lem:series_covariance}
    Suppose Assumptions~\ref{asmp:dgp},~\ref{asmp:bdd_density}, and~\ref{asmp:holder} hold.  If $\widehat \mu(x)$ is a regression spline (Estimator~\ref{est:spline}) and Assumption~\ref{asmp:ortho_design} holds or $\widehat \mu$ is a wavelet estimator (Estimator~\ref{est:wavelet}) and Assumption~\ref{asmp:density_smooth} holds, then 
    $$
    \bbE \left[ \big| \cov \left\{ \widehat \mu(X_i), \widehat \mu(X_j) \mid X_i, X_j \right\} \big| \right] \lesssim \frac1n.
    $$
    Analogous results hold for $\widehat \pi(X)$.
\end{lemma}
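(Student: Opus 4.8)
The plan is to treat both estimators as reproducing‑kernel linear smoothers and to reduce the expected absolute covariance to the product of a uniform variance bound and a localization factor, exactly as in the proofs of Lemmas~\ref{lem:knn_covariance} and~\ref{lem:lpr_covariance}. Writing $\widehat\mu(x)=\frac1n\sum_{Z_l\in D_\mu}K_n(x,X_l)Y_l$ — with $K_n(x,x')=g(x)^T\widehat Q^{-1}g(x')$ for regression splines and $K_n(x,x')=K_{V_{k_\mu}}(x,x')/f(x')$ for the wavelet estimator — I would first record the crude uniform bound. Since $\widehat\mu(X_i)\ind X_j$ given $X_i$, the Cauchy–Schwarz step from the proof of Proposition~\ref{prop:spectral} gives $|\cov\{\widehat\mu(X_i),\widehat\mu(X_j)\mid X_i,X_j\}|\leq\sqrt{\bbV\{\widehat\mu(X_i)\mid X_i\}\,\bbV\{\widehat\mu(X_j)\mid X_j\}}\leq\sup_{x}\bbV\{\widehat\mu(x)\}\lesssim k_\mu/n$ by Assumption~\ref{asmp:series_bounds}. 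This is too crude by a factor $k_\mu$, and the remaining work is to supply the localization contributing the missing $k_\mu^{-1}$.

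For the wavelet estimator the localization is immediate. Under Assumption~\ref{asmp:density_smooth} the Cohen–Daubechies–Vial scaling functions are compactly supported, so $K_{V_{k_\mu}}(x,\cdot)$ is supported in a ball of radius $\asymp k_\mu^{-1/d}$; hence $\widehat\mu(X_i)$ and $\widehat\mu(X_j)$ are measurable functions of the training observations lying in $B(X_i,c\,k_\mu^{-1/d})$ and $B(X_j,c\,k_\mu^{-1/d})$ respectively. When $\lVert X_i-X_j\rVert$ exceeds twice this radius these balls are disjoint, the two estimators depend on disjoint (hence independent) training data, and the conditional covariance is exactly zero. Combining this with the uniform variance bound and the Sphere Lemma (Lemma~\ref{lem:sphere}), which gives $\bbP(\lVert X_i-X_j\rVert\lesssim k_\mu^{-1/d})\asymp k_\mu^{-1}$, yields $\bbE[|\cov|]\lesssim (k_\mu/n)\cdot k_\mu^{-1}=1/n$, mirroring Lemma~\ref{lem:lpr_covariance} line for line.

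The regression spline case is the main obstacle, because the inverse Gram matrix $\widehat Q^{-1}$ is dense, so the weights $\frac1n g(x)^T\widehat Q^{-1}g(X_l)$ are not compactly supported and there is no exact zero‑covariance region to multiply by a probability. Here I would instead control the full conditional covariance through the locality of the reproducing kernel itself. Conditioning on the training covariates, the outcome‑noise contribution equals $\frac1n$ times a variance‑weighted kernel $g(X_i)^T\widehat Q^{-1}\widehat Q_v\widehat Q^{-1}g(X_j)$, where $\widehat Q_v=\frac1n\sum_l \bbV(Y_l\mid X_l)g(X_l)g(X_l)^T$; since $\bbV(Y\mid X)$ is bounded above and below (Assumption~\ref{asmp:dgp}), this inherits the locality of the ordinary spline reproducing kernel, whose decisive property is a uniformly bounded $L^1$ norm, $\int|K_n(x,x')|f(x')\,dx'\lesssim 1$ (the bounded Lebesgue constant of spline projection), equivalently the exponential off‑diagonal decay of the well‑conditioned banded matrix $\widehat Q$. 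The spline argument therefore splits into (i) a matrix‑concentration step, analogous to Proposition~\ref{prop:matrix_chernoff} and powered by Assumption~\ref{asmp:ortho_design}, restricting to a high‑probability event on which $\widehat Q$ is well conditioned and its inverse kernel is effectively local, and (ii) transferring the bounded‑$L^1$‑norm property to the variance‑weighted kernel and to the analogous design‑randomness (conditional‑mean) contribution, after which integrating over $X_j$ and then $X_i$ gives an $O(1)$ factor and hence $\bbE[|\cov|]\lesssim 1/n$. I expect step (ii) — controlling the covariance uniformly despite the non‑locality of the spline weights, rather than merely the compact support exploited for wavelets and for the local‑averaging estimators — to be the delicate part of the proof.
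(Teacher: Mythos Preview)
Your proposal is broadly sound but inverts the difficulty relative to the paper, and for splines you work much harder than needed.

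\medskip

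\textbf{Regression splines.} The paper does \emph{not} wrestle with the density of $\widehat Q^{-1}$ at all. It exploits the ``approximately evenly sized neighborhoods'' clause in Estimator~\ref{est:spline}: the spline basis is treated as a partition, so that when $X_i$ and $X_j$ lie in different spline neighborhoods the two estimators use disjoint training data and the conditional covariance is exactly zero. The argument is then identical to Lemmas~\ref{lem:knn_covariance} and~\ref{lem:lpr_covariance}: bound the nonzero part by $\sup_x\bbV\{\widehat\mu(x)\}\lesssim k_\mu/n$ and multiply by $\bbP(X_i,X_j\text{ in the same neighborhood})\lesssim k_\mu^{-1}$. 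Your concern about non-compactly-supported weights is legitimate for general B-spline bases with an empirical Gram matrix, and the Lebesgue-constant / banded-inverse-decay route you sketch would handle that more general setting; but in the paper's framing it is unnecessary, and your ``step~(ii)'' is left as a plan rather than carried out.

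\medskip

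\textbf{Wavelets.} Your localization argument via compact support of CDV scaling functions is correct and would slot into the paper's framework neatly. The paper, however, takes a genuinely different route here: it computes the conditional covariance directly. Writing out $\bbE\{\widehat\mu(X_i)\widehat\mu(X_j)\mid X_i,X_j\}$ as a double sum over training indices, the $n(n-1)$ off-diagonal terms reproduce $\bbE\{\widehat\mu(X_i)\mid X_i\}\bbE\{\widehat\mu(X_j)\mid X_j\}$ up to an $O(1/n)$ correction, while the $n$ diagonal terms contribute an explicit $1/n$ factor. This yields the $1/n$ bound algebraically, without invoking the Sphere Lemma or any localization. That approach does not depend on compact support of the basis and so would extend to other orthonormal series; your approach is more geometric and parallels the local-averaging proofs more closely.
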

\begin{proof}
    For regression splines, the proof follows by the same technique as for local averaging estimators (e.g., Lemma~\ref{lem:lpr_covariance}) because regression splines partition the covariate space into neighborhoods: if $X_i$ and $X_j$ are far enough apart, then they do not share training data.  Specifically, let $A_{ij}$ denote the event that $X_i$ and $X_j$ are in the same neighborhood according to the basis $g$ in Estimator~\ref{est:spline}.  Then,
    \begin{align*}
        \bbE \left[ \big| \cov \left\{ \widehat \mu(X_i), \widehat \mu(X_j) \mid X_i, X_j \right\} \big| \right] &= \bbE\left[ \left|\cov \left\{ \widehat \mu(X_i), \widehat \mu(X_j) \mid X_i, X_j \right\} \right| A_{ij} \right] \\
        &\leq \sup_{x_i, x_j} \left| \cov \{ \widehat \mu(x_i), \widehat \mu(x_j) \} \right| \bbP ( A_{ij} ) \\
        &\lesssim \sup_{x} \bbV \{ \widehat \mu(x) \}  k_\mu^{-1}  \\
        &\lesssim \frac1n.
    \end{align*}
    where the first line follows because $\cov \{ \widehat \mu(X_i), \widehat \mu(X_j) \mid X_i, X_j \} = 0$ when  $X_i$ and $X_j$ are not in the same neighborhood, the second by H\"{o}lder's inequality, the third by the definition of the size of the neighborhoods in Estimator~\ref{est:spline} and Lemma~\ref{lem:sphere}, and the final line by Assumption~\ref{asmp:series_bounds}.
    
    \medskip
    
    For wavelet estimators, the proof is different.  It follows by the same analysis as in Lemma 15 (i) in \citet{mcgrath2022undersmoothing}, which we repeat here for completeness.  Notice that 
    \begin{align*}
        \bbE \{ \widehat \mu(X_i) \widehat \mu(X_j) \mid X_i, X_j \} &=  \bbE \left[ \sum_{Z_k, Z_l \in D_\mu} \frac{K_{V_{k_\mu}}(X_i, X_k) K_{V_{k_\mu}}(X_j, X_l) Y_k Y_l}{n^2f(X_k) f(X_l)} \mid X_i, X_j \right] \\
        &= \frac{1}{n} \bbE \left[ \frac{K_{V_{k_\mu}}(X_i, X_k) K_{V_{k_\mu}}(X_j, X_k) Y_k^2}{f(X_k)^2} \mid X_i, X_j \right] + \left( 1 - \frac1n \right) \bbE \{ \widehat \mu(X) \mid X \}^2 \\
        &= \bbE \{ \widehat \mu(X) \mid X \}^2 \\
        &+ \frac1n \left( \bbE \left[ \frac{K_{V_{k_\mu}}(X_i, X_k) K_{V_{k_\mu}}(X_j, X_k) Y_k^2}{f(X_k)^2} \mid X_i, X_j \right] - \bbE \{ \widehat \mu(X) \mid X \}^2 \right), \\
    \end{align*}
    where the first line follows by definition, the second by iid datapoints, and the third by rearranging. By the definition of covariance, 
    \begin{align*}
        \cov \{ \widehat \mu(X_i), \widehat \mu(X_j) \mid X_i, X_j \} &= \bbE \{ \widehat \mu(X_i) \widehat \mu(X_j) \mid X_i, X_j \} -  \bbE \{ \widehat \mu(X) \mid X \}^2 \\
        &=  \frac1n \left( \bbE \left[ \frac{K_{V_{k_\mu}}(X_i, X_k) K_{V_{k_\mu}}(X_j, X_k) Y_k^2}{f(X_k)^2} \mid X_i, X_j \right] - \bbE \{ \widehat \mu(X) \mid X \}^2 \right).
    \end{align*}
    Therefore, 
    $$
    \bbE \left[ \big| \cov \left\{ \widehat \mu(X_i), \widehat \mu(X_j) \mid X_i, X_j \right\} \big| \right] \lesssim \frac1n,
    $$
    where the inequality follows by Assumptions~\ref{asmp:dgp} and~\ref{asmp:bdd_density} and because $K_{V_{k_\mu}}(x, y)$ is bounded.  
\end{proof}

\noindent By Assumption~\ref{asmp:series_bounds} and Lemma~\ref{lem:series_covariance}, we have an analogous result to Theorem~\ref{thm:semiparametric}, which we state without proof.

\begin{theorem}
    \emph{\textbf{(Series regression)}}
    Suppose Assumptions~\ref{asmp:dgp},~\ref{asmp:bdd_density}, and~\ref{asmp:holder} hold and $\psi_{ecc}$ is estimated with the DCDR estimator $\widehat \psi_n$ from Algorithm~\ref{alg:dcdr}.  If the nuisance functions $\widehat \mu$ and $\widehat \pi$ are estimated with regression splines (Estimator~\ref{est:spline}), Assumption~\ref{asmp:ortho_design} holds, and the bases scale like $k_\mu, k_\pi \asymp \frac{n}{\log n}$, or if the nuisance functions are estimated with wavelet estimators (Estimator~\ref{est:wavelet}), Assumption~\ref{asmp:density_smooth} holds, and $k_\mu, k_\pi \asymp \frac{n}{\log n}$, then
    \begin{equation}
        \begin{cases}
            \sqrt{\frac{n}{\bbV \{ \varphi(Z) \}}} (\widehat \psi_n - \psi_{ecc}) \indist N(0, 1) &\text{ if } \frac{\alpha + \beta}{2} > d/4 \text{, and} \\[10pt]
            \bbE | \widehat \psi_n - \psi_{ecc} | \lesssim \left( \frac{n}{\log n} \right)^{-\frac{\alpha + \beta}{d}} &\text{ otherwise.}
        \end{cases}
    \end{equation}    
\end{theorem}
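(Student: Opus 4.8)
The plan is to mirror the proof of Theorem~\ref{thm:semiparametric} essentially verbatim, substituting the series-specific bias, variance, and covariance bounds for those of k-Nearest-Neighbors and local polynomial regression. First I would invoke Lemma~\ref{lem:expansion} to write
$$\widehat\psi_n - \psi_{ecc} = (\bbP_n - \bbP)\{\varphi(Z)\} + R_{1,n} + R_{2,n},$$
with $R_{1,n} \leq \| b_\pi \|_\bbP \| b_\mu \|_\bbP$ and $R_{2,n} = O_\bbP\big(\sqrt{(\bbE\| \widehat\varphi - \varphi \|_\bbP^2 + \rho(\Sigma_n))/n}\big)$. The empirical process term $(\bbP_n - \bbP)\{\varphi(Z)\}$ already supplies both the stated central limit theorem and an $O_\bbP(n^{-1/2})$ (equivalently $L_2$) bound, so the whole argument reduces to showing $R_{2,n}$ is negligible relative to $n^{-1/2}$ while $R_{1,n}$ governs the threshold and the rate.

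For $R_{2,n}$ I would first apply Lemma~\ref{lem:series_covariance}, which yields $\bbE[|\cov\{\widehat\eta(X_i),\widehat\eta(X_j)\mid X_i,X_j\}|] \lesssim n^{-1}$ for $\eta\in\{\pi,\mu\}$ under the stated assumptions for splines or wavelets. Substituting this into Proposition~\ref{prop:spectral}, together with the Lipschitz property of the ECC influence function (so that $\bbE\| \widehat\varphi - \varphi \|_\bbP^2 \lesssim \| b_\pi^2 \|_\infty + \| s_\pi^2 \|_\infty + \| b_\mu^2 \|_\infty + \| s_\mu^2 \|_\infty$), collapses the remainder to
$$R_{2,n} = O_\bbP\Bigg(\sqrt{\frac{\| b_\pi^2 \|_\infty + \| s_\pi^2 \|_\infty + \| b_\mu^2 \|_\infty + \| s_\mu^2 \|_\infty}{n}}\Bigg).$$
Now I feed in Assumption~\ref{asmp:series_bounds}: with $k_\mu, k_\pi \asymp n/\log n$ the variance terms satisfy $\| s_\eta^2 \|_\infty \lesssim k_\eta/n \asymp 1/\log n$, while $\| b_\pi^2 \|_\infty \lesssim (n/\log n)^{-2\alpha/d}$ and $\| b_\mu^2 \|_\infty \lesssim (n/\log n)^{-2\beta/d}$. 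The crucial observation—the series analogue of the bandwidth constraint imposed for local polynomial regression—is that $k_\eta \asymp n/\log n$ is the largest admissible choice keeping $k_\eta = o(n)$, so the variance genuinely vanishes and $R_{2,n} = O_\bbP\big(\sqrt{(1/\log n)/n}\big) = O_\bbP\big((n\log n)^{-1/2}\big) = o_\bbP(n^{-1/2})$.

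With the remainder controlled, the bias alone determines the behavior. Since $\bbP$ is a probability measure, $\| b_\eta \|_\bbP \leq \| b_\eta \|_\infty$, so $R_{1,n} \lesssim k_\pi^{-\alpha/d} k_\mu^{-\beta/d} \asymp (n/\log n)^{-(\alpha+\beta)/d}$. This is $o(n^{-1/2})$ exactly when $(\alpha+\beta)/d > 1/2$, i.e. $\frac{\alpha+\beta}{2} > d/4$, which gives the central limit theorem in the first case; in the complementary regime $R_{1,n}$ dominates both $R_{2,n}$ and the $n^{-1/2}$ fluctuation, producing the rate $(n/\log n)^{-(\alpha+\beta)/d}$. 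Because $R_{1,n}$ is deterministic and $R_{2,n}$ is mean-zero with the explicit variance bound above, I would state the non-$\sqrt{n}$ conclusion as the $L_1$ bound $\bbE|\widehat\psi_n-\psi_{ecc}| \leq \bbE|(\bbP_n-\bbP)\varphi| + |R_{1,n}| + \bbE|R_{2,n}|$, using $\bbE|R_{2,n}| \leq \sqrt{\bbV(R_{2,n})} = o(n^{-1/2})$, so that the $(n/\log n)^{-(\alpha+\beta)/d}$ term dominates.

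There is no genuine obstacle here: the covariance control of Lemma~\ref{lem:series_covariance} is already established (by a partitioning argument for splines and by the wavelet orthogonality computation), and the rest is the arithmetic of balancing rates. The only point requiring care—and the reason series regression delivers the clean exponent $(\alpha+\beta)/d$ rather than a truncated one—is that Assumption~\ref{asmp:series_bounds} posits an uncapped bias bound $k_\eta^{-\beta/d}$, unlike the local polynomial estimator whose basis is fixed at order $\lceil d/2\rceil$; consequently no case split on whether $\alpha$ or $\beta$ exceeds $d/2$ is needed. I would simply verify that the variance truly tends to zero under $k \asymp n/\log n$ so that the empirical-process central limit theorem is not contaminated by a non-vanishing $O_\bbP(n^{-1/2})$ contribution from $R_{2,n}$.
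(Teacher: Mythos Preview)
Your proposal is correct and follows precisely the approach the paper intends: the paper explicitly states this theorem without proof, noting only that it follows from Assumption~\ref{asmp:series_bounds} and Lemma~\ref{lem:series_covariance} by the same argument as Theorem~\ref{thm:semiparametric}. You have filled in exactly those details, and your observation that the uncapped bias bound $k_\eta^{-\beta/d}$ in Assumption~\ref{asmp:series_bounds} obviates the case split on $\alpha,\beta$ versus $\lceil d/2\rceil$ is the only genuine difference from the local polynomial argument.
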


This result is optimal for regression splines -- to ensure the Gram matrix is invertible, they cannot be undersmoothed any further, and so the bias of the DCDR estimator cannot be reduced.  For wavelet estimators with known covariate density, this result can be improved in the non-$\sqrt{n}$ regime by undersmoothing even further only one of the two nuisance function estimators and carefully analyzing the bias of the DCDR estimator (see, \citet{mcgrath2022undersmoothing}, Proposition 2).

\end{document}